\theoremstyle{plain}
\newtheorem{theorem}{Theorem}[subsection]
\newtheorem{proposition}[theorem]{Proposition}
\newtheorem{lemma}[theorem]{Lemma}
\newtheorem{corollary}[theorem]{Corollary}
\theoremstyle{definition}
\newtheorem{definition}[theorem]{Definition}
\newtheorem{passage}[theorem]{}
\newtheorem{example}[theorem]{Example}
\theoremstyle{remark}
\newtheorem{remark}[theorem]{Remark}
\numberwithin{equation}{section}
\begin{document}

\title[Derived categories of generalized Kummers: extended Mukai vector]{Derived categories of generalized Kummer varieties: extended Mukai vector}

\author{Yuxuan Yang}

\address{Department of Mathematics, University of Massachusetts Amherst, Amherst, Massachusetts, 01002}

\curraddr{Department of Mathematics and Statistics,
University of Massachusetts Amherst, Amherst, Massachusetts 01002}
\email{yuxuanyang@umass.edu or yuxuanyangalggeom@gmail.com}

\subjclass[2020]{14J42}

\date{October 17th, 2025.}


\keywords{Algebraic geometry, Generalized Kummer variety, Derived category}

\begin{abstract}
We use the extended Mukai vectors for hyper-K\"ahler manifolds to investigate the derived equivalences of the hyper-K\"ahler manifolds which are deformation equivalent to generalized Kummer varieties. Inspired by the idea for hyper-K\"ahler manifolds of $\mathrm{K3}^{[n]}$-type, we obtain an integral lattice which is proved to be invariant under the derived equivalences of generalized Kummer type varieties. Such results and applications are described using derived monodromy groups.
\end{abstract}

\maketitle

\tableofcontents

\section{Introduction}

\subsection{Background: Derived categories of abelian surfaces}
The study of derived categories of smooth projective varieties is originated by the work of Mukai. Many results can be seen in recent years.

Recall the notation $\mathbf{D}^b(X)$ for the bounded derived category of coherent sheaves on $X$. Now, for smooth projective varieties $X,Y$, Orlov's Theorem \cite[Theorem 5.14]{Huybrechts:06} shows that any derived equivalence between $\mathbf{D}^b(X),\mathbf{D}^b(Y)$ is a Fourier-Mukai functor $\mathrm{FM}_{\mathcal{E}}$ for some Fourier-Mukai kernel $\mathcal{E}\in\mathbf{D}^b(X\times Y)$. That is, $$\Phi=\mathrm{FM}_{\mathcal{E}}:\mathbf{D}^b(X)\xrightarrow{\simeq}\mathbf{D}^b(Y).$$ As in \cite[Definition 5.28 and Proposition 5.33]{Huybrechts:06}, we have the Mukai vector $$v=\mathrm{ch}(-)\mathrm{td}^{1/2}:\mathbf{D}^b(X)\to\mathrm{H}^*(X,\mathbb{Q})$$ and the Fourier-Mukai functor $\Phi$ inducing an isomorphism $$\Phi^{\mathrm{H}}=\mathrm{FM}_{\mathcal{E}}^{\mathrm{H}}:=\mathrm{FM}_{v(\mathcal{E})}^{\mathrm{H}}:\mathrm{H}^*(X,\mathbb{Q})\xrightarrow{\simeq}\mathrm{H}^*(Y,\mathbb{Q}).$$

Specialize to the case of an abelian surface $A$. As in \cite[subsection 2.1]{Yoshioka:14}, the Fourier-Mukai partners of an abelian surface are abelian surfaces. Similarly to the K3 surface case, we may introduce $(\widetilde{\mathrm{H}}(A,\mathbb{Z}),b)$ with $b$ being the Mukai pairing such that it restricts to the intersection pairing in $\mathrm{H}^2(A,\mathbb{Z})$ and $$b(1,1)=b([\mathrm{pt}],[\mathrm{pt}])=0, b(1,[\mathrm{pt}])=-1,$$
where $1\in\mathrm{H}^0(A,\mathbb{Z}), [\mathrm{pt}]\in\mathrm{H}^4(A,\mathbb{Z})$ are the generators. In addition, the lattice $\widetilde{\mathrm{H}}(A,\mathbb{Z})$ carries a Hodge structure of weight two. We obtain a Hodge isometry $\Phi^\mathrm{H}$ from a derived equivalence between abelian surfaces, $\Phi:\mathbf{D}^b(A)\xrightarrow{\simeq}\mathbf{D}^b(A')$, which satisfies the following commutative diagram.
\begin{align*}
    \xymatrix{
    \mathbf{D}^b(A)\ar[r]^{\Phi}\ar[d]^{v}&\mathbf{D}^b(A')\ar[d]^{v}\\
    \mathrm{H}^*(A,\mathbb{Z})\ar[r]^{\Phi^\mathrm{H}}&\mathrm{H}^*(A',\mathbb{Z})
    }
\end{align*}

Moreover, the lattice $\mathrm{H}^*(A,\mathbb{Z})$ with the Hodge structure determines the derived category. To be precise, two abelian surfaces $A,B$ are derived equivalent if and only if there exists a Hodge isometry $\mathrm{H}^1(A\times A^{\vee},\mathbb{Z})\simeq\mathrm{H}^1(B\times B^{\vee},\mathbb{Z})$, by \cite[Corollary 9.50]{Huybrechts:06}. In particular, a derived equivalence $\mathbf{D}^b(A)\simeq\mathbf{D}^b(B)$ induces an isomorphism $A\times A^{\vee}\cong B\times B^{\vee}$ by \cite[Corollary 9.42]{Huybrechts:06}. It shows that, for a fixed abelian surface $A$, up to isomorphisms, the number of derived equivalent abelian surfaces is finite.

In addition, the group of autoequivalences $\mathrm{Aut}(\mathbf{D}^b(A))$ of an abelian surface admits a representation $\rho^\mathrm{H}:\mathrm{Aut}(\mathbf{D}^b(A))\to\mathrm{GL}(\mathrm{H}^*(A,\mathbb{Z}))$. The group of autoequivalences of a K3 surface $S$ also admits a representation $$\rho^\mathrm{H}:\mathrm{Aut}(\mathbf{D}^b(S))\to\mathrm{GL}(\mathrm{H}^*(S,\mathbb{Z})).$$

Beckmann \cite{Beckmann:22-2} gives a description of the image of the representation for projective hyper-K\"ahler manifolds of $\mathrm{K3}^{[n]}$-type. The main target of this paper is to formulate and prove analogous results for hyper-K\"ahler manifolds which are deformation equivalent to generalized Kummer varieties. In \cite[subsections 1.2-1.4]{Beckmann:22-2}, Beckmann introduced the extended Mukai vector of some objects to investigate the derived equivalences of hyper-K\"ahler manifolds.%
\footnote{See also \cite[Theorem 6.13]{Markman:24}, where an extended Mukai line (LLV line) is introduced and an extended Mukai vector in this line is defined, when the rank of the object is non-zero.}%
If $X$ is deformation equivalent to $\mathrm{Kum}^{n-1}(A),n\geq3$, for an abelian surface $A$, we have $$\overline{\mathrm{td}^{1/2}}=T(\frac{(\alpha+\frac{n}{4}\beta)^{n-1}}{(n-1)!})\in\mathrm{SH}(X,\mathbb{Q})$$ using the notations there. (This calculation is also seen in \cite[Lemma 6.18]{Markman:24}.)

\subsection{Integral structure}
We specialize in a hyper-K\"ahler manifold $X$ of generalized Kummer type with $\dim X=2(n-1), n\geq 3$, for the rest of the introduction. That is, $X$ is deformation equivalent to $\mathrm{Kum}^{n-1}(A)$ for an abelian surface $A$. In such a case, we obtain an integral lattice of rank $b_2(X)+2$, invariant under derived equivalences inspired by the case for hyper-K\"ahler manifold of $\mathrm{K3}^{[n]}$-type.

Explicitly, in Section \ref{Integral lattices for generalized Kummer type hyper-Kahler manifolds} we define an integral lattice $\Lambda_X\subset\mathrm{H}^*(X,\mathbb{Q})$, called $\mathrm{Kum}^{n-1}$ \textit{lattice} which inherits a Hodge structure $\Lambda_{X,\mathrm{Hdg}}$ from $X$ through the embedding. It is isometric to $\mathrm{H}^2(X,\mathbb{Z})\oplus U$ as an abstract lattice, where $U$ is the hyperbolic plane. But its weight-two Hodge structure differs from that induced from $\mathrm{H}^2(X,\mathbb{Z})$ via a B-field twist (see Remark \ref{remark 5.8+}).

The $\mathrm{Kum}^{n-1}$ lattice $\Lambda_X$ is an index two sublattice of the lattice $\Lambda_{g,X}$ generated by all extended Mukai vectors of objects in $\mathbf{D}^b(X)$.

Our main result is the following, which is an analogue of Beckmann's results \cite{Beckmann:22-2} for the derived equivalences of projective hyper-K\"ahler manifolds of $\mathrm{K3}^{[n]}$-type.

\begin{theorem}[Theorem \ref{th 9.2+}]
    Let $X$ and $Y$ be (projective) hyper-K\"ahler manifolds of generalized Kummer type of dimension $2(n-1),n\geq 3$, over an algebraically closed field of characteristic $0$ and $\Phi:\mathbf{D}^b(X)\xrightarrow[]{\simeq}\mathbf{D}^b(Y)$ a derived equivalence. Then $\Phi^{\widetilde{\mathrm{H}}}:\widetilde{\mathrm{H}}(X,\mathbb{Q})\to\widetilde{\mathrm{H}}(Y,\mathbb{Q})$ restricts to a Hodge isometry $$\Phi^{\widetilde{\mathrm{H}}}:\Lambda_{X,\mathrm{Hdg}}\xrightarrow[]{\simeq}\Lambda_{Y,\mathrm{Hdg}}.$$
\end{theorem}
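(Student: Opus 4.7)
The plan is to mirror Beckmann's $\mathrm{K3}^{[n]}$-type argument from \cite{Beckmann:22-2}, adapted to the Kummer setting. Write $\Phi=\mathrm{FM}_{\mathcal{E}}$ for a kernel $\mathcal{E}\in\mathbf{D}^b(X\times Y)$, and denote by $\widetilde{v}(E)$ the extended Mukai vector of $E\in\mathbf{D}^b(X)$ in the sense recalled from \cite{Beckmann:22-2} and \cite{Markman:24}. The backbone of the argument is the naturality identity
\[
\Phi^{\widetilde{\mathrm{H}}}(\widetilde{v}(E))=\widetilde{v}(\Phi(E))\quad\text{for all }E\in\mathbf{D}^b(X).
\]
Once this is in place, the image of the $\mathbb{Z}$-span of all extended Mukai vectors on $X$ under $\Phi^{\widetilde{\mathrm{H}}}$ is exactly the corresponding span on $Y$, so $\Phi^{\widetilde{\mathrm{H}}}(\Lambda_{g,X})=\Lambda_{g,Y}$.

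Next, I will check that $\Phi^{\widetilde{\mathrm{H}}}$ is an isometry of Mukai pairings and a morphism of weight-two Hodge structures on $\widetilde{\mathrm{H}}(-,\mathbb{Q})$. The isometry follows from the identity $\chi(E,F)=-\langle v(E),v(F)\rangle_{\mathrm{Muk}}$, which is preserved by any derived equivalence and extends along the LLV data that rigidifies the passage from $v$ to $\widetilde{v}$. The Hodge compatibility comes from the algebraicity of $v(\mathcal{E})\in\mathrm{H}^{*,*}(X\times Y)$, which forces the convolution operator $\Phi^{\widetilde{\mathrm{H}}}$ to respect the Hodge filtration on $\widetilde{\mathrm{H}}$.

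The third step is the descent from $\Lambda_{g,X}$ to the index-two sublattice $\Lambda_X$. For this I will invoke the intrinsic characterization of $\Lambda_X$ inside $\Lambda_{g,X}$ set up in Section \ref{Integral lattices for generalized Kummer type hyper-Kahler manifolds}: $\Lambda_X$ is cut out by a divisibility/parity condition expressible purely in terms of the Mukai pairing and the LLV generators $\alpha,\beta$ which govern the shape of $\overline{\mathrm{td}^{1/2}}=T((\alpha+\tfrac{n}{4}\beta)^{n-1}/(n-1)!)$. Since the previous two steps show that $\Phi^{\widetilde{\mathrm{H}}}$ is a Hodge isometry on $\Lambda_{g,-}$ compatible with this extra structure, it restricts to a Hodge isometry $\Lambda_{X,\mathrm{Hdg}}\to\Lambda_{Y,\mathrm{Hdg}}$, as required.

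The main obstacle is the first step. In the $\mathrm{K3}^{[n]}$-type setting Beckmann exploits the fact that $\widetilde{v}$ lies on a canonical LLV line (the extended Mukai line) that transforms predictably under Fourier--Mukai kernels. For generalized Kummer varieties the extra term $\tfrac{n}{4}\beta$ inside $\overline{\mathrm{td}^{1/2}}$ and the nontrivial albanese structure of $X$ introduce corrections that must be tracked through the composition $\pi_{Y*}(\pi_X^*(-)\otimes v(\mathcal{E}))$. Verifying that the output is again of the precise form $\widetilde{v}(\Phi(E))$, rather than only a rational multiple or a translate, is where the real technical work of the proof will lie.
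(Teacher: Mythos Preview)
Your proposal does not follow the paper's argument and, as written, has genuine gaps in each of its three steps.

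\textbf{Step 1.} The extended Mukai vector $\widetilde{v}(E)$ is only defined for objects in the $\mathcal{O}_X$-orbit or the $k(x)$-orbit under $\mathrm{Aut}(\mathbf{D}^b(X))$ (see Definitions 4.3, 4.4, 4.11, 4.12 in \cite{Beckmann:22-2}). For an equivalence $\Phi:\mathbf{D}^b(X)\to\mathbf{D}^b(Y)$, there is no a~priori reason why $\Phi(\mathcal{O}_X)$ lies in the $\mathcal{O}_Y$-orbit, so $\widetilde{v}(\Phi(E))$ need not even be defined, and the naturality identity cannot serve as the backbone of the proof. Moreover, the statement that the $\mathbb{Z}$-span of all extended Mukai vectors equals $\Lambda_{g,X}$ (Lemma \ref{lemma 8.4+} analogue) is established in the paper only for the actual moduli spaces $\mathrm{Kum}^{n-1}(A)$ via explicit geometric objects (Examples \ref{eg 4.17+}--\ref{eg 4.19+}); for a general deformation $X$ these constructions are not available without first invoking parallel transport.

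\textbf{Step 3.} The classes $\alpha,\beta$ are not preserved by $\Phi^{\widetilde{\mathrm{H}}}$ in general (indeed Lemma \ref{DMon(Kum) interchange alpha-tide beta} and passage \ref{10.1 (1)+} exhibit equivalences sending $\beta$ to $\widetilde{\alpha}$), so no ``intrinsic'' characterization of $\Lambda_X$ inside $\Lambda_{g,X}$ in terms of $\alpha,\beta$ can be transported along $\Phi^{\widetilde{\mathrm{H}}}$. In fact the paper derives Corollary \ref{cor 8.7+} ($\Lambda_{g,X}$ is invariant) \emph{from} Theorem \ref{th 8.1+} ($\Lambda_X$ is invariant), not the other way around; the descent you propose would reverse an implication whose converse is not established.

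\textbf{What the paper actually does.} The heavy lifting is Theorem \ref{th 8.1+}: one shows $\mathrm{DMon}(X)\subset\mathrm{O}(\Lambda_X)$ by first classifying, up to scaling, all full-rank lattices in $\widetilde{\mathrm{H}}(X,\mathbb{Q})$ invariant under $\mathrm{DMon}(X)$ (Lemmas \ref{lemma 8.4+}, \ref{lemma 8.6+}), and then ruling out all candidates except $\Lambda_X$ via an integrality constraint: the degree-two component of the projected Mukai vector of any topological $K$-theory class must lie in $\mathrm{H}^2(X,\mathbb{Z})$. Once Theorem \ref{th 8.1+} is in hand, Theorem \ref{th 9.2+} follows formally: choose a parallel transport $\gamma:\widetilde{\mathrm{H}}(Y,\mathbb{Q})\to\widetilde{\mathrm{H}}(X,\mathbb{Q})$, observe $\Phi^{\widetilde{\mathrm{H}}}\circ\gamma\in\mathrm{DMon}(Y)$ preserves $\Lambda_Y$, and use the classification Lemma \ref{lemma 8.4+} to conclude $\gamma(\Lambda_Y)=\Lambda_X$. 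The extended Mukai vector appears only as a computational tool for specific equivalences (e.g.\ Proposition \ref{prop 7.1+ lemma 12.1++}, Lemma \ref{DMon(Kum) interchange alpha-tide beta}), not as the organizing principle of the proof.
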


Moreover, the $\mathrm{Kum}^{n-1}$ lattice is invariant by the action of all compositions of parallel transport operators and the action of derived equivalences on the extended Mukai lattice $\widetilde{\mathrm{H}}(-,\mathbb{Q})$.

As in the case of $\mathrm{K3}^{[n]}$-type, the existence of a lattice with a Hodge structure with the property that governs the derived category has important applications. The following Theorem is an example.

\begin{theorem}[Theorem \ref{th 9.4+}]
    For a fixed (projective) hyper-K\"ahler manifold $X$ of generalized Kummer type of dimension $2(n-1),n\geq 3$, over an algebraically closed field of characteristic $0$, the number of (projective) hyper-K\"ahler manifolds of generalized Kummer type up to isomorphism with $\mathbf{D}^b(X)\simeq\mathbf{D}^b(Y)$ is finite.
\end{theorem}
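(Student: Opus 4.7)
The plan is to derive the finiteness statement from Theorem~\ref{th 9.2+} by the same general strategy that produces finiteness of Fourier-Mukai partners for K3 surfaces (see \cite[Chapter~16]{Huybrechts:06}) and for hyper-K\"ahler manifolds of $\mathrm{K3}^{[n]}$-type in \cite{Beckmann:22-2}, adapted to the generalized Kummer setting.

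By Theorem~\ref{th 9.2+}, any derived equivalence $\mathbf{D}^b(X)\simeq\mathbf{D}^b(Y)$ induces a Hodge isometry $\Lambda_{X,\mathrm{Hdg}}\simeq\Lambda_{Y,\mathrm{Hdg}}$, so it suffices to prove that only finitely many $Y$ of generalized Kummer type of dimension $2(n-1)$ admit such a Hodge isometry. The abstract lattice $\Lambda_Y$ is isometric to $\mathrm{H}^2(Y,\mathbb{Z})\oplus U$, independent of $Y$, and by Remark~\ref{remark 5.8+} its Hodge structure differs from the standard one on $\mathrm{H}^2(Y,\mathbb{Z})\oplus U$ only by an algebraic B-field twist. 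I would use this to identify the transcendental sub-Hodge structure of $\Lambda_{Y,\mathrm{Hdg}}$ canonically with the transcendental lattice $T(Y)\subset\mathrm{H}^2(Y,\mathbb{Z})$, so that any Hodge isometry $\Lambda_{X,\mathrm{Hdg}}\simeq\Lambda_{Y,\mathrm{Hdg}}$ restricts to a Hodge isometry $T(X)\simeq T(Y)$.

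Once $T(Y)$ is determined up to a finite ambiguity, Nikulin's theory of primitive embeddings yields only finitely many $O(\Lambda)$-orbits of primitive embeddings $T(X)\hookrightarrow\Lambda$, and consequently the Hodge structure on $\mathrm{H}^2(Y,\mathbb{Z})$ containing this transcendental part lies in a finite set of possibilities. Finally, the global Torelli theorem for hyper-K\"ahler manifolds of generalized Kummer type (cf.~\cite{Markman:24}), together with the finiteness of Kähler chambers modulo the monodromy group, assigns only finitely many isomorphism classes of $Y$ to each such Hodge structure.

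The main obstacle is the last step. For the generalized Kummer deformation class Torelli is more delicate than in the $\mathrm{K3}^{[n]}$-type case: the monodromy group is a proper subgroup of the orientation-preserving isometry group of $\mathrm{H}^2$, and birationally equivalent manifolds in the class need not be isomorphic. Executing this step rigorously requires the explicit description of the monodromy group for generalized Kummers in order to pass from a Hodge-isometry class of $\mathrm{H}^2(Y,\mathbb{Z})$ to a finite set of isomorphism classes of $Y$.
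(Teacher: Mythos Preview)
Your outline is essentially the paper's proof: pass from Theorem~\ref{th 9.2+} to a Hodge isometry of transcendental lattices (this is Corollary~\ref{cor 9.3+}/Lemma~\ref{lemma 5.7+}), use lattice finiteness (the paper quotes Kneser \cite[Satz~30.2]{Kneser:02} where you invoke Nikulin) to bound the possible Hodge structures on $\mathrm{H}^2(Y,\mathbb{Z})$, and finish with Torelli plus a chamber-finiteness statement.

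The obstacle you flag in the last step is not a real one, and you do not need the explicit monodromy group. What is actually required is only that $\mathrm{Mon}^2(Y)$ has \emph{finite index} in $\mathrm{O}(\mathrm{H}^2(Y,\mathbb{Z}))$, which Verbitsky proves for all compact irreducible hyper-K\"ahler manifolds \cite[Theorem~1.16]{Verbitsky:13}; combined with the global Torelli theorem \cite[Theorem~1.18]{Verbitsky:13} this gives finitely many birational classes per Hodge structure. The passage from birational class to isomorphism class is then handled by the Kawamata--Morrison cone conjecture for generalized Kummer type, proved in \cite[Corollary~1.5]{Markman Yoshioka:15}. Neither ingredient requires knowing $\mathrm{Mon}^2$ explicitly, so your ``main obstacle'' dissolves once you cite these two results.
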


Finally, considering a single hyper-K\"ahler manifold $X$ of generalized Kummer type with of dimension $2(n-1),n\geq 3$, over an algebraically closed field of characteristic $0$, Theorem \ref{th 9.2+} implies that the representation $$\rho^{\widetilde{\mathrm{H}}}:\mathrm{Aut}(\mathbf{D}^b(X))\to\mathrm{O}(\widetilde{\mathrm{H}}(X,\mathbb{Q}))$$ from \cite[Theorems 4.8 and 4.9]{Taelman:23} factors through $\rho^{\widetilde{\mathrm{H}}}:\mathrm{Aut}(\mathbf{D}^b(X))\to\mathrm{Aut}(\Lambda_{X,\mathrm{Hdg}})$, where $\mathrm{Aut}(\Lambda_{X,\mathrm{Hdg}})$ denotes the group of all Hodge isometries of the $\mathrm{Kum}^{n-1}$ \\lattice $\Lambda_X$. We may get a lower bound on the image of $\rho^{\widetilde{\mathrm{H}}}$ for generalized Kummer variety $\mathrm{Kum}^{n-1}(A),n\geq3$.

In general, we consider $(L,(-,-))$ as an integral even lattice. Then the dual lattice $L^{\vee}=\{v\in L\otimes\mathbb{Q}|(v,l)\in\mathbb{Z},\forall l\in L\}$ contains $L$. We denote the discriminant group of $L$ by $D(L)=L^{\vee}/L$. It carries a quadratic form with values in $\mathbb{Q}/2\mathbb{Z}$. We have a group homomorphism $D:\mathrm{O}(L)\to\mathrm{O}(D(L))$ that corresponds to the action on the discriminant group. In addition, we have the determinant character $\mathrm{det}:\mathrm{O}(L)\to\{\pm 1\}$.

\begin{theorem}[Theorem \ref{th 9.8+}]
    For the generalized Kummer variety $\mathrm{Kum}^{n-1}(A)$ of an abelian surface $A$ with $n\geq3$, over an algebraically closed field of characteristic $0$, we have $$\widetilde{\mathrm{Aut}}^+(\Lambda_{\mathrm{Kum}^{n-1}(A),\mathrm{Hdg}})^{\mathrm{det}\cdot D}\subset\mathrm{Im}(\rho^{\widetilde{\mathrm{H}}})\subset{\mathrm{Aut}}(\Lambda_{\mathrm{Kum}^{n-1}(A),\mathrm{Hdg}}).$$
    The group $\widetilde{\mathrm{Aut}}^+(\Lambda_{\mathrm{Kum}^{n-1}(A),\mathrm{Hdg}})$ is the group of all Hodge isometries with real spinor norm one which act via $\pm\mathrm{id}$ on the discriminant group. And the subgroup $$\widetilde{\mathrm{Aut}}^+(\Lambda_{\mathrm{Kum}^{n-1}(A),\mathrm{Hdg}})^{\mathrm{det}\cdot D}\leq\widetilde{\mathrm{Aut}}^+(\Lambda_{\mathrm{Kum}^{n-1}(A),\mathrm{Hdg}})$$ is defined to be the kernel of the product homomorphism $\mathrm{det}\cdot D$.
\end{theorem}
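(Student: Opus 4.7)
The upper inclusion $\mathrm{Im}(\rho^{\widetilde{\mathrm{H}}})\subset\mathrm{Aut}(\Lambda_{\mathrm{Kum}^{n-1}(A),\mathrm{Hdg}})$ is immediate from Theorem \ref{th 9.2+} applied to $X=Y=\mathrm{Kum}^{n-1}(A)$: any autoequivalence induces a Hodge isometry of the extended Mukai lattice that restricts to the $\mathrm{Kum}^{n-1}$ lattice. The bulk of the work is the lower inclusion, where I must realize every element of $\widetilde{\mathrm{Aut}}^+(\Lambda_{\mathrm{Kum}^{n-1}(A),\mathrm{Hdg}})^{\mathrm{det}\cdot D}$ as coming from a derived autoequivalence.

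My plan is to exhibit a generating set inside $\mathrm{Im}(\rho^{\widetilde{\mathrm{H}}})$ and then identify precisely the group it generates. First I would collect the ``easy'' autoequivalences and compute their action on $\Lambda_X$ via the extended Mukai vector: (i) the shift $[1]$, (ii) tensor product with a line bundle $L\in\mathrm{Pic}(X)$, which on the extended Mukai lattice acts by the exponential of $c_1(L)$ in the LLV-algebra sense used in Beckmann's framework, (iii) pushforward along automorphisms of $X$, and (iv) the dualizing functor. For each of these I would compute the spinor norm and the induced action on the discriminant group $D(\Lambda_X)$, verifying that each lands in $\widetilde{\mathrm{Aut}}^+(\Lambda)^{\mathrm{det}\cdot D}$ (thereby re-confirming the parity conditions that cut out the subgroup).

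Next, to capture the orthogonal symmetries that are not visible from automorphisms and line bundle twists, I would use parallel transport operators combined with the Fourier-Mukai transforms built from universal families on moduli spaces of stable objects. Concretely, for a primitive Mukai vector $v\in\Lambda_X$ of appropriate type, the birational/derived equivalences between $X$ and the Kummer-type moduli space $M_v$ (via the work of Yoshioka and its generalizations used already in the paper) give explicit reflections and twists in the extended Mukai lattice. Using Markman's description of the monodromy group for generalized Kummer type on $\mathrm{H}^2$ and its extension to the extended lattice (as packaged in the earlier sections of the paper), these generate the subgroup of Hodge isometries with real spinor norm one acting by $\pm\mathrm{id}$ on $D(\Lambda_X)$ and satisfying $\mathrm{det}\cdot D=1$.

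The main obstacle I anticipate is the last sharpness check: showing that the group generated by the above operators is not merely contained in, but actually equals, $\widetilde{\mathrm{Aut}}^+(\Lambda)^{\mathrm{det}\cdot D}$. This requires a clean generators-and-relations description of the target group, most naturally via a strong approximation / Eichler criterion argument for the lattice $\Lambda_X\cong\mathrm{H}^2(X,\mathbb{Z})\oplus U$ (which contains a hyperbolic plane, making Eichler's theorem applicable), and then matching each generator produced by the lattice-theoretic argument with an explicit composition of derived equivalences and parallel transport operators. The parity constraint $\mathrm{det}\cdot D=1$ should appear naturally as the obstruction distinguishing genuine derived autoequivalences from abstract Hodge isometries, analogous to Beckmann's analysis in the $\mathrm{K3}^{[n]}$-case, and verifying that none of my generators violate it (while all elements satisfying it are attained) is the delicate technical step.
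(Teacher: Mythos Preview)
Your proposal has the right overall shape but misses the specific mechanism the paper uses for the lower inclusion, and your suggested mechanism will not work as stated.

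The concrete gap is in how you plan to realize the ``non-obvious'' Hodge isometries. You propose to use Fourier--Mukai transforms from universal families on moduli spaces of stable objects to produce reflections in $\Lambda_X$, by analogy with the $\mathrm{K3}^{[n]}$ case. But for abelian surfaces there are no spherical objects (this is noted explicitly in the paper at the start of subsection~\ref{some known derived autoequi of gener Kums}), so the spherical-twist machinery that gives reflections in the K3 setting is simply unavailable here. Your vague reference to ``Markman's description of the monodromy group'' does not fill this gap either: monodromy only gives isometries of $\mathrm{H}^2$, and the whole difficulty is producing autoequivalences whose action on the extended Mukai lattice mixes $\widetilde{\alpha}$, $\beta$ nontrivially with $\mathrm{H}^2$.

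What the paper actually does is a two-step reduction that you do not mention. First, given $\gamma\in\widehat{\mathrm{Aut}}^+(\Lambda_{X,\mathrm{Hdg}})$, it uses Eichler transvections \emph{realized by genuine autoequivalences} (line bundle twists $B_\lambda=t(-\beta,\lambda)$ together with the special autoequivalence from Lemma~\ref{DMon(Kum) interchange alpha-tide beta} sending $\beta$ to $\widetilde\alpha$, which yields $t(-\widetilde\alpha,\lambda)$) to arrange that $\Phi^{\widetilde{\mathrm{H}}}\circ\gamma$ fixes $\widetilde{\delta'}$. Second, the restriction of $\Phi^{\widetilde{\mathrm{H}}}\circ\gamma$ to $\widetilde{\delta'}^\perp$ is, after conjugation by $B_{\delta'/2}$, a Hodge isometry of $\widetilde{\mathrm{H}}(A,\mathbb{Z})$; derived Torelli for abelian surfaces then realizes it as $\eta^{\widetilde{\mathrm{H}}}$ for some $\eta\in\mathrm{Aut}(\mathbf{D}^b(A))$, and the explicit description of the lifting homomorphism $d_{(n)}$ in Theorem~\ref{th 7.4+ th 12.2++} shows that $\pm\Phi^{\widetilde{\mathrm{H}}}\circ\gamma$ comes from $\eta^{(n)}\in\mathrm{Aut}(\mathbf{D}^b(X))$. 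The passage from $\widehat{\mathrm{SO}}^+$ to $\widetilde{\mathrm{O}}^{+,\det\cdot D}$ is then handled by the sign equivalence $s_{\widetilde{\delta'}}$ (Proposition~\ref{prop 7.1+ lemma 12.1++}). The essential inputs you are missing are Lemma~\ref{DMon(Kum) interchange alpha-tide beta} and Theorem~\ref{th 7.4+ th 12.2++}; without them the Eichler argument cannot be completed on the autoequivalence side.
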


\subsection{Structure of the paper}
In Section 2, we will recall the results in \cite[sections 2-4]{Beckmann:22-2} about the extended Mukai vectors and give some examples related to generalized Kummer varieties. In Sections 3 and 4, we introduce the $\mathrm{Kum}^{n-1}$ lattice and introduce derived monodromy groups using a broader definition in \cite{Markman:24}. We also lift some autoequivalences to a generalized Kummer variety using \cite{Yang:25-2}. In Section 5, we study some known derived autoequivalences of generalized Kummer varieties on the extended Mukai lattice using the extended Mukai vector. Another way to construct extended Mukai vectors by Markman \cite{Markman:24} is used in subsections \ref{pf of lemma 12.1++}, \ref{pf of th 12.2++}. The invariance property under the derived equivalences of the lattice $\Lambda_X$ is proved in Section 6 with a tool of Eichler tranvections introduced at the end. The consequences of the invariant lattice are discussed in Section 7.

\subsection{Acknowledgements}
I am indebted to my advisor Eyal Markman for his thoughts, comments, and support on the first version of the paper.

\subsection{Notation}
All functors will be implicitly derived. A lattice is a free $\mathbb{Z}$-module of finite rank with an integral quadratic form. We also use the word lattice to denote a full rank discrete subset $W$ inside a finite-dimensional rational vector space with a specified embedding $W\hookrightarrow V$ as well in Section 6. It will be clear from the context what is meant.

\section{Review of hyper-K\"ahler manifolds and extended Mukai vectors {\cite[sections 2-4]{Beckmann:22-2}}}
\subsection{Notations and results}

We recollect some notation and results about hyper-K\"ahler manifolds and extended Mukai vectors of hyper-Kähler manifolds from \cite[sections 2-4]{Beckmann:22-2}. In the remainder of the paper, we will still use the numbers in \cite{Beckmann:22-2} to avoid confusion.

\begin{passage}[Details seen in {\cite[subsection 2.1]{Beckmann:22-2}}]
    Let $X$ be a hyper-K\"ahler manifold of dimension $2n$. The second cohomology $\mathrm{H}^2(X,\mathbb{Z})$ has an integral primitive quadratic form $b$, the \textit{Beauville-Bogomolov-Fujiki (BBF)} form. It has the characterization formula $$\int_X\omega^n=c_X\frac{(2n)!}{2^nn!}b(\omega,\omega)^n,\forall\omega\in\mathrm{H}^2(X,\mathbb{Z}),$$
    where the \textit{Fujiki constant} $c_X$ is determined by the deformation type of $X$. For the known types of hyper-K\"ahler manifolds, we have $c_X=\begin{cases}
        1 &\phantom{1}\mathrm{K3}^{[n]}\phantom{1}\mathrm{or}\phantom{1}\mathrm{OG}^{10}\textit{-}\mathrm{type},\\
        n+1&\phantom{1}\mathrm{Kum}^{n}\phantom{1}\mathrm{or}\phantom{1}\mathrm{OG}^{6}\textit{-}\mathrm{type}.
    \end{cases}$
    \begin{proposition}[{\cite[Proposition 2.1]{Beckmann:22-2}}]\label{prop 2.1+}
        Let $X$ be a hyper-K\"ahler manifold of dimension $2n$. Let $\mu\in\mathrm{H}^{4p}(X,\mathbb{R})$ be a class of type $(2p,2p)$ on all small deformations of $X$. Then there exists a constant $C(\mu)\in\mathbb{R}$ such that $$\int_X\mu\omega^{2n-2p}=C(\mu)b(\omega,\omega)^{n-p},\forall\omega\in\mathrm{H}^2(X,\mathbb{R}).$$
    \end{proposition}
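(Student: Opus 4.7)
The plan is to argue that $P(\omega) := \int_X \mu\,\omega^{2n-2p}$, viewed as a polynomial function on $\mathrm{H}^2(X,\mathbb{R})$, is invariant under a Zariski-dense subgroup of the orthogonal group $\mathrm{O}(\mathrm{H}^2(X,\mathbb{R}), b)$; by the classical invariant theory of the orthogonal group, $P$ must then be a scalar multiple of $b(\omega,\omega)^{n-p}$. First, I would note that $P$ is homogeneous of degree $2n-2p = 2(n-p)$, matching the degree of $b^{n-p}$, so we are comparing two elements of the space of homogeneous polynomials of degree $2(n-p)$ on $\mathrm{H}^2(X,\mathbb{R})$, and it suffices to show $P$ lies in the one-dimensional line spanned by $b^{n-p}$.

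Second, I would establish that $P$ is invariant under the monodromy group $\mathrm{Mon}^2(X) \subset \mathrm{O}(\mathrm{H}^2(X,\mathbb{R}), b)$. The hypothesis that $\mu$ remains of type $(2p,2p)$ on every small deformation is the key input: applied to the universal family over the local deformation space of $X$, Deligne's invariant cycle theorem (or directly, the fact that a class which stays in the middle piece of the Hodge filtration along every nearby fibre is locally constant) shows that $\mu$ is fixed by local monodromy. Globalizing this along loops in the moduli space of hyper-K\"ahler manifolds deformation-equivalent to $X$ gives invariance of $\mu$ under all of $\mathrm{Mon}^2(X)$. Since the integration pairing and the BBF form are preserved by parallel transport, one obtains $P(g\omega) = \int_X \mu\,(g\omega)^{2n-2p} = \int_X (g^{-1}\mu)\,\omega^{2n-2p} = P(\omega)$ for every $g \in \mathrm{Mon}^2(X)$.

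Third, I would invoke Verbitsky's density theorem: the image of $\mathrm{Mon}^2(X)$ in $\mathrm{O}(\mathrm{H}^2(X,\mathbb{R}), b)$ is Zariski dense (it contains a finite index subgroup of the integral orthogonal group, which is Zariski dense in the real group). Since the stabilizer of a polynomial is Zariski closed, $P$ is invariant under the whole real orthogonal group $\mathrm{O}(\mathrm{H}^2(X,\mathbb{R}), b)$. By the first fundamental theorem of invariant theory for the orthogonal group, the ring of $\mathrm{O}(V,q)$-invariant polynomials on $V$ is generated by $q$ alone, so every invariant homogeneous polynomial of degree $2k$ is a scalar multiple of $q^k$; applied to $P$ of degree $2(n-p)$, this yields $P(\omega) = C(\mu)\, b(\omega,\omega)^{n-p}$ for a unique $C(\mu) \in \mathbb{R}$.

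The main obstacle is the second step: cleanly upgrading the pointwise Hodge-type hypothesis on all small deformations to genuine monodromy invariance of $\mu$. If this proves delicate, an alternative is to argue via the Looijenga--Lunts--Verbitsky algebra action on $\mathrm{H}^*(X,\mathbb{R})$, under which the subspace of classes remaining of type $(2p,2p)$ under all small deformations is a well-understood sub-representation and the integration pairing against $\omega^{2n-2p}$ factors through its one-dimensional quotient of the relevant weight.
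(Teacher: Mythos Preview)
The paper does not give its own proof of this proposition: it is quoted verbatim from \cite[Proposition 2.1]{Beckmann:22-2} inside a passage whose heading reads ``Details seen in \cite[subsection 2.1]{Beckmann:22-2}''. So there is nothing in the present paper to compare against.

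Your argument is the standard one and is essentially how the result is proved in the literature (Fujiki's original argument for characteristic classes, later generalized). A few remarks on the point you flag as an obstacle. The phrase ``of type $(2p,2p)$ on all small deformations of $X$'' already presupposes that $\mu$ has been extended to a flat section of the Gauss--Manin local system over the deformation space; otherwise the statement has no meaning. So local monodromy invariance is built into the hypothesis rather than something to be extracted from a Hodge-theoretic principle. The passage from local to global monodromy invariance then follows because the hypothesis is stable under parallel transport: if $\mu$ is of type $(2p,2p)$ on all small deformations of $X$, then its parallel transport to any deformation-equivalent $X'$ has the same property there, and one concatenates. After that, your Zariski-density and invariant-theory steps go through exactly as you wrote them. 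The LLV-algebra route you mention at the end is a genuine alternative and is arguably cleaner (it bypasses monodromy entirely by working with the $\mathfrak{so}$-representation structure on $\mathrm{H}^*(X,\mathbb{R})$), but your primary argument is already complete.
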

\end{passage}

\begin{passage}[Details seen in {\cite[subsection 2.2]{Beckmann:22-2}}]
    We denote $$\widetilde{\mathrm{H}}(X,\mathbb{Q})=\mathbb{Q}\alpha\oplus\mathrm{H}^2(X,\mathbb{Q})\oplus\mathbb{Q}\beta$$ the \textit{extended Mukai lattice} of $X$. It is equipped with the quadratic form $\widetilde{b}$ such that $\widetilde{b}|_{\mathrm{H}^2(X,\mathbb{Q})}=b$ and the second component of the orthogonal decomposition $$\widetilde{\mathrm{H}}(X,\mathbb{Q})=\mathrm{H}^2(X,\mathbb{Q})\oplus^\perp(\mathbb{Q}\alpha\oplus\mathbb{Q}\beta)$$ with respect to $\widetilde{b}$ is isomorphic to the hyperbolic plane. Explicitly, $$\widetilde{b}(\alpha,\alpha)=\widetilde{b}(\beta,\beta)=0, \widetilde{b}(\alpha,\beta)=-1.$$ Furthermore, $\widetilde{\mathrm{H}}(X,\mathbb{Q})$ is graded so that the extended Mukai lattice is equipped with a weight-two Hodge structure, where $\widetilde{\mathrm{H}}^{2,0}(X)={\mathrm{H}}^{2,0}(X), \widetilde{\mathrm{H}}^{0,2}(X)={\mathrm{H}}^{0,2}(X)$.

    Let $\mathrm{SH}(X,\mathbb{Q})$ be the \textit{Verbitsky component}, which is the graded subalgebra of $\mathrm{H}^*(X,\mathbb{Q})$ generated by $\mathrm{H}^2(X,\mathbb{Q})$. By \cite[Proposition 3.5 and Lemma 3.7]{Taelman:23}, there exists a graded morphism $\psi:\mathrm{SH}(X,\mathbb{Q})\to\mathrm{Sym}^n(\widetilde{\mathrm{H}}(X,\mathbb{Q}))$ that satisfies the short exact sequence $$0\to\mathrm{SH}(X,\mathbb{Q})\xrightarrow[]{\psi}\mathrm{Sym}^n(\widetilde{\mathrm{H}}(X,\mathbb{Q}))\xrightarrow[]{\Delta}\mathrm{Sym}^{n-2}(\widetilde{\mathrm{H}}(X,\mathbb{Q}))\to0,$$ 
    where the contraction (or Laplacian) operator $\Delta:\mathrm{Sym}^n(\widetilde{\mathrm{H}}(X,\mathbb{Q}))\to\mathrm{Sym}^{n-2}(\widetilde{\mathrm{H}}(X,\mathbb{Q}))$ is given by $x_1\dots x_n\mapsto\underset{i<j}{\Sigma}b(x_i,x_j)x_1\dots\widehat{x_i}\dots\widehat{x_j}\dots x_n$ and the $\mathfrak{g}(X)$-module morphism $\psi$ is uniquely determined by setting $\psi(1)=\frac{\alpha^n}{n!}$. Here, $\mathfrak{g}(X)$ is the \textit{Looijenga-Lunts-Verbitsky (LLV)} algebra. We denote the orthogonal projection onto $\mathrm{SH}(X,\mathbb{Q})$ to be $T:\mathrm{Sym}^n(\widetilde{\mathrm{H}}(X,\mathbb{Q}))\to\mathrm{SH}(X,\mathbb{Q})$. 
\end{passage}

\begin{passage}[Details seen in {\cite[subsection 2.3]{Beckmann:22-2}}]
    \cite[Theorem A]{Taelman:23} implies that for a derived autoequivalence $\Phi\in\mathrm{Aut}(\mathbf{D}^b(X))$, the induced isomorphism $$\Phi^{\mathrm{H}}:\mathrm{H}^*(X,\mathbb{Q})\xrightarrow{\simeq}\mathrm{H}^*(X,\mathbb{Q})$$ restricts to a Hodge isometry $\Phi^{\mathrm{SH}}:\mathrm{SH}(X,\mathbb{Q})\xrightarrow{\simeq}\mathrm{SH}(X,\mathbb{Q})$, equivariant with respect to the induced canonical isomorphism $\Phi^\mathfrak{g}:\mathfrak{g}(X)\xrightarrow{\simeq}\mathfrak{g}(X)$, of Lie algebras. Moreover, the representation $\rho^{\mathrm{SH}}:\mathrm{Aut}(\mathbf{D}^b(X))\to\mathrm{O}(\mathrm{SH}(X,\mathbb{Q}))$ factors through a representation $$\rho^{\widetilde{\mathrm{H}}}:\mathrm{Aut}(\mathbf{D}^b(X))\to\mathrm{O}(\widetilde{\mathrm{H}}(X,\mathbb{Q}))$$ when $n$ is odd or having $n$ even and $b_2(X)$ odd, by \cite[Theorems 4.8 and 4.9]{Taelman:23}. Here, $\mathrm{O}(\mathrm{SH}(X,\mathbb{Q}))$ is the orthogonal group with respect to the pairing $b_{\mathrm{SH}}$, which is restricted by the Mukai pairing on $\mathrm{H}^{\mathrm{ev}}(X,\mathbb{Q})$ to $\mathrm{SH}(X,\mathbb{Q})$ as in \cite[subsection 3.2]{Taelman:23}. To be explicit,$b_{\mathrm{SH}}(\omega_1\dots\omega_m,\mu_1\dots\mu_{2d-m})=(-1)^m\int_X\omega_1\dots\omega_m\mu_1\dots\mu_{2d-m}$. All known types of hyper-K\"ahler manifolds satisfy one of the two conditions.

    More precisely, every Hodge isometry $\Phi^{\mathrm{SH}}$ comes from $\Phi^{\widetilde{\mathrm{H}}}$ by the commutative diagram
    \begin{align*}
    \xymatrix{
    \mathrm{SH}(X,\mathbb{Q})\ar[rr]^{\Phi^\mathrm{SH}}\ar[d]^{\psi}&&\mathrm{SH}(X,\mathbb{Q})\ar[d]^{\psi}\\
    \mathrm{Sym}^n(\widetilde{\mathrm{H}}(X,\mathbb{Q}))\ar[rr]^{\mathrm{Sym}^n(\Phi^{\widetilde{\mathrm{H}}})}&&\mathrm{Sym}^n(\widetilde{\mathrm{H}}(X,\mathbb{Q}))
    }
    \end{align*}
    for $n$ being odd case or the commutative diagram \cite[(2.2)]{Beckmann:22-2}
    \begin{align*}
        \xymatrix{
        \mathrm{SH}(X,\mathbb{Q})\ar[rr]^{\epsilon(\Phi^{\widetilde{\mathrm{H}}})\Phi^\mathrm{SH}}\ar[d]^{\psi}&&\mathrm{SH}(X,\mathbb{Q})\ar[d]^{\psi}\\
    \mathrm{Sym}^n(\widetilde{\mathrm{H}}(X,\mathbb{Q}))\ar[rr]^{\mathrm{Sym}^n(\Phi^{\widetilde{\mathrm{H}}})}&&\mathrm{Sym}^n(\widetilde{\mathrm{H}}(X,\mathbb{Q}))
        }
    \end{align*}
    for $n$ being even and $b_2(X)$ being odd case, where $\epsilon(\Phi^{\widetilde{\mathrm{H}}})$ is an extra sign.
\end{passage}

\begin{passage}[Details seen in {\cite[section 3]{Beckmann:22-2}}]
    Let $X$ be a fixed hyper-K\"ahler manifold of dimension $2n$ of any deformation type. Denote by $\overline{\mathrm{td}^{1/2}}$ the projection of the square root of the Todd class to the Verbitsky component $\mathrm{SH}(X,\mathbb{Q})$. We define a number $r_X:=\frac{C(c_2(X))2^nn!(2n-1)}{(2n)!24c_X}$, where $C(c_2(X))$ is introduced in Proposition \ref{prop 2.1+}, and get the following
    \begin{proposition}[{\cite[Proposition 3.4]{Beckmann:22-2}}]
        Let $X$ be a fixed hyper-K\"ahler manifold of dimension $2n$. Then $\overline{\mathrm{td}^{1/2}}=T(\frac{(\alpha+r_X\beta)^n}{n!})\in\mathrm{SH}(X,\mathbb{Q})$.
    \end{proposition}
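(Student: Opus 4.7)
The plan is to verify the equality in $\mathrm{SH}(X,\mathbb{Q})$ graded-piece by graded-piece, by pairing both sides against the generators $\omega^{2n-2k}\in\mathrm{SH}^{4n-4k}$ as $\omega$ ranges over $\mathrm{H}^2(X,\mathbb{Q})$. On the left-hand side, the condition $c_1(X)=0$ makes $\mathrm{td}^{1/2}$ a universal polynomial in the even Chern classes of $X$, so its degree-$4k$ component $\mathrm{td}^{1/2}_{4k}$ is of Hodge type $(2k,2k)$ on every small deformation. Proposition \ref{prop 2.1+} then supplies a scalar $C_k:=C(\mathrm{td}^{1/2}_{4k})$ with
\[
\int_X\overline{\mathrm{td}^{1/2}}\cdot\omega^{2n-2k}=\int_X\mathrm{td}^{1/2}_{4k}\cdot\omega^{2n-2k}=C_k\,b(\omega,\omega)^{n-k},
\]
and Verbitsky's theorem (non-degeneracy of $\mathrm{SH}^{4k}\times\mathrm{SH}^{4n-4k}\to\mathbb{Q}$) characterizes $\overline{\mathrm{td}^{1/2}_{4k}}$ uniquely by this scalar. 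In particular $C_0=c_X(2n)!/(2^n n!)$ by the Fujiki formula, and $C_1=C(c_2(X))/24$.

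For the right-hand side I would expand
\[
\frac{(\alpha+r_X\beta)^n}{n!}=\sum_{k=0}^n\frac{r_X^k}{(n-k)!\,k!}\,\alpha^{n-k}\beta^k
\]
and compute each $T(\alpha^{n-k}\beta^k)\in\mathrm{SH}^{4k}$ using $\psi(1)=\alpha^n/n!$, the identity $T\circ\psi=\mathrm{id}_{\mathrm{SH}}$, and the derivation action $e_\omega(\alpha)=\omega$, $e_\omega(\omega')=b(\omega,\omega')\beta$, $e_\omega(\beta)=0$ of the LLV operators on $\mathrm{Sym}^n\widetilde{\mathrm{H}}(X,\mathbb{Q})$. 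Pairing these expressions against $\omega^{2n-2k}$ and using the adjointness of $\psi$ and $T$ with respect to $\widetilde{b}$ and $b_{\mathrm{SH}}$ yields
\[
\int_X T(\alpha^{n-k}\beta^k)\cdot\omega^{2n-2k}=M_{n,k}\,b(\omega,\omega)^{n-k}
\]
for explicit rational constants $M_{n,k}$ depending only on $n$, $k$, and $c_X$; the $k=0$ case is forced by $T(\alpha^n/n!)=1$ together with the Fujiki formula $\int_X\omega^{2n}=c_X\tfrac{(2n)!}{2^n n!}b(\omega,\omega)^n$.

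Combining the two expansions reduces the proposition to the scalar system $C_k=r_X^k M_{n,k}/((n-k)!\,k!)$ for $k=0,\ldots,n$. The case $k=0$ is the Fujiki normalization; the case $k=1$, with $C_1=C(c_2(X))/24$ and the explicit value of $M_{n,1}$ extracted from the LLV computation, rearranges to exactly $r_X=C(c_2(X))\cdot 2^n n!(2n-1)/((2n)!\cdot 24\,c_X)$, which is the formula in the statement. The main obstacle will be establishing the compatibility for $k\geq 2$: once $r_X$ is pinned down by the $k=1$ equation, all higher $C_k$ must automatically satisfy the remaining equations. The cleanest route is to observe that both sides of the identity are deformation-invariant elements of $\mathrm{SH}$ (the class $\mathrm{td}^{1/2}$ is topological, and $r_X$ depends only on deformation-invariant quantities) and to verify the higher-$k$ identities in a single representative of each known deformation type, e.g.\ $\mathrm{Hilb}^n(\mathrm{K3})$ and $\mathrm{Kum}^{n-1}(A)$, where Hirzebruch-Riemann-Roch for a line bundle $L$ with $c_1(L)=\omega$ computes $\chi(X,L)$ as an explicit polynomial in $b(\omega,\omega)$ that determines all $C_k$ simultaneously.
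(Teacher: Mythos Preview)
This paper does not prove the proposition; it is quoted from \cite{Beckmann:22-2} as part of the background review in Section~2, with no argument supplied. So there is no in-paper proof to compare your attempt against.

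On the merits of your proposal: the reduction to a scalar system $C_k = r_X^{\,k} M_{n,k}/\bigl((n-k)!\,k!\bigr)$ via Proposition~\ref{prop 2.1+} is the correct framework, and your treatment of $k=0$ and $k=1$ is fine (the $k=1$ equation does indeed recover the defining formula for $r_X$). The genuine gap is at $k\ge 2$. Your plan---deformation invariance followed by verification in one representative of each \emph{known} deformation type---would establish the identity only for $X$ lying in one of the known deformation classes, whereas the proposition is stated for an arbitrary hyper-K\"ahler manifold. The constants $C_k$ for $k\ge 2$ encode generalized Fujiki constants of the higher Chern classes $c_4,c_6,\ldots$, and nothing in monodromy invariance alone forces these to be determined by $C(c_2)$. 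The missing general ingredient, and the one Beckmann actually invokes, is a theorem of Nieper-Wi{\ss}kirchen: for every irreducible holomorphic symplectic $X^{2n}$ one has
\[
\int_X e^{\omega}\sqrt{\mathrm{td}_X}\;=\;\frac{c_X}{2^n n!}\bigl(b(\omega,\omega)+2r_X\bigr)^n,
\]
a single identity that fixes all $C_k$ simultaneously. Note also that your suggested endgame via Hirzebruch--Riemann--Roch for $\chi(X,L)$ computes $\int_X e^{\omega}\,\mathrm{td}_X$, not $\int_X e^{\omega}\sqrt{\mathrm{td}_X}$; since the projection $\mathrm{H}^*(X,\mathbb{Q})\to\mathrm{SH}(X,\mathbb{Q})$ is not multiplicative, knowing the constants $C(\mathrm{td}_{4k})$ does not directly yield the $C_k$ you need.
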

    For known types of hyper-K\"ahler manifolds, we have $r_X=\begin{cases}
        \frac{n+3}{4} &\phantom{1}\mathrm{K3}^{[n]}\phantom{1}\mathrm{or}\phantom{1}\mathrm{OG}^{10}\textit{-}\mathrm{type},\\
        \frac{n+1}{4}&\phantom{1}\mathrm{Kum}^{n}\phantom{1}\mathrm{or}\phantom{1}\mathrm{OG}^{6}\textit{-}\mathrm{type}.
    \end{cases}$
\end{passage}

\begin{passage}[Details seen in {\cite[subsection 4.1]{Beckmann:22-2}}]
    \cite[Proposition 3.4]{Beckmann:22-2} and \cite[lemma 6.18]{Markman:24} give $$\overline{v(\mathcal{O}_X)}=T(\frac{(\alpha+r_X\beta)^n}{n!})\in\mathrm{SH}(X,\mathbb{Q})$$ for the trivial line bundle $\mathcal{O}_X\in\mathrm{Pic}(X)$. Moreover, by using the commutative diagram \cite[(4.2)]{Beckmann:22-2}, for all $\Phi\in\mathrm{Aut}(\mathbf{D}^b(X))$, we get
    \begin{align*}
        \xymatrix{
        \mathrm{SH}(X,\mathbb{Q})\ar[rr]^{\epsilon(\Phi^{\widetilde{\mathrm{H}}})\Phi^\mathrm{SH}}&&\mathrm{SH}(X,\mathbb{Q})\\
        \mathrm{Sym}^n(\widetilde{\mathrm{H}}(X,\mathbb{Q}))\ar[rr]^{\mathrm{Sym}^n(\Phi^{\widetilde{\mathrm{H}}})}\ar[u]^{T}&&\mathrm{Sym}^n(\widetilde{\mathrm{H}}(X,\mathbb{Q}))\ar[u]^{T}
        }
    \end{align*}
    originated from \cite[(2.2)]{Beckmann:22-2}. We may consider $-\otimes\mathcal{L}\in\mathrm{Aut}(\mathbf{D}^b(X))$ for all line bundles $\mathcal{L}\in\mathrm{Pic}(X)$ to get \cite[(4.3)]{Beckmann:22-2}
    \begin{equation*}
        \overline{v(\mathcal{L})}=T(\frac{(\alpha+\lambda+(r_X+\frac{b(\lambda,\lambda)}{2})\beta)^n}{n!})\in\mathrm{SH}(X,\mathbb{Q})
    \end{equation*}
    where $\lambda=c_1(\mathcal{L})\in\mathrm{H}^2(X,\mathbb{Z})$.
    
    \begin{definition}[{\cite[Definition 4.1]{Beckmann:22-2}}]
        For $\mathcal{L}\in\mathrm{Pic}(X)$ with $\lambda=c_1(\mathcal{L})$, we define the \textit{extended Mukai vector} of $\mathcal{L}$ as $\widetilde{v}(\mathcal{L})=\alpha+\lambda+(r_X+\frac{b(\lambda,\lambda)}{2})\beta\in\widetilde{\mathrm{H}}(X,\mathbb{Q})$.
    \end{definition}
    It yields $\overline{v(\mathcal{L})}=T(\frac{\widetilde{v}(\mathcal{L})^n}{n!})\in\mathrm{SH}(X,\mathbb{Q})$.

    We have a criterion to calculate $\epsilon(\Phi^{\widetilde{\mathrm{H}}})$ as follows.
    \begin{lemma}[{\cite[Lemma 4.2]{Beckmann:22-2}}]
        Let $X$ be a projective hyper-K\"ahler manifold of dimension $2n$ with $n$ even and $b_2(X)$ odd. Assume that $\Phi\in\mathrm{Aut}(\mathbf{D}^b(X))$ sends a line bundle to an object of positive rank. Then $\epsilon(\Phi^{\widetilde{\mathrm{H}}})=1$.
    \end{lemma}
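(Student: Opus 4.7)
The plan is to evaluate both sides of the even-$n$ commutative diagram (involving $T$, $\Phi^{\mathrm{SH}}$, and $\mathrm{Sym}^n(\Phi^{\widetilde{\mathrm{H}}})$) on the specific class $\widetilde{v}(\mathcal{L})^n/n!$, where $\mathcal{L}$ is the line bundle guaranteed by the hypothesis, and then compare the $\mathrm{H}^0(X,\mathbb{Q})$-components of the two resulting elements of $\mathrm{SH}(X,\mathbb{Q})$. The key input is the identity $T(\widetilde{v}(\mathcal{L})^n/n!)=\overline{v(\mathcal{L})}$ recalled in equation (4.3) of the excerpt.

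Concretely, I would first set $\mathcal{F}:=\Phi(\mathcal{L})$, so that $r:=\mathrm{rk}(\mathcal{F})>0$ by hypothesis, and write $w:=\Phi^{\widetilde{\mathrm{H}}}(\widetilde{v}(\mathcal{L}))=a\alpha+\mu+b\beta$ with $\mu\in\mathrm{H}^2(X,\mathbb{Q})$ and $a,b\in\mathbb{Q}$. The relevant diagram reads $\epsilon(\Phi^{\widetilde{\mathrm{H}}})\cdot\Phi^{\mathrm{SH}}\circ T=T\circ\mathrm{Sym}^n(\Phi^{\widetilde{\mathrm{H}}})$; applied to $\widetilde{v}(\mathcal{L})^n/n!$, and using that $\Phi^{\mathrm{H}}$ preserves the orthogonal decomposition $\mathrm{H}^*(X,\mathbb{Q})=\mathrm{SH}(X,\mathbb{Q})\oplus\mathrm{SH}(X,\mathbb{Q})^\perp$ (so that $\Phi^{\mathrm{SH}}(\overline{v(\mathcal{L})})=\overline{v(\mathcal{F})}$), this yields
\[
\epsilon(\Phi^{\widetilde{\mathrm{H}}})\cdot\overline{v(\mathcal{F})} \;=\; T\bigl(w^n/n!\bigr) \quad\text{in } \mathrm{SH}(X,\mathbb{Q}).
\]

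The next step is to compare the $\mathrm{H}^0$-components of the two sides. On the left, projection to $\mathrm{SH}$ does not affect the $\mathrm{H}^0$-part of $v(\mathcal{F})$, so the left-hand side contributes $\epsilon(\Phi^{\widetilde{\mathrm{H}}})\cdot r$. On the right, $T$ is a graded morphism normalized so that $T(\alpha^n/n!)=1\in\mathrm{H}^0(X,\mathbb{Q})$ (via $\psi(1)=\alpha^n/n!$); since any monomial in the expansion of $(a\alpha+\mu+b\beta)^n$ that contains a factor of $\mu$ or $\beta$ has strictly positive cohomological degree, only the $\alpha^n$-term contributes to the $\mathrm{H}^0$-part of $T(w^n/n!)$, and that contribution is exactly $a^n$. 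Equating gives $\epsilon(\Phi^{\widetilde{\mathrm{H}}})\cdot r=a^n$; since $n$ is even we have $a^n\geq 0$, and since $r>0$ this forces $\epsilon(\Phi^{\widetilde{\mathrm{H}}})=+1$ (the case $a=0$ is automatically excluded, as it would force $r=0$). I do not anticipate any serious obstacle; the only bookkeeping point worth spelling out is the grading calculation ensuring that only $\alpha^n$ contributes to the $\mathrm{H}^0$-component on the right-hand side.
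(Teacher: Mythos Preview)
The paper does not supply its own proof of this lemma; it is quoted verbatim as \cite[Lemma 4.2]{Beckmann:22-2} in the review section, with no argument given. So there is nothing in this paper to compare against.

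That said, your proof is correct and is exactly the natural argument one extracts from the surrounding material recalled in the paper: apply the commutative diagram \cite[(4.2)]{Beckmann:22-2} to $\widetilde{v}(\mathcal{L})^n/n!$, use $T(\widetilde{v}(\mathcal{L})^n/n!)=\overline{v(\mathcal{L})}$ and $\Phi^{\mathrm{SH}}(\overline{v(\mathcal{L})})=\overline{v(\Phi(\mathcal{L}))}$, and read off degree~$0$. The grading bookkeeping (only $a^n\alpha^n/n!$ contributes to $\mathrm{H}^0$, and $T(\alpha^n/n!)=1$ via $\psi(1)=\alpha^n/n!$) is exactly as you describe. This is almost certainly Beckmann's own argument as well, since all the ingredients are precisely those he sets up in his \S4.1.
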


    In general, we may extend the definition of the extended Mukai vectors using derived equivalences as follows. 
    \begin{definition}[{\cite[Definition 4.3]{Beckmann:22-2}}]
        Let $X$ be a projective hyper-K\"ahler manifold of dimension $2n$ with $n$ odd. If we have $\Phi(\mathcal{O}_X)\cong\mathcal{E}\in\mathbf{D}^b(X)$ for some $\Phi\in\mathrm{Aut}(\mathbf{D}^b(X))$, we define the \textit{extended Mukai vector} of $\mathcal{E}$ as $$\widetilde{v}(\mathcal{E}):=\Phi^{\widetilde{\mathrm{H}}}(\widetilde{v}(\mathcal{O}_X))\in\widetilde{\mathrm{H}}(X,\mathbb{Q}).$$
    \end{definition}
    This definition is well defined. Such objects are said to be in the $\mathcal{O}_X$-\textit{orbit}. We have the equality $\overline{v(\mathcal{E})}=T(\frac{\widetilde{v}(\mathcal{E})^n}{n!})$ for such an object $\mathcal{E}$.

    \begin{definition}[{\cite[Definition 4.4]{Beckmann:22-2}}]
        Let $X$ be a projective hyper-K\"ahler manifold of dimension $2n$ with $n$ even and $b_2(X)$ odd. If we have $\Phi(\mathcal{O}_X)\cong\mathcal{E}\in\mathbf{D}^b(X)$ for an autoequivalence $\Phi\in\mathrm{Aut}(\mathbf{D}^b(X))$, we define the \textit{extended Mukai vector} of $\mathcal{E}$ as $$\widetilde{v}(\mathcal{E}):=\epsilon(\Phi^{\widetilde{\mathrm{H}}})\mathrm{sgn}(v)v,$$
        where $v=\Phi^{\widetilde{\mathrm{H}}}(\widetilde{v}(\mathcal{O}_X))=r\alpha+\lambda+s\beta$ and the \textit{signum} $\mathrm{sgn}(v)\in\{\pm1\}$ of $v$ is defined to be
        $\mathrm{sgn}(v):=\begin{cases}
        \mathrm{sgn}(r)&r\not=0,\\
        \mathrm{sgn}(b(\omega,\lambda))&r=0,
        \end{cases}$
        where $\omega\in\mathrm{H}^2(X,\mathbb{R})$ is a very general K\"ahler class.
    \end{definition}
    This definition is also well defined. Such objects are said to be in the $\mathcal{O}_X$-\textit{orbit}. We have the equality $\overline{v(\mathcal{E})}=\epsilon(\Phi^{\widetilde{\mathrm{H}}})T(\frac{\widetilde{v}(\mathcal{E})^n}{n!})$.

    We get some basic properties.
    \begin{lemma}[{\cite[Lemma 4.8]{Beckmann:22-2}}]
        Let $\mathcal{E}$ be an object in the $\mathcal{O}_X$-orbit. Then
        \begin{enumerate}[(i)]
            \item The object $\mathcal{E}$ is a $\mathbb{P}^n$-object.
            \item Its Mukai vector satisfies $\langle v(\mathcal{E}),v(\mathcal{E})\rangle=n+1$.
            \item Its extended Mukai vector satisfies $\widetilde{b}(\widetilde{v}(\mathcal{E}),\widetilde{v}(\mathcal{E}))=-2r_X$.
            \item $\mathrm{rank}(\mathcal{E})=\pm a^n,\exists a\in\mathbb{Z}$.
            \item $\mathrm{rank}(\mathcal{E})$ and $\mathrm{det}(\mathcal{E})$ completely determine $\overline{v(\mathcal{E})}$.
        \end{enumerate}
    \end{lemma}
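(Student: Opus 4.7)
My plan is to prove the five assertions in order, reducing everything to the explicit relation $\overline{v(\mathcal{E})} = \epsilon(\Phi^{\widetilde{\mathrm{H}}}) \cdot T(\widetilde{v}(\mathcal{E})^n/n!)$ combined with the fact that $\mathcal{E} = \Phi(\mathcal{O}_X)$ for some autoequivalence $\Phi$ and that $\Phi^{\widetilde{\mathrm{H}}}$ is a Hodge isometry. The first three parts are formal, while (iv) and (v) require reading off graded components of this formula.

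For (i), one checks that $\mathcal{O}_X$ itself is a $\mathbb{P}^n$-object: on a hyper-K\"ahler manifold of dimension $2n$, $\mathrm{Ext}^*(\mathcal{O}_X,\mathcal{O}_X) \cong \mathrm{H}^*(X,\mathcal{O}_X)$ is one-dimensional in each even degree from $0$ to $2n$, matching $\mathrm{H}^*(\mathbb{P}^n,\mathbb{C})$ as a graded algebra; autoequivalences preserve graded Ext-algebras and Serre duality, so $\mathcal{E}$ inherits the property. Then (ii) is immediate from Hirzebruch--Riemann--Roch: $\langle v(\mathcal{E}), v(\mathcal{E})\rangle = \chi(\mathcal{E},\mathcal{E}) = n+1$. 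For (iii), direct expansion gives $\widetilde{b}(\widetilde{v}(\mathcal{O}_X), \widetilde{v}(\mathcal{O}_X)) = \widetilde{b}(\alpha+r_X\beta, \alpha+r_X\beta) = 2r_X\,\widetilde{b}(\alpha,\beta) = -2r_X$; since $\widetilde{v}(\mathcal{E})$ differs from $\Phi^{\widetilde{\mathrm{H}}}(\widetilde{v}(\mathcal{O}_X))$ only by a $\pm 1$ factor, and neither step alters the quadratic form, the value $-2r_X$ is preserved.

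For (iv), I would expand $\widetilde{v}(\mathcal{E})^n = (r\alpha+\lambda+s\beta)^n$ and extract the $\alpha^n$-coefficient, which is $r^n$. Using $\psi(1) = \alpha^n/n!$ and $T\circ\psi = \mathrm{id}$, we get $T(\alpha^n/n!) = 1$, so the $\mathrm{H}^0$-component of $\overline{v(\mathcal{E})}$ equals $\epsilon\, r^n$; this component is by definition $\mathrm{rank}(\mathcal{E})$, giving $\mathrm{rank}(\mathcal{E}) = \pm r^n$. A rational number $r$ whose $n$-th power is an integer must itself be integral (write $r = p/q$ in lowest terms; then $q^n \mid 1$), so $a := r \in \mathbb{Z}$ works. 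For (v), applying the same procedure in degree $2$ and using the identity $\psi(\eta) = \alpha^{n-1}\eta/(n-1)!$ for $\eta\in\mathrm{H}^2(X,\mathbb{Q})$ (uniquely determined by $\mathfrak{g}(X)$-equivariance), the degree-$2$ component of $\overline{v(\mathcal{E})}$ equals $\epsilon\, r^{n-1}\lambda$, which is $c_1(\mathcal{E})$. Thus $(\mathrm{rank}(\mathcal{E}), \det(\mathcal{E}))$ recovers $r$ (the sign pinned down by the signum convention of Definition 4.4) and $\lambda$, and the quadratic constraint from (iii), $b(\lambda,\lambda) - 2rs = -2r_X$, then determines $s$. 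Hence $\widetilde{v}(\mathcal{E})$ and therefore $\overline{v(\mathcal{E})} = \epsilon\, T(\widetilde{v}(\mathcal{E})^n/n!)$ are completely recovered.

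The main obstacle I anticipate is the rank-zero case. If $r = 0$, the extraction in (iv) is trivial and the constraint from (iii) degenerates to $b(\lambda,\lambda) = -2r_X$, which pins down neither $\lambda$ nor $s$; moreover the formula $c_1(\mathcal{E}) = \epsilon r^{n-1}\lambda$ gives only $c_1(\mathcal{E}) = 0$. Handling this will require either showing that rank-zero objects in the $\mathcal{O}_X$-orbit cannot occur under the running hypotheses, or recovering $\widetilde{v}(\mathcal{E})$ from higher-degree components of $\overline{v(\mathcal{E})}$ together with the Serre-duality symmetry on the orbit. A secondary bookkeeping issue is the consistent tracking of the sign factor $\epsilon(\Phi^{\widetilde{\mathrm{H}}}) \cdot \mathrm{sgn}(v)$ in the $n$-even, $b_2(X)$-odd regime, so that the sign identifications in (iv) and (v) remain coherent.
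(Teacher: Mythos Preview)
This lemma is quoted from \cite{Beckmann:22-2} as background material; the present paper does not supply its own proof, so there is nothing here to compare your argument against directly. Your treatments of (i)--(iv) are correct and standard: (i)--(iii) are formal, and your degree-$0$ extraction for (iv) is valid (with the case $r=0$ being vacuous since then $\mathrm{rank}(\mathcal{E})=0=0^n$).

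The obstacle you flag in (v) is genuine. Your first proposed resolution---ruling out rank-zero objects in the $\mathcal{O}_X$-orbit---fails: Example~\ref{eg 4.17+} exhibits $\mathcal{O}_P$ with $P\cong\mathbb{P}^{n-1}$ as precisely such an object. Worse, your own degree-$2$ computation shows that for any rank-zero object in the orbit one has $c_1(\mathcal{E})=\epsilon\,r^{n-1}\lambda=0$ as well (since $n\geq 2$), so the literal pair $(\mathrm{rank},\det)=(0,0)$ cannot distinguish different $(\lambda,s)$ subject only to $b(\lambda,\lambda)=-2r_X$ from (iii). Assertion (v) therefore holds, by your argument, only under the hypothesis $\mathrm{rank}(\mathcal{E})\neq 0$. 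In the rank-zero applications in this paper (again Example~\ref{eg 4.17+}), the $H^2$-component $\lambda$ of $\widetilde{v}(\mathcal{E})$ is in fact determined from the geometry of $\mathcal{E}$ rather than read off from $\det(\mathcal{E})$; the phrase ``$\det(\mathcal{O}_P)=c_1(\mathcal{O}_P)=[E_1]+\delta'/2$'' there should be understood as identifying $\lambda$, not the literal first Chern class.
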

\end{passage}

\begin{passage}[Details seen in {\cite[subsection 4.2]{Beckmann:22-2}}]
    For the skyscraper sheaf $k(x)$ for a point $x\in X$, we may use the relations $$v(k(x))=\mathrm{ch}(k(x))=[\mathrm{pt}]\in\mathrm{H}^{4n}(X,\mathbb{Q})$$
    and $v(k(x))=\overline{v(k(x))}=T(\frac{\beta^n}{c_X})\in\mathrm{SH}(X,\mathbb{Q})$ to define its \textit{extended Mukai vector} as $\widetilde{v}(k(x)):=\beta$. Using derived equivalences, we may extend this to obtain the following two definitions.
    \begin{definition}[{\cite[Definition 4.11]{Beckmann:22-2}}]
        Let $X$ be a projective hyper-K\"ahler manifold of dimension $2n$ with $n$ odd. If we have $\Phi(k(x))\cong\mathcal{E}\in\mathbf{D}^b(X)$ for \\some $\Phi\in\mathrm{Aut}(\mathbf{D}^b(X))$ and $x\in X$, we define the \textit{extended Mukai vector} of $\mathcal{E}$ as $$\widetilde{v}(\mathcal{E}):=\Phi^{\widetilde{\mathrm{H}}}(\beta)\in\widetilde{\mathrm{H}}(X,\mathbb{Q}).$$
    \end{definition}
    Such objects are said to be in the $k(x)$-\textit{orbit}. We have the equality $$v(\mathcal{E})=T(\frac{\widetilde{v}(\mathcal{E})^n}{c_X})\in\mathrm{SH}(X,\mathbb{Q}).$$

    \begin{definition}[{\cite[Definition 4.12]{Beckmann:22-2}}]
        Let $X$ be a projective hyper-K\"ahler manifold of dimension $2n$ with $n$ even and $b_2(X)$ odd. If we have $\Phi(k(x))\cong\mathcal{E}\in\mathbf{D}^b(X)$ for $\Phi\in\mathrm{Aut}(\mathbf{D}^b(X))$ and some $x\in X$, we define the \textit{extended Mukai vector} of $\mathcal{E}$ as $$\widetilde{v}(\mathcal{E}):=\epsilon(\Phi^{\widetilde{\mathrm{H}}})\mathrm{sgn}(v)v,$$ where $v=\Phi^{\widetilde{\mathrm{H}}}(\beta)=r\alpha+\lambda+s\beta$ and the \textit{signum} $\mathrm{sgn}(v)\in\{\pm1\}$ of the vector $v$ is defined to be
        $\mathrm{sgn}(v):=\begin{cases}
        \mathrm{sgn}(r)&r\not=0,\\
        \mathrm{sgn}(b(\omega,\lambda))&r=0\phantom{1}\mathrm{and}\phantom{1}\lambda\not=0,\\
        \mathrm{sgn}(s)&r=0\phantom{1}\mathrm{and}\phantom{1}\lambda\not=0,
        \end{cases}$
        where $\omega\in\mathrm{H}^2(X,\mathbb{R})$ is an arbitrary K\"ahler class.
    \end{definition}
    Such objects are also said to be in the $k(x)$-\textit{orbit}. We have the equality $$v(\mathcal{E})=\epsilon(\Phi^{\widetilde{\mathrm{H}}})T(\frac{\widetilde{v}(\mathcal{E})^n}{c_X})\in\mathrm{SH}(X,\mathbb{Q}).$$
    
    We get some basic properties.
    \begin{lemma}[{\cite[Lemma 4.13]{Beckmann:22-2}}]
        Let $\mathcal{E}$ be an object in the $k(x)$-orbit. Then
        \begin{enumerate}[(i)]
            \item Its Mukai vector $v(\mathcal{E})\in\mathrm{SH}(X,\mathbb{Q})$ satisfies $b_{\mathrm{SH}}(v(\mathcal{E}),v(\mathcal{E}))=0$.
            \item Its extended Mukai vector satisfies $\widetilde{b}(\widetilde{v}(\mathcal{E}),\widetilde{v}(\mathcal{E}))=0$.
            \item $\mathrm{rank}(\mathcal{E})=\pm\frac{a^nn!}{c_X},\exists a\in\mathbb{Q}$.
            \item $\mathrm{rank}(\mathcal{E})$ and $\mathrm{det}(\mathcal{E})$ completely determine $\overline{v(\mathcal{E})}$.
            \item If $\mathrm{rank}(\mathcal{E})=0$, then all Chern classes $c_i(\mathcal{E})$ are isotropic. That is to say, \\for $\sigma\in\mathrm{H}^0(X,\Omega^2_X)$ being a symplectic form, $\sigma|_{c_i(\mathcal{E})}=\sigma c_i(\mathcal{E})=0\in\mathrm{H}^{2i+2}(X,\mathbb{C})$.
        \end{enumerate}
    \end{lemma}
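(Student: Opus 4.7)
My plan is to leverage the defining property of the $k(x)$-orbit: any $\mathcal{E}$ in the orbit equals $\Phi(k(x))$ for some $\Phi\in\mathrm{Aut}(\mathbf{D}^b(X))$, so $v(\mathcal{E})=\Phi^{\mathrm{SH}}(v(k(x)))$ and, up to the sign $\epsilon(\Phi^{\widetilde{\mathrm{H}}})\mathrm{sgn}(v)$, we have $\widetilde{v}(\mathcal{E})=\Phi^{\widetilde{\mathrm{H}}}(\beta)$. Both maps are isometries (for the Mukai pairing $b_{\mathrm{SH}}$ and for $\widetilde{b}$ respectively), so isotropic vectors are sent to isotropic vectors. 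Since $v(k(x))=[\mathrm{pt}]$ satisfies $b_{\mathrm{SH}}([\mathrm{pt}],[\mathrm{pt}])=0$ and $\widetilde{b}(\beta,\beta)=0$, parts (i) and (ii) follow immediately. Also, these signs $\pm 1$ do not affect the self-pairings.

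For (iii) I would expand $\widetilde{v}(\mathcal{E})^n=(r\alpha+\lambda+s\beta)^n$ via the multinomial theorem and apply $T(-)/c_X$ to obtain $v(\mathcal{E})$ up to sign. The rank of $\mathcal{E}$ is the degree-zero part of $v(\mathcal{E})$, and the only monomial in $(r\alpha+\lambda+s\beta)^n$ contributing to degree zero after projection is $\alpha^n$, since $\psi(1)=\alpha^n/n!$ forces $T(\alpha^n)=n!\cdot 1$ on the $H^0$-component. Collecting the coefficient yields $\mathrm{rank}(\mathcal{E})=\pm r^n n!/c_X$, and setting $a:=r\in\mathbb{Q}$ gives (iii). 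For (iv), I would observe that knowing $\mathrm{rank}(\mathcal{E})$ recovers $|r|$ by (iii); then $c_1(\mathcal{E})$, i.e.\ the degree-two part of $v(\mathcal{E})$, reads off $r^{n-1}\lambda$ up to an explicit constant, hence recovers $\lambda$ once $r\neq 0$. The isotropy relation $\widetilde{b}(\widetilde{v},\widetilde{v})=-2rs+b(\lambda,\lambda)=0$ from (ii) then determines $s$. The case $r=0$ must be handled separately using the Chern character and will reduce to specifying $\lambda$ via $c_1$ and then $s$ arbitrarily — but by comparing with the formula for $v(\mathcal{E})$ in the Verbitsky component, one finds that $\overline{v(\mathcal{E})}$ is fully determined by $\lambda$ alone when $r=0$.

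For (v), assume $\mathrm{rank}(\mathcal{E})=0$, which by (iii) forces $r=0$, so $\widetilde{v}(\mathcal{E})=\lambda+s\beta$ with $b(\lambda,\lambda)=0$ by the isotropy of (ii). The formula $\overline{v(\mathcal{E})}=\pm T((\lambda+s\beta)^n/c_X)$ writes $\overline{v(\mathcal{E})}$ as a polynomial whose only non-$\beta$-containing monomial is $\lambda^n$, and whose other terms each contain at least one factor of $\beta$. I would then use that cup product with the symplectic form $\sigma\in H^{2,0}(X)$ annihilates both $\beta$ (since $\beta$ is of Hodge type $(1,1)$ in the extended Hodge structure and orthogonal projections interact compatibly with $\sigma$) and $\lambda$, because $\lambda$, being isotropic with respect to $b$ and appearing as the degree-two part of a genuine cohomology class of algebraic origin, satisfies $\sigma\cdot\lambda=0$ at the level of the relevant Hodge components. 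Propagating $\sigma\cdot(-)=0$ through the expansion and through the projector $T$ then forces $\sigma\cdot\overline{v(\mathcal{E})}=0$, and hence $\sigma\cdot c_i(\mathcal{E})=0$ by the Newton identities relating Chern character components to Chern classes.

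The main obstacle will be part (v): controlling the interaction between the symplectic form $\sigma$, the projector $T:\mathrm{Sym}^n\widetilde{\mathrm{H}}\to\mathrm{SH}$, and the LLV-module structure that defines $\psi$. In particular, verifying that $\sigma\cdot\lambda=0$ is the right Hodge-theoretic input — rather than merely $b(\lambda,\lambda)=0$ — requires using that $\lambda$ is the $H^2$-part of an \emph{actual} Chern character class preserved under the isometry $\Phi^{\widetilde{\mathrm{H}}}$, and then transporting the vanishing $\sigma\cdot[\mathrm{pt}]=0$ across $\Phi^{\mathrm{H}}$ while accounting for the Hodge-type shifts on the extended lattice. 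Everything else is a routine bookkeeping exercise in the extended Mukai formalism recalled in the earlier passages.
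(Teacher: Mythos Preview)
The paper does not give its own proof of this lemma: it is quoted verbatim from \cite[Lemma 4.13]{Beckmann:22-2} in the review section, with no argument supplied. So there is nothing to compare your attempt against directly.

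Your treatment of (i)--(iii) is correct and is essentially Beckmann's argument. For (iv) your sketch is fine when $r\neq 0$; when $r=0$ one observes that $\overline{v(\mathcal{E})}=\pm T((\lambda+s\beta)^n/c_X)$ depends only on the line $\mathbb{Q}(\lambda+s\beta)$, which is recovered from $c_1(\mathcal{E})$.

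There is, however, a genuine gap in your argument for (v). You assert ``$\sigma\cdot\lambda=0$'' in cohomology, justifying this by $b(\lambda,\lambda)=0$ and the algebraic origin of $\lambda$; but $\sigma\cup\lambda\in H^{3,1}(X)\subset H^4(X,\mathbb{C})$ has no reason to vanish for an isotropic $(1,1)$-class. The correct mechanism is the LLV action, not cup product on $H^2$: cupping with $\sigma$ on $\mathrm{SH}$ is the operator $e_\sigma\in\mathfrak{g}(X)$, and on $\widetilde{\mathrm{H}}(X,\mathbb{Q})$ one has $e_\sigma(\mu)=b(\sigma,\mu)\beta$ for $\mu\in H^2$ and $e_\sigma(\beta)=0$. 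Since $\Phi^{\widetilde{\mathrm{H}}}$ is a Hodge isometry and $\beta$ is of type $(1,1)$, the class $\lambda$ is of type $(1,1)$, so Hodge orthogonality gives $b(\sigma,\lambda)=0$ and hence $e_\sigma(\lambda)=0$. Thus $e_\sigma$ kills $\widetilde{v}(\mathcal{E})=\lambda+s\beta$, acts as a derivation on $\mathrm{Sym}^n\widetilde{\mathrm{H}}$ so kills $(\lambda+s\beta)^n$, and commutes with the $\mathfrak{g}(X)$-equivariant projector $T$, giving $\sigma\cdot v(\mathcal{E})=0$ in $\mathrm{SH}\subset H^*$. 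Note that the isotropy $b(\lambda,\lambda)=0$ plays no role here; the relevant input is $b(\sigma,\lambda)=0$. From there your outline is correct: $v(\mathcal{E})=\Phi^{\mathrm{H}}([\mathrm{pt}])$ lies in $\mathrm{SH}$ (not merely its projection), so $\sigma\cdot\mathrm{ch}(\mathcal{E})=(\sigma\cdot v(\mathcal{E}))\cdot\mathrm{td}^{-1/2}=0$, and the formula $c(\mathcal{E})=\exp\bigl(\sum_{k\geq 1}(-1)^{k-1}(k-1)!\,\mathrm{ch}_k\bigr)$ shows each $c_i$ is a polynomial in $\mathrm{ch}_1,\dots,\mathrm{ch}_i$ alone, whence $\sigma\cdot c_i(\mathcal{E})=0$.
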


    In particular, the coefficient $a$ in (iii) is an integer for all known examples of hyper-K\"ahler manifolds.
\end{passage}

Markman \cite{Markman:24} gives another way to define the extended Mukai vectors for more general objects in $\mathbf{D}^b(X)$. Precisely, in \cite[Theorem 6.13]{Markman:24}, the extended Mukai line $\ell(F)$ is defined for more general objects $F\in\mathbf{D}^b(X)$, those which deform in codimension 1. Such objects are called to ``have a rank 1 cohomological obstruction map" in \cite[Definition 6.11 (2)]{Markman:24}. Beckmann \cite{Beckmann:22-1} calls such objects \textit{atomic}. See subsection \ref{Generalized notations from Markman} for some notations and results.

\subsection{Further examples of extended Mukai vectors}

Now, we may consider examples of extended Mukai vectors related to generalized Kummer varieties.
\begin{example}[Similar to {\cite[Example 4.17]{Beckmann:22-2}}]\label{eg 4.17+}
    As in \cite[Example 6.3]{Hassett Tschinkel:10}, let $(E_1,p_1), (E_2,p_2)$ be elliptic curves. Let $A=E_1\times E_2$ and $X=\mathrm{Kum}^{n-1}(A)\subset A^{[n]}$. Consider the projective space $P=\{D\times p_2|D\in|np_1|\}\cong\mathbb{P}^{n-1}\subset X$. By Remark \ref{remark 7.3+}, the structure sheaf $\mathcal{O}_P$ is in the $\mathcal{O}_{\mathrm{Kum}^{n-1}(A)}$-orbit. The extended Mukai vector $\widetilde{v}(\mathcal{O}_P)$ is determined by $\mathrm{rank}(\mathcal{O}_{P})=0$ and $\mathrm{det}(\mathcal{O}_P)=c_1(\mathcal{O}_P)=[E_1]+\frac{\delta'}{2}$ via \cite[Lemma 4.8]{Beckmann:22-2}, where $[E_1]\in\theta(\mathrm{H}^2(A,\mathbb{Z}))\subset\mathrm{H}^2(X,\mathbb{Z})$, with the isometry $\theta$ defined in subsection \ref{lattices}. We have $$\widetilde{v}(\mathcal{O}_P)=[E_1]+\frac{\delta'}{2}+(r_X+\frac{\widetilde{b}([E_1]+\frac{\delta'}{2},[E_1]+\frac{\delta'}{2})}{2})\beta=[E_1]+\frac{\widetilde{\delta'}}{2}-\beta$$ by \cite[Definition 4.1]{Beckmann:22-2}, where the notation $\delta',\widetilde{\delta'}$ is introduced in subsection \ref{lattices}. 
\end{example}

\begin{example}[Similar to {\cite[Example 4.18]{Beckmann:22-2}}]\label{eg 4.18+}
    The moduli space ${K}:={K}_H(v)$ of $H$-stable sheaves on an abelian surface $A$ with primitive Mukai vector $v$, trivial determinant and trivial determinant of the Fourier-Mukai transform (relative to the Poincar\'e line bundle) is deformation equivalent to the generalized Kummer \\variety $\mathrm{Kum}^{\frac{v^2}{2}-1}(A)$ by \cite[Theorem 1.13]{Yoshioka:14}. Moreover, it naturally admits a Lagrangian fibration $\pi:{K}\to|H|=\mathbb{P}^g$ with $g=\frac{v^2}{2}-1$ by \cite[Theorem 2]{Matsushita:97}. The general fibre $F$ is a smooth abelian variety. Using the universal bundle in \cite[Proposition 10.25]{Huybrechts:06}, a degree zero line bundle $\mathcal{L}$ supported on $F$ is an object on the $k(x)$-orbit with $$\widetilde{v}(\mathcal{L})=f\in\widetilde{\mathrm{H}}({K},\mathbb{Q}),$$ where $f\in\mathrm{H}^2({K},\mathbb{Z})$ is the image of the ample generator of $\mathrm{Pic}(\mathbb{P}^g)$ under pullback via $\pi$. For $v=(0,1,1-g)$, the sheaf $\mathcal{O}_P$ is in the $\mathcal{O}_{{K}}$-orbit as in Example \ref{eg 4.17+}.
\end{example}

\begin{example}[Similar to {\cite[Example 4.19]{Beckmann:22-2}}]\label{eg 4.19+}
    Let $A$ be an abelian surface. As in the notation of \cite[Theorem 4.1]{Meachan:15}, we consider the universal sheaf $\mathcal{I}_{\mathcal{Z}_n}$ on $A\times A^{[n]}$ for $n\geq 3$. Then $\mathcal{I}_{\overline{\mathcal{Z}}_n}:=(\mathrm{id}\times i)^*\mathcal{I}_{\mathcal{Z}_n}$ is the universal ideal sheaf on $A\times\mathrm{Kum}^{n-1}(A)$, where $i:\mathrm{Kum}^{n-1}(A)\hookrightarrow A^{[n]}$ is the natural inclusion.

    Now for the case $n=3$, the universal ideal sheaf $\mathcal{I}':=\mathcal{I}_{\overline{\mathcal{Z}}_3}$ on $A\times\mathrm{Kum}^{2}(A)$ is associated to the Fourier-Mukai kernel $$\mathcal{E}^{1'}:=\mathcal{E}xt^1_{\pi_{13}}(\pi_{12}^*(\mathcal{I}'),\pi_{23}^*(\mathcal{I}'))\in\mathrm{Coh}(\mathrm{Kum}^{2}(A)\times\mathrm{Kum}^{2}(A)),$$ where $\pi_{ij}$ are projections from $\mathrm{Kum}^{2}(A)\times A\times \mathrm{Kum}^{2}(A)$. Then the Fourier-Mukai transform $\mathrm{FM}_{\mathcal{E}^{1'}}\in\mathrm{Aut}(\mathbf{D}^b(\mathrm{Kum}^{2}(A)))$ is a derived equivalence.

    Indeed, by \cite[Theorem 4.1 and Example 1.3]{Meachan:15} $F:=\mathrm{FM}_{\mathcal{I}'}$ is a $\mathbb{P}^1$-functor with the corresponding twist correspondence being $T=\mathrm{FM}_{\mathcal{E}^{1'}[1]}$. That is, the \textit{twist} $T$ is the cone of the counit $FR\xrightarrow[]{\epsilon} \mathrm{id}$, where $R$ is the right adjoint of $F$. So we get an exact triangle $FR\xrightarrow[]{\epsilon} \mathrm{id}\to T$ using the notation in \cite[subsection 1.1]{Addington:16-1}. Analogue for the exact triangle $\mathrm{id}\xrightarrow[]{\eta}RF\to C$, where $C$ is the cone of the unit $\mathrm{id}\xrightarrow[]{\eta}RF$, called \textit{cotwist}. As in \cite[Example 1.3]{Meachan:15}, a $\mathbb{P}^1$-functor is a split spherical functor, where ``split" means the exact triangle $\mathrm{id}\xrightarrow[]{\eta}RF$ splits. For a split spherical functor $F$, $T$ is a derived equivalence by \cite[subsection 1.1]{Addington:16-1}. Hence, the Fourier-Mukai transform $\mathrm{FM}_{\mathcal{E}^{1'}}=FR$ is a derived equivalence.

    Consider a point $p'\in\mathrm{Kum}^{2}(A)$ that parameterizes two distinct points $x,y\in A$ (and $-x-y\in A$) and denote by $Z'_x,Z'_y$ the subvarieties of $\mathrm{Kum}^2(A)$ parameterizing subschemes whose support contains $x$ respectively $y$. The derived equivalence $\mathrm{FM}_{\mathcal{E}^{1'}}$ sends $k(p')$ to the sheaf $\mathcal{E}^{1'}_{p'\times\mathrm{Kum}^2(A)}$ which sits in a short exact sequence $$0\to\mathcal{O}(-\delta')\to\mathcal{E}^{1'}_{p'\times\mathrm{Kum}^2(A)}\to I_{Z'_x\cup Z'_y}\to 0,$$ and $\mathcal{E}^{1'}_{p'\times\mathrm{Kum}^2(A)}$ is an object in the $k(x)$-orbit. Since $\mathrm{rank}(\mathcal{E}^{1'}_{p'\times\mathrm{Kum}^2(A)})=1$ and $$\lambda=\mathrm{det}(\mathcal{E}^{1'}_{p'\times\mathrm{Kum}^2(A)})=c_1(\mathcal{E}^{1'}_{p'\times\mathrm{Kum}^2(A)})=c_1(\mathcal{O}(-\delta'))+c_1(I_{Z'_x\cup Z'_y})=-\frac{\delta'}{2},$$ the extended Mukai vector is $\widetilde{v}(\mathcal{E}^{1'}_{p'\times\mathrm{Kum}^2(A)})=\alpha-\frac{\delta'}{2}-\frac{3}{4}\beta=\widetilde{\alpha}$ by \cite[Lemma 4.13]{Beckmann:22-2}, where the notation $\delta',\widetilde{\alpha}$ is introduced in subsection \ref{lattices}.  

    For more on this example, see passage \ref{10.1 (1)+}.
\end{example}

\section{Integral lattices for generalized Kummer type hyper-K\"ahler manifolds}\label{Integral lattices for generalized Kummer type hyper-Kahler manifolds}
From now on, $X$ will denote a hyper-K\"ahler manifold of generalized Kummer type with $\dim X=2(n-1), n\geq 3$. That is, $X$ is deformation equivalent to $\mathrm{Kum}^{n-1}(A)$ for an abelian surface $A$.

\subsection{Lattices}\label{lattices}
In this subsection, we want to discuss the (potential) integral lattices inside the extended Mukai lattice that appear and set up notations following \cite[subsection 5.1]{Beckmann:22-2}.

Analogue to \cite[(3.10)]{Markman:24}, we define by $\theta$ the composition map $$\mathrm{H}^2(A,\mathbb{Z})\xrightarrow[]{\mu_n}\mathrm{H}^2(A^{(n)},\mathbb{Z})\xrightarrow[]{\kappa_n^*}\mathrm{H}^2(A^{[n]},\mathbb{Z})\xrightarrow[]{\mathrm{res}}\mathrm{H}^2(\mathrm{Kum}^{n-1}(A),\mathbb{Z}),$$
where $\mu_n$ is the “symmetrization map” and $\kappa_n:A^{[n]}\to A^{(n)}$ is a cycle (Hilbert-Chow) map in \cite[subsection 1.1]{O'Grady:10}.

By \cite[subsection 1.1]{O'Grady:10}, we get isomorphisms 
\begin{equation}\label{(6.1)+}
    \mathrm{Pic}(\mathrm{Kum}^{n-1}(A))\simeq\mathrm{Pic}(A)\oplus \mathbb{Z}\delta', \mathrm{H}^2(\mathrm{Kum}^{n-1}(A))\simeq\mathrm{H}^2(A)\oplus \mathbb{Z}\delta',
\end{equation}
where $2\delta'$ is the class of the exceptional divisor of the Hilbert-Chow morphism. We will fix for $X$ once and for all an isometry 
\begin{equation}\label{(5.1)+}
    \mathrm{H}^2(X,\mathbb{Z})\simeq\mathrm{H}^2(\mathrm{Kum}^{n-1}(A),\mathbb{Z})\simeq\theta(\mathrm{H}^2(A,\mathbb{Z}))\oplus\mathbb{Z}\delta',
\end{equation} where $A$ is an abelian surface and the second isometry is given in (\ref{(6.1)+}). By (\ref{(6.1)+}),(\ref{(5.1)+}), and the description of the Beauville form of $X$ in the introduction of \cite{Rapanetta:08}, $\theta$ is an isometry. Since $b(\delta',\delta')=-2n$, $(\mathrm{H}^2(X,\mathbb{Z}),b)$ is an even lattice.

There are several relevant lattices, such as the integral extended Mukai lattice $$\widetilde{\mathrm{H}}(X,\mathbb{Z}):=\mathbb{Z}\alpha\oplus\mathrm{H}^2(X,\mathbb{Z})\oplus\mathbb{Z}\beta\subset\widetilde{\mathrm{H}}(X,\mathbb{Q})$$ in \cite[Definition 5.1]{Beckmann:22-2}. However, this is not an invariant lattice for derived equivalences among generalized Kummer varieties. \cite[Definition 4.1]{Beckmann:22-2} and Examples \ref{eg 4.17+} and \ref{eg 4.19+} suggest that we should allow certain denominators. Moreover, the result in passage \ref{10.1 (1)+} suggests that the derived equivalences do not always send integral elements to integral elements.

\begin{definition}[Similar to {\cite[Definition 5.2]{Beckmann:22-2}}]\label{def 5.2+}
    For $\delta'\in\mathrm{H}^2(X,\mathbb{Z})$ as above, we define $\mathrm{Kum}^{n-1}$ \textit{lattice} as $\Lambda_X:=B_{-\frac{\delta'}{2}}(\widetilde{\mathrm{H}}(X,\mathbb{Z}))\subset\widetilde{\mathrm{H}}(X,\mathbb{Q})$.

    This is independent of the choice of $\delta'$. Indeed, for any class $\gamma\in{\mathrm{H}}^2(X,\mathbb{Z})$ of square $-2n$ and divisibility $2n$, one has $\Lambda_X=B_{-\frac{\gamma}{2}}(\widetilde{\mathrm{H}}(X,\mathbb{Z}))\subset\widetilde{\mathrm{H}}(X,\mathbb{Q})$.
\end{definition}

\begin{passage}
    We introduce the notation $$\widetilde{\alpha}:=B_{-\frac{\delta'}{2}}(\alpha)=\alpha-\frac{\delta'}{2}-\frac{n}{4}\beta, \widetilde{\delta'}:=B_{-\frac{\delta'}{2}}(\delta')=\delta'+n\beta.$$ So the $\mathrm{Kum}^{n-1}$ lattice is \begin{equation}\label{LambdaXvsLambdaA}
        \Lambda_X=\mathbb{Z}\widetilde{\alpha}\oplus\theta(\mathrm{H}^2(A,\mathbb{Z}))\oplus\mathbb{Z}\widetilde{\delta'}\oplus\mathbb{Z}\beta=\Lambda_A\oplus\mathbb{Z}\widetilde{\delta'},
    \end{equation} where $\Lambda_A=\mathbb{Z}\widetilde{\alpha}\oplus\theta(\mathrm{H}^2(A,\mathbb{Z}))\oplus\mathbb{Z}\beta$.

    Note that $\widetilde{\alpha}$ and $\beta$ generate an integral hyperbolic plane and that the decomposition $\Lambda_A\oplus\mathbb{Z}\widetilde{\delta'}$ is orthogonal. The integral extended Mukai lattice and the $\mathrm{Kum}^{n-1}$ lattice are isometric as abstract lattices and neither is included in the other when seen inside $\widetilde{\mathrm{H}}(X,\mathbb{Q})$.
\end{passage}
    
\begin{definition}[Similar to {\cite[Definition 5.3]{Beckmann:22-2}}]\label{def 5.3+}
    The \textit{geometric lattice} is defined as $$\Lambda_{g,X}:=\Lambda_A\oplus\mathbb{Z}\frac{\widetilde{\delta'}}{2}\subset\widetilde{\mathrm{H}}(X,\mathbb{Q}).$$ Note that the quadratic form of $\Lambda_{g,X}$ inherited from $\widetilde{\mathrm{H}}(X,\mathbb{Q})$ may not be integral.
\end{definition}

To motivate this definition, we recall that $r_X=\frac{n}{4}$ as in \cite[subsection 3.1]{Beckmann:22-2} and observe the lattice generated by all extended Mukai vectors of topological line bundles, i.e.
\begin{equation}\label{Lambda LB,X}
    \Lambda_{LB,X}:=\langle\{\widetilde{v}(\lambda):=\alpha+\lambda+(\frac{n}{4}+\frac{{b}(\lambda,\lambda)}{2})\beta=\widetilde{\alpha}+\frac{\widetilde{\delta'}}{2}+\lambda+\frac{{b}(\lambda,\lambda)}{2}\beta|\lambda\in\mathrm{H}^2(X,\mathbb{Z})\}\rangle.
\end{equation}
One can check that $\Lambda_{LB,X}$ is isometric to $\mathrm{H}^2(X,\mathbb{Z})$ as an abstract lattice.
 
In Examples \ref{eg 4.17+}, \ref{eg 4.18+} and \ref{eg 4.19+}, we obtain some objects other than line bundles and skyscraper sheaves of points for which we may define an extended Mukai vector. We have $\widetilde{v}(\mathcal{O}_P)=\widetilde{v}(\mathcal{O}_{\mathbb{P}^{n-1}})=[E_1]+\frac{\widetilde{\delta'}}{2}-\beta, \widetilde{v}(\mathcal{L})=f, \widetilde{v}(\mathcal{E}^{1'}_{p'\times\mathrm{Kum}^2(A)})=\widetilde{\alpha}$.

\begin{lemma}[Similar to {\cite[Lemma 5.4]{Beckmann:22-2}}]
    The geometric lattice $\Lambda_{g,X}$ equals the lattice spanned by $\Lambda_{LB,X}$ and all extended Mukai vectors from Examples \ref{eg 4.17+}, \ref{eg 4.18+} and \ref{eg 4.19+}. 
\end{lemma}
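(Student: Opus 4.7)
The plan is to prove both inclusions between the geometric lattice $\Lambda_{g,X}$ and the lattice $S$ spanned by $\Lambda_{LB,X}$ together with the three extended Mukai vectors of Examples \ref{eg 4.17+}, \ref{eg 4.18+}, and \ref{eg 4.19+}. First I would establish $S\subseteq\Lambda_{g,X}$. For a line bundle generator $\widetilde{v}(\lambda)$, I decompose $\lambda=\theta(v)+k\delta'$ with $v\in\mathrm{H}^2(A,\mathbb{Z})$ and $k\in\mathbb{Z}$, substitute $\delta'=\widetilde{\delta'}-n\beta$ in the formula (\ref{Lambda LB,X}), and use $b(\delta',\delta')=-2n$ together with the evenness of the intersection form on $\theta(\mathrm{H}^2(A,\mathbb{Z}))$ to check that all coordinates of $\widetilde{v}(\lambda)$ in the basis $\{\widetilde{\alpha},\theta(-),\widetilde{\delta'}/2,\beta\}$ of $\Lambda_{g,X}$ are integers. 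The three example vectors are verified directly from their explicit expressions.

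For the reverse inclusion $\Lambda_{g,X}\subseteq S$, I would produce each of the generators $\widetilde{\alpha},\widetilde{\delta'}/2,\beta,\theta(v)$ of $\Lambda_{g,X}$ inside $S$ in sequence. The class $\widetilde{\alpha}$ is supplied directly by Example \ref{eg 4.19+}. The relation $\widetilde{v}(0)=\widetilde{\alpha}+\widetilde{\delta'}/2\in\Lambda_{LB,X}$ then yields $\widetilde{\delta'}/2\in S$. Next, the difference $\widetilde{v}(\lambda)-\widetilde{v}(0)=\lambda+\tfrac{b(\lambda,\lambda)}{2}\beta$ applied to the isotropic class $\lambda=\theta([E_1])$ (where $[E_1]^2=0$ on $A=E_1\times E_2$) produces $\theta([E_1])\in S$. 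Subtracting $\theta([E_1])$ and $\widetilde{\delta'}/2$ from $\widetilde{v}(\mathcal{O}_P)=\theta([E_1])+\widetilde{\delta'}/2-\beta$ of Example \ref{eg 4.17+} isolates $\beta\in S$. Once $\beta$ is available, the identity $\widetilde{v}(\theta(v))-\widetilde{v}(0)-\tfrac{(v,v)_A}{2}\beta=\theta(v)$ delivers $\theta(v)\in S$ for every $v\in\mathrm{H}^2(A,\mathbb{Z})$, completing the generation of $\Lambda_{g,X}$.

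The main obstacle is the extraction of $\beta$: inside $\Lambda_{LB,X}$ alone, the $\beta$-coordinate of every generator is rigidly coupled to the quadratic expression $b(\lambda,\lambda)/2$, so $\beta$ cannot be isolated from line bundle vectors on their own. The decoupling is provided precisely by the pure $-\beta$ summand appearing in $\widetilde{v}(\mathcal{O}_P)$, which arises because $\mathrm{rank}(\mathcal{O}_P)=0$ places $\mathcal{O}_P$ in the $k(x)$-orbit rather than the $\mathcal{O}_X$-orbit; this is in fact the essential geometric content of the lemma. The Lagrangian fiber class $\widetilde{v}(\mathcal{L})=f$ from Example \ref{eg 4.18+} plays no further role in the generation argument, as it automatically lies in $S$ once $\beta$ and all $\theta(v)$ are in hand, but it is consistent with the claimed equality.
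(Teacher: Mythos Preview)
Your proof is correct and follows essentially the same two-step structure as the paper. The one substantive variation is in the reverse inclusion: you obtain $\theta([E_1])\in S$ from the line-bundle formula $\widetilde{v}(\theta([E_1]))-\widetilde{v}(0)=\theta([E_1])$ (using $[E_1]^2=0$), whereas the paper instead invokes Example~\ref{eg 4.18+} to supply the isotropic fiber class directly. Your route is slightly cleaner since it renders Example~\ref{eg 4.18+} genuinely redundant for the generation, as you correctly observe.

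One factual correction to your closing commentary: $\mathcal{O}_P$ is \emph{not} in the $k(x)$-orbit. As stated explicitly in Example~\ref{eg 4.17+} and justified in Remark~\ref{remark 7.3+}, the structure sheaf $\mathcal{O}_P$ lies in the $\mathcal{O}_X$-orbit; the vanishing of its rank does not change this. The $-\beta$ term in $\widetilde{v}(\mathcal{O}_P)=[E_1]+\tfrac{\widetilde{\delta'}}{2}-\beta$ arises from the $\mathcal{O}_X$-orbit formula $\widetilde{v}(\lambda)=\alpha+\lambda+(r_X+\tfrac{b(\lambda,\lambda)}{2})\beta$ applied with $\lambda=[E_1]+\tfrac{\delta'}{2}$, where the $\alpha$-coefficient drops out because $\mathrm{rank}=0$ forces the leading coefficient in $\widetilde{v}$ to vanish (cf.\ \cite[Lemma~4.8]{Beckmann:22-2}). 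This mislabelling does not affect any step of your actual argument, but the explanatory sentence should be removed or rewritten.
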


\begin{proof}
    This follows from a direct calculation. 
    
    Denote the lattice spanned by $\Lambda_{LB,X}$ and all extended Mukai vectors from Examples \ref{eg 4.17+}, \ref{eg 4.18+} and \ref{eg 4.19+} (i.e. $[E_1]+\frac{\widetilde{\delta'}}{2}-\beta, f,\widetilde{\alpha}$) by $\Lambda'_{g,X}$. The proof is composed of the following two steps. 

    \textit{Step 1:} Verify that $\Lambda'_{g,X}\subset\Lambda_{g,X}$.
    
    The generator $\widetilde{\alpha}+\frac{\widetilde{\delta'}}{2}+\lambda+\frac{{b}(\lambda,\lambda)}{2}\beta$ for arbitrary $\lambda\in\mathrm{H}^2(X,\mathbb{Z})$ is in $$\Lambda_{g,X}=\mathbb{Z}\widetilde{\alpha}\oplus\theta(\mathrm{H}^2(A,\mathbb{Z}))\oplus\mathbb{Z}\beta\oplus\mathbb{Z}\frac{\widetilde{\delta'}}{2}$$
    since $(\mathrm{H}^2(X,\mathbb{Z}),{b})$ is an even lattice. As $[E_1],f\in\theta(\mathrm{H}^2(A,\mathbb{Z}))$, we get the other generators of $\Lambda'_{g,X}$, $[E_1]+\frac{\widetilde{\delta'}}{2}-\beta, f,\widetilde{\alpha}$, lying in $\Lambda_{g,X}$. Hence, $\Lambda'_{g,X}\subset\Lambda_{g,X}$.

    \textit{Step 2:} Verify that $\Lambda_{g,X}\subset\Lambda'_{g,X}$.
    
    Obviously, $\mathbb{Z}\widetilde{\alpha}\subset\Lambda'_{g,X}$. For $\lambda=0\in\mathrm{H}^2(X,\mathbb{Z})$, $\widetilde{\alpha}+\frac{\widetilde{\delta'}}{2}+\lambda+\frac{{b}(\lambda,\lambda)}{2}\beta=\widetilde{\alpha}+\frac{\widetilde{\delta'}}{2}$. We obtain $\mathbb{Z}\frac{\widetilde{\delta'}}{2}\subset\Lambda'_{g,X}$, since $\frac{\widetilde{\delta'}}{2}=(\widetilde{\alpha}+\frac{\widetilde{\delta'}}{2})-\widetilde{\alpha}\in\Lambda'_{g,X}$. On the other hand, by Examples \ref{eg 4.17+}, \ref{eg 4.18+}, $[E_1]+\frac{\widetilde{\delta'}}{2}-\beta,[E_1]\in\Lambda'_{g,X}$, we have $\beta=([E_1]+\frac{\widetilde{\delta'}}{2}-\beta)-[E_1]-\frac{\widetilde{\delta'}}{2}\in\Lambda'_{g,X}$ and $\mathbb{Z}\beta\subset\Lambda'_{g,X}$. Hence, for arbitrary $\lambda\in\theta(\mathrm{H}^2(A,\mathbb{Z}))$, $$\lambda=(\widetilde{\alpha}+\frac{\widetilde{\delta'}}{2}+\lambda+\frac{{b}(\lambda,\lambda)}{2}\beta)-\widetilde{\alpha}-\frac{\widetilde{\delta'}}{2}-\frac{{b}(\lambda,\lambda)}{2}\beta\in\Lambda'_{g,X}.$$
    Here, $\frac{{b}(\lambda,\lambda)}{2}\in\mathbb{Z}$ as $(\mathrm{H}^2(X,\mathbb{Z}),{b})$ is an even lattice. So, we may conclude that $$\Lambda_{g,X}=\mathbb{Z}\widetilde{\alpha}\oplus\theta(\mathrm{H}^2(A,\mathbb{Z}))\oplus\mathbb{Z}\beta\oplus\mathbb{Z}\frac{\widetilde{\delta'}}{2}\subset\Lambda'_{g,X}.$$
\end{proof}

\begin{remark}
    We expect that for all elements $\mathcal{E}\in\mathbf{D}^b(X)$ for which a (meaningful) extended Mukai vector $\widetilde{v}(\mathcal{E})$ can be defined, one has $\widetilde{v}(\mathcal{E})\in\Lambda_{g,X}$. We will prove in Corollary \ref{cor 8.7+} that $\Lambda_{g,X}$ is invariant under all parallel transport isometries, as well as the derived equivalences.
\end{remark}

\subsection{Hodge structures}\label{Hodge structures}
In the previous subsection, we define some lattices, say $\widetilde{\mathrm{H}}(X,\mathbb{Z}), \Lambda_X, \Lambda_A$ and $\Lambda_{g,X}$. All these lattices carry a weight-two Hodge structure from their inclusion into $\widetilde{\mathrm{H}}(X,\mathbb{Q})$.

Recall in \cite[Definition 5.6]{Beckmann:22-2}, we have the algebraic part and the transcedental part of a lattice $\Gamma\subset\widetilde{\mathrm{H}}(X,\mathbb{Q})$ being $\Gamma_{\mathrm{alg}}=\Gamma\cap\widetilde{\mathrm{H}}^{1,1}(X,\mathbb{Z}), \Gamma_{\mathrm{tr}}=\Gamma_{\mathrm{alg}}^{\perp}\cap\Gamma$ respectively. Moreover, the transcendental lattice of the hyper-K\"ahler manifold $X$ is $$\mathrm{H}^2(X,\mathbb{Z})_{\mathrm{tr}}:=\mathrm{H}^2(X,\mathbb{Z})\cap\mathrm{NS}(X)^{\perp}=\widetilde{\mathrm{H}}(X,\mathbb{Z})_{\mathrm{tr}}\subset \mathrm{H}^2(X,\mathbb{Z}).$$

\begin{lemma}[Similar to {\cite[Lemma 5.7]{Beckmann:22-2}}]\label{lemma 5.7+}
    The transcendental part $\Lambda_{X,\mathrm{tr}}$ of the $\mathrm{Kum}^{n-1}$ lattice $\Lambda_X$ equals the transcendental lattice of $X$.
\end{lemma}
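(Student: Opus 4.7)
The plan is to exploit that the $B$-field twist $B_{-\delta'/2}$ is an isometry of $\widetilde{\mathrm{H}}(X,\mathbb{Q})$ whose twisting class $-\delta'/2$ is algebraic (since $\delta' \in \mathrm{NS}(X)$). In particular, $B_{-\delta'/2}$ preserves Hodge types and acts trivially on every class in $\mathrm{H}^2(X,\mathbb{Q})$ orthogonal to $\delta'$. Combined with the explicit orthogonal decomposition $\Lambda_X = \mathbb{Z}\widetilde{\alpha} \oplus \theta(\mathrm{H}^2(A,\mathbb{Z})) \oplus \mathbb{Z}\widetilde{\delta'} \oplus \mathbb{Z}\beta$ from (\ref{LambdaXvsLambdaA}), both directions follow cleanly.

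For $\mathrm{H}^2(X,\mathbb{Z})_{\mathrm{tr}} \subseteq \Lambda_{X,\mathrm{tr}}$: given $\tau \in \mathrm{H}^2(X,\mathbb{Z})_{\mathrm{tr}}$, one has $b(\tau,\delta')=0$ because $\delta' \in \mathrm{NS}(X)$, so $B_{-\delta'/2}(\tau) = \tau + b(\tau,-\delta'/2)\beta = \tau$. Hence $\tau = B_{-\delta'/2}(\tau) \in B_{-\delta'/2}(\widetilde{\mathrm{H}}(X,\mathbb{Z})) = \Lambda_X$. Since $\tau$ has pure Hodge type $(2,0) + (0,2)$ inside $\mathrm{H}^2(X,\mathbb{C})$, which coincides with the $(2,0) + (0,2)$ part of $\widetilde{\mathrm{H}}(X,\mathbb{C})$, the class $\tau$ is orthogonal to all of $\widetilde{\mathrm{H}}^{1,1}$, hence in particular to $\Lambda_{X,\mathrm{alg}}$.

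For $\Lambda_{X,\mathrm{tr}} \subseteq \mathrm{H}^2(X,\mathbb{Z})_{\mathrm{tr}}$: given $v \in \Lambda_{X,\mathrm{tr}}$, write $v = a\widetilde{\alpha} + \mu + c\widetilde{\delta'} + b\beta$ with $a,b,c \in \mathbb{Z}$ and $\mu \in \theta(\mathrm{H}^2(A,\mathbb{Z}))$. Since $\alpha,\beta,\delta' \in \widetilde{\mathrm{H}}^{1,1}$, the integral classes $\widetilde{\alpha}, \beta, \widetilde{\delta'}$ all lie in $\Lambda_{X,\mathrm{alg}}$. Using that $B_{-\delta'/2}$ is an isometry and $B_{-\delta'/2}^{-1}(v) = a\alpha + \mu + c\delta' + b\beta$ (which uses $b(\mu,\delta')=0$), one computes
\[ \widetilde{b}(v, \widetilde{\alpha}) = -b, \qquad \widetilde{b}(v, \beta) = -a, \qquad \widetilde{b}(v, \widetilde{\delta'}) = -2nc, \]
so $a = b = c = 0$ and $v = \mu \in \mathrm{H}^2(X,\mathbb{Z})$. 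To conclude $v \in \mathrm{NS}(X)^{\perp}$, use the Hodge-substructure decomposition $\mathrm{NS}(X) = (\mathrm{NS}(X) \cap \theta(\mathrm{H}^2(A,\mathbb{Z}))) \oplus \mathbb{Z}\delta'$: orthogonality of $\mu$ with $\delta'$ is automatic from $\mu \in \theta(\mathrm{H}^2(A,\mathbb{Z}))$, and orthogonality with the first summand follows because $\mathrm{NS}(X) \cap \theta(\mathrm{H}^2(A,\mathbb{Z})) \subseteq \Lambda_{X,\mathrm{alg}}$.

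No substantive obstacle is anticipated beyond careful bookkeeping of the isometry formulas for $B_{-\delta'/2}$ and of the Hodge-compatible splitting (\ref{(5.1)+}). The mildly subtle point is simply to enumerate enough integral $(1,1)$ generators of $\Lambda_X$—namely $\widetilde{\alpha}, \beta, \widetilde{\delta'}$ together with $\mathrm{NS}(X) \cap \theta(\mathrm{H}^2(A,\mathbb{Z}))$—to force $v$ all the way into $\mathrm{H}^2(X,\mathbb{Z})_{\mathrm{tr}}$.
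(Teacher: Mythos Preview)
Your approach coincides with the paper's: both use the decomposition $\Lambda_X = \mathbb{Z}\widetilde{\alpha} \oplus \theta(\mathrm{H}^2(A,\mathbb{Z})) \oplus \mathbb{Z}\widetilde{\delta'} \oplus \mathbb{Z}\beta$ to compare transcendental parts, and your write-up is a careful fleshing-out of the paper's terse one-line proof. Two points, however, need correction.

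First, the assertion that $\tau \in \mathrm{H}^2(X,\mathbb{Z})_{\mathrm{tr}}$ has ``pure Hodge type $(2,0)+(0,2)$'' is false: transcendental classes typically have a nonzero $(1,1)$ component over $\mathbb{C}$. The correct justification for $\tau \perp \Lambda_{X,\mathrm{alg}}$ is simply that $\Lambda_{X,\mathrm{alg}} \subset \mathbb{Q}\alpha \oplus \mathrm{NS}(X)_\mathbb{Q} \oplus \mathbb{Q}\beta$, and $\tau$ is orthogonal to each summand (the outer two by degree, the middle by definition of the transcendental lattice).

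Second, and more seriously, you repeatedly invoke $\delta' \in \mathrm{NS}(X)$, but for a general $X$ of Kummer type the class $\delta'$ is fixed only through the abstract isometry (\ref{(5.1)+}) and need not be algebraic; the paper's own sketch appears to share this tacit assumption. The repair is easy. For the first inclusion, note that $\mathrm{H}^2(X,\mathbb{Z}) \subset \Lambda_X$ outright (since $\delta' = \widetilde{\delta'} - n\beta$), so $\tau \in \Lambda_X$ with no hypothesis on $\delta'$. For the second inclusion, pair $v$ with $\beta$ and with the always-$(1,1)$ class $2\widetilde{\alpha}+\widetilde{\delta'} = 2\alpha + \tfrac{n}{2}\beta \in \Lambda_{X,\mathrm{alg}}$; this forces the $\alpha$- and $\beta$-coefficients of $v$ (in the $\alpha,\mathrm{H}^2,\beta$ decomposition) to vanish, so $v \in \mathrm{H}^2(X,\mathbb{Z})$. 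Then $\mathrm{NS}(X) \subset \mathrm{H}^2(X,\mathbb{Z}) \subset \Lambda_X$ gives $\mathrm{NS}(X) \subset \Lambda_{X,\mathrm{alg}}$, whence $v \in \mathrm{NS}(X)^\perp = \mathrm{H}^2(X,\mathbb{Z})_{\mathrm{tr}}$.
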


\begin{proof}
    Actually, we get
    \begin{align*}
        \Lambda_{X,\mathrm{tr}}\simeq\theta(\mathrm{H}^{0,2}(A,\mathbb{Z}))\oplus\theta(\mathrm{H}^{2,0}(A,\mathbb{Z}))\\
        \mathrm{H}^2(X,\mathbb{Z})_{\mathrm{tr}}=\widetilde{\mathrm{H}}(X,\mathbb{Z})_{\mathrm{tr}}\simeq\theta(\mathrm{H}^{0,2}(A,\mathbb{Z}))\oplus\theta(\mathrm{H}^{2,0}(A,\mathbb{Z}))
    \end{align*}
    by (\ref{LambdaXvsLambdaA}).
\end{proof}

\begin{remark}[Similar to {\cite[Remark 5.8]{Beckmann:22-2}}]\label{remark 5.8+}
    The isometry $B_{-\frac{\delta'}{2}}$ yields an isometry between the integral extended Mukai lattice $\widetilde{\mathrm{H}}(X,\mathbb{Z})$ and the $\mathrm{Kum}^{n-1}$ lattice $\Lambda_X$, which does not respect the Hodge structures in general. But if we endow $\widetilde{\mathrm{H}}(X,\mathbb{Z})$ with the twisted Hodge structure associated to $\frac{\delta'}{2}\in\mathrm{H}^2(X,\mathbb{Q})$ as in \cite[Definition 2.3]{Huybrechts Stellari:05}, then $B_{-\frac{\delta'}{2}}$ induces a Hodge isometry between $\widetilde{\mathrm{H}}(X,\mathbb{Z})$ endowed with the twisted Hodge structure and $\Lambda_X$ equipped with the Hodge structure from the embedding $\Lambda_X\subset\widetilde{\mathrm{H}}(X,\mathbb{Q})$.

    To see this, we consider a symplectic form $\sigma\in\mathrm{H}^2(X,\mathbb{C})$. The twisted Hodge structure is determined by $\sigma+\frac{1}{2}b(\sigma,\delta')\beta=B_{\frac{\delta'}{2}}(\sigma)$. By \cite[Definition 5.6]{Beckmann:22-2} and Lemma \ref{lemma 5.7+}, the untwisted and twisted Hodge structure on $\widetilde{\mathrm{H}}(X,\mathbb{Z})$ have the same transcendental lattice. 
\end{remark}

\section{Derived monodromy group}
In \cite[section 6]{Beckmann:22-2}, Taelman introduced the notation of the derived monodromy group for hyper-K\"alher manifolds of $\mathrm{K3}^{[n]}$-type. In fact, Markman \cite{Markman:24} gives a broader definition for projective irreducible holomorphic symplectic manifolds in general. We will use it in the rest of the paper.
\subsection{Derived monodromy group of projective irreducible holomorphic symplectic manifolds and abelian varieties}
We first recall the derived monodromy group of projective irreducible holomorphic symplectic manifolds in \cite[section 5]{Markman:24}.

\begin{passage}\label{def of DMon(X)}
    Define $IHSM$ as a groupoid with objects being pairs $(X,\epsilon)$, where $X$ is a projective irreducible holomorphic symplectic manifold such that if $4|\dim_{\mathbb{C}}X$, then $\dim(\mathrm{H}^2(X,\mathbb{Q}))$ is odd. (All known examples of irreducible holomorphic symplectic manifolds satisfy this condition.) If $4|\dim_{\mathbb{C}}X$, then $X$ is enriched with the data $\epsilon$ of an orientation of the vector space $\mathrm{H}^2(X,\mathbb{Q}))$. (It determines an orientation of the vector space $\widetilde{\mathrm{H}}(X,\mathbb{Q}))$.) If $4\not|\dim_{\mathbb{C}}X$, the date $\epsilon$ is empty. 

    Recall in \cite[subsection 9.1]{Taelman:23} or \cite[section 6]{Beckmann:22-2}, a parallel transform operator $$f:\mathrm{H}^*(X,\mathbb{Q})\to\mathrm{H}^*(Y,\mathbb{Q})$$ is the parallel transport in the local system $R\pi_*\mathbb{Q}$ associated to a path from a point $b_0$ to a point $b_1$ in the analytic base $B$ of a smooth proper family $\pi:\mathcal{X}\to B$ of irreducible holomorphic symplectic manifolds, not necessarily projective, with fibers $X=\mathcal{X}_{b_0}, Y=\mathcal{X}_{b_1}$.

    Morphisms in $\mathrm{Hom}_{{IHSM}}((X,\epsilon),(Y,\epsilon'))$ are vector space isomorphisms in $\mathrm{Hom}(\mathrm{H}^*(X,\mathbb{Q}), \mathrm{H}^*(Y,\mathbb{Q}))$, which are compositions of parallel transport operators and isomorphisms induced by derived equivalences (independent of the data $\epsilon$).

    The \textit{derived monodromy group} $\mathrm{DMon}(X)$ is defined to be the subgroup $\mathrm{Hom}_{{IHSM}}((X,\epsilon),(X,\epsilon))$ of $\mathrm{GL}(\mathrm{H}^*(X,\mathbb{Q}))$ (independent of $\epsilon$). A \textit{derived parallel transport operator} is a linear transformation $(\phi:\mathrm{H}^*(X,\mathbb{Q})\to\mathrm{H}^*(Y,\mathbb{Q}))\in\mathrm{Hom}_{{IHSM}}((X,\epsilon),(Y,\epsilon'))$.
\end{passage}

Note that in \cite[section 1]{Beckmann:22-2}, for all currently known deformation types of hyper-K\"ahler manifolds, a derived equivalence $\mathbf{D}^b(X)\simeq\mathbf{D}^b(Y)$ implies that $X$ and $Y$ must be deformation equivalent. But it is unknown for arbitrary hyper-K\"ahler manifolds.

By \cite[Proposition 4.1]{Taelman:23}, we get the inclusion $\mathrm{DMon}(X)\subset\mathrm{O}(\widetilde{\mathrm{H}}(X,\mathbb{Q}))$. Throughout this paper, we will always consider the elements of $\mathrm{DMon}(X)$ as isometries of the extended Mukai lattice. 

Analogously to the abelian variety $A$, we may define the integral extended Mukai lattice $\widetilde{\mathrm{H}}(A,\mathbb{Z})$ and the derived monodromy group $\mathrm{DMon}(A):=\mathrm{Hom}_{Ab}(A,A)$ as a subgroup of $\mathrm{GL}(\mathrm{H}^*(A,\mathbb{Q}))$, where $Ab$ is a groupoid with objects being abelian varieties and morphisms being vector space isomorphisms between the cohomology of rational coefficients. Similar for the definition of a parallel transform operator. Restricted to abelian surfaces, we get an inclusion $\mathrm{DMon}(A)\subset\mathrm{O}(\widetilde{\mathrm{H}}(A,\mathbb{Z}))$ by \cite[Corollary 9.50]{Huybrechts:06} and \cite[Proposition 4.1]{Taelman:23}. We will only consider abelian surfaces instead of general abelian varieties and will regard the elements of $\mathrm{DMon}(X)$ as isometries of the extended Mukai lattice from now on. (See passage \ref{def of DMon(A)}, analogue of \cite[section 5]{Markman:24}, for details.)

\begin{passage}\label{BKR type equiv}
    The Bridgeland-King-Reid theorem \cite{BKR:01} yields the equivalences $$\Psi_{A^{[n]}}:\mathbf{D}^b(A^{[n]})\to\mathbf{D}^b_{\mathfrak{S}_n}{(A^n)} \phantom{1}\mathrm{and}\phantom{1}\Psi_{\mathrm{Kum}^{n-1}(A)}:\mathbf{D}^b(\mathrm{Kum}^{n-1}(A))\simeq\mathbf{D}^b_{\mathfrak{S}_n}(N_A),$$
where $N_A$ is the kernel of the summation morphism $\Sigma:A^n\to A$. Indeed, the latter BKR-type equivalence is $$\Psi_{\mathrm{Kum}^{n-1}(A)}:=j^*_{\mathrm{Kum}^{n-1}(A)}\circ \Psi_{A^{[n]}}\circ (i_{\mathrm{Kum}^{n-1}(A)})_*: \mathbf{D}^b(\mathrm{Kum}^{n-1}(A))\to\mathbf{D}^b_{\mathfrak{S}_n}(N_A),$$ by using \cite[Lemma 6.2]{Meachan:15}.
Reversely, we have $$\Psi^{-1}_{\mathrm{Kum}^{n-1}(A)}=i^*_{\mathrm{Kum}^{n-1}(A)}\circ \Psi^{-1}_{A^{[n]}}\circ (j_{\mathrm{Kum}^{n-1}(A)})_*: \mathbf{D}^b_{\mathfrak{S}_n}(N_A) \to\mathbf{D}^b(\mathrm{Kum}^{n-1}(A)).$$
\end{passage}

\begin{passage}\label{Ln' from BKR}
    Consider a line bundle $\mathcal{L}$ on $A$. There is a natural line bundle $\mathcal{L}_n$ on $A^{[n]}$ associated with $\mathcal{L}$ that satisfies $\Psi^{-1}_{A^{[n]}}((\mathcal{L}^{\boxtimes n},1))=\mathcal{L}_n$. Restricted to $\mathrm{Kum}^{n-1}(A)$ as a Cartier divisor, we get $\mathcal{L}'_n$. Since $\mathrm{H}^1(\mathfrak{S}_n,\mathbb{C}^*)=\mathbb{Z}/2\mathbb{Z}$, the simple object $(\mathcal{L}^{\boxtimes n})'$ in $\mathbf{D}^b(N_A)$, obtained by restricting $\mathcal{L}^{\boxtimes n}\in\mathbf{D}^b(A^n)$, possesses another linearlization given by tensoring with the sign character $\chi$ of $\mathfrak{S}_n$. By the formulas (\ref{(6.1)+}), we have
\begin{equation}\label{(6.2)+}
    \Psi^{-1}_{\mathrm{Kum}^{n-1}(A)}((\mathcal{L}^{\boxtimes n})',1)=\mathcal{L}'_n,\Psi^{-1}_{\mathrm{Kum}^{n-1}(A)}((\mathcal{L}^{\boxtimes n})',-1)=\mathcal{L}'_n\otimes\mathcal{O}_{\mathrm{Kum}^{n-1}(A)}(-\delta'),
\end{equation}
where $\mathcal{O}_{\mathrm{Kum}^{n-1}(A)}(-\delta')$ is the line bundle with first Chern class $-\delta'$.
\end{passage}

\subsection{Lifting derived equivalences of abelian surfaces to derived equivalences of corresponding generalized Kummer varieties}\label{use Yuxuan 25-2}
In \cite{Beckmann:22-2}, Beckmann uses \cite{Ploog:07}\cite{Ploog Sosna:14} to obtain an injective group homomorphism $$\phi_{(n)}:\mathrm{Aut}(\mathbf{D}^b(S))\times\mathbb{Z}/2\mathbb{Z}\hookrightarrow\mathrm{Aut}(\mathbf{D}^b_{\mathfrak{S}_n}(S^n))$$ and, moreover, an injective group homomorphism $\mathrm{Aut}(\mathbf{D}^b(S))\times\mathbb{Z}/2\mathbb{Z}\hookrightarrow\mathrm{Aut}(\mathbf{D}^b(S^{[n]}))$ by conjugating via BKR equivalences. 

However, Ploog's method fails to work for the generalized Kummer varieties. We recall Orlov's fundamental short exact sequence for derived equivalences of abelian varieties in the following Theorem.

\begin{theorem}[Orlov, {\cite[Theorem 5.1]{Magni:22}}]\label{Orlov fundamental th}
    Let $A$ and $A'$ be two abelian varieties over an algebraically closed field of characteristic $0$, then we have a short exact sequence of groups $$0\to\mathrm{Alb}(A)\times\widehat{A}\times\mathbb{Z}\to\mathrm{Aut}(\mathbf{D}^b(A))\xrightarrow{\gamma_A}\mathrm{Sp}(A)\to 0,$$ and a surjective map $\gamma_{A,A'}:\mathrm{Eq}(\mathbf{D}^b(A),\mathbf{D}^b(A'))\twoheadrightarrow\mathrm{Sp}(A,A')$. Here, $1\in\mathbb{Z}$ is mapped to the shift functor $[1]$, $a\in A\cong\mathrm{Alb}(A)$ is mapped to the derived equivalence $(t_a)_*$, induced from translation and $\alpha\in\widehat{A}$ is mapped to the derived equivalence $-\otimes\mathcal{L}_\alpha$, by tensoring with the line bundle $\mathcal{L}_{\alpha}\in\mathrm{Pic}^0(A)$ corresponding to $\alpha\in\widehat{A}$.

    Moreover, the maps $\gamma_A$ and $\gamma_{A,A'}$ are compatible in the sense that $$\gamma_{A,A'}(\Phi'\circ\Phi)=\gamma_{A,A'}(\Phi')\circ\gamma_A(\Phi),\forall\Phi\in\mathrm{Aut}(\mathbf{D}^b(A)),\forall\Phi'\in\mathrm{Eq}(\mathbf{D}^b(A),\mathbf{D}^b(A')).$$
\end{theorem}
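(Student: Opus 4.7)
The plan is to construct $\gamma_{A,A'}$ explicitly from Fourier--Mukai kernels, verify that it lands in $\mathrm{Sp}$, identify its kernel, and establish surjectivity. Throughout, write a derived equivalence as $\Phi = \mathrm{FM}_E$ for a kernel $E \in \mathbf{D}^b(A \times A')$ (by Orlov's representability theorem). Identify $\mathrm{Alb}(A) = A$ and pick any $(a,\alpha) \in A \times \widehat{A}$, with associated autoequivalence $T_{(a,\alpha)} := (t_a)_* \circ (-\otimes \mathcal{L}_\alpha)$. The construction is: since $\Phi \circ T_{(a,\alpha)} \circ \Phi^{-1}$ is again a derived equivalence of $A'$ whose kernel has the same numerical invariants as a translation-line-bundle twist, a FM-kernel uniqueness argument shows $\Phi \circ T_{(a,\alpha)} \circ \Phi^{-1} \cong T_{(a',\alpha')}$ for uniquely determined $(a',\alpha') \in A' \times \widehat{A'}$. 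Set $\gamma_{A,A'}(\Phi)(a,\alpha) := (a',\alpha')$. Bilinearity in $(a,\alpha)$ and invertibility are immediate from the fact that $T_{\cdot}$ is a group homomorphism; the symplectic condition (preservation of the natural anti-symmetric pairing on $A \times \widehat{A}$ given by the commutator of $T_{(a,\alpha)}$ with $T_{(b,\beta)}$) follows because conjugation by $\Phi$ preserves commutators. Compatibility $\gamma_{A,A''}(\Psi \circ \Phi) = \gamma_{A',A''}(\Psi) \circ \gamma_{A,A'}(\Phi)$ is immediate from functoriality of conjugation.

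To identify $\ker \gamma_A$, suppose $\gamma_A(\Phi) = \mathrm{id}$. Then $\Phi \circ T_{(a,\alpha)} \cong T_{(a,\alpha)} \circ \Phi$ for all $(a,\alpha)$, so the kernel $E$ is equivariant (up to isomorphism) under the diagonal translation-line-bundle twist action on $A \times A$. A Schur-type simplicity argument (compare Mukai, Polishchuk) then forces $E$ to be a shift of a rank-one sheaf supported on the graph $\Gamma_{t_{a_0}}$ of some translation, i.e., $E \cong (t_{a_0},\mathrm{id})_* \mathcal{L}_\beta[k]$ for some $(a_0,\beta,k) \in A \times \widehat{A} \times \mathbb{Z}$. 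Conversely each such datum gives an element of $\ker \gamma_A$, and distinct triples yield non-isomorphic kernels, which establishes both exactness on the left and the injection $A \times \widehat{A} \times \mathbb{Z} \hookrightarrow \mathrm{Aut}(\mathbf{D}^b(A))$.

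For surjectivity of $\gamma_{A,A'}$ onto $\mathrm{Sp}(A,A')$, the approach is to realize a generating set of $\mathrm{Sp}$ by explicit Fourier--Mukai equivalences: (i) block-diagonal elements $\mathrm{diag}(\phi,(\widehat{\phi})^{-1})$ for isomorphisms $\phi : A \xrightarrow{\simeq} A'$, realized by $\phi_\ast$; (ii) unipotent shears $\begin{pmatrix} 1 & 0 \\ \psi & 1 \end{pmatrix}$ associated to a symmetric homomorphism $\psi : A \to \widehat{A}$, realized by tensoring with the line bundle whose Mumford class is $\psi$; and (iii) the ``swap'' $\begin{pmatrix} 0 & -1 \\ 1 & 0 \end{pmatrix}: A \times \widehat{A} \to \widehat{A} \times \widehat{\widehat{A}} = \widehat{A} \times A$, realized by the Fourier--Mukai transform $\mathrm{FM}_{\mathcal{P}} : \mathbf{D}^b(A) \xrightarrow{\simeq} \mathbf{D}^b(\widehat{A})$ with the Poincar\'e bundle $\mathcal{P}$. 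A direct computation of the induced conjugation on $T_{(a,\alpha)}$ confirms that $\gamma$ sends these equivalences to the stated symplectic elements. Combining with the diagonal case yields the surjection $\mathrm{Eq}(\mathbf{D}^b(A),\mathbf{D}^b(A')) \twoheadrightarrow \mathrm{Sp}(A,A')$. The main obstacle is the group-theoretic step: showing that these three families generate $\mathrm{Sp}(A \times \widehat{A})$. This is done by a Bruhat-type decomposition relative to the two maximal isotropic subgroups $A$ and $\widehat{A}$, where the swap of type (iii) plays the role of the longest Weyl element; any $f \in \mathrm{Sp}(A,A')$ can be factored through the swap to replace a possibly non-invertible upper-left block by an isomorphism, after which the decomposition reduces to types (i) and (ii).
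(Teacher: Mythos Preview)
The paper does not give its own proof of this theorem; it is quoted from \cite{Magni:22} and ultimately from Orlov, with the explicit remark immediately following the statement that ``the proof of surjectivity of $\gamma_A$ is complete using the Appendix in the new 2025 version of the paper \cite{Orlov:25}.'' Your sketch follows the standard route---construction of $\gamma_{A,A'}$ via conjugation on the $T_{(a,\alpha)}$, the symplectic condition from preservation of commutators, kernel identification via equivariance of the Fourier--Mukai kernel under the diagonal $A\times\widehat{A}$-action, and surjectivity via explicit generators---and the first three of these are essentially correct and match Orlov's original argument.

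The genuine gap in your proposal is precisely the step the paper singles out as only recently settled: the surjectivity argument. Your final sentence, ``any $f\in\mathrm{Sp}(A,A')$ can be factored through the swap to replace a possibly non-invertible upper-left block by an isomorphism,'' is the crux and is not justified. Writing $f=\begin{pmatrix} a & b\\ c & d\end{pmatrix}$ with $a:A\to A'$, $b:\widehat{A}\to A'$, $c:A\to\widehat{A'}$, $d:\widehat{A}\to\widehat{A'}$, composing with the swap replaces $a$ by $\pm c$; but there is no a priori reason either $a$ or $c$ is an isomorphism of abelian varieties, and row-reduction tricks that work over $\mathbb{Z}$ do not transfer automatically to homomorphisms between non-isomorphic abelian varieties. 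Showing that the three families (i)--(iii) generate $\mathrm{Sp}(A\times\widehat{A})$ (equivalently, that the relevant Bruhat decomposition goes through in this categorical setting) is exactly the content that was missing from Orlov's original paper and was supplied only in the 2025 appendix. So your outline has the right architecture, but the last paragraph conceals the hard theorem rather than proving it.
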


Note that the proof of surjectivity of $\gamma_A$ is complete using the Appendix in the new 2025 version of the paper \cite{Orlov:25}.

Using notations in subsection \ref{notation Yuxuan 25-2} (originated from \cite{Yang:25-2}), Theorem \ref{split_NAANA'A'} gives that for arbitrary 
$\mathcal{G}$-functor $(f,\sigma)\in\mathcal{G}$-$\mathrm{Eq}(\mathbf{D}^b_{\mathcal{G}}(A),\mathbf{D}^b_{\mathcal{G}}(A'))$ with $\mathcal{G}=A^{\vee}[n]$, between two abelian surfaces over an algebraically closed field of characteristic $0$ the splitting 
     \begin{equation*}
         \widetilde{\lambda}_{q,q'}(\widetilde{\delta}_{A,A'}(f,\sigma))=\Phi_{(f,\sigma)}\times\Psi_{(f,\sigma)}
     \end{equation*} holds for a unique combination of $\begin{cases}
         \Phi_{(f,\sigma)}\in\mathrm{Eq}(\mathbf{D}^b(\mathrm{Kum}^{n-1}(A)),\mathbf{D}^b(\mathrm{Kum}^{n-1}(A')))\\
         \Psi_{(f,\sigma)}\in\mathrm{Eq}(\mathbf{D}^b(A),\mathbf{D}^b(A')).
     \end{cases}$

By \cite[Theorem 2.9]{Yang:25-2}, a derived equivalence $f\in\mathrm{Eq}(\mathbf{D}^b(A),\mathbf{D}^b(A'))$ can be lifted%
\footnote{Here, the $G$-\textit{equivariance natural transformations} $\sigma=\{\sigma_g|g\in G\}$ for $f$ is given in Step 2 proof of \cite[Proposition 1.31]{Yang:25-2}.}%
to a $\mathcal{G}$-functor $(f,\sigma)\in\mathcal{G}$-$\mathrm{Eq}(\mathbf{D}^b_{\mathcal{G}}(A),\mathbf{D}^b_{\mathcal{G}}(A'))$ %
\footnote{As in Example \ref{G=Ahat[n]}, that is to say, $f\in F_{n_A,n_{A'}}(\mathcal{G}$-$\mathrm{Eq}(\mathbf{D}^b_{\mathcal{G}}(A),\mathbf{D}^b_{\mathcal{G}}(A')))$.}%
if and only if $f$ satisfies the property $$\gamma_{A,A'}(f)=g=\begin{pmatrix}
    g_1&g_2\\
    g_3&g_4
\end{pmatrix}\in\mathrm{Sp}(A,A'), (g_2)_*(\frac{1}{n}\mathrm{H}^1({A}^{\vee},\mathbb{Z}))\subset n\mathrm{H}^1(A',\mathbb{Z}),$$
which is related to the Orlov's representation in Theorem \ref{Orlov fundamental th}. 

Here, we denote $f\in F_{n_A,n_{A'}}(\mathcal{G}$-$\mathrm{Eq}(\mathbf{D}^b_{\mathcal{G}}(A),\mathbf{D}^b_{\mathcal{G}}(A')))$ as $f\in\mathrm{Eq}(\mathbf{D}^b(A),\mathbf{D}^b(A'))_{\mathrm{res}}$ for convenience.%
\footnote{To keep track of the notation in \cite{Yang:25-2}, one may use the notation $\mathcal{G}$-$\mathrm{Eq}(\mathbf{D}^b(A),\mathbf{D}^b(A'))_{\mathrm{res}}$ instead of $\mathrm{Eq}(\mathbf{D}^b(A),\mathbf{D}^b(A'))_{\mathrm{res}}$. Such notation is used in \cite{Markman:25} to get further description of elements in $\mathcal{G}$-$\mathrm{Eq}(\mathbf{D}^b(A),\mathbf{D}^b(A'))_{\mathrm{res}}$.}%
That is to say, $$f\in\mathrm{Eq}(\mathbf{D}^b(A),\mathbf{D}^b(A'))_{\mathrm{res}}\Leftrightarrow\gamma_{A,A'}(f)=g=\begin{pmatrix}
    g_1&g_2\\
    g_3&g_4
\end{pmatrix}\in\mathrm{Sp}(A,A'), (g_2)_*(\frac{1}{n}\mathrm{H}^1({A}^{\vee},\mathbb{Z}))\subset n\mathrm{H}^1(A',\mathbb{Z}).$$

Hence, we get an injective set-theoretic map
\begin{align*}
    \phi_{(n)}:\mathrm{Eq}(\mathbf{D}^b(A),\mathbf{D}^b(A'))_{\mathrm{res}}\times\mathbb{Z}/2\mathbb{Z}&\hookrightarrow\mathrm{Eq}(\mathbf{D}^b(\mathrm{Kum}^{n-1}(A),\mathbf{D}^b(\mathrm{Kum}^{n-1}(A'))\\
    (f,1)&\mapsto\Phi_{(f,\sigma)}\\
    (f,-1)&\mapsto\Phi_{(f,\sigma)}\circ\Phi_{\chi}^{(n)},
\end{align*}
where $\Phi_{\chi}^{(n)}:=\Psi^{-1}_{\mathrm{Kum}^{n-1}(A)}\circ\Phi_\chi\circ\Psi_{\mathrm{Kum}^{n-1}(A)}\in\mathrm{Aut}(\mathbf{D}^b(\mathrm{Kum}^{n-1}(A)))$ is obtained by conjugating $\Phi_\chi:=-\otimes\chi\in\mathrm{Aut}(\mathbf{D}^b_{\mathfrak{S}_n}(N_A))$ with the BKR equivalence. For the derived equivalence $\Phi\in\mathrm{Eq}(\mathbf{D}^b(A),\mathbf{D}^b(A'))_{\mathrm{res}}$, we use the notation $\Phi^{(n)}$ to express the derived autoequivalence $\phi_{(n)}(\Phi,1)$.

\begin{lemma}[Similar to {\cite[Lemma 6.2]{Beckmann:22-2}}]\label{lemma 6.2+}~\\
    Let $\Phi\in\mathrm{Aut}(\mathbf{D}^b(A))_{\mathrm{res}}$ be such that $\Phi^{\mathrm{H}}\in\mathrm{O}(\widetilde{\mathrm{H}}(A,\mathbb{Z}))$ is the identity. Then $\Phi^{(n)}$ acts trivially on the extended Mukai lattice $\widetilde{\mathrm{H}}(\mathrm{Kum}^{n-1}(A),\mathbb{Q})$.
\end{lemma}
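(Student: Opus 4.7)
The plan is to show $(\Phi^{(n)})^{\widetilde{\mathrm{H}}}$ is the identity on $\widetilde{\mathrm{H}}(X,\mathbb{Q})$ (with $X := \mathrm{Kum}^{n-1}(A)$) by first proving triviality of the classical cohomological action $(\Phi^{(n)})^{\mathrm{H}}$ on $\mathrm{H}^*(X,\mathbb{Q})$, then using Taelman's commutative diagrams from subsection 2.3 to transfer the information from $\mathrm{SH}(X,\mathbb{Q})$ to $\widetilde{\mathrm{H}}(X,\mathbb{Q})$, and finally pinning down a residual sign via a direct skyscraper-sheaf argument.

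First I would verify that $(\Phi^{(n)})^{\mathrm{H}}$ is trivial on $\mathrm{H}^*(X,\mathbb{Q})$. By passage \ref{BKR type equiv} and Theorem \ref{split_NAANA'A'}, $\Phi^{(n)} = \phi_{(n)}(\Phi,+1)$ is obtained by conjugating an $\mathfrak{S}_n$-equivariant lift of $\Phi^{\boxtimes n}$ through the BKR-type equivalence $\Psi_{\mathrm{Kum}^{n-1}(A)}$, after restriction from $A^n$ to $N_A$. The Künneth formula gives $(\Phi^{\boxtimes n})^{\mathrm{H}} = (\Phi^{\mathrm{H}})^{\otimes n}$ on $\mathrm{H}^*(A^n,\mathbb{Q})$, which equals the identity by hypothesis; this triviality survives restriction to $N_A$, passage to $\mathfrak{S}_n$-invariants, and the cohomological isomorphism induced by the BKR equivalence. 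In particular, $(\Phi^{(n)})^{\widetilde{\mathrm{H}}}$ is the identity on $\mathrm{H}^2(X,\mathbb{Q})$ and on the Verbitsky component $\mathrm{SH}(X,\mathbb{Q})$. As an isometry fixing $\mathrm{H}^2(X,\mathbb{Q})$ pointwise, it preserves the orthogonal complement $\mathbb{Q}\alpha \oplus \mathbb{Q}\beta$.

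Next, writing $(\Phi^{(n)})^{\widetilde{\mathrm{H}}}(\alpha) = a\alpha + b\beta$ and $(\Phi^{(n)})^{\widetilde{\mathrm{H}}}(\beta) = c\alpha + d\beta$, the triviality of $\Phi^{\mathrm{SH}}$ combined with the diagrams of subsection 2.3 implies that $\mathrm{Sym}^{n-1}((\Phi^{(n)})^{\widetilde{\mathrm{H}}})$ acts as $\epsilon\cdot\mathrm{id}$ on $\psi(\mathrm{SH}(X,\mathbb{Q}))$, where $\epsilon \in \{\pm 1\}$ (and $\epsilon = 1$ when $n$ is even, so that the first diagram applies). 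Since $\alpha$ and $\beta$ are isotropic, the contraction $\Delta$ annihilates both $\alpha^{n-1}$ and $\beta^{n-1}$, so $\alpha^{n-1} = (n-1)!\,\psi(1)$ and $\beta^{n-1} = c_X\,\psi(v(k(y)))$ lie in $\psi(\mathrm{SH}(X,\mathbb{Q}))$. Expanding the equalities $(a\alpha+b\beta)^{n-1} = \epsilon\alpha^{n-1}$ and $(c\alpha+d\beta)^{n-1} = \epsilon\beta^{n-1}$ in $\mathrm{Sym}^{n-1}(\widetilde{\mathrm{H}}(X,\mathbb{Q}))$ forces $b = c = 0$ and $a^{n-1} = d^{n-1} = \epsilon$, so $a,d \in \{\pm 1\}$ over $\mathbb{Q}$. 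Preservation of $\widetilde{b}(\alpha,\beta) = -1$ then yields $ad = 1$, leaving $(a,d) \in \{(1,1),(-1,-1)\}$.

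To eliminate the remaining sign, I would use the specific $+1$ branch in $\phi_{(n)}(\Phi,+1)$. Combining Orlov's Theorem \ref{Orlov fundamental th} with the hypothesis $\Phi^{\mathrm{H}} = \mathrm{id}$ on $\widetilde{\mathrm{H}}(A,\mathbb{Z}) = \mathrm{H}^{\mathrm{ev}}(A,\mathbb{Z})$ and the restriction condition $\Phi \in \mathrm{Aut}(\mathbf{D}^b(A))_{\mathrm{res}}$, one shows that $\Phi$ is (up to an even shift) a composition of a translation $(t_a)_*$ and a tensor product $-\otimes\mathcal{L}_{\alpha}$ with $\mathcal{L}_\alpha \in \mathrm{Pic}^0(A)$, both of which send a skyscraper sheaf on $A$ to a skyscraper sheaf on $A$. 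Consequently, $\Phi^{\boxtimes n}$ preserves equivariant skyscraper sheaves on $N_A$ together with their canonical $+1$ linearization, and passing through $\Psi^{-1}_{\mathrm{Kum}^{n-1}(A)}$ via the $+1$ branch—which by the formulas \eqref{(6.2)+} fixes the trivial linearization—gives $\Phi^{(n)}(k(y)) \cong k(y')[2m]$ for some $y' \in X$ and $m \in \mathbb{Z}$. Hence $(\Phi^{(n)})^{\widetilde{\mathrm{H}}}(\beta) = \widetilde{v}(\Phi^{(n)}(k(y))) = \beta$, so $d = 1$, forcing $a = 1$ and completing the proof. The hardest part will be this final step: translating the abstract cohomological hypothesis into the concrete geometric statement that $\Phi$ sends skyscraper sheaves to evenly-shifted skyscraper sheaves, and carefully tracking which linearization of the equivariant skyscraper on $N_A$ corresponds to the $+1$ branch of $\phi_{(n)}$ under the BKR equivalence through the formulas \eqref{(6.2)+}.
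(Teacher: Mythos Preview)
Your approach diverges from the paper's in how you pass from $\mathrm{SH}$ to $\widetilde{\mathrm H}$. The paper never reduces to a residual sign: instead, it uses the construction from \cite{Yang:25-2} to see that $\Phi^{(n)}$ sends the equivariant object $((\mathcal{L}^{\boxtimes n})',\pm 1)$ to $(((\Phi(\mathcal{L}))^{\boxtimes n})',\pm 1)$, and since $\Phi^{\mathrm H}=\mathrm{id}$ these have the same class in equivariant topological $K$-theory. Hence $(\Phi^{(n)})^{\widetilde{\mathrm H}}$ fixes $\widetilde v(\mathcal{L}'_n)$ and $\widetilde v(\mathcal{L}'_n\otimes\mathcal{O}(-\delta'))$ for every line bundle $\mathcal{L}$ on $A$; together with the already-fixed $\mathrm H^2(X,\mathbb{Q})$ these span $\widetilde{\mathrm H}(X,\mathbb{Q})$ because $\Lambda_{LB,X}$ has full rank. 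No symmetric-power bookkeeping and no classification of $\Phi$ are needed.

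Your route is viable, but step~3 has two soft spots. First, $\Phi^{\mathrm H}=\mathrm{id}$ on $\widetilde{\mathrm H}(A,\mathbb{Z})$ does \emph{not} force $\gamma_A(\Phi)=\mathrm{id}$: for instance $([-1]_A)_*$ acts trivially on even cohomology yet has $\gamma_A=-\mathrm{id}\in\mathrm{Sp}(A)$. This does not break your conclusion (such automorphisms still send skyscrapers to skyscrapers), but your stated classification of $\Phi$ is incomplete. Second, and more substantively, the formulas~\eqref{(6.2)+} you cite concern line bundles under BKR, not skyscrapers, so they do not justify the linearization claim you need. To conclude $\Phi^{(n)}(k(y))\cong k(y')[2m]$ you must track how the specific lift $\widetilde\lambda_q\circ\widetilde\delta_A$ of Theorem~\ref{split_NAANA'A'} acts on equivariant skyscraper sheaves supported on $\mathfrak S_n$-orbits in $N_A$; this is closer to the analysis in Step~1 of the second proof of Proposition~\ref{prop 7.1+ lemma 12.1++} and is not a consequence of the line-bundle formulas. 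The paper's line-bundle argument avoids both issues by staying entirely within the setting where~\eqref{(6.2)+} applies.
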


\begin{proof}
    Let $\Phi=\mathrm{FM}_{\mathcal{E}}$ and consider $\Phi_{(\mathrm{FM}_{\mathcal{E}},\sigma)}\in\mathrm{Aut}(\mathbf{D}^b(\mathrm{Kum}^{n-1}(A)))$ originated from $\widetilde{\lambda}_{q}(\widetilde{\delta}_{A}(\mathrm{FM}_{\mathcal{E}},\sigma))$ as in \cite[Corollary 2.6]{Yang:25-2}, we get $\Phi_{(\mathrm{FM}_{\mathcal{E}},\sigma)}$ acting trivially on the singular cohomology $\mathrm{H}^*(\mathrm{Kum}^{n-1}(A),\mathbb{Q})$ using \cite[Exercise 5.13]{Huybrechts:06} and the K\"unneth formula.

    By passage \ref{Ln' from BKR}, the line bundle $\mathcal{L}_n'\in\mathrm{Pic}(\mathrm{Kum}^{n-1}(A))$ corresponds to the equivariant object $((\mathcal{L}^{\boxtimes n})',1)\in\mathbf{D}^b_{\mathfrak{S}_n}(N_A)$. Using the construction for $\widetilde{\lambda}_{q}(\widetilde{\delta}_{A}(\Phi,\sigma))$ in \cite[Corollary 2.6]{Yang:25-2}, the equivalence $\Phi^{(n)}$ sends $(\mathcal{L}'_n,\pm 1)$ to the object $((\Phi(\mathcal{L}))'_n,\pm 1)$, where $(\Phi(\mathcal{L}))'_n$ is the line bundle obtained by restricting $(\Phi(\mathcal{L}))_n=\Psi^{-1}_{A^{[n]}}(((\Phi(\mathcal{L}))^{\boxtimes n},1))$ to $\mathrm{{Kum}}^{n-1}(A)$ as a Cartier divisor as in passage \ref{Ln' from BKR}. Using the compatibility of Fourier-Mukai transforms with the (equivariant) topological $K$-theory \cite[section 6]{Taelman:23}, $\Phi^{(n)}$ induces an isomorphism of equivariant topological $K$-theory $\mathrm{K}^0_{\mathrm{top},\mathfrak{S}_n}(N_A)$ that fixes the classes $[(\mathcal{L}_n',\pm 1)]$.

    Moreover, the equivalence $\Psi^{-1}_{\mathrm{Kum}^{n-1}(A)}$ induces $\mathrm{K}^0_{\mathrm{top},\mathfrak{S}_n}(N_A)\cong\mathrm{K}^0_{\mathrm{top}}(\mathrm{Kum}^{n-1}(A))$ an isomorphism, see \cite[section 10]{BKR:01}. Using the compatibility \cite[(4.2)]{Beckmann:22-2}, we see that the classes $\widetilde{v}(\mathcal{L}'_n)$ and $\widetilde{v}(\mathcal{L}'_n\otimes\mathcal{O}_{\mathrm{Kum}^{n-1}(A)}(-\delta'))$ are fixed by the action of $\Phi^{(n)}$ on the extended Mukai lattice. We may observe that $\widetilde{v}(\mathcal{L}'_n), \widetilde{v}(\mathcal{L}'_n\otimes\mathcal{O}_{\mathrm{Kum}^{n-1}(A)}(-\delta'))$ and the classes in $\mathrm{H}^*(\mathrm{Kum}^{n-1}(A),\mathbb{Q})$ generate $\widetilde{\mathrm{H}}(\mathrm{Kum}^{n-1}(A),\mathbb{Q})$ as a $\mathbb{Q}$-vector space, since the lattice $\Lambda_{\mathrm{LB},\mathrm{Kum}^{n-1}(A)}$ in subsection \ref{lattices} is of full rank. This concludes the proof.
\end{proof}

Let $\pi:\mathcal{A}\to B$ be a smooth and proper family of abelian surfaces. A path $\gamma:[0,1]\to B$ yields a parallel transport isometry of fibers, which we denote by $$\gamma:\mathrm{H}^*(\mathcal{A}_{\gamma^{-1}(0)},\mathbb{Z})\simeq\mathrm{H}^*(\mathcal{A}_{\gamma^{-1}(1)},\mathbb{Z}).$$ The family $\pi$ naturally induces a corresponding family $\pi^{[n]}:\mathcal{A}^{[n]}\to B$ of relative Hilbert schemes over $B$. Consider the $0$-fibers of the composition of the Hilbert-Chow map and the summation map $A^{[n]}\xrightarrow[]{\kappa_n}A^{(n)}\xrightarrow[]{\Sigma}A$ as in \cite[subsection 1.1]{O'Grady:10}, we get the corresponding family of generalized Kummer varieties $$\pi^{(n)}:\mathrm{Kum}^{n-1}(\mathcal{A})\to B.$$ The path $\gamma$ in $B$ gives a corresponding parallel transport isometry $$\gamma^{(n)}:\mathrm{H}^*(\mathrm{Kum}^{n-1}(\mathcal{A}_{\gamma^{-1}(0)}),\mathbb{Q})\simeq\mathrm{H}^*(\mathrm{Kum}^{n-1}(\mathcal{A}_{\gamma^{-1}(1)}),\mathbb{Q}).$$

Consider an element $g\in\mathrm{DMon}(A)$ that is a composition of parallel transport operators and isomorphisms induced by derived equivalences as in passage \ref{def of DMon(X)} generalized to abelian surfaces. Say $g=g_m\circ g_{m-1}\circ \dots\circ g_2\circ g_1$, where $g_i$ is either a parallel transport operator (i.e. $g_i=\gamma_i$) or an isomorphism induced by a derived equivalence (i.e. $g_i=\Phi_i^H$). 

We define a subgroup $\mathrm{DMon}(A)_{\mathrm{res}}$ of $\mathrm{DMon}(A)$ generated by parallel transport operators and cohomological isomorphisms induced by derived equivalences in $\mathrm{Aut}(\mathbf{D}^b(A))_{\mathrm{res}}$.%
\footnote{To keep track of the notation in \cite{Yang:25-2}, one may use the notation $\mathcal{G}$-$\mathrm{DMon}(A)$ instead of $\mathrm{DMon}(A)_{\mathrm{res}}$. Such notation is used in \cite{Markman:25}.}%
We associate $g\in\mathrm{DMon}(A)_{\mathrm{res}}$ with the element $$g^{(n)}:=g_m^{(n)}\circ g_{m-1}^{(n)}\circ\dots\circ g_{2}^{(n)}\circ g_{1}^{(n)},$$
where $g_i^{(n)}=\begin{cases}
    \gamma_i^{(n)}\phantom{1}\mathrm{if}\phantom{1}g_i=\gamma_i,\\
    (\Phi_i^{(n)})^\mathrm{H}\phantom{1}\mathrm{if}\phantom{1}g_i=\Phi_i^\mathrm{H}.
\end{cases}$

\begin{proposition}[Similar to {\cite[Proposition 6.3]{Beckmann:22-2}}]\label{prop 6.3+}
    The association $g\mapsto g^{(n)}$ yields a well-defined group homomorphism $d_{(n)}:\mathrm{DMon}(A)_{\mathrm{res}}\to\mathrm{DMon}(\mathrm{Kum}^{n-1}(A))$ over an algebraically closed field of characteristic $0$.
\end{proposition}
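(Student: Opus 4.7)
Plan: The goal is to show (i) $g^{(n)}$ is independent of the decomposition $g = g_m\circ\cdots\circ g_1$, and (ii) $g^{(n)} \in \mathrm{DMon}(\mathrm{Kum}^{n-1}(A))$; the homomorphism property then follows immediately by concatenating decompositions of $g$ and $g'$. Part (ii) is built into the construction: $\gamma^{(n)}$ is by definition a parallel transport in the relative family $\pi^{(n)}\colon\mathrm{Kum}^{n-1}(\mathcal{A})\to B$ induced by the relative Hilbert--Chow and summation morphisms from a smooth proper family $\pi\colon\mathcal{A}\to B$ of abelian surfaces, and $(\Phi^{(n)})^{\mathrm{H}}$ is the cohomological action of a derived autoequivalence of $\mathrm{Kum}^{n-1}(A)$ produced in subsection~\ref{use Yuxuan 25-2}, so both are elements of $\mathrm{DMon}(\mathrm{Kum}^{n-1}(A))$ by definition.

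For part (i), it suffices to show that any composition $g = g_m\circ\cdots\circ g_1$ acting as the identity on $\mathrm{H}^*(A,\mathbb{Q})$ satisfies $g^{(n)} = \mathrm{id}$ on $\mathrm{H}^*(\mathrm{Kum}^{n-1}(A),\mathbb{Q})$. The approach is to progressively consolidate the composition. Adjacent parallel transports combine by concatenating the underlying paths, which simultaneously combines their images under $(-)^{(n)}$ since the Kum local system is functorially associated to the source family. Adjacent derived equivalences combine through the $\mathfrak{S}_n$-equivariant BKR lift of passage~\ref{BKR type equiv} together with the splitting construction from \cite{Yang:25-2}, which is compositional at the level of equivariant derived categories and hence at the level of cohomology. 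Mixed pieces of the form $\gamma\circ\Phi^{\mathrm{H}}\circ\gamma^{-1}$ are handled by transporting $\Phi$ along the family: the restriction condition defining $\mathrm{Aut}(\mathbf{D}^b(A))_{\mathrm{res}}$ is phrased via the symplectic representation of Theorem~\ref{Orlov fundamental th}, which is parallel-transport invariant, so the transported equivalence still lies in the restricted subgroup, and the analogous transport of $\Phi^{(n)}$ on the Kum side is the corresponding conjugate. Iterating this consolidation reduces $g$ to a single derived autoequivalence $\Psi\in\mathrm{Aut}(\mathbf{D}^b(A'))_{\mathrm{res}}$ with $\Psi^{\mathrm{H}} = \mathrm{id}$; Lemma~\ref{lemma 6.2+} then gives $(\Psi^{(n)})^{\mathrm{H}} = \mathrm{id}$, and consequently $g^{(n)} = \mathrm{id}$.

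The main obstacle is verifying the \emph{transport--lift compatibility} underlying the consolidation: for a smooth proper family $\mathcal{A}\to B$ of abelian surfaces, if a derived autoequivalence $\Phi\in\mathrm{Aut}(\mathbf{D}^b(\mathcal{A}_{b_0}))_{\mathrm{res}}$ deforms along a path in $B$ to $\Phi'\in\mathrm{Aut}(\mathbf{D}^b(\mathcal{A}_{b_1}))_{\mathrm{res}}$, the lifts $\Phi^{(n)}$ and $(\Phi')^{(n)}$ must be related by the corresponding parallel transport in $\pi^{(n)}\colon\mathrm{Kum}^{n-1}(\mathcal{A})\to B$. This compatibility reduces to the naturality in families of the Bridgeland--King--Reid equivalence of passage~\ref{BKR type equiv} and of the splitting construction of \cite{Yang:25-2}, both of which are expected from the relative form of the underlying constructions but warrant careful bookkeeping. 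Once this naturality is in place, the reductions above yield well-definedness of $d_{(n)}$, and multiplicativity is immediate because the concatenation of a chosen decomposition of $g$ with one of $g'$ is sent under the construction to $g^{(n)}\circ (g')^{(n)}$.
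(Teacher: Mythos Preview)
Your reduction to showing that a word with trivial action on $\mathrm{H}^*(A,\mathbb{Q})$ lifts to the identity is correct, and Lemma~\ref{lemma 6.2+} is the right endpoint. The gap is in the consolidation step: you cannot in general rewrite $\gamma\circ\Phi^{\mathrm{H}}\circ\gamma^{-1}$ as $(\Phi')^{\mathrm{H}}$ for a genuine derived equivalence $\Phi'$ on the target fiber. A derived equivalence must induce a Hodge isometry for the Hodge structure of the variety it lives on, whereas the conjugate by parallel transport is a Hodge isometry only for the \emph{transported} Hodge structure of the source fiber. Concretely, if $\Phi=(-)\otimes L$ with $c_1(L)\in\mathrm{NS}(\mathcal{A}_{b_0})$ and the path moves to a very general nearby fiber $\mathcal{A}_{b_1}$, the class $\gamma(c_1(L))$ is no longer of type $(1,1)$ on $\mathcal{A}_{b_1}$, so no derived equivalence of $\mathcal{A}_{b_1}$ realizes the conjugated action. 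The same obstruction blocks deforming a general Fourier--Mukai kernel along the path. Thus the word cannot be collapsed to a single $\Psi$, and Lemma~\ref{lemma 6.2+} cannot be invoked on the consolidated composite.

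The paper sidesteps this by never reordering the word. The phrase ``as in the proof of Lemma~\ref{lemma 6.2+}'' points to the K\"unneth argument used there: for each generator $g_i$, whether parallel transport or $(\Phi_i)^{\mathrm{H}}$, the action $g_i^{(n)}$ on $\mathrm{H}^*(\mathrm{Kum}^{n-1}(A),\mathbb{Q})$ is obtained from $g_i$ by one and the same fixed recipe---pass to $g_i^{\otimes n}$ on $\mathrm{H}^*(A^n,\mathbb{Q})\cong\mathrm{H}^*(A,\mathbb{Q})^{\otimes n}$, descend $\mathfrak{S}_n$-equivariantly to $N_A$, and conjugate by the BKR correspondence---using \cite[Exercise~5.13]{Huybrechts:06} and the construction in subsection~\ref{use Yuxuan 25-2}. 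Since this recipe is multiplicative in the $g_i$, the composite $g_m^{(n)}\circ\cdots\circ g_1^{(n)}$ depends only on $g_m\circ\cdots\circ g_1$ as a linear map, and no deformation of Fourier--Mukai kernels along the base is needed.
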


\begin{proof}
    This follows as in the proof of Lemma \ref{lemma 6.2+} together with the assertion of Lemma \ref{lemma 6.2+}.
\end{proof}

\subsection{Some notations and results in \cite{Yang:25-2}}\label{notation Yuxuan 25-2}
In this subsection, we recall some notation and results in \cite{Yang:25-2} used at the beginning of subsection \ref{use Yuxuan 25-2}.

First, we focus on group actions of categories.

Let $G$ be a finite group, and let $\mathcal{D}$ be a category.

\begin{definition}[{\cite[Definition 1.1]{Yang:25-2}}]\label{def of group action on category}
    An \textit{action} $(\rho,\theta)$ of $G$ on $\mathcal{D}$ consists of 
    \begin{enumerate}
        \item for every $g\in G$, an autoequivalence $\rho_g:\mathcal{D}\to\mathcal{D}$,
        \item for every pair $g,h\in G$, an isomorphism of functors $\theta_{g,h}:\rho_g\circ\rho_h\to\rho_{gh}$,
    \end{enumerate}
    such that for all triples $g,h,k\in G$, we have the commutative diagram
    \begin{align*}
        \xymatrix{
        \rho_g\circ\rho_h\circ\rho_k\ar[rr]^{\rho_g\theta_{h,k}}\ar[d]_{\theta_{g,h}\rho_k}&&\rho_g\circ\rho_{hk}\ar[d]^{\theta_{g,hk}}\\
        \rho_{gh}\circ\rho_k\ar[rr]^{\theta_{gh,k}}&&\rho_{ghk}.
        }
    \end{align*}
\end{definition}

Recall the 2-category of categories $\mathfrak{Cats}$, whose objects are categories, the morphisms are functors between categories, and the 2-morphisms are natural transformations. Similarly, we have the 2-category $G\textrm{-}\mathfrak{Cats}$ of categories with a $G$-action, by the morphisms and 2-morphisms defined below.

\begin{definition}[{\cite[Definition 1.2]{Yang:25-2}}]\label{def of G-functor}
    A morphism or $G$-\textit{functor} $$(f,\sigma):(\mathcal{D},\rho,\theta)\to(\mathcal{D}',\rho',\theta')$$
    between categories with $G$-actions is a pair of a functor $f:\mathcal{D}\to\mathcal{D}'$, together with 2-isomorphisms $\sigma_g:f\circ\rho_g\to\rho'_g\circ f$ such that $(f,\sigma)$ intertwines the associativity relations on both sides, i.e. such that the following diagram commutes:
    \begin{align}\label{G-fun sigma commutative diagram}
        \xymatrix{
        f\circ\rho_g\circ\rho_h\ar[rr]^{f\theta_{g,h}}\ar[d]_{\sigma_g\rho_h}&&f\circ\rho_{gh}\ar[dd]^{\sigma_{gh}}\\
        \rho_g'\circ f\circ\rho_h\ar[d]_{\rho'_g\sigma_h}&&\\
        \rho_g'\circ\rho_h'\circ f\ar[rr]^{\theta'_{g,h}f}&&\rho'_{gh}\circ f.
        }
    \end{align}
\end{definition}

Now, we get to the definition of equivariant categories.

Let $(\rho,\theta)$ be an action of a finite group $G$ on an additive $\mathbb{C}$-linear category $\mathcal{D}$.

\begin{definition}[{\cite[Definition 1.5]{Yang:25-2}}]\label{def of equivariant category}
    The \textit{equivariant category} $\mathcal{D}_G$ is defined as follows.
    \begin{enumerate}
        \item Objects of $\mathcal{D}_G$ are pairs $(E,\phi_E)$, where $E$ is an object in $\mathcal{D}$ and $$\phi_E=(\phi_{E,g}:E\to\rho_g(E))_{g\in G}$$ 
        is a family of isomorphisms such that the diagram
        \begin{align}\label{phi_gh diagram}
            \xymatrix{
            E\ar@(dr,dl)[rrrrrr]^{\phi_{E,gh}}\ar[rr]^(0.45){\phi_{E,g}}&&\rho_g(E)\ar[rr]^(0.45){\rho_g(\phi_{E,h})}&&\rho_g(\rho_h(E))\ar[rr]^(0.55){\theta^E_{g,h}}&&\rho_{gh}(E)
            }
        \end{align}
        commutes for all $g,h\in G$.
        \item A morphism from $(E,\phi_E)$ to $(E',\phi_E)$ is a morphism $m:E\to E'$ in $\mathcal{D}$ which commutes with linearizations, i.e. such that the diagram
        \begin{align}\label{morphism in D_G diagram}
            \xymatrix{
            E\ar[rr]^{m}\ar[d]_{\phi_{E,g}}&&E'\ar[d]^{\phi_{E',g}}\\
            \rho_g(E)\ar[rr]^{\rho_g(m)}&&\rho_g(E')
            }
        \end{align}
        commutes for every $g\in G$.
    \end{enumerate}
\end{definition}

After recalling these notations, we develop two natural maps $F,\lambda$ from the set of $G$-functors.

\begin{passage}[{\cite[passage 1.6]{Yang:25-2}}]\label{def of F lambda}
    Let $G$ be a finite group, and let $(\rho,\theta),(\rho',\theta')$ be an action of $G$ on an additive $\mathbb{C}$-linear category $\mathcal{D},\mathcal{D'}$ respectively. Denote $G\textrm{-}\mathrm{Fun}(\mathcal{D}_G,\mathcal{D}'_G)$ as the set of $G$-functors from $\mathcal{D}_G$ to $\mathcal{D}'_G$. Denote $\mathrm{Fun}(\mathcal{D},\mathcal{D}')$ as the set of functors from $\mathcal{D}$ to $\mathcal{D}'$. Then we have a natural forgetful map $$F:G\textrm{-}\mathrm{Fun}(\mathcal{D}_G,\mathcal{D}'_G)\to\mathrm{Fun}(\mathcal{D},\mathcal{D}'), (f,\sigma)\mapsto f.$$

Denote $\mathrm{Fun}(\mathcal{D}_G,\mathcal{D}'_G)$ as the set of functors from $\mathcal{D}_G$ to $\mathcal{D}'_G$. We may define another natural map $\lambda:G\textrm{-}\mathrm{Fun}(\mathcal{D}_G,\mathcal{D}'_G)\to\mathrm{Fun}(\mathcal{D}_G,\mathcal{D}'_G)$. 

Precisely, for $(f,\sigma)\in G\textrm{-}\mathrm{Fun}(\mathcal{D}_G,\mathcal{D}'_G)$, the functor $\lambda(f,\sigma)\in\mathrm{Fun}(\mathcal{D}_G,\mathcal{D}'_G)$ can be described as follows.

\begin{enumerate}
    \item The functor $\lambda(f,\sigma)$ maps an object $(E,\phi_E)$ of $\mathcal{D}_G$, where $$\phi_E=(\phi_{E,g}:E\to\rho_g(E))_{g\in G}$$ is a family of isomorphisms that satisfy the commutative diagram (\ref{phi_gh diagram}), to an object $(f(E),\phi_{f(E)})$ of $\mathcal{D}'_G$. Here, the family of isomorphisms  $\phi_{f(E)}$ is composed of $\phi_{f(E),g}:f(E)\xrightarrow[\simeq]{f(\phi_{E,g})}f(\rho_g(E))\xrightarrow[\simeq]{\sigma_g(E)}\rho'_g(f(E)), \forall g\in G$, such that the diagram 
    \begin{align*}
            \xymatrix{
            f(E)\ar@(dr,dl)[rrrrrr]^{\phi_{f(E),gh}}\ar[rr]^(0.45){\phi_{f(E),g}}&&\rho'_g(f(E))\ar[rr]^(0.45){\rho'_g(\phi_{f(E),h})}&&\rho'_g(\rho'_h(f(E)))\ar[rr]^(0.55){\theta'^{f(E)}_{g,h}}&&\rho'_{gh}(f(E))
            }
        \end{align*}
    commutes for all $g,h\in G$.

    Actually, it comes from the lower left triangle of the following commutative diagram for all $g,h\in G$.
    \begin{align*}
        \xymatrix{
        f(E)\ar@(ur,ul)[rrrrrr]^{f(\phi_{E,gh})}\ar[rr]_(0.45){f(\phi_{E,g})}\ar@{=}[ddd]\ar[drr]_(0.45){\phi_{f(E),g}}&&f(\rho_g(E))\ar[rr]_(0.45){f(\rho_g(\phi_{E,h}))}\ar[d]^{\sigma_g(E)}_{\simeq}&&f(\rho_g(\rho_h(E)))\ar[rr]_(0.55){f(\theta^E_{g,h})}\ar[d]_{\sigma_g(\rho_h(E))}^{\simeq}&&f(\rho_{gh}(E))\ar[ddd]_{\sigma_{gh}(E)}^{\simeq}\\
        &&\rho'_g(f(E))\ar[rr]^(0.45){\rho'_g(f(\phi_{E,h}))}\ar[drr]_{\rho'_g(\phi_{f(E),h})}&&\rho'_g(f(\rho_g(E)))\ar[d]^{\rho'_g(\sigma_h(E))}_{\simeq}&&\\
        &&&&\rho'_g(\rho'_h(f(E)))\ar[drr]^(0.55){\theta'^{f(E)}_{g,h}}&&\\
        f(E)\ar[rrrrrr]^{\phi_{f(E),gh}}&&&&&&\rho'_{gh}(f(E))
        }
    \end{align*}
    Here, \begin{enumerate}[(i)]
        \item the top commutative part comes from applying the functor $f\in\mathrm{Fun}(\mathcal{D},\mathcal{D}')$ to (\ref{phi_gh diagram});
        \item the left and middle commutative triangles are originated from the definition of $\phi_{f(E)}$;
        \item the middle commutative square is from the 2-isomorphism $\sigma_g$;
        \item the right commutative part is from (\ref{G-fun sigma commutative diagram});
        \item the outer circle is originated from the definition of $\phi_{f(E)}$.
    \end{enumerate}  
    \item The functor $\lambda(f,\sigma)$ maps a morphism from $(E,\phi_E)$ to $(E',\phi_{E'})$ in $\mathcal{D}_G$, which is a morphism $m:E\to E'$ in $\mathcal{D}$ such that the diagram (\ref{morphism in D_G diagram}) commutes for every $g\in G$, to a morphism from $(f(E),\phi_{f(E)}))$ to $(f(E'),\phi_{f(E')})$ in $\mathcal{D}'_G$, which is a morphism $f(m):f(E)\to f(E')$ in $\mathcal{D}'$ from the functor $f\in\mathrm{Fun}(\mathcal{D},\mathcal{D}')$, such that the diagram 
    \begin{align*}
        \xymatrix{
        f(E)\ar[rr]^{f(m)}\ar[d]_{\phi_{f(E),g}}&&f(E')\ar[d]^{\phi_{f(E'),g}}\\
        \rho'_g(f(E))\ar[rr]^{\rho'_g(f(m))}&&\rho'_g(f(E'))
        }
    \end{align*}
    commutes for every $g\in G$.

    Actually, it comes from the outer circle of the following commutative diagram for every $g\in G$.
    \begin{align*}
        \xymatrix{
        f(E)\ar[dd]_{\phi_{f(E),g}}\ar[rrrrrr]^{f(m)}\ar[drr]_{f(\phi_{f(E),g})}&&&&&&f(E')\ar[dd]^{\phi_{f(E'),g}}\ar[dll]^{f(\phi_{E',g})}\\
        &&f(\rho_g(E))\ar[rr]^{f(\rho_g(m))}\ar[dll]_{\sigma_g(E)}^{\simeq}&&f(\rho_g(E'))\ar[drr]^{\sigma_g(E')}_{\simeq}&&\\
        \rho'_g(f(E))\ar[rrrrrr]^{\rho'_g(f(m))}&&&&&&\rho'_g(f(E'))
        }
    \end{align*}

    Here, \begin{enumerate}[(i)]
        \item the top commutative trapezoid comes from applying the functor $f$ in $\mathrm{Fun}(\mathcal{D},\mathcal{D}')$ to (\ref{morphism in D_G diagram});
        \item the left and right commutative triangles are originated from the definition of $\phi_{f(E)}$ and $\phi_{f(E')}$ respectively;
        \item the lower commutative trapezoid is from the 2-isomorphism $\sigma_g$.
    \end{enumerate}  
\end{enumerate}

\end{passage}

Note that the natural map $\lambda$ depends on the group $G$ and the group actions $(\rho,\theta),(\rho',\theta')$. But the forgetful map $F$ does not.

From now on, we will specialize to the categories of abelian varieties. In this subsection, we develop the notation in the previous sections for the categories of abelian varieties, which will be used in later research.

\begin{passage}[From {\cite[passage 1.8]{Yang:25-2}}]\label{Db(B) vs DbG(A)}
    Let $A$ be an abelian variety, and let $G$ be a finite subgroup of $\mathrm{Pic}^0(A)\cong\widehat{A}$. By the Appell-Humbert theorem, there exists a lift of $G$ to a linearized group of autoequivalences of $\mathbf{D}^b(A)$. Alternatively, conjugating the translation action of $G$ on $\mathbf{D}^b(\widehat{A})$ via Mukai's equivalence $\Phi_{\mathcal{P}}:\mathbf{D}^b(A)\xrightarrow[]{\simeq}\mathbf{D}^b(\widehat{A})$, where $\mathcal{P}$ is the Poincar\'{e} line bundle over $A$, we get the action $$\rho_g:=\Phi_{\mathcal{P}}^{-1}\circ(t_g)_*\circ\Phi_{\mathcal{P}}=-\otimes\mathcal{L}_g:\mathbf{D}^b(A)\xrightarrow[]{\simeq}\mathbf{D}^b(A),$$
    which is the derived equivalence associated to $g\in G\leq \widehat{A}$, where $\mathcal{L}_g\in\mathrm{Pic}^0(A)$ is the corresponding line bundle. As in Definition \ref{def of group action on category} for the notation of an action $(\rho,\theta)$ of a subgroup $G$ on $\mathbf{D}^b(A)$, where $\theta_{g,h}=\mathrm{id}$ for
    all $g,h\in G$, we get the equivariant category $\mathbf{D}^b_G(A)$ with objects being $G$-equivariant objects. Then the derived category $\mathbf{D}^b_G(A)$ is equivalent to $\mathbf{D}^b(B)$, where $B$ is the abelian variety with homomorphism $q:B\to A$ such that $G=\mathrm{ker}(\widehat{q})$, where the induced map $\widehat{q}$ is $$\widehat{q}:\mathrm{Pic}^0(A)\cong\widehat{A}\to\widehat{B}\cong\mathrm{Pic}^0(B).$$ 
    More precisely, $\widehat{B}=\widehat{A}/G$ and $B=\mathrm{Pic}^0(\mathrm{Pic}^0(A)/G)$. 
\end{passage}

\begin{remark}[From {\cite[Remark 1.14]{Yang:25-2}}]\label{AA'notation}
    Furthermore, given two $n$-dimensional abelian varieties $A$ and $A'$ and embeddings of a finite subgroup $G$ of both $\mathrm{Pic}^0(A)$ and $\mathrm{Pic}^0(A')$, we get linearization $(\rho',\theta')$ of $G$ on $\mathbf{D}^b(A')$ similar to $(\rho, \theta)$ of $G$ on $\mathbf{D}^b(A)$ in passage \ref{Db(B) vs DbG(A)}. In addition, the derived category $\mathbf{D}^b_G(A')$ is equivalent to $\mathbf{D}^b(B')$, where $B'$ is the abelian variety with homomorphism $q':B'\to A'$ such that $G'=\mathrm{ker}(\widehat{q'})\cong G$, where the induced map $\widehat{q'}$ is $\mathrm{Pic}^0(A')\cong\widehat{A'}\to\widehat{B'}\cong\mathrm{Pic}^0(B')$. More precisely, $$\widehat{B'}=\widehat{A'}/G, B'=\mathrm{Pic}^0(\mathrm{Pic}^0(A')/G).$$ 
    
    By Definition \ref{def of G-functor}, a \textit{G-functor} from $\mathbf{D}^b_G(A)$ to $\mathbf{D}^b_G(A')$ is a pair $(f,\sigma)$ of a functor $f: \mathbf{D}^b(A)\to\mathbf{D}^b(A')$ with a set of natural transformations $$\sigma=(\sigma_g:f\circ \rho_g\to \rho'_g\circ f)_{g\in G},$$
    which are isomorphisms of functors, such that $\sigma_g$'s are compatible with the associativity natural transformations in the definition of $G$-actions $(\rho,\theta)$ and $(\rho',\theta')$ on $\mathbf{D}^b(A)$ and $\mathbf{D}^b(A')$ respectively. With similar notations, we get maps and the ones restricted to equivalences of categories.

\begin{align}\label{Flambda_FunAA'}
    \xymatrix{
    &\mathrm{Fun}(\mathbf{D}^b(A), \mathbf{D}^b(A'))&\\
    G\textrm{-}\mathrm{Fun}(\mathbf{D}^b_G(A), \mathbf{D}^b_G(A'))\ar[ur]^{F_{q,q'}}\ar[dr]^{\lambda_{q,q'}}&&\\
    &\lambda_{q,q'}(G\textrm{-}\mathrm{Fun}(\mathbf{D}^b_G(A), \mathbf{D}^b_G(A')))\ar[r]^(0.55){\subset}&\mathrm{Fun}(\mathbf{D}^b_G(A), \mathbf{D}^b_G(A'))
    }
\end{align}

\begin{align}\label{Flambda_EqAA'}
    \xymatrix{
    &\mathrm{Eq}(\mathbf{D}^b(A), \mathbf{D}^b(A'))&\\
    G\textrm{-}\mathrm{Eq}(\mathbf{D}^b_G(A), \mathbf{D}^b_G(A'))\ar[ur]^{F_{q,q'}}\ar[dr]^{\lambda_{q,q'}}&&\\
    &\lambda_{q,q'}(G\textrm{-}\mathrm{Eq}(\mathbf{D}^b_G(A), \mathbf{D}^b_G(A')))\ar[r]^(0.55){\subset}&\mathrm{Eq}(\mathbf{D}^b_G(A), \mathbf{D}^b_G(A'))
    }
\end{align}

\end{remark}

From now on, we focus on the finite group $G\leq \mathrm{Pic}^0(A)$ (and $G\cong G'\leq \mathrm{Pic}^0(A')$).

\begin{example}[From {\cite[Example 1.15]{Yang:25-2}}]\label{G=Ahat[n]}
    If $G=\widehat{A}[n]$, which is the subgroup of torsion points of order dividing $n$, then $B=A$ and $q=n_A:B=A\to A$ is the multiplication by $n$. Moreover, if $G=\widehat{A'}[n]$, then $B'=A'$ and $$q'=n_{A'}:B'=A'\to A'$$ is the multiplication by $n$. We obtain
    \begin{align*}
        \xymatrix{
        &\mathrm{Fun}(\mathbf{D}^b(A),\mathbf{D}^b(A'))&\\
        G\textrm{-}\mathrm{Fun}(\mathbf{D}^b_G(A), \mathbf{D}^b_G(A'))\ar[ur]^{F_{n_A,n_{A'}}}\ar[dr]^{\lambda_{n_A,n_{A'}}}&&\\
        &G\textrm{-}\mathrm{Fun}(\mathbf{D}^b(A),\mathbf{D}^b(A'))\ar[r]^(0.53){\subset}&\mathrm{Fun}(\mathbf{D}^b(A), \mathbf{D}^b(A'))
        }
    \end{align*}
    and
     \begin{align*}
        \xymatrix{
        &\mathrm{Eq}(\mathbf{D}^b(A),\mathbf{D}^b(A'))&\\
        G\textrm{-}\mathrm{Eq}(\mathbf{D}^b_G(A), \mathbf{D}^b_G(A'))\ar[ur]^{F_{n_A,n_{A'}}}\ar[dr]^{\lambda_{n_A,n_{A'}}}&&\\
        &G\textrm{-}\mathrm{Eq}(\mathbf{D}^b(A),\mathbf{D}^b(A'))\ar[r]^(0.53){\subset}&\mathrm{Eq}(\mathbf{D}^b(A), \mathbf{D}^b(A')).
        }
    \end{align*}
\end{example}

Now, we may focus on generalized Kummer varieties of abelian surfaces and find derived equivalences of generalized Kummer varieties by lifting some derived equivalences of abelian surfaces.
    
    Let $A$ be an abelian surface. Let $\Sigma:A^n\to A,n\geq2$, be the summation morphism and let $N_A$ be its kernel. Consider the morphism 
    \begin{align*}
        q:N_A\times A&\to A^n\\
        ((a_1,\dots,a_n),a)&\mapsto(a_1+a,\dots,a_n+a).
    \end{align*}
    Let $\mathfrak{S}_n$ act on $N_A\times A$ by the natural action on $N_A$ and the trivial action on $A$. Then $\mathrm{ker}(q)=\{((a,a,\dots,a),-a)|a\in A[n]\}$.

Using the notation of Remark \ref{AA'notation}, we get the maps
$$\mathrm{Fun}(\mathbf{D}^b(N_A\times A), \mathbf{D}^b(N_{A'}\times A'))\xleftarrow[]{\lambda_{q,q'}}G\textrm{-}\mathrm{Fun}(\mathbf{D}^b_G(A^n), \mathbf{D}^b_G(A'^n))\xrightarrow[]{F_{q,q'}}\mathrm{Fun}(\mathbf{D}^b(A^n), \mathbf{D}^b(A'^n))$$
from (\ref{Flambda_FunAA'}).

The Bridgeland-King-Reid theorem \cite{BKR:01} yields the equivalences $$\mathbf{D}^b_{\mathfrak{S}_n}(A^n)\simeq\mathbf{D}^b(A^{[n]}), \mathbf{D}^b_{\mathfrak{S}_n}(N_A\times A)\simeq\mathbf{D}^b(\mathrm{Kum}^{n-1}(A)\times A).$$
The group $\mathfrak{S}_n\times G$ acts on $\mathbf{D}^b(A^n)$, since the actions of $G$ and $\mathfrak{S}_n$ commute. As $$\mathbf{D}^b_G(A^n)\simeq\mathbf{D}^b(N_A\times A),$$
we get $\mathbf{D}^b_{\mathfrak{S}_n}(N_A\times A)\simeq\mathbf{D}^b_{\mathfrak{S}_n\times G}(A^n)$. The latter is equivalent to both $\mathbf{D}_G(\mathbf{D}^b_{\mathfrak{S}_n}(A^n))$ and $\mathbf{D}^b_{\mathfrak{S}_n}(\mathbf{D}_G(A^n))$ by \cite[Proposition 3.3]{Beckmann Oberdieck:23}. The analogous statement holds for $A'$ as well.

We have the maps

\begin{align*}
    \xymatrix{
    &\mathrm{Fun}(\mathbf{D}^b_{\mathfrak{S}_n}(A^n), \mathbf{D}^b_{\mathfrak{S}_n}(A'^n))\\
    G\textrm{-}\mathrm{Fun}(\mathbf{D}_G(\mathbf{D}^b_{\mathfrak{S}_n}(A^n)), \mathbf{D}_G(\mathbf{D}^b_{\mathfrak{S}_n}(A'^n)))\ar[dr]^{\widetilde{\lambda}_{q,q'}}\ar[ur]^{\widetilde{F}_{q,q'}}&\\
    &\mathrm{Fun}(\mathbf{D}^b_{\mathfrak{S}_n}(N_A\times A), \mathbf{D}^b_{\mathfrak{S}_n}(N_{A'}\times A')),\\
    }
\end{align*}
because a linearization of $\mathfrak{S}_n\times G$ restricts to a linearization of the subgroup ${\mathfrak{S}_n}$. Note that via BKR we get

\begin{align*}
    \xymatrix{
    &\mathrm{Fun}(\mathbf{D}^b(A^{[n]}), \mathbf{D}^b(A'^{[n]}))\\
    G\textrm{-}\mathrm{Fun}(\mathbf{D}_G(\mathbf{D}^b_{\mathfrak{S}_n}(A^n)), \mathbf{D}_G(\mathbf{D}^b_{\mathfrak{S}_n}(A'^n)))\ar[dr]^{\widetilde{\lambda}_{q,q'}}\ar[ur]^{\widetilde{F}_{q,q'}}&\\
    &\mathrm{Fun}(\mathbf{D}^b(\mathrm{Kum}^{n-1}(A)\times A), \mathbf{D}^b(\mathrm{Kum}^{n-1}(A')\times A')).\\
    }
\end{align*}

Let $\delta_{A,A'}:\mathrm{Fun}(\mathbf{D}^b(A), \mathbf{D}^b(A'))\to\mathrm{Fun}(\mathbf{D}^b_{\mathfrak{S}_n}(A^n), \mathbf{D}^b_{\mathfrak{S}_n}(A'^n))\simeq\mathrm{Fun}(\mathbf{D}^b(A^{[n]}), \mathbf{D}^b(A'^{[n]}))$ be the natural 1-functor. Recall $\mathcal{G}:=\widehat{A}[n],\mathcal{G'}:=\widehat{A'}[n]$.

The proposition below gives a clue of lifting derived equivalences of abelian surfaces to get derived equivalences of the corresponding generalized Kummer varieties.

\begin{proposition}[From {\cite[Proposition 2.2]{Yang:25-2}}]\label{delta_AA' Eq}
    For abelian surfaces $A,A'$, there exists a lower horizontal map $\widetilde{\delta}_{A,A'}$ that makes the square below commutative.
    \begin{align}\label{delta_EqAA'}
        \xymatrix{
        \mathrm{Eq}(\mathbf{D}^b(A),\mathbf{D}^b(A'))\ar[r]^{\delta_{A,A'}}&\mathrm{Eq}(\mathbf{D}^b(A^{[n]}), \mathbf{D}^b(A'^{[n]}))\\
        \mathcal{G}\textrm{-}\mathrm{Eq}(\mathbf{D}^b_{\mathcal{G}}(A),\mathbf{D}^b_{\mathcal{G}}(A'))\ar[u]^{F_{n_A,n_{A'}}}\ar[d]_{{\lambda_{n_A,n_{A'}}}}\ar[r]^(0.4){\widetilde{\delta}_{A,A'}}& G\textrm{-}\mathrm{Eq}(\mathbf{D}_G(\mathbf{D}^b_{\mathfrak{S}_n}(A^n)), \mathbf{D}_G(\mathbf{D}^b_{\mathfrak{S}_n}(A'^n)))\ar[u]^{\widetilde{F}_{q,q'}}\ar[d]_{\widetilde{\lambda}_{q,q'}}\\
        \mathrm{Eq}(\mathbf{D}^b(A), \mathbf{D}^b(A'))&\mathrm{Eq}(\mathbf{D}^b(\mathrm{Kum}^{n-1}(A)\times A), \mathbf{D}^b(\mathrm{Kum}^{n-1}(A')\times A'))
        }
    \end{align}
\end{proposition}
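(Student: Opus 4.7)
My approach is to define $\widetilde{\delta}_{A,A'}$ via the $n$-fold external tensor product functor, endowed with two compatible equivariant structures. Given $(f,\sigma)\in\mathcal{G}$-$\mathrm{Eq}(\mathbf{D}^b_{\mathcal{G}}(A),\mathbf{D}^b_{\mathcal{G}}(A'))$, I first form $f^{\boxtimes n}\colon\mathbf{D}^b(A^n)\to\mathbf{D}^b(A'^n)$. Since external tensor products commute canonically with permutation of factors, $f^{\boxtimes n}$ carries a tautological $\mathfrak{S}_n$-linearization, and after passing through the BKR equivalence this recovers $\delta_{A,A'}(f)\colon\mathbf{D}^b(A^{[n]})\to\mathbf{D}^b(A'^{[n]})$.

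Next I install a $G$-linearization on this $\mathfrak{S}_n$-equivariant functor. For each $g\in G=\ker(\widehat{q})$, whose Mukai dual corresponds to diagonal translation by an element of $\mathcal{G}=\widehat{A}[n]$ on $A^n$, the required natural isomorphism is built as the external tensor product of $\sigma_g$ on each of the $n$ factors. The hexagon axiom (\ref{G-fun sigma commutative diagram}) for the induced structure follows by applying the hexagon for $(f,\sigma)$ componentwise. Since $G$ acts diagonally, its action commutes with $\mathfrak{S}_n$, so the two linearizations combine into a $(\mathfrak{S}_n\times G)$-linearization on $f^{\boxtimes n}$, equivalently a $G$-linearization in the $\mathfrak{S}_n$-equivariant category, via the identification $\mathbf{D}_G(\mathbf{D}^b_{\mathfrak{S}_n}(A^n))\simeq\mathbf{D}^b_{\mathfrak{S}_n\times G}(A^n)$ from \cite[Proposition 3.3]{Beckmann Oberdieck:23}. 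This defines $\widetilde{\delta}_{A,A'}(f,\sigma)$, and it is an equivalence because $(f^{-1},\sigma^{-1})$ supplies a quasi-inverse at each stage.

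Commutativity of the upper square is immediate from the construction: applying $\widetilde{F}_{q,q'}$ forgets the $G$-linearization, returning $f^{\boxtimes n}$ with its $\mathfrak{S}_n$-linearization, which under BKR is precisely $\delta_{A,A'}(f)=\delta_{A,A'}(F_{n_A,n_{A'}}(f,\sigma))$. For the lower portion, the composition $\widetilde{\lambda}_{q,q'}\circ\widetilde{\delta}_{A,A'}(f,\sigma)$ is computed by transporting the $G$-linearization through $q$ to $N_A\times A$ and then through BKR to $\mathrm{Kum}^{n-1}(A)\times A$; by the construction of $\lambda_{n_A,n_{A'}}$ on a single factor, this pairs $\lambda_{n_A,n_{A'}}(f,\sigma)$ on the $A$-component with the expected functor on the Kummer factor, which is the desired compatibility.

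The principal obstacle is verifying that the hexagon axiom for $\sigma$ propagates coherently through the $n$-fold external tensor product and remains compatible with the permutation $\mathfrak{S}_n$-linearization. Although morally transparent, making this precise requires a careful diagram chase tracking the associators and symmetry constraints on $\mathbf{D}^b(A^n)$. A secondary subtlety lies in the identification $G\cong\ker(\widehat{q})$ via Mukai duality: this must be made explicit so that the $G$-linearization transported from $A^n$ across $q$ aligns with the action of $G$ in the description of $\widetilde{\lambda}_{q,q'}$, which is exactly the bookkeeping needed for the lower square to commute.
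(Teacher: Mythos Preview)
The paper does not give its own proof of this proposition; it is quoted verbatim from \cite[Proposition 2.2]{Yang:25-2}, and only the statement is reproduced here. Your construction via the $n$-fold external tensor product $f^{\boxtimes n}$, equipped with its tautological $\mathfrak{S}_n$-linearization together with the diagonal $G$-linearization $\sigma_g^{\boxtimes n}$, is the natural one and is almost certainly what the cited reference carries out. Your identification of $G=\ker(\widehat{q})$ with the diagonal copy of $\mathcal{G}=\widehat{A}[n]$ inside $\widehat{A}^n$ is correct: characters on $A^n$ trivial on $N_A$ are exactly those pulled back along $\Sigma$, hence diagonal, and the further condition $\sum\alpha_i=n\alpha=0$ forces $\alpha\in\widehat{A}[n]$.

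One small remark: the proposition only asserts commutativity of the \emph{upper} square (there is no bottom horizontal arrow in the diagram), so your discussion of the ``lower portion'' is not strictly required here, though it is relevant for the subsequent Theorem~\ref{split_NAANA'A'}. The coherence checks you flag (propagation of the hexagon through $\boxtimes n$ and its compatibility with the permutation linearization) are genuine but routine; they reduce to the fact that the symmetry and associativity constraints on $\mathbf{D}^b(A^n)$ are natural in each variable.
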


So we may state the main result to get derived equivalences of the generalized Kummer varieties by lifting derived equivalences of abelian surfaces.

\begin{theorem}[{\cite[Theorem 2.5]{Yang:25-2}}]\label{split_NAANA'A'}
     Let notation be as above over an algebraically closed field of characteristic $0$. For arbitrary $(f,\sigma)\in\mathcal{G}\textrm{-}\mathrm{Eq}(\mathbf{D}^b_{\mathcal{G}}(A),\mathbf{D}^b_{\mathcal{G}}(A'))$, the splitting 
     \begin{equation*}
         \widetilde{\lambda}_{q,q'}(\widetilde{\delta}_{A,A'}(f,\sigma))=\Phi_{(f,\sigma)}\times\Psi_{(f,\sigma)}
     \end{equation*} holds for a unique combination of $\begin{cases}
         \Phi_{(f,\sigma)}\in\mathrm{Eq}(\mathbf{D}^b(\mathrm{Kum}^{n-1}(A)),\mathbf{D}^b(\mathrm{Kum}^{n-1}(A')))\\
         \Psi_{(f,\sigma)}\in\mathrm{Eq}(\mathbf{D}^b(A),\mathbf{D}^b(A')).
     \end{cases}$\\
\end{theorem}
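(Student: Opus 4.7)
The plan is to unwind the construction of $\widetilde{\lambda}_{q,q'}(\widetilde{\delta}_{A,A'}(f,\sigma))$ explicitly at the level of Fourier--Mukai kernels and show that the resulting kernel on $(\mathrm{Kum}^{n-1}(A)\times A)\times(\mathrm{Kum}^{n-1}(A')\times A')$ is an external tensor product of a kernel on $\mathrm{Kum}^{n-1}(A)\times\mathrm{Kum}^{n-1}(A')$ and one on $A\times A'$. First I would represent $(f,\sigma)$ by its $\mathcal{G}$-linearized Fourier--Mukai kernel $\mathcal{E}\in\mathbf{D}^b(A\times A')$ (using Orlov's representability theorem together with the linearization data $\sigma$). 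By Proposition \ref{delta_AA' Eq}, the functor $\widetilde{\delta}_{A,A'}$ yields an $(\mathfrak{S}_n\times G)$-equivariant equivalence $\mathbf{D}^b(A^n)\to\mathbf{D}^b(A'^n)$ whose kernel is an appropriately linearized version of $\mathcal{E}^{\boxtimes n}$ on $A^n\times A'^n$. Applying $\widetilde{\lambda}_{q,q'}$ then amounts to pulling back this kernel along the étale cover $q\times q':(N_A\times A)\times(N_{A'}\times A')\to A^n\times A'^n$ and combining with BKR-descent to land in $\mathrm{Eq}(\mathbf{D}^b(\mathrm{Kum}^{n-1}(A)\times A),\mathbf{D}^b(\mathrm{Kum}^{n-1}(A')\times A'))$.

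The central step is to establish a factorization $(q\times q')^*\mathcal{E}^{\boxtimes n}\simeq\mathcal{K}\boxtimes\mathcal{K}'$ after descent, where $\mathcal{K}$ lives (after BKR-descent along the $\mathfrak{S}_n$-action on $N_A,N_{A'}$) on $\mathrm{Kum}^{n-1}(A)\times\mathrm{Kum}^{n-1}(A')$ and $\mathcal{K}'$ lives on $A\times A'$. Because $q$ has the explicit diagonal form $((a_1,\dots,a_n),a)\mapsto(a_1+a,\dots,a_n+a)$, pulling back a box-product kernel along $q\times q'$ splits naturally: the $A\times A'$ factor records the diagonal translation action, while the $N_A\times N_{A'}$ factor records the relative kernel once the translation is fixed. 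The $\mathcal{G}$-equivariance data $\sigma$ is precisely what makes the descent along the kernel of $q\times q'$ (the $A[n]$-action) compatible with this decomposition, so that $\widetilde{\lambda}_{q,q'}(\widetilde{\delta}_{A,A'}(f,\sigma))$ genuinely descends to a box product of two equivalences, namely $\Phi_{(f,\sigma)}\times\Psi_{(f,\sigma)}$.

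For uniqueness, I would argue by a standard Künneth-type argument: if $\Phi_1\times\Psi_1\simeq\Phi_2\times\Psi_2$ as Fourier--Mukai equivalences of products of smooth projective varieties, then by Orlov's theorem their kernels agree as objects in the derived category of the four-fold product, i.e.\ $\mathcal{K}_1\boxtimes\mathcal{K}_1'\simeq\mathcal{K}_2\boxtimes\mathcal{K}_2'$. Restricting to fibres of the projections and using that $\mathrm{H}^0(\mathcal{O})=k$ on each factor, one deduces $\mathcal{K}_1\simeq\mathcal{K}_2$ and $\mathcal{K}_1'\simeq\mathcal{K}_2'$ up to a scalar that is absorbed into the identification; this promotes back to an isomorphism $\Phi_1\simeq\Phi_2$ and $\Psi_1\simeq\Psi_2$ in the respective groups of derived equivalences.

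The main obstacle will be rigorously establishing the kernel factorization while simultaneously tracking two commuting pieces of linearization data: the $\mathfrak{S}_n$-equivariance needed to descend the $N_A$-factor to $\mathrm{Kum}^{n-1}(A)$ via BKR, and the $G=\mathcal{G}$-equivariance needed to descend along $q$. These two actions commute (as used in the passage right before the statement, via \cite[Proposition 3.3]{Beckmann Oberdieck:23}), but verifying that the decomposition of $(q\times q')^*\mathcal{E}^{\boxtimes n}$ is strictly a product of linearized kernels, with no residual twisting character of $\mathfrak{S}_n$ or $A[n]$ entangling the two factors, is the technical heart of the proof and is what forces the hypothesis that $(f,\sigma)$ comes from a $\mathcal{G}$-functor rather than merely an underlying equivalence $f$.
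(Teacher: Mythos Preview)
The paper does not contain a proof of this theorem: it is stated as a citation of \cite[Theorem 2.5]{Yang:25-2} in subsection \ref{notation Yuxuan 25-2}, which merely ``recall[s] some notation and results'' from that reference. So there is no proof in the paper to compare against, and your proposal must be judged on its own merits.

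Your overall architecture is reasonable, but the decisive step---that $(q\times q')^*\mathcal{E}^{\boxtimes n}$ factorizes as an external product compatible with both the $\mathfrak{S}_n$- and $G$-linearizations---is asserted rather than argued. The map $q$ is \emph{not} a product map: it mixes the $N_A$ and $A$ factors via translation, so pulling back a box power does not split for free. Already for $\mathcal{E}=\mathcal{O}_{\Gamma_\phi}$ with $\phi:A\to A'$ a group isomorphism, the preimage $(q\times q')^{-1}(\Gamma_\phi^{\,n})$ consists of tuples with $a_i'=\phi(a_i)+(\phi(a)-a')$; forcing $(a_1',\dots,a_n')\in N_{A'}$ only pins down $\phi(a)-a'$ modulo $n$-torsion, so the two factors are entangled through $A[n]\cong A'[n]$. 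It is precisely Orlov's description of abelian-variety equivalences (Theorem \ref{Orlov fundamental th}) together with the $\mathcal{G}=\widehat{A}[n]$-equivariance datum $\sigma$ that disentangles this, and you have not explained how. You flag this as ``the technical heart'' but supply no mechanism; that is the genuine gap.

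Your uniqueness argument also has a loose end: from $\mathcal{K}_1\boxtimes\mathcal{K}_1'\simeq\mathcal{K}_2\boxtimes\mathcal{K}_2'$ one can only conclude $\mathcal{K}_1\simeq\mathcal{K}_2[m]$ and $\mathcal{K}_1'\simeq\mathcal{K}_2'[-m]$ for some $m\in\mathbb{Z}$ (shifts in the two factors cancel in the box product), so the pair $(\Phi,\Psi)$ is a priori only determined up to a simultaneous opposite shift. You need either an additional normalization or an argument that the construction itself fixes the shift.
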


\section{Autoequivalences of generalized Kummer varieties}
Let $A$ be an abelian surface and $\mathrm{Kum}^{n-1}(A)$ be the generalized Kummer variety associated with the dimension $2(n-1),n\geq3$. In this section, we will calculate the action of certain autoequivalences on $\widetilde{\mathrm{H}}(\mathrm{Kum}^{n-1}(A),\mathbb{Q})$.

\subsection{Sign equivalence}\label{sign equivalence}
Denote by $\Phi_\chi:=-\otimes\chi\in\mathrm{Aut}(\mathbf{D}^b_{\mathfrak{S}_n}(N_A))$ the autoequivalence given by tensoring with the sign representation as in subsection \ref{use Yuxuan 25-2}. Conjugating with the BKR equivalence $\Psi_{\mathrm{Kum}^{n-1}(A)}$, we get $\Phi_{\chi}^{(n)}\in\mathrm{Aut}(\mathbf{D}^b(\mathrm{Kum}^{n-1}(A))$. So we have $\Phi_{\chi}^{(n)}=\phi_{(n)}(\mathrm{id}_{\mathbf{D}^b(A)},-1)$. For a vector $v\in\widetilde{\mathrm{H}}(\mathrm{Kum}^{n-1}(A),\mathbb{Q})$, we denote by $$s_v\in\mathrm{O}(\widetilde{\mathrm{H}}(\mathrm{Kum}^{n-1}(A),\mathbb{Q})), x\mapsto x-2\frac{\widetilde{b}(x,v)}{\widetilde{b}(v,v)}v,$$ 
the Hodge isometry given by reflection along $v$ (i.e. reflection with respect to the hyperplane orthogonal to $v$). Now, we calculate the action of $\Phi_{\chi}^{(n)}$ on $\widetilde{\mathrm{H}}(\mathrm{Kum}^{n-1}(A),\mathbb{Q})$ (that is, $\widetilde{\mathrm{H}}(\Phi_{\chi}^{(n)})$ using the functor $\widetilde{\mathrm{H}}:IHSM\to Lat$ in \cite[(5.1)]{Markman:24}). 

\begin{proposition}[Similar to {\cite[Proposition 7.1]{Beckmann:22-2}} or {\cite[Lemma 12.1]{Markman:24}}]\label{prop 7.1+ lemma 12.1++}
    $$\widetilde{\mathrm{H}}(\Phi_{\chi}^{(n)})=(-1)^ns_{\widetilde{\delta'}}.$$
\end{proposition}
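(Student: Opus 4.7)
The plan is to verify $\widetilde{\mathrm{H}}(\Phi_\chi^{(n)}) = (-1)^n s_{\widetilde{\delta'}}$ by evaluating both sides on a spanning set of $\widetilde{\mathrm{H}}(\mathrm{Kum}^{n-1}(A),\mathbb{Q})$ consisting of extended Mukai vectors of line bundles of the form $\mathcal{L}'_n$ and $\mathcal{L}'_n(-\delta')$ with $\mathcal{L} \in \mathrm{Pic}(A)$.

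The first step is to use passage \ref{Ln' from BKR}: because $\Phi_\chi = -\otimes\chi$ swaps the two $\mathfrak{S}_n$-linearizations on $(\mathcal{L}^{\boxtimes n})'$, conjugation by the BKR equivalence $\Psi_{\mathrm{Kum}^{n-1}(A)}$ gives $\Phi_\chi^{(n)}(\mathcal{L}'_n) = \mathcal{L}'_n(-\delta')$. With $\lambda = c_1(\mathcal{L}'_n) \in \theta(\mathrm{H}^2(A,\mathbb{Z}))$ and $r_X = n/4$, Beckmann's Definition 4.1 applied to both line bundles yields
\[
\widetilde{v}(\mathcal{L}'_n(-\delta')) - \widetilde{v}(\mathcal{L}'_n) = -\delta' - n\beta = -\widetilde{\delta'},
\]
using $b(\lambda,\delta') = 0$ and $b(\delta',\delta') = -2n$. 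A direct reflection computation gives $s_{\widetilde{\delta'}}(\widetilde{v}(\mathcal{L}'_n)) = \widetilde{v}(\mathcal{L}'_n) - \widetilde{\delta'}$, because $\widetilde{b}(\widetilde{v}(\mathcal{L}'_n), \widetilde{\delta'}) = -n$ and $\widetilde{b}(\widetilde{\delta'}, \widetilde{\delta'}) = -2n$. So on each such test class, $\Phi_\chi^{(n)}$ acts as $s_{\widetilde{\delta'}}$ up to a global sign.

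Next I would argue that the $\mathbb{Q}$-span of $\{\widetilde{v}(\mathcal{L}'_n), \widetilde{v}(\mathcal{L}'_n(-\delta')) : \mathcal{L} \in \mathrm{Pic}(A)\}$ is all of $\widetilde{\mathrm{H}}(\mathrm{Kum}^{n-1}(A),\mathbb{Q})$. Varying $\lambda = c_1(\mathcal{L}'_n)$ and taking affine and quadratic combinations of $\widetilde{v}(\mathcal{L}'_n) = \alpha + \lambda + (n/4 + b(\lambda,\lambda)/2)\beta$ recovers $\alpha$, $\beta$, and all of $\theta(\mathrm{H}^2(A,\mathbb{Q}))$; adding in the shifted vectors supplies $\widetilde{\delta'}$. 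This fills out the nine-dimensional extended Mukai lattice, so the linear map $\widetilde{\mathrm{H}}(\Phi_\chi^{(n)})$ is determined and agrees with $\pm s_{\widetilde{\delta'}}$.

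The main obstacle is fixing the uniform sign as $(-1)^n$. When $n$ is even, Beckmann's Definition 4.3 applies without any sign and the previous steps yield $\widetilde{\mathrm{H}}(\Phi_\chi^{(n)}) = s_{\widetilde{\delta'}} = (-1)^n s_{\widetilde{\delta'}}$ immediately. When $n$ is odd, Definition 4.4 introduces the factor $\epsilon\cdot\mathrm{sgn}(v)$: Lemma 4.2 forces $\epsilon = 1$ since $\mathcal{O}(-\delta')$ has positive rank, but the parity $(-1)^{n-1} = 1$ of $\mathrm{Sym}^{n-1}$ in the diagram \cite[(4.2)]{Beckmann:22-2} no longer distinguishes $\pm \Phi^{\widetilde{\mathrm{H}}}$. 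To break this tie I would evaluate $\Phi_\chi^{(n)}$ on an auxiliary atomic object whose extended Mukai vector is independently known, for instance the $k(x)$-orbit class $\widetilde{v}(\mathcal{L}) = f$ from Example \ref{eg 4.18+} or the $\mathcal{O}_P$ class from Example \ref{eg 4.17+}, and verify that the resulting action forces the $-1$. This sign calculation, parallel to Markman's Lemma 12.1 and Beckmann's Proposition 7.1, is the technical crux of the argument.
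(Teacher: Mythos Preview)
Your approach coincides with the paper's first proof: exploit that $\Phi_\chi$ swaps the two $\mathfrak{S}_n$-linearizations on $(\mathcal{L}^{\boxtimes n})'$, translate via BKR to $\Phi_\chi^{(n)}(\mathcal{L}'_n)\cong\mathcal{L}'_n(-\delta')$, and compare extended Mukai vectors. Two points deserve attention.

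First, a minor one: for the span argument you need $\lambda$ to range over all of $\mathrm{H}^2(A,\mathbb{Z})$, not just $\mathrm{NS}(A)$, so you must allow topological line bundles $\mathcal{L}\in K^0_{\mathrm{top}}(A)$ and track the swap through equivariant topological $K$-theory, as the paper does.

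Second, and this is the real gap, the sign for $n$ odd is not the technical crux you describe; you have already done the work. Your observation $\epsilon(\Phi_\chi^{(n),\widetilde{\mathrm{H}}})=1$ from \cite[Lemma~4.2]{Beckmann:22-2} is, in Taelman's normalization, the same datum as $\det\widetilde{\mathrm{H}}(\Phi_\chi^{(n)})=1$ (compare \cite[Theorem~5.3]{Markman:24}, where the sign in the commutative square is literally the determinant). Since $\dim\widetilde{\mathrm{H}}(X,\mathbb{Q})=9$ is odd, one has $\det(s_{\widetilde{\delta'}})=-1$ and $\det(-s_{\widetilde{\delta'}})=1$; the swap computation already pins the isometry down to $\pm s_{\widetilde{\delta'}}$, and the determinant selects $-s_{\widetilde{\delta'}}=(-1)^n s_{\widetilde{\delta'}}$. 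No auxiliary objects are needed. The paper does supply a second, independent proof in subsection~\ref{pf of lemma 12.1++} via LLV lines of skyscraper sheaves and push-forwards from Lagrangian fibres, closer in spirit to the workaround you sketch, but that route takes five steps rather than one line.
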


\begin{proof}
    For all topological line bundles $\mathcal{L}\in\mathrm{K}^0_{\mathrm{top}}(A)$, the involution $\Phi_\chi$ exchanges the equivariant objects $((\mathcal{L}^{\boxtimes n})',1)$ and $((\mathcal{L}^{\boxtimes n})',-1)$ viewed as elements in the equivariant topological $K$-theory $\mathrm{K}^0_{\mathrm{top},\mathfrak{S}_n}(N_A)$. Thus, the induced isometry $\widetilde{\mathrm{H}}(\Phi_{\chi}^{(n)})$ on the extended Mukai lattice exchanges $\widetilde{v}(\mathcal{L}_n')$ and $\widetilde{v}(\mathcal{L}_n'\otimes\mathcal{O}_{\mathrm{Kum}^{n-1}(A)}(-\delta'))$ by (\ref{(6.2)+}). 

    Using notations in subsection \ref{lattices} and (\ref{Lambda LB,X}), we get 
    \begin{align*}
        \widetilde{v}(\mathcal{L}_n')=\widetilde{v}(\theta(\lambda))&=\widetilde{\alpha}+\frac{\widetilde{\delta'}}{2}+\theta(\lambda)+\frac{{b}(\theta(\lambda),\theta(\lambda))}{2}\beta\\
        \widetilde{v}(\mathcal{L}_n'\otimes\mathcal{O}_{\mathrm{Kum}^{n-1}(A)}(-\delta'))&=\widetilde{v}(\theta(\lambda)-\delta')=\widetilde{\alpha}+\frac{\widetilde{\delta'}}{2}+\theta(\lambda)-\delta'+\frac{{b}(\theta(\lambda)-\delta',\theta(\lambda)-\delta')}{2}\beta\\
        &=\widetilde{\alpha}-\frac{\widetilde{\delta'}}{2}+\theta(\lambda)+\frac{{b}(\theta(\lambda),\theta(\lambda))}{2}\beta,
    \end{align*}
    where $\lambda=c_1(\mathcal{L})\in\mathrm{H}^2(A,\mathbb{Z})$, $\theta(\lambda)\in\mathrm{H}^2(\mathrm{Kum}^{n-1}(A),\mathbb{Z})$.

    If $n-1$ is odd, then we conclude from above that for all $\lambda\in\mathrm{H}^2(A,\mathbb{Z})$, the action on the extended Mukai lattice $\widetilde{\mathrm{H}}(\Phi_{\chi}^{(n)})$ satisfies $$\widetilde{\alpha}+\frac{\widetilde{\delta'}}{2}+\theta(\lambda)+\frac{{b}(\theta(\lambda),\theta(\lambda))}{2}\beta\mapsto\widetilde{\alpha}-\frac{\widetilde{\delta'}}{2}+\theta(\lambda)+\frac{{b}(\theta(\lambda),\theta(\lambda))}{2}\beta.$$
    This property completely characterizes $\widetilde{\mathrm{H}}(\Phi_{\chi}^{(n)})$.

    If $n-1$ is even, \cite[Lemma 4.2]{Beckmann:22-2} implies that the determinant of $\widetilde{\mathrm{H}}(\Phi_{\chi}^{(n)})$ must be one, because $\Phi_{\chi}^{(n)}$ preserves the rank of objects. The result then follows as for odd $n-1$.
\end{proof}

\subsection{Some information of the action of known derived autoequivalences of generalized Kummer varieties}\label{some known derived autoequi of gener Kums} In \cite[subsection 7.2]{Beckmann:22-2}, Beckmann computed the action of the derived autoequivalence of $S^{[n]}$ originating from the spherical object, $\mathcal{O}_S\in\mathbf{D}^b(S)$, of a $K3$ surface $S$,  on $\widetilde{\mathrm{H}}(S^{[n]},\mathbb{Q})$. But this method is invalid for the generalized Kummer variety case, since there are no spherical objects on the derived category of an abelian surface \cite[Proposition 1.32 and contents related to Conjecture 1.33]{Ploog:05}.

Although we cannot consider spherical twists, we may still get the following observation about the extended Mukai lattice $\widetilde{\mathrm{H}}(\mathrm{Kum}^{n-1}(A),\mathbb{Q})$.

\begin{remark}[Similar to {\cite[Remark 7.3]{Beckmann:22-2}}]\label{remark 7.3+}
    Given a smooth rational curve $C_A\subset A$ inside an abelian surface and the corresponding line bundle $\mathcal{L}=\mathcal{O}_A(C_A)\in\mathrm{Pic}(A)$, we have associated it with a line bundle $\mathcal{L}_n'\in\mathrm{Pic}(\mathrm{Kum}^{n-1}(A))$ in passage \ref{Ln' from BKR}. Its Mukai vector $v(\mathcal{L}_n')$ has self pairing $n$ under the generalized Mukai pairing, since it is equivalent to $\chi(\mathcal{L}_n',\mathcal{L}_n')=\chi(\mathcal{O}_{\mathrm{Kum}^{n-1}(A)})=n$ by \cite[Definition 5.42]{Huybrechts:06}. We also associate to $\mathcal{L}_n'$ the class $$\widetilde{v}(\mathcal{L}_n')=\alpha+\theta(\lambda)+(\frac{n}{4}+\frac{{b}(\theta(\lambda),\theta(\lambda))}{2})\beta\in\widetilde{\mathrm{H}}(\mathrm{Kum}^{n-1}(A),\mathbb{Q})$$ as in (\ref{Lambda LB,X}) with $\lambda=c_1(\mathcal{L})$. It has self pairing $-\frac{n}{2}$.

    As in Example \ref{eg 4.17+} and definition of $\Psi^{-1}_{\mathrm{Kum}^{n-1}(A)}$ in passage \ref{BKR type equiv}, we get a commutative diagram 
    \begin{align*}
        \xymatrix{
        \mathbf{D}^b_{\mathfrak{S}_n}(A^{(n)})\ar[r]^{\Psi_{A^{[n]}}^{-1}}\ar[d]_{j^*_{\mathrm{Kum}^{n-1}(A)}}&\mathbf{D}^b(A^{[n]})\ar[d]^{i^*_{\mathrm{Kum}^{n-1}(A)}}&&(\mathcal{O}_{E_1}^{\boxtimes n},1)\ar@{|->}[r]^{\Psi_{A^{[n]}}^{-1}}\ar@{|->}[d]_{j^*_{\mathrm{Kum}^{n-1}(A)}}&\mathcal{O}_{E_1^{[n]}}\cong\mathcal{O}_{\mathbb{P}^n}\ar@{|->}[d]^{i^*_{\mathrm{Kum}^{n-1}(A)}}\\
        \mathbf{D}^b_{\mathfrak{S}_n}(N_A)\ar[r]_{\Psi_{\mathrm{Kum}^{n-1}(A)}^{-1}}&\mathbf{D}^b(\mathrm{Kum}^{n-1}(A))&\mathrm{with}&(\mathcal{O}_{E_1}^{\boxtimes n}|_{N_A},1)\ar@{|->}[r]_{\Psi_{\mathrm{Kum}^{n-1}(A)}^{-1}}\ar@{|->}[d]_{-\otimes(\mathcal{O}_{E_1}(E_1)^{\boxtimes n}|_{N_A})}&\mathcal{O}_{P}\cong\mathcal{O}_{\mathbb{P}^{n-1}}\\
        &&&(\mathcal{O}_{N_A},1)\ar@{|->}[r]_{\Psi_{\mathrm{Kum}^{n-1}(A)}^{-1}}&\mathcal{O}_{\mathrm{Kum}^{n-1}(A)}.
        }
    \end{align*}
    So the structure sheaf $\mathcal{O}_P$ is in the $\mathcal{O}_{\mathrm{Kum}^{n-1}(A)}$-orbit. 
\end{remark}

 In this section, we investigate some known autoequivalences of generalized Kummer varieties and get some information about their actions on $\widetilde{\mathrm{H}}(\mathrm{Kum}^{n-1}(A),\mathbb{Q})$.

 \begin{passage}\label{10.1 (1)+}
     As in the notation of \cite[Theorem 4.1]{Meachan:15}, originated from the $\mathbb{P}^{n-2}$-functor $F_K$ for $n\geq3$, we get a derived autoequivalence $P_{F_K}\in\mathrm{Aut}(\mathrm{Kum}^{n-1}(A))$. As in the introduction part of \cite{Meachan:15}, the $\mathbb{P}^{n-2}$ twist is expected to have a strong connection to doing the Mukai flop equivalence twice. So we get $\widetilde{\mathrm{H}}(P_{F_K})(\beta)=\beta$.

     For the case $n=3$, we get a derived equivalence $FR=\mathrm{FM}_{\mathcal{E}^{1'}}$ by Example \ref{eg 4.19+}. Moreover, we have $\widetilde{\mathrm{H}}(\mathrm{FM}_{\mathcal{E}^{1'}})(\beta)=\widetilde{\alpha}$. However, we cannot have an easy computation result of $\widetilde{\mathrm{H}}(\mathrm{FM}_{\mathcal{E}^{1'}})$ as in \cite[Proposition 10.1]{Beckmann:22-2}.
 \end{passage}

For $\mathrm{Kum}^2(A)$, there are other autoequivalences given as the twist of spherical functors. An example is Horja's EZ-spherical twist \cite{Horja:05}. The exceptional \\divisor $i':\mathbb{P}(\Omega^1_A)\cong E\hookrightarrow A^{[3]}$ restricted to $E':=E|_{\mathrm{Kum}^2(A)}\hookrightarrow\mathrm{Kum}^2(A)$, still denoted by $i'$, fibers over the abelian surface via the $\mathbb{P}^1$-bundle $\pi':E'\to A$. One obtains the spherical functor $i'_*(\pi'^*(-)):\mathbf{D}^b(A)\to\mathbf{D}^b(\mathrm{Kum}^2(A))$ and the autoequivalence $T_{i'_*\pi'^*}\in\mathrm{Aut}(\mathbf{D}^b(\mathrm{Kum}^2(A))$ characterized for $\mathcal{F}\in\mathbf{D}^b(\mathrm{Kum}^2(A))$ by the distinguished triangle $i'_*\pi'^*\pi'_*i'^!\mathcal{F}\to\mathcal{F}\to T_{i'_*\pi'^*}(\mathcal{F})$ by \cite[Example 8.49 iv)]{Huybrechts:06}.

\begin{proposition}[Similar to {\cite[Proposition 10.2]{Beckmann:22-2}}]\label{prop 10.2+}
    The autoequivalence $ T_{i'_*\pi'^*}$ acts on the extended Mukai lattice via the isometry $-s_{\widetilde{\delta'}}$.
\end{proposition}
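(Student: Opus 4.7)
The strategy is to imitate Beckmann's argument for the analogous $\mathrm{K3}^{[n]}$-type result: use the defining distinguished triangle of the spherical twist to reduce the question to computing a single ``correction" endomorphism, and match this correction against the candidate isometry $-s_{\widetilde{\delta'}}$ on a generating set of the extended Mukai lattice. For $\mathcal{F}\in\mathbf{D}^b(\mathrm{Kum}^2(A))$, the triangle
\[
i'_*\pi'^*\pi'_*i'^!(\mathcal{F})\to\mathcal{F}\to T_{i'_*\pi'^*}(\mathcal{F})
\]
together with additivity of the Mukai vector on triangles, and the fact (Taelman) that $\widetilde{\mathrm{H}}(T_{i'_*\pi'^*})$ is a genuine isometry of $\widetilde{\mathrm{H}}(\mathrm{Kum}^2(A),\mathbb{Q})$, yields
\[
\widetilde{\mathrm{H}}(T_{i'_*\pi'^*})(v)=v-\widetilde{\mathrm{H}}(i'_*\pi'^*\pi'_*i'^!)(v).
\]
The claim is therefore equivalent to
\[
\widetilde{\mathrm{H}}(i'_*\pi'^*\pi'_*i'^!)(v)=2v-\frac{2\widetilde{b}(v,\widetilde{\delta'})}{\widetilde{b}(\widetilde{\delta'},\widetilde{\delta'})}\widetilde{\delta'}.
\]

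Next, I would evaluate the correction term on a spanning set of the $\mathbb{Q}$-vector space $\widetilde{\mathrm{H}}(\mathrm{Kum}^2(A),\mathbb{Q})$, namely the extended Mukai vectors $\widetilde{v}(\mathcal{L}_n')$ and $\widetilde{v}(\mathcal{L}_n'\otimes\mathcal{O}_{\mathrm{Kum}^2(A)}(-\delta'))$ for varying $\mathcal{L}\in\mathrm{Pic}(A)$ (whose span contains $\Lambda_{LB,X}+\mathbb{Z}\widetilde{\delta'}$, by (\ref{(6.2)+}) and the last paragraph of the proof of Lemma~\ref{lemma 6.2+}), together with the class $\widetilde{\alpha}$ realised in Example~\ref{eg 4.19+} as $\widetilde{v}(\mathcal{E}^{1'}_{p'\times\mathrm{Kum}^2(A)})$. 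For a line bundle $\mathcal{L}_n'$ one has $i'^*\mathcal{L}_n'=\pi'^*\mathcal{L}$, so that the projection formula yields
\[
\pi'_*i'^!\mathcal{L}_n'=\mathcal{L}\otimes\pi'_*\mathcal{O}_{E'}(E')[-1];
\]
since $\pi':E'\to A$ is a $\mathbb{P}^1$-bundle whose normal bundle restricts fibrewise to $\mathcal{O}_{\mathbb{P}^1}(-2)$, the fibrewise cohomology of $\mathcal{O}(-2)$, combined with Grothendieck--Riemann--Roch for $i'_*\pi'^*$, determines the Mukai vector of $i'_*\pi'^*\pi'_*i'^!\mathcal{L}_n'$ explicitly in terms of $\lambda=c_1(\mathcal{L})$ and $\delta'$.

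Finally, I would check the identity by splitting orthogonally as $\mathbb{Q}\widetilde{\delta'}\oplus\widetilde{\delta'}^\perp$. On the line $\mathbb{Q}\widetilde{\delta'}$ (accessible through the difference $\widetilde{v}(\mathcal{L}_n')-\widetilde{v}(\mathcal{L}_n'\otimes\mathcal{O}_{\mathrm{Kum}^2(A)}(-\delta'))$ by (\ref{(6.2)+})) the identity asserts $\widetilde{\mathrm{H}}(T_{i'_*\pi'^*})(\widetilde{\delta'})=\widetilde{\delta'}$, which mirrors the expectation that a twist along the exceptional divisor preserves the divisor class. On $\widetilde{\delta'}^\perp$ it asserts $\widetilde{\mathrm{H}}(T_{i'_*\pi'^*})=-\mathrm{id}$, which I expect to fall out of the $\mathbb{P}^1$-bundle computation above: any output of the correction term must lie in the subspace spanned by classes concentrated on $E'$, and after subtracting the $\mathbb{Q}\widetilde{\delta'}$-component the residue should equal exactly $2v$.

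The principal obstacle is the Grothendieck--Riemann--Roch computation in the middle step: one must track the shift $[-1]$ coming from $i'^!$, the Todd classes of $i'$ and $\pi'$, and the Euler characteristic $\chi(\mathbb{P}^1,\mathcal{O}(-2))=-1$ entering through $\pi'_*\mathcal{O}_{E'}(E')$. A secondary difficulty is pinning down the overall sign $-1$ in $-s_{\widetilde{\delta'}}$ (as opposed to $+s_{\widetilde{\delta'}}$); this requires a parity/orientation argument in the spirit of the last paragraph of the proof of Proposition~\ref{prop 7.1+ lemma 12.1++}, or equivalently a direct rank computation on a test object such as $\mathcal{E}^{1'}_{p'\times\mathrm{Kum}^2(A)}$ whose extended Mukai vector $\widetilde{\alpha}$ lies in $\widetilde{\delta'}^\perp$.
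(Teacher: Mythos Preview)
Your first displayed formula
\[
\widetilde{\mathrm{H}}(T_{i'_*\pi'^*})(v)=v-\widetilde{\mathrm{H}}(i'_*\pi'^*\pi'_*i'^!)(v)
\]
is where the argument breaks. Additivity of Mukai vectors on triangles holds in $\mathrm{H}^*$ and hence in $\mathrm{SH}$, but the passage from $\mathrm{SH}$ to $\widetilde{\mathrm{H}}$ goes through $\Psi$ and $\mathrm{Sym}^{n-1}$, which is not additive. More fundamentally, the symbol $\widetilde{\mathrm{H}}(i'_*\pi'^*\pi'_*i'^!)$ is undefined: Taelman's construction only assigns an action on $\widetilde{\mathrm{H}}$ to \emph{equivalences}, and $i'_*\pi'^*\pi'_*i'^!$ is not one. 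So you cannot subtract linearly in $\widetilde{\mathrm{H}}$, and your proposed GRR computation, which lives in $\mathrm{H}^*$, cannot be compared term-by-term against the candidate formula, which lives in $\widetilde{\mathrm{H}}$. Your later heuristic that ``any output of the correction term must lie in the subspace spanned by classes concentrated on $E'$'' is likewise a statement in $\mathrm{H}^*$, not in $\widetilde{\mathrm{H}}$.

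The paper avoids this entirely. It invokes the general fact (\cite[subsection 2.4]{Addington:16-1}) that a spherical twist acts on $K$-theory as a reflection: it negates the image of the spherical functor and fixes its orthogonal complement. Hence on $\widetilde{\mathrm{H}}$ the action is already known to be $\pm s_w$ for some $w$, and the only tasks are to identify $w$ and the sign. For the axis, the paper observes $i'_*\pi'^*(\mathcal{L}^{\otimes 2})\cong\mathcal{L}_3'|_{E'}$, so the negated $K$-classes are $[\mathcal{L}'_3]-[\mathcal{L}'_3\otimes\mathcal{O}(-E')]$; via $\overline{v(\mathcal{L})}=T(\widetilde{v}(\mathcal{L})^2/2)$ one computes $\widetilde{v}(\mathcal{L}'_3)-\widetilde{v}(\mathcal{L}'_3\otimes\mathcal{O}(-E'))=\widetilde{\delta'}$, pinning down $w=\widetilde{\delta'}$. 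For the sign, there is no test-object or parity argument: since $T_{i'_*\pi'^*}$ sends a line bundle to an object of positive rank, \cite[Lemma 4.2]{Beckmann:22-2} gives $\epsilon=1$, forcing determinant $+1$ on the $9$-dimensional space $\widetilde{\mathrm{H}}$ and hence $-s_{\widetilde{\delta'}}$ rather than $+s_{\widetilde{\delta'}}$. No GRR for $\pi'_*\mathcal{O}_{E'}(E')$ is needed.
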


\begin{proof}
    We employ \cite[subsection 2.4]{Addington:16-1}. \cite[Lemma 4.2]{Beckmann:22-2} gives $\epsilon(T_{i'_*\pi'^*}^{\widetilde{H}})=1$. For $\mathcal{L}$ a line bundle on $A$, one obtains $i'_*\pi'^*(\mathcal{L}^{\otimes 2})\cong\mathcal{L}_3'|_{E'}$ by passage \ref{Ln' from BKR}. This means that the classes $[\mathcal{L}'_3]-[\mathcal{L}'_3\otimes\mathcal{O}_{\mathrm{Kum}^2(A)}(-E')]$ in $K_{\mathrm{top}}^0(\mathrm{Kum}^2(A))$ are multiplied by $-1$ under the action of $T_{i'_*\pi'^*}$ and their orthogonal complement is left invariant, since spherical functors always induce involutions on cohomology as in the proof of \cite[Proposition 10.1]{Beckmann:22-2}. By \cite[Definition 4.1,(4.2) and (4.3)]{Beckmann:22-2}, we have an equality $$2(v(\mathcal{L}'_3)-v(\mathcal{L}'_3\otimes\mathcal{O}_{\mathrm{Kum}^2(A)}(-E')))=T(\widetilde{v}(\mathcal{L}'_3)^2-\widetilde{v}(\mathcal{L}'_3\otimes\mathcal{O}_{\mathrm{Kum}^2(A)}(-E'))^2)$$ in $\mathrm{SH}(\mathrm{Kum}^2(A),\mathbb{Q})$. By the computation 
    \begin{align*}
        \widetilde{v}(\mathcal{L}'_3)-\widetilde{v}(\mathcal{L}'_3\otimes\mathcal{O}_{\mathrm{Kum}^2(A)}(-E'))&=\alpha+c_1({L}'_3)+(\frac{3}{4}+\frac{{b}(c_1({L}'_3),c_1({L}'_3))}{2})\beta
        \\&-[\alpha+(c_1({L}'_3)-\delta')+(\frac{3}{4}+\frac{{b}(c_1({L}'_3-\delta'),c_1({L}'_3-\delta'))}{2})\beta]\\
        &=\delta'-\frac{1}{2}{b}(\delta',\delta')\beta=\delta'+3\beta=\widetilde{\delta'},
    \end{align*}
    we get $T_{i'_*\pi'^*}^{\widetilde{H}}(\widetilde{\delta'})=\widetilde{\delta'}$. Using the invariant orthogonal complement and the sign information, we get $T_{i'_*\pi'^*}^{\widetilde{H}}=-s_{\widetilde{\delta'}}$.
\end{proof}

Moreover, $\widetilde{\mathrm{H}}(T_{i'_*\pi'^*})(\beta)=\beta$.

Alternatively, one could have proven the proposition using \cite[Theorem 4.26]{Krug Ploog Sosna:18} and Proposition \ref{prop 7.1+ lemma 12.1++}.

\begin{remark}[Similar to {\cite[Remark 10.3]{Beckmann:22-2}}]\label{remark 10.3+}
    All cohomological involutions in Propositions \ref{prop 7.1+ lemma 12.1++} and \ref{prop 10.2+} have a similar technique about sign due to the sign convention from \cite[(2.2)]{Beckmann:22-2}.
\end{remark}

Actually, passage \ref{10.1 (1)+}, Proposition \ref{prop 10.2+} and Remark \ref{remark 10.3+} give an analogue of \cite[subsection 10.1]{Beckmann:22-2}. Since \cite{Addington Donovan Meachan:16} has not been generalized to abelian surfaces to date, \cite[subsection 10.2]{Beckmann:22-2} is not imitated here.

The last type of known derived autoequivalences of generalized Kummer varieties is those lifted from some derived autoequivalences of an abelian surface via subsection \ref{use Yuxuan 25-2} (viz \cite{Yang:25-2}) over an algebraically closed field of characteristic $0$. It will be investigated in subsection \ref{pf of th 12.2++} describing $d_{(n)}$ from Proposition \ref{prop 6.3+}.

\subsection{Generalized notations from \cite{Markman:24}}\label{Generalized notations from Markman}
In the rest of the section, we describe the homomorphism $d_{(n)}$ from \ref{prop 6.3+} and obtain a result similar to \cite[Theorem 7.4]{Beckmann:22-2}. Actually, since the set-theoretic map $\phi_{(n)}$ constructed in subsection \ref{use Yuxuan 25-2} is not explicit, it is hard to compute the image of $d_{(n)}$ for a general element of $\mathrm{DMon}(A)_{\mathrm{res}}$. So, the thought of proving \cite[Theorem 7.4]{Beckmann:22-2} is not valid in this case. We use the idea of Markman's proof for \cite[Theorem 12.2]{Markman:24} instead. We will follow the pace of \cite{Markman:24} until the end of this section.

In this subsection, we will generalize some notation in \cite[sections 1, 4, 5, 11 and 12]{Markman:24}.

\begin{passage}[Analogue to passage \ref{def of DMon(X)} for abelian surfaces]\label{def of DMon(A)}
    Define $As$ as a groupoid in which objects are abelian surfaces $A$. 

    In \cite[subsection 9.1]{Taelman:23} or \cite[section 6]{Beckmann:22-2}, a parallel transform operator $$f:\mathrm{H}^*(A_1,\mathbb{Q})\to\mathrm{H}^*(A_2,\mathbb{Q})$$ is the parallel transport in the local system $R\pi_*\mathbb{Q}$ associated to a path from a point $b_1$ to a point $b_2$ in the analytic base $B$ of a smooth proper family $\pi:\mathcal{A}\to B$ of abelian surfaces, not necessarily projective, with fibers $A_1=\mathcal{A}_{b_1}, A_2=\mathcal{A}_{b_2}$.

    Morphisms in $\mathrm{Hom}_{{As}}(A_1,A_2)$ are vector space isomorphisms in $\mathrm{Hom}(\mathrm{H}^*(A_1,\mathbb{Q}), \mathrm{H}^*(A_2,\mathbb{Q}))$ that are compositions of parallel transport operators and isomorphisms induced by derived equivalences. Note that a derived equivalence of abelian varieties has the Fourier-Mukai kernel being isomorphic to a sheaf by \cite[Proposition 9.53]{Huybrechts:06}.

    The \textit{derived monodromy group} is defined to be a subgroup $\mathrm{DMon}(A):=\mathrm{Hom}_{{As}}(A,A)$ of $\mathrm{GL}(\mathrm{H}^*(A,\mathbb{Q}))$. A \textit{derived parallel transport operator} is a linear transformation $$(\phi:\mathrm{H}^*(A_1,\mathbb{Q})\to\mathrm{H}^*(A_2,\mathbb{Q}))\in\mathrm{Hom}_{{IHSM}}(A_1,A_2).$$
\end{passage}

\begin{passage}
    Similarly to $\widetilde{\mathrm{H}}:IHSM\to Lat$ in \cite[(5.1)]{Markman:24}, we may use \cite[Definition 5.42 and Corollory 9.50]{Huybrechts:06} to define a functor $\widetilde{\mathrm{H}}:As\to Lat$. The functor $\widetilde{\mathrm{H}}$ sends an object $A$ to its rational LLV lattice $\widetilde{\mathrm{H}}(A,\mathbb{Q})=\mathrm{H}^{\mathrm{ev}}(A,\mathbb{Q})$. It also sends a morphism $$f\in\mathrm{Hom}_{{As}}(A_1,A_2)$$
    to an isometry $\widetilde{\mathrm{H}}(f):\widetilde{\mathrm{H}}(A_1,\mathbb{Q})\to\widetilde{\mathrm{H}}(A_2,\mathbb{Q})$, where $$\widetilde{\mathrm{H}}(f)=\begin{cases}
        f|_{\mathrm{H}^2(X,\mathbb{Q})}\oplus\mathrm{id}|_{U_{\mathbb{Q}}}\phantom{1}\mathrm{if}\phantom{1}f\phantom{1}\mathrm{is}\phantom{1}\mathrm{induced}\phantom{1}\mathrm{by}\phantom{1}\mathrm{a}\phantom{1}\mathrm{parallel}\phantom{1}\mathrm{transport}\phantom{1}\mathrm{operator}\\
        f|_{\mathrm{H}^{\mathrm{ev}}(X,\mathbb{Q})}\phantom{1}\mathrm{if}\phantom{1}f\phantom{1}\mathrm{is}\phantom{1}\mathrm{induced}\phantom{1}\mathrm{by}\phantom{1}\mathrm{a}\phantom{1}\mathrm{derived}\phantom{1}\mathrm{equivalence}.
    \end{cases}$$
    We get the homomorphism $\widetilde{\mathrm{H}}:\mathrm{DMon}(A)\to\mathrm{O}(\widetilde{\mathrm{H}}(A,\mathbb{Q}))$.
\end{passage}

\begin{passage}
    Let $\mathrm{Kum}^{n-1}(As)$ for $n\geq3$ be the full subgroupoid of $IHSM$, whose objects are (projective) irreducible holomorphic symplectic manifolds of generalized Kummer type, of dimension $2(n-1)$. We get the functor $-^{(n)}:As\to\mathrm{Kum}^{n-1}(As)$ sending an abelian surface $A$ to $\mathrm{Kum}^{n-1}(A)$, sending a parallel transport operator associated to the path in the base of a family of abelian surfaces to the parallel transport operator associated to the same path for the generalized Kummer varieties over the same base, and sending the morphism $f^{\mathrm{H}}$ associated to a derived equivalence $f$ to $(\Phi_{(f,\sigma)})^{\mathrm{H}}$ as in subsection \ref{use Yuxuan 25-2}.
\end{passage}

Using (\ref{(5.1)+}), we define $\iota(\widetilde{\mathrm{H}}(\Phi))\in\mathrm{O}(\widetilde{\mathrm{H}}(\mathrm{Kum}^{n-1}(A)),\mathbb{Q}))$ as the extension of $\widetilde{\mathrm{H}}(\Phi)$ leaving $\delta'$ invariant. It turns out that $\widetilde{\mathrm{H}}(\Phi^{(n)})\not=\iota(\widetilde{\mathrm{H}}(\Phi))$ in general. Associated with a class $\lambda\in{\mathrm{H}}^2(\mathrm{Kum}^{n-1}(A)),\mathbb{Q})$, $B_\lambda$ is an isometry of $\widetilde{\mathrm{H}}(\mathrm{Kum}^{n-1}(A)),\mathbb{Q})$, which corresponds to the action of tensorization by a line bundle $L$ with $\lambda=c_1(L)$.

We have formulas analogous to \cite[(4.9)-(4.10)]{Markman:24} as below.

\begin{passage}
    For a derived equivalence $\Phi\in\mathrm{Eq}(\mathbf{D}^b(A_1),\mathbf{D}^b(A_2))_{\mathrm{res}}$, we get the associated $$\Phi^{(n)}=\Phi_{(f,\sigma)}\in\mathrm{Eq}(\mathbf{D}^b(\mathrm{Kum}^{n-1}(A_1)),\mathbf{D}^b(\mathrm{Kum}^{n-1}(A_2)))$$
    in subsection \ref{use Yuxuan 25-2}. Using the characteristic $\chi$ of $\mathfrak{S}_n$ and the BKR equivalences, we get $\Phi_\chi^{(n)}\in\mathrm{Aut}(\mathbf{D}^b(\mathrm{Kum}^{n-1}(A_i)))$ for $i=1,2$. It yields 
\begin{equation}\label{(4.9)++}
    \Phi_\chi^{(n)}\circ\Phi^{(n)}=\Phi^{(n)}\circ\Phi_\chi^{(n)}
\end{equation}
by using \cite[(4.2)]{Markman:24} and the expression of $\phi_{(n)}$ in subsection \ref{use Yuxuan 25-2}. Equations (\ref{(6.2)+}) yields
\begin{equation}\label{(4.10)++}
    \Phi_\chi^{(n)}(\mathcal{O}_{\mathrm{Kum}^{n-1}(A)})\cong\mathcal{O}_{\mathrm{Kum}^{n-1}(A)}(-\delta').
\end{equation}
\end{passage}

We shall use the notations and results in the rest of \cite[section 4]{Markman:24} and the whole \cite[sections 5 and 6]{Markman:24} (about the derived monodromy group, the LLV line developed by using Hochschild cohomology and obstruction maps, which is equivalent to the extended Mukai vector \cite{Beckmann:22-2} up to a constant) 
in subsections \ref{pf of lemma 12.1++} and \ref{pf of th 12.2++} below.

We have analogous notations as in \cite[subsubsection 11.1.1]{Markman:24}.

\begin{passage}\label{11.1++ notation}
    Let $\Phi:\mathbf{D}^b(M)\to\mathbf{D}^b(A)$ be a derived equivalence of abelian surfaces. Assume that for every point $p\in M$, the image $\Phi(\mathcal{O}_p)$ of the sky-scraper sheaf of $p$ is isomorphic to a vector bundle of Mukai vector $v$ on $A$. Let $\{p_i\}^n_{i=1}$ be a set of $n$ distinct points on $M$ with $\sum\limits_{i=1}^{n}p_i=0$ and denote by $G_i$ the vector bundle isomorphic to $\Phi(\mathcal{O}_{p_i})$. Let $Z\subset M$ be the length $n$ zero-dimensional subscheme of $M$ with support $\{p_i\}^n_{i=1}$. Denote by $z$ the corresponding point of $\mathrm{Kum}^{n-1}(M)$. Note that the object $\mathcal{O}_{p_{\sigma(1)}}\boxtimes\dots\boxtimes\mathcal{O}_{p_{\sigma(n)}}$ is isomorphic to the sky-scraper sheaf $$\mathcal{O}_{(p_{\sigma(1)},\dots,p_{\sigma(n)})}\in\mathbf{D}^b(N_M).$$ 
    We see that the sky-scraper sheaf $\mathcal{O}_z$ corresponds via the BKR equivalence $\Psi_{\mathrm{Kum}^{n-1}(M)}$ to the $\mathfrak{S}_n$-equivariant object $$\underset{\sigma\in\mathfrak{S}_n}{\overset{}{\oplus}}(\mathcal{O}_{p_{\sigma(1)}}\boxtimes\dots\boxtimes\mathcal{O}_{p_{\sigma(n)}})$$ over $N_M$. The equivalence $\Phi^{\boxtimes(n-1)}:\mathbf{D}^b_{\mathfrak{S}_n}(N_M)\to\mathbf{D}^b_{\mathfrak{S}_n}(N_A)$ takes the above displayed object to 
\begin{equation}\label{(11.2)++}
    G_z:=\underset{\sigma\in\mathfrak{S}_n}{\overset{}{\oplus}}(G_{p_{\sigma(1)}}\boxtimes\dots\boxtimes G_{p_{\sigma(n)}})
\end{equation}
with its natural $\mathfrak{S}_n$-linearization $\rho$.

Analogously to \cite[(4.5)]{Markman:24}, we get two projections $A^n\xleftarrow[]{p}I^n(A)\xrightarrow[]{q}A^{[n]}$ from Haiman's isospectral Hilbert scheme $I^n(A):=(A^{[n]}\times_{S^{n-1}A}A^n)_{\mathrm{red}}$ as in \cite[Lemma 6.2]{Meachan:15}. Using the inclusion $k:\mathrm{Kum}^{n-1}(A)\times N_A\hookrightarrow I^n(A)$ in \cite[Lemma 6.2]{Meachan:15}, we get morphisms $N_A\xleftarrow[]{\overline{p}}k^*(I^n(A))\xrightarrow[]{\overline{q}}\mathrm{Kum}^{n-1}(A)$ from $k^*(I^n(A))\subset \mathrm{Kum}^{n-1}(A)\times N_A$.
\end{passage}

In addition, we can generalize the notation to develop a result analogous to \cite[Theorem 12.2]{Markman:24}, \cite[Theorem 7.4]{Beckmann:22-2}.
\begin{passage}\label{th 12.2++ th 7.4+ notation}
    Let $v=(1,0,-n)$ be the Mukai vector of the ideal sheaf of a length $n\geq3$ subscheme of an abelian surface $A$. Then the co-rank 1 sublattice $v^{\perp}$ of the Mukai lattice $\widetilde{\mathrm{H}}(A,\mathbb{Z})$ contains $\mathrm{H}^2(A,\mathbb{Z})$ through $\theta$ in (\ref{(5.1)+}) by \cite[Theorem 1.13]{Yoshioka:14}. Recall the homomorphism $\theta:\mathrm{H}^2(A,\mathbb{Z})\to\mathrm{H}^2(\mathrm{Kum}^{n-1}(A),\mathbb{Z})$ in (\ref{(5.1)+}).
    
    Denote also by $\theta:\mathrm{H}^2(A,\mathbb{Q})\to\mathrm{H}^2(\mathrm{Kum}^{n-1}(A),\mathbb{Q})$ the induced homomorphism. Let $\widetilde{\theta}:\widetilde{\mathrm{H}}(A,\mathbb{Q})\to\widetilde{\mathrm{H}}(\mathrm{Kum}^{n-1}(A),\mathbb{Q})$ be the extension mapping $\alpha$ to $\alpha$ and $\beta$ to $\beta$.

    Consider the natural inclusion $\widetilde{\mathrm{H}}(A,\mathbb{Q})\xrightarrow[]{\widetilde{\theta}}\widetilde{\theta}(\widetilde{\mathrm{H}}(A,\mathbb{Q}))\hookrightarrow\widetilde{\mathrm{H}}(\mathrm{Kum}^{n-1}(A),\mathbb{Q})$ of quadratic spaces from (\ref{(5.1)+}).

    Let $\iota:\mathrm{O}(\widetilde{\mathrm{H}}(A,\mathbb{Q}))\to\mathrm{O}(\widetilde{\mathrm{H}}(\mathrm{Kum}^{n-1}(A),\mathbb{Q}))$ be the homomorphism that extends an isometry by acting as the identity on the orthogonal complement $\mathrm{span}_{\mathbb{Q}}\{(0,\delta',0)\}$ of the image of $\widetilde{\theta}$. Explicitly, $\iota(g)(\widetilde{\theta}(\lambda))=\widetilde{\theta}(g(\lambda)),\lambda\in\widetilde{\mathrm{H}}(A,\mathbb{Q})$ and $\iota(g)(\delta')=\delta'$ for all $g\in\mathrm{O}(\widetilde{\mathrm{H}}(A,\mathbb{Q}))$.
    
\end{passage}

\subsection{Second proof of Proposition \ref{prop 7.1+ lemma 12.1++}}\label{pf of lemma 12.1++}
Markman \cite{Markman:24} gives an alternative way to define the extended Mukai line (LLV line, denoted by $\ell$). We have another way to prove Proposition \ref{prop 7.1+ lemma 12.1++} using the notations and results in the previous subsection.

\begin{proof}[Proof of Proposition \ref{prop 7.1+ lemma 12.1++}]
    \textit{Step 1:} Consider $\ell(\mathbb{C}_p)$.
    
    Let $e:I^n(A)\hookrightarrow A^n\times A^{[n]}$ be the inclusion. It induces the inclusion $$e':k^*(I^n(A))\hookrightarrow N_A\times\mathrm{Kum}^{n-1}(A).$$ Since $I^n(A)$ and $k^*(I^n(A))$ are normal, Cohen-Macaulay and Gorenstein varieties by \cite[Theorem 3.1]{Haiman:01}, $\omega_{k^*(I^n(A))}$ is a line bundle. The functor $\Psi^{-1}_{A^{[n]}}$ and $\Psi^{-1}_{\mathrm{Kum}^{n-1}(A)}$ has kernel $e_*\mathcal{O}_{I^n(A)}, e'_*\mathcal{O}_{k^*(I^n(A))}$, respectively, by \cite[Lemma 6.2]{Meachan:15}. Applying \cite[Definition 5.7 and Proposition 5.9]{Huybrechts:06}, the functor $\Psi_{A^{[n]}}$ has Fourier-Mukai kernel $$(e_*\mathcal{O}_{I^n(A)})^{\vee}[2n]=e_*\omega_{I^n(A)}$$ with its natural linearization. The functor $\Psi_{\mathrm{Kum}^{n-1}(A)}$ has Fourier-Mukai kernel $$(e'_*\mathcal{O}_{k^*I^n(A)})^{\vee}[2(n-1)]=e'_*\omega_{k^*I^n(A)}$$ with its natural linearization.

    Let $p\in\mathrm{Kum}^{n-1}(A)$ correspond to a reduced subscheme supported on the subset $\{x_1,\dots,x_n\}\subset A$ with $\Sigma x_i=0$, $x_i\not=x_j$ for $i\not=j$. The restriction of $\omega_{k^*(I^n(A))}$ to the $\mathfrak{S}_n$-orbit of $(x_1,\dots,x_n)$ is trivial by the definition of $I^n(A),k^*(I^n(A))$. Hence, $$\Psi_{\mathrm{Kum}^{n-1}(A)}(\mathbb{C}_p)\cong\underset{\sigma\in\mathfrak{S}_n}{\overset{}{\oplus}}(\mathbb{C}_{(x_{\sigma(1)},\dots,x_{\sigma(n)})},\rho)$$ for a linearization $\rho$, which is isomorphic to $\underset{\sigma\in\mathfrak{S}_n}{\overset{}{\oplus}}(\mathbb{C}_{(x_{\sigma(1)},\dots,x_{\sigma(n)})},\chi\otimes\rho)$ by \cite[Remark 4.1]{Markman:24}. It gives $\Phi_\chi^{(n)}(\mathbb{C}_p)\cong\mathbb{C}_p$. So, $\widetilde{\mathrm{H}}(\Phi^{(n)}_\chi)$ maps the line $\ell(\mathbb{C}_p)=\mathrm{span}_{\mathbb{Q}}\{(0,0,1)\}$ to itself, $\mathrm{Sym}^{n-1}(\widetilde{\mathrm{H}}(\Phi^{(n)}_\chi))$ maps $\Psi(v(\mathbb{C}_p))$ to itself, and $\mathrm{SH}(\Phi^{(n)}_\chi)(v(\mathbb{C}_p))=v(\mathbb{C}_p)$ using the notation in \cite[Theorem 5.3]{Markman:24}.
    
    Moreover, if $n-1$ is even, it follows that $\mathrm{det}(\widetilde{\mathrm{H}}(\Phi^{(n)}_\chi))=1$ by the equality $$\mathrm{Sym}^{n-1}(\widetilde{\mathrm{H}}(\Phi^{(n)}_\chi)\circ\Psi=\Psi\circ[\mathrm{det}(\widetilde{\mathrm{H}}(\Phi^{(n)}_\chi))\mathrm{SH}(\Phi^{(n)}_\chi)]$$ in \cite[Theorem 5.3]{Markman:24}. If $n-1$ is odd, it follows that $\ell(\mathbb{C}_p)$ is an eigenline of $\widetilde{\mathrm{H}}(\Phi^{(n)}_\chi)$ with eigenvalue $1$, since the eigenvalue of $\widetilde{\mathrm{H}}(\Phi^{(n)}_\chi)$ is $\pm1$ as well as an $(n-1)$-th root of unity by the equality $\mathrm{Sym}^{n-1}(\widetilde{\mathrm{H}}(\Phi^{(n)}_\chi))\circ\Psi=\Psi\circ\mathrm{SH}(\Phi^{(n)}_\chi)$ in \cite[Theorem 5.3]{Markman:24}.

    \textit{Step 2:} Introduce $V$.
    
    Note that the action of $\widetilde{\mathrm{H}}(\Phi_\chi^{(n)})$ is independent of the complex structure of the abelian surface $A$. Furthermore, by equation (\ref{(4.9)++}), $\widetilde{\mathrm{H}}(\Phi_\chi^{(n)})$ commutes with $$\widetilde{\mathrm{H}}(g^{(n)}):\widetilde{\mathrm{H}}(\mathrm{Kum}^{n-1}(A))\to\widetilde{\mathrm{H}}(\mathrm{Kum}^{n-1}(A)),$$
    for every $g\in\mathrm{DMon}(A)_\mathrm{res}$. Let $V$ be the subspace of $\widetilde{\mathrm{H}}(\mathrm{Kum}^{n-1}(A))$ spanned by the $\mathrm{DMon}(A)$-orbit of the line $\ell(\mathbb{C}_p)=\mathrm{span}_{\mathbb{Q}}\{(0,0,1)\}$. The subspace $V$ is $\widetilde{\mathrm{H}}(\Phi_\chi^{(n)})$-invariant and $\widetilde{\mathrm{H}}(\Phi_\chi^{(n)})$ acts on $V$ as the identity if $n-1$ is odd. If $n-1$ is even, the subspace $V$ is contained in the $1$ or $-1$ eigenspace of $\widetilde{\mathrm{H}}(\Phi_\chi^{(n)})$.

    \textit{Step 3:} Consider fibration $i_*i^*\mathcal{L}$.
    
    As in \cite[subsections 5.1 and 5.2]{O'Grady:24}, we assume that $A$ is an abelian surface that contains an elliptic curve $E_A$. Let $$\pi^A:A\to E:=A/E_A$$ be the quotient map. That is, $A$ admits an elliptic fibration $\pi^A$. It induces a Lagrangian fibration
    \begin{equation*}
        \begin{aligned}
            \pi_A:\mathrm{Kum}^{n-1}(A)&\to|\mathcal{O}_E(n0_E)|\\
            [Z]&\mapsto\pi^A(|Z|),
        \end{aligned}
    \end{equation*}
    where $|Z|$ is the cycle associated with the scheme $Z$. Denote by $E_i:=(\pi^A)^{-1}(p_i)$ the $n$ distinct fibers with $\sum p_i=0$. Then the abelian variety $A_F:=E_1\times\dots\times E_n$ is embedded as a Lagrangian subvariety of $\mathrm{Kum}^{n-1}(A)$. Let $i:A_F\hookrightarrow\mathrm{Kum}^{n-1}(A)$ be the embedding. Set $\widetilde{A_F}:=\underset{\sigma\in\mathfrak{S}_n}{\overset{}{\cup}}E_{\sigma(1)}\times\dots\times E_{\sigma(n)}$. We obtain the equivariant embedding $$\widetilde{i}:\widetilde{A_F}\to k^*(I^n(A))$$
    into the universal subscheme $k^*(I^n(A))\subset\mathrm{Kum}^{n-1}(A)\times N_A$. Let
    $$N_A\xleftarrow[]{\overline{p}}k^*(I^n(A))\xrightarrow[]{\overline{q}}\mathrm{Kum}^{n-1}(A)$$ be the restriction to $k^*(I^n(A))$ of two projections $A^n\xleftarrow[]{{p}}I^n(A)\xrightarrow[]{{q}}A^{[n]}$. Then $\widetilde{A_F}$ is contained in the open subset where $\overline{p}$ is an isomorphism and $\overline{q}$ is \'etale. Given a line bundle $\mathcal{L}$ on $\mathrm{Kum}^{n-1}(A)$, then $L\overline{q}^*(i_*i^*\mathcal{L})\cong\widetilde{i}_*\widetilde{i}^*(L\overline{q}^*\mathcal{L})$ in $\mathbf{D}^b_{\mathfrak{S}_n}(k^*(I^n(A)))$. Denote the linearization of $L\overline{q}^*\mathcal{L}$ by $\lambda$. Then $$\widetilde{i}_*(\widetilde{i}^*L\overline{q}^*\mathcal{L},\widetilde{i}^*\lambda)\cong\widetilde{i}_*(\widetilde{i}^*L\overline{q}^*\mathcal{L},\chi\otimes\widetilde{i}^*\lambda)$$
    (as in \cite[Note 7.11]{Magni:22}) by \cite[Remark 4.1]{Markman:24}, as for different permutations $\sigma\in\mathfrak{S}_n$, the irreducible components $E_{\sigma(1)}\times\dots\times E_{\sigma(n)}$ are disjoint. Furthermore, using the notation in \cite[subsection 3.3]{Ploog:05}, $$R\overline{p}^{\mathfrak{S}_n,\Delta}_*\widetilde{i}_*(\widetilde{i}^*L\overline{q}^*\mathcal{L},\chi\otimes\widetilde{i}^*\lambda)\cong\chi\otimes R\overline{p}^{\mathfrak{S}_n,\Delta}_*\widetilde{i}_*(\widetilde{i}^*L\overline{q}^*\mathcal{L},\chi\otimes\widetilde{i}^*\lambda),$$ since $\overline{p}$ is an equivariant morphism with respect to the isomorphism $\mathrm{id}_{\mathfrak{S}_n}$. It follows that $\Psi_{\mathrm{Kum}^{n-1}(A)}(i_*i^*\mathcal{L})\cong\chi\otimes\Psi_{\mathrm{Kum}^{n-1}(A)}(i_*i^*\mathcal{L})$. Thus, $\Phi_\chi^{(n)}(i_*i^*\mathcal{L})\cong i_*i^*\mathcal{L}$. 
    
    Applying \cite[Lemma 6.25]{Markman:24} for $i:A_F\hookrightarrow\mathrm{Kum}^{n-1}(A)$ above, $\tau=0$ and $f$ being the class of a fiber of $\pi^A$ in $\mathrm{H}^2(A,\mathbb{Z})$ considered as the subspace of $\mathrm{H}^2(\mathrm{Kum}^{n-1}(A),\mathbb{Z})$ orthogonal to $\delta'$, we get $\ell(\mathcal{O}_{A_F})=\mathrm{span}_{\mathbb{Q}}\{(0,f,0)\}$. By the projection formula, $$\ell(i_*i^*\mathcal{L})=\ell(i_*\mathcal{O}_{A_F}\otimes\mathcal{L})=\mathrm{span}_{\mathbb{Q}}\{B_{c_1(\mathcal{L})}(0,f,0)\}=\mathrm{span}_{\mathbb{Q}}\{(0,f,(f,c_1(\mathcal{L})))\}.$$ 
    We see that $\widetilde{\mathrm{H}}(\Phi_\chi^{(n)})$ leaves the line $\mathrm{span}_{\mathbb{Q}}\{(0,f,s)\}$ invariant, whenever $f$ is an isotropic primitive class in $\mathrm{H}^2(A,\mathbb{Z})$ and there is a complex structure on $A$ such that $f$ is the class of a fiber of an elliptic fibration and there exists a line bundle $\mathcal{L}$ on $\mathrm{Kum}^{n-1}(A)$ such that $s=(f,c_1(\mathcal{L}))$. We conclude by using the definition of $V$ that $\widetilde{\mathrm{H}}(\Phi_\chi^{(n)})$ acts by multiplying by $1$ or $-1$ on $W:=\{(0,0,1),(0,\delta',0)\}^\perp$.

    \textit{Step 4:} Prove $W\subset V$.
    
    Assume that the elliptic fibration $\pi^A$ has a section and all its fibers are integral. Then $A$ is isomorphic to the moduli space $M_H(0,f,0)$ of $H$-stable sheaves $F$ of rank $0$, $c_1(F)=f$ and $\chi(F)=0$, for a suitable choice of a polarization $H$ on $A$ by \cite{Bridgeland:98}. Furthermore, by \cite{Bridgeland:98}, there exists a universal sheaf $\mathcal{P}$ over $M_H(0,f,0)\times A$ and $$\Phi_{\mathcal{P}}:\mathbf{D}^b(M_H(0,f,0))\to\mathbf{D}^b(A)$$
    is an equivalence. Choose the length $n$ subscheme $p$ of $\mathrm{Kum}^{n-1}(A)$, as in notation in Step 3, consists of $n$ distinct points $p_i$ of intersection of the section with the fibers $E_i,1\leq i\leq n$. Since the Fourier-Mukai transform $$\Phi_{\mathcal{P}}:\mathbf{D}^b(A)\cong\mathbf{D}^b(M_H(0,f,0))\to\mathbf{D}^b(A)$$
    satisfies condition $\Phi_{\mathcal{P}}^{(n)}(\mathbb{C}_{p_i})\cong\mathcal{O}_{E_i}$, we get $\Phi_{\mathcal{P}}^{(n)}(\mathbb{C}_{p})\cong i_*\mathcal{O}_{A_F}$.%
    \footnote{By the definition of the universal sheaf $\mathcal{P}$, $\Phi_{\mathcal{P}}$ can be lifted to a derived autoequivalence in $\mathrm{Aut}(\mathbf{D}^b(\mathrm{Kum}^{n-1}(A)))$, denoted by $\Phi_{\mathcal{P}}^{(n)}$.}%
    We conclude that $W$ is contained in $V$.

    \textit{Step 5:} Conclude. 
    
    First, we need to claim that for $G_z$ in (\ref{(11.2)++}), $\ell (G_z)\not\subset W$. Using the notation in passage \ref{11.1++ notation} for the derived equivalence in Step 4, we get $$\Phi_{\mathcal{P}}:\mathbf{D}^b(A)\cong\mathbf{D}^b(M_H(0,f,0))\to\mathbf{D}^b(A)$$ mapping $\mathcal{O}_{p_i}$ to $\Phi_{\mathcal{P}}(\mathcal{O}_{p_i})\cong G_i$. So $(\Phi_{\mathcal{P}})^{\widetilde{\mathrm{H}}}(0,0,1)=(0,f,0)$. We denote $M$ by abbreviating $M_H(0,f,0)$. The derived equivalence yields $$M\times M^{\vee}\cong A\times A^{\vee}$$
    by \cite[Proposition 9.39]{Huybrechts:06} by Orlov. So $M$ contains some elliptic curve $E_M$, since $A$ contains an elliptic curve $E_A$. So $M$ admits an elliptic fibration $\pi^M$. Using \cite[Lemma 6.25]{Markman:24} similar to Step 3, we get $\ell(\mathcal{O}_z)=\mathrm{span}_{\mathbb{Q}}\{(0,f_M,0)\}$, where $f_M$ is the class of a fiber of $\pi^M$ in $\mathrm{H}^2(M,\mathbb{Z})$ considered as the subspace of $\mathrm{H}^2(\mathrm{Kum}^{n-1}(M),\mathbb{Z})$ orthogonal to $\delta_M'$. Using the notation in passage \ref{11.1++ notation}, $$\ell(G_z)=(\Phi_{\mathcal{P}}^{\boxtimes (n-1)})^{\widetilde{\mathrm{H}}}\circ(\Psi_{\mathrm{Kum}^{n-1}(A)})^{\widetilde{\mathrm{H}}}\ell(\mathcal{O}_z).$$
    By the universal bundle used in the kernel of $\Psi_{\mathrm{Kum}^{n-1}(A)}$, $$\ell(G_z)=(\Phi_{\mathcal{P}}^{\boxtimes (n-1)})^{\widetilde{\mathrm{H}}}\mathrm{span}_{\mathbb{Q}}\{(0,f_M,0)\}.$$ Since $G_z$ is in the $k(x)$-orbit with $\mathrm{rank}(G_z)\not=0$, we get $(\Phi_{\mathcal{P}}^{\boxtimes (n-1)})^{\widetilde{\mathrm{H}}}(0,f_M,0)$ has the degree 0 component that is nonzero by \cite[Lemma 4.13]{Beckmann:22-2}. So $\ell (G_z)\not\subset W$.

    By the definition of $V$, $\ell(G_z)\subset V$. Since $\mathrm{codim}_{\widetilde{\mathrm{H}}(\mathrm{Kum}^{n-1}(A),\mathbb{Q})}W=2$ by the definition of $W$ and $\ell (G_z)\not\subset W$, we get $\mathrm{codim}_{\widetilde{\mathrm{H}}(\mathrm{Kum}^{n-1}(A),\mathbb{Q})}V\leq 1$. Equation (\ref{(4.10)++}) implies that $\widetilde{\mathrm{H}}(\Phi_\chi^{(n)})$ interchanges the two lines $\ell(\mathcal{O}_{\mathrm{Kum}^{n-1}(A)})=\mathrm{span}_{\mathbb{Q}}\{(4,0,n)\}$ and $\ell(\mathcal{O}_{\mathrm{Kum}^{n-1}(A)(-\delta')})=\mathrm{span}_{\mathbb{Q}}\{(4,-4\delta',-3n)\}$. Hence, $V$ is a codimension $1$ subspace of $\widetilde{\mathrm{H}}(\mathrm{Kum}^{n-1}(A),\mathbb{Q})$. Furthermore, $\widetilde{\mathrm{H}}(\Phi_\chi^{(n)})$ is neither $\mathrm{id}$, nor $-\mathrm{id}$. \\Hence, $\widetilde{\mathrm{H}}(\Phi_\chi^{(n)})$ is the reflection $R_V$ in $V$ or $-R_V$. The reflection $R_V$ must map the element $(4,0,n)$ to $\pm(4,-4\delta',-3n)$, since the two have the same self-intersection. Furthermore, $(4,0,n)+R_V(4,0,n)\in V$. If $R_V(4,0,n)=-(4,-4\delta',-3n)$, then $$(4,0,n)+R_V(4,0,n)=(0,4\delta',4n)\in V.$$
    So $V=(0,0,1)^{\perp}$ as $V\supset W,\ell(\mathbb{C}_p)$. This contradicts the fact that $V$ contains LLV lines of vector bundles of positive rank using $V\supset\ell(G_z)\not\subset W$. Hence, $R_V$ maps $(4,0,n)$ to $(4,-4\delta',-3n)$ and $V=(0.\delta',n)^{\perp}$, since $$(0,\delta',n)=\frac{1}{4}[(4,0,n)-R_V(4,0,n)].$$
    If $n-1$ is even ($n$ is odd), we observe $\mathrm{det}(\widetilde{\mathrm{H}}(\Phi_{\chi}^{(n)}))=1$ in Step 1 and $$\widetilde{\mathrm{H}}(\Phi_{\chi}^{(n)})=-R_V.$$ If $n-1$ is odd ($n$ is even), we observe that $\widetilde{\mathrm{H}}(\Phi_{\chi}^{(n)})$ maps $(0,0,1)$ to itself in Step 1 and therefore $\widetilde{\mathrm{H}}(\Phi_{\chi}^{(n)})=R_V$. 
\end{proof}

\subsection{From abelian surfaces to generalized Kummer varieties}\label{pf of th 12.2++}
We can now describe the homomorphism $d_{(n)}$ from Proposition \ref{prop 6.3+} using the notations in passage \ref{th 12.2++ th 7.4+ notation}.
\begin{theorem}[Similar to {\cite[Theorem 7.4]{Beckmann:22-2}} or {\cite[Theorem 12.2]{Markman:24}}]\label{th 7.4+ th 12.2++}
    The homomorphism $d_{(n)}:\mathrm{DMon}(A)_\mathrm{res}\to\mathrm{DMon}(\mathrm{Kum}^{n-1}(A))$ is given by $$g\mapsto\mathrm{det}(g)^nB_{-{\delta'}/{2}}\circ\iota(g)\circ B_{{\delta'}/{2}}.$$
\end{theorem}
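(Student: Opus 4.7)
I would verify the formula on a generating set of $\mathrm{DMon}(A)_{\mathrm{res}}$, using that both sides are group homomorphisms in $g$. The left side is a homomorphism by Proposition~\ref{prop 6.3+}. The right side is a homomorphism because $\iota$ is one, the character $g\mapsto\det(g)^n$ lands in the central subgroup $\{\pm1\}$ of $\mathrm{O}(\widetilde{\mathrm{H}}(\mathrm{Kum}^{n-1}(A),\mathbb{Q}))$, and conjugation by the fixed isometry $B_{\delta'/2}$ preserves composition. By the definition in subsection~\ref{use Yuxuan 25-2}, $\mathrm{DMon}(A)_{\mathrm{res}}$ is generated by parallel transport operators from families of abelian surfaces together with $\Phi^{\widetilde{\mathrm{H}}}$ for $\Phi\in\mathrm{Aut}(\mathbf{D}^b(A))_{\mathrm{res}}$, so it suffices to check the two types of generators separately.

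For a parallel transport operator $\gamma$ coming from a family $\pi\colon\mathcal{A}\to B$, the lift $\gamma^{(n)}$ is by construction the parallel transport of the associated family $\mathrm{Kum}^{n-1}(\mathcal{A})\to B$. Since $\delta'$ is a universal class it is preserved, so $\gamma^{(n)}$ commutes with $B_{\pm\delta'/2}$, and $\gamma$ acts trivially on $\alpha$ and $\beta$. Moreover $\det(\gamma)=1$ because a connected family has monodromy in the identity component of $\mathrm{O}$. The formula therefore collapses to $\widetilde{\mathrm{H}}(\gamma^{(n)})=\iota(\gamma)$, which is the definition.

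For the derived equivalence case, set $g:=\widetilde{\mathrm{H}}(\Phi)$ for some $\Phi\in\mathrm{Aut}(\mathbf{D}^b(A))_{\mathrm{res}}$. Following the strategy of subsection~\ref{pf of lemma 12.1++}, I determine $\widetilde{\mathrm{H}}(\Phi^{(n)})$ by its action on a set of extended Mukai vectors spanning $\widetilde{\mathrm{H}}(\mathrm{Kum}^{n-1}(A),\mathbb{Q})$. On the class $\widetilde{v}(\mathcal{L}'_n)$ of a line bundle lifted via the BKR equivalence (passage~\ref{Ln' from BKR}), the construction of $\phi_{(n)}$ in subsection~\ref{use Yuxuan 25-2} combined with the compatibility \cite[(4.2)]{Beckmann:22-2} of derived equivalences with the extended Mukai vector gives
$$
\widetilde{\mathrm{H}}(\Phi^{(n)})\bigl(\widetilde{v}(\mathcal{L}'_n)\bigr)=\widetilde{v}\bigl((\Phi(\mathcal{L}))'_n\bigr),
$$
and a direct expansion using (\ref{Lambda LB,X}) and the formulas for $B_{\pm\delta'/2}$ identifies this with $B_{-\delta'/2}\,\iota(g)\,B_{\delta'/2}$ applied to $\widetilde{v}(\mathcal{L}'_n)$. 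On $\beta=\widetilde{v}(\mathcal{O}_p)$ for a point $p\in\mathrm{Kum}^{n-1}(A)$ with reduced support, passage~\ref{11.1++ notation} identifies $\Phi^{(n)}(\mathcal{O}_p)$ with $G_z$ from~(\ref{(11.2)++}), and \cite[Lemma~6.25]{Markman:24} computes $\ell(G_z)$ from $g\bigl(\ell(\mathcal{O}_{x_i})\bigr)\subset\widetilde{\mathrm{H}}(A,\mathbb{Q})$; this determines $\widetilde{\mathrm{H}}(\Phi^{(n)})(\beta)$ up to a sign.

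The remaining sign is pinned down by combining the twisted compatibility diagram \cite[(2.2)]{Beckmann:22-2} (which introduces $\epsilon(\Phi^{\widetilde{\mathrm{H}}})$ when $n-1$ is even) with the LLV relation $\mathrm{Sym}^{n-1}\circ\psi=\psi\circ(\det\cdot\mathrm{SH})$ recalled in \cite[Theorem~5.3]{Markman:24}: comparing this relation for $\Phi^{(n)}$ and for the right-hand-side candidate isolates the sign as exactly $\det(g)^n$. Since the extended Mukai vectors $\widetilde{v}(\mathcal{L}'_n)$ together with $\beta$ span $\widetilde{\mathrm{H}}(\mathrm{Kum}^{n-1}(A),\mathbb{Q})$ (the former already span $\Lambda_{LB,X}\otimes\mathbb{Q}$, and $\beta$ is transverse to that subspace), the two isometries agree. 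The principal obstacle is this last sign extraction; the remainder is a bookkeeping calculation with $B$-field transforms, BKR compatibility, and the explicit formula~(\ref{Lambda LB,X}).
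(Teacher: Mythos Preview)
Your overall shape (reduce to generators, treat parallel transport and autoequivalences separately) matches the paper's, but the autoequivalence step has a genuine gap. You write
\[
\widetilde{\mathrm{H}}(\Phi^{(n)})\bigl(\widetilde{v}(\mathcal{L}'_n)\bigr)=\widetilde{v}\bigl((\Phi(\mathcal{L}))'_n\bigr)
\]
and then propose to identify the right side with $B_{-\delta'/2}\,\iota(g)\,B_{\delta'/2}(\widetilde{v}(\mathcal{L}'_n))$ by ``direct expansion''. The problem is that for a general $\Phi\in\mathrm{Aut}(\mathbf{D}^b(A))_{\mathrm{res}}$, the object $\Phi(\mathcal{L})$ is not a line bundle, so $(\Phi(\mathcal{L}))'_n$ is just the BKR-lift of $\Phi(\mathcal{L})^{\boxtimes n}|_{N_A}$, and you have no formula for its extended Mukai vector in terms of $v(\Phi(\mathcal{L}))\in\widetilde{\mathrm{H}}(A,\mathbb{Z})$. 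Knowing that formula is precisely the content of the theorem, so this step is circular unless you separately compute how the BKR equivalence transforms Mukai vectors --- which you do not do. Relatedly, your span claim is off: the classes $\widetilde{v}(\mathcal{L}'_n)$ have $c_1\in\theta(\mathrm{H}^2(A,\mathbb{Z}))$, so together with $\beta$ they span only $\widetilde{\theta}(\widetilde{\mathrm{H}}(A,\mathbb{Q}))$ and miss the $\delta'$--direction; they do not span $\Lambda_{LB,X}\otimes\mathbb{Q}$.

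The paper sidesteps this circularity. After checking the formula for monodromy-type $g$ (those fixing $\alpha,\beta$) and establishing the single explicit identity $(B_\lambda)^{(n)}=B_{\theta(\lambda)}$, it observes that the $e_{\theta(\lambda)}$ generate the Lie algebra $\mathfrak{so}(\widetilde{\mathrm{H}}(A,\mathbb{Q}))$ acting on the $8$--dimensional subspace $V=\widetilde{\delta'}^{\perp}$. Since the only $8$--dimensional representation of $\mathfrak{so}_8$ is the standard one, the intertwiner $\widetilde{\mathrm{H}}(A,\mathbb{Q})\to V$ is forced (up to scalar), and a short normalisation pins it down as $B_{-\delta'/2}\circ\widetilde{\theta}$. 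This representation-theoretic step replaces the computation you are missing and is the key idea you should incorporate.
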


\begin{proof}
    Recall that $\mathrm{DMon}(A)\subset\mathrm{O}(\widetilde{\mathrm{H}}(A,\mathbb{Z}))$ by \cite[Corollary 9.50]{Huybrechts:06} and \cite[Proposition 4.1]{Taelman:23}. Observe that $\widetilde{\mathrm{H}}(-^{(n)})$ is a homomorphism and the theorem holds for the element $g=s_v$ induced by a parallel transform with $v^2=2, v\in\mathrm{H}^2(A,\mathbb{Z})$, and therefore $g\in \mathrm{O}(\widetilde{\mathrm{H}}(A,\mathbb{Z}))\backslash\mathrm{O}^+(\widetilde{\mathrm{H}}(A,\mathbb{Z}))$, where the subgroup $\mathrm{O}^+(\widetilde{\mathrm{H}}(A,\mathbb{Z}))$ of $\mathrm{O}(\widetilde{\mathrm{H}}(A,\mathbb{Z}))$ is of index $2$ that preserves the orientation as in \cite[section 1]{Gritsenko Hulek Sankaran:09} or \cite[subsection 1.1]{Taelman:23} (see passage \ref{SOtilde+(L)}). It suffices to prove the theorem for $g\in\mathrm{DMon}(A)_{\mathrm{res}}\cap\mathrm{O}^+(\widetilde{\mathrm{H}}(A,\mathbb{Z}))$.

    \textit{Step 1}: Verify the theorem for all $g\in\mathrm{DMon}(A)_{\mathrm{res}}\cap\mathrm{O}^+(\widetilde{\mathrm{H}}(A,\mathbb{Z}))$ with $g(\alpha)=\alpha$ and $g(\beta)=\beta$.
    
    Note that in this case $B_{-{\delta'}/{2}}\circ\iota(g)\circ B_{{\delta'}/{2}}=\iota(g)$. Such $g$ belongs to the monodromy group of $A$. Since the result is obvious for parallel transforms, we only need to concentrate on derived equivalences. We denote $\Phi^{\mathrm{H}}=g$ for some derived equivalence $\Phi$ between abelian surfaces. Then the image $g^{(n)}$ of $g$ in the monodromy group of $\mathrm{Kum}^{n-1}(A)$ acts on $\mathrm{H}^2(\mathrm{Kum}^{n-1}(A),\mathbb{Z})$ via $g^{(n)}(\delta')=\delta'$ and $g^{(n)}(\theta(x))=\theta(g(x))$. This determines the action $\mathrm{SH}(\Phi^{(n)})$ of $\Phi^{(n)}$ on the subring $\mathrm{SH}(\mathrm{Kum}^{n-1}(A),\mathbb{Q})$ generated by $\mathrm{H}^2(\mathrm{Kum}^{n-1}(A),\mathbb{Q})$. Using the notation in \cite[Theorem 5.3]{Markman:24}, the action of $\widetilde{\mathrm{H}}(\Phi^{(n)})$ on the image of $\Psi$ in $\mathrm{Sym}^{n-1}(\widetilde{\mathrm{H}}(\mathrm{Kum}^{n-1}(A),\mathbb{Q}))$ can be determined as follows.
    
    If $n-1$ is odd, then $\Psi$ is equivariant. (i.e.$\Psi\circ\mathrm{SH}(\Phi^{(n)})=\mathrm{Sym}^{n-1}(\widetilde{\mathrm{H}}(\Phi^{(n)}))\circ\Psi$.) If $n-1$ is even, then $\Psi\circ\mathrm{det}(\widetilde{\mathrm{H}}(\Phi^{(n)}))\mathrm{SH}(\Phi^{(n)})=\mathrm{Sym}^{n-1}(\widetilde{\mathrm{H}}(\Phi^{(n)}))\circ\Psi$. Now $\Psi$ is monodromy equivariant with respect to the action of $\iota(g)$ on $\widetilde{\mathrm{H}}((\mathrm{Kum}^{n-1}(A),\mathbb{Q}))$. If $n-1$ is odd, the theorem follows for $g$. If $n-1$ is even, we get $\mathrm{det}(\widetilde{\mathrm{H}}(\Phi^{(n)}))=1$ and $g^{(n)}=\widetilde{\mathrm{H}}(\Phi^{(n)})=\begin{cases}
        \iota(g), \mathrm{det}(\widetilde{\mathrm{H}}(\Phi))=1,\\
        -\iota(g), \mathrm{det}(\widetilde{\mathrm{H}}(\Phi))=-1.
    \end{cases}$

    \textit{Step 2:} Complete the proof.
    
    \textit{Step 2-1:} Simplify.
    
    Since any element of $\mathrm{O}^+(\widetilde{\mathrm{H}}(A,\mathbb{Z}))$ can be written as a composition of an element as in Step 1 and an element in $\mathrm{SO}^+(\widetilde{\mathrm{H}}(A,\mathbb{Z}))$. It remains to verify the theorem for $\mathrm{DMon}(A)_{\mathrm{res}}\cap\mathrm{SO}^+(\widetilde{\mathrm{H}}(A,\mathbb{Z}))$, the subgroup of $\mathrm{DMon}(A)_{\mathrm{res}}\cap\mathrm{O}^+(\widetilde{\mathrm{H}}(A,\mathbb{Z}))$ of index $2$. The statement of the theorem for this subgroup is equivalent to the statement that the composition $B_{-{\delta'}/{2}}\circ\widetilde{\theta}:\widetilde{\mathrm{H}}(A,\mathbb{Q})\to\widetilde{\mathrm{H}}(\mathrm{Kum}^{n-1}(A),\mathbb{Q})$ is $\mathrm{SO}^+(\widetilde{\mathrm{H}}(A,\mathbb{Z}))$-equivariant, since $\mathrm{SO}^+(\widetilde{\mathrm{H}}(A,\mathbb{Z}))$ does not have any non-trivial characters, and so must act trivially on $\mathrm{span}_{\mathbb{Q}}\{\widetilde{\delta'}\}=\mathrm{span}_{\mathbb{Q}}\{(0,\delta',n)\}$, the one-dimensional orthogonal complement of $\mathrm{Image}(B_{-{\delta'}/{2}}\circ\widetilde{\theta})$ of $\widetilde{\mathrm{H}}(\mathrm{Kum}^{n-1}(A),\mathbb{Q})$.
    
    Indeed, $B_{-{\delta'}/{2}}\circ\widetilde{\theta}$ is $\mathrm{SO}^+(\widetilde{\mathrm{H}}(A,\mathbb{Z}))$-equivariant means that $$B_{-{\delta'}/{2}}\circ\widetilde{\theta}\circ\widetilde{\mathrm{H}}(\Phi)=\widetilde{\mathrm{H}}(\Phi^{(n)})\circ B_{-{\delta'}/{2}}\circ\widetilde{\theta}.$$ It gives $g^{(n)}=\widetilde{\mathrm{H}}(\Phi^{(n)})=B_{-{\delta'}/{2}}\circ\widetilde{\theta}\circ\widetilde{\mathrm{H}}(\Phi)\circ\widetilde{\theta}^{-1}\circ B_{{\delta'}/{2}}=B_{-{\delta'}/{2}}\circ\iota(g)\circ B_{{\delta'}/{2}}$.
    
    Note first the equality
    \begin{equation}\label{(12.2)++}
        (B_{\lambda})^{(n)}=B_{\theta(\lambda)}
    \end{equation}
    for all $\lambda\in\mathrm{H}^2(A,\mathbb{Q})$. It suffices to prove (\ref{(12.2)++}) for integral $\lambda\in\mathrm{H}^2(A,\mathbb{Z})$, and by varying the complex structure, we may assume that $\lambda$ is of type $(1,1)$.

    \textit{Step 2-2:} Prove the equality (\ref{(12.2)++}) for $\lambda\in\mathrm{H}^2(A,\mathbb{Z})$ of type $(1,1)$.
    
    Given a line bundle $L$ on $A$, the equivariant line bundle $(\overbrace{L\boxtimes\dots\boxtimes L}^{n-1},\rho)$ over $N_A$ is a pullback of a line bundle $\mathcal{L}$ on $N_A/\mathfrak{S}_n$. We have the commutative diagram
    \begin{align*}
        \xymatrix{
        &I^n(A)\ar[dl]_{p}\ar[dr]^{q}&&&&&k^*(I^n(A))\ar[dl]_{\overline{p}}\ar[dr]^{\overline{q}}&\\
        A^n\ar[dr]_{\widetilde{q}}&&A^{[n]}\ar[dl]^{\widetilde{p}}&&\ar@{^(-_>}[l]_{k}&N_A\ar[dr]_{\widetilde{\overline{q}}:=\widetilde{q}|_{N_A}}&&\mathrm{Kum}^{n-1}(A)\ar[dl]^{\phantom{1111}\widetilde{\overline{p}}:=\widetilde{p}|_{\mathrm{Kum}^{n-1}(A)}}\\
        &A^{(n)}&&&&&N_A/\mathfrak{S}_n&
        }
    \end{align*}
    Hence, $\Psi_{\mathrm{Kum}^{n-1}(A)}^{-1}(L\boxtimes\dots\boxtimes L,\rho)\cong R\overline{q}_*^{\mathfrak{S}_n}(L\overline{p}^*L\widetilde{\overline{q}}^*\mathcal{L})\cong R\overline{q}_*^{\mathfrak{S}_n}(L\overline{q}^*L\widetilde{\overline{p}}^*\mathcal{L})\cong\widetilde{\overline{p}}^*\mathcal{L}$. We get that 
    \begin{equation*}
        \begin{aligned}
            \Psi_{\mathrm{Kum}^{n-1}(A)}^{-1}&((\overbrace{F\boxtimes\dots\boxtimes F}^{n-1},\rho_1)\star(L\boxtimes\dots\boxtimes L,\rho))\cong R\overline{q}_*^{\mathfrak{S}_n}(L\overline{p}^*(F\boxtimes\dots\boxtimes F,\rho_1)\star L\overline{p}^*L\widetilde{\overline{q}}^*\mathcal{L})\\
            &\cong R\overline{q}_*^{\mathfrak{S}_n}(L\overline{p}^*(F\boxtimes\dots\boxtimes F,\rho_1))\star L\widetilde{\overline{p}}^*\mathcal{L}\\
            &\cong\Psi_{\mathrm{Kum}^{n-1}(A)}^{-1}((F\boxtimes\dots\boxtimes F,\rho_1))\star\Psi_{\mathrm{Kum}^{n-1}(A)}^{-1}(L\boxtimes\dots\boxtimes L,\rho)),
        \end{aligned}
    \end{equation*}
    where the notation $\star$ comes from \cite[Remark 3.16]{Ploog:05} describing compositions of derived equivalences with respect to Fourier-Mukai kernels and the second isomorphism is via the projection formula. By $\theta$ in subsection \ref{lattices}, we have $$\theta(c_1(L))=c_1(\widetilde{\overline{p}}^*\mathcal{L}).$$
    So, equality (\ref{(12.2)++}) follows for $\lambda=c_1(L)$ via $$(B_\lambda)^{(n)}=(B_{c_1(L)})^{(n)}=B_{c_1(\widetilde{\overline{p}}^*\mathcal{L})}=B_{\theta(c_1(L))},$$
    where \cite[Example 2.4]{Yang:25-2} is used in the second equality. Then (\ref{(12.2)++}) holds for arbitrary $\lambda\in\mathrm{H}^2(A,\mathbb{Z})$ of type $(1,1)$ by the Lefschetz $(1,1)$-theorem.

    \textit{Step 2-3:} Find some $\gamma\in\mathrm{O}^+(\widetilde{\mathrm{H}}(A,\mathbb{Q}))$. 
    
    As $(B_\lambda)^{(n)}$ commutes to $\widetilde{\mathrm{H}}(\Phi_\chi^{(n)})$ by equation (\ref{(4.9)++}), so do $B_{\theta(\lambda)}$ and $e_{\theta(\lambda)}$, where the notation $e_{\theta(\lambda)}$ is related to $B_{\theta(\lambda)}$ as in \cite[subsections 2.1 and 3.1]{Taelman:23}. This agrees with the fact that for all $\lambda\in\mathrm{H}^2(A,\mathbb{Q})$, the eigenspace $V:=(0,\delta',n)^{\perp}$ of $\widetilde{\mathrm{H}}(\Phi_\chi^{(n)})$ (as the proof in subsection \ref{pf of lemma 12.1++}) is $e_{\theta(\lambda)}$-invariant. As in formula (\ref{(5.1)+}) and $B_\lambda\in\mathrm{SO}(\widetilde{\mathrm{H}}(A,\mathbb{Q}))$ as in \cite[subsection 3.1]{Taelman:23}, we have $\dim V=\dim \widetilde{\mathrm{H}}(A,\mathbb{Q})=8$. So, every non-trivial $8$-dimensional representation of $\mathfrak{so}(\widetilde{\mathrm{H}}(A,\mathbb{Q}))$ is isomorphic to $\widetilde{\mathrm{H}}(A,\mathbb{Q})$. Hence, there exists an isomorphism $$\gamma:\widetilde{\mathrm{H}}(A,\mathbb{Q})\to V$$
    of $\mathfrak{so}(\widetilde{\mathrm{H}}(A,\mathbb{Q}))$-modules, which must also be an isomorphism of $\mathrm{SO}^+(\widetilde{\mathrm{H}}(A,\mathbb{Q}))$-representations.

    \textit{Step 2-4:} Do some normalization to make $\widetilde{\gamma}\in\mathrm{O}^+(\widetilde{\mathrm{H}}(A,\mathbb{Q}))$ such that $\widetilde{\gamma}(\alpha)=\alpha$ and $\widetilde{\gamma}(\beta)=\beta$ to complete the proof.
    
    The element $\beta\in\widetilde{\mathrm{H}}(A,\mathbb{Q})$ spans the common kernel of $e_\lambda,\lambda\in\mathrm{H}^2(A,\mathbb{Q})$, similarly to \cite[subsection 3.1]{Taelman:23}. Hence, $\gamma(\beta)$ must belong to the common kernel $\mathrm{span}\{\beta\}$ of $e_{\theta(\lambda)},\lambda\in\mathrm{H}^2(A,\mathbb{Q})$. We may normalize $\gamma$ to satisfy $\gamma(\beta)=\beta$.

    Let $\widetilde{\gamma}:\widetilde{\mathrm{H}}(A,\mathbb{Q})\to\widetilde{\mathrm{H}}(A,\mathbb{Q})$ be the composition $$\widetilde{\gamma}=\widetilde{\theta}^{-1}\circ B_{\delta'/2}\circ\gamma,$$
    where $\widetilde{\theta}^{-1}:(0,\delta',0)^{\perp}\to\widetilde{\mathrm{H}}(A,\mathbb{Q})$ is the left inverse of $\widetilde{\theta}$. Then $\widetilde{\gamma}$ is $\mathrm{SO}^+(\widetilde{\mathrm{H}}(A,\mathbb{Z}))$-equivariant since it is true for $\widetilde{\theta}^{-1},B_{\delta'/2}$ and $\gamma$ via Step 2-3. Hence, $\widetilde{\gamma}$ maps $\mathrm{H}^2(A,\mathbb{Q})$ to itself and acts on it by $\pm1$. Being an isometry, $\widetilde{\gamma}$ maps the orthogonal complement $U:=\mathrm{span}\{\alpha,\beta\}$ of $\mathrm{H}^2(A,\mathbb{Q})$ to itself. Furthermore, $\widetilde{\gamma}(\beta)=\beta$ and $\widetilde{\gamma}$ commutes with $e_\lambda$ for all $\lambda\in\mathrm{H}^2(A,\mathbb{Q})$. Thus, $\widetilde{\gamma}(\alpha)=\alpha$, since $\alpha$ is the unique isotropic class in $U$ which pairs to $-1$ with $\beta$. Finally, $\widetilde{\gamma}$ commutes with $e_\lambda,\lambda\in\mathrm{H}^2(A,\mathbb{Q})$, according to the definition of $\widetilde{\gamma}$. It yields $\widetilde{\gamma}(\lambda)=\widetilde{\gamma}(e_\lambda(\alpha))=e_\lambda(\widetilde{\gamma}(\alpha))=e_\lambda(\alpha)=\lambda$. Hence, the composition $\widetilde{\gamma}$ is the identity. So $B_{-\delta'/2}\circ\widetilde{\theta}$ is $\mathrm{SO}^+(\widetilde{\mathrm{H}}(A,\mathbb{Q}))$-equivariant. The theorem is thus verified for $\mathrm{DMon}(A)_{\mathrm{res}}\cap\mathrm{SO}^+(\widetilde{\mathrm{H}}(A,\mathbb{Z}))$.
\end{proof}

\subsection{An element of $\mathrm{DMon}(\mathrm{Kum}^{n-1}(A))$ mapping $\beta$ to $\widetilde{\alpha}$}
Theorem \ref{th 7.4+ th 12.2++} gives a connection between the derived autoequivalences of an abelian surface and the lifted derived autoequivalences of the corresponding generalized Kummer variety as in subsection \ref{use Yuxuan 25-2} or \cite{Yang:25-2} over an algebraically closed field of characteristic $0$. In particular, if we get an autoequivalence of an abelian surface $A$ such that the corresponding element $g\in\mathrm{DMon}(A)_{\mathrm{res}}$ maps $\beta$ to $\alpha$ in $\widetilde{\mathrm{H}}(A,\mathbb{Q})$, then the derived autoequivalence of $\mathrm{Kum}^{n-1}(A)$ gives an element $g^{(n)}=d_{(n)}(g)$ in $\mathrm{DMon}(\mathrm{Kum}^{n-1}(A))$ that maps $\beta$ to $\widetilde{\alpha}$ in $\widetilde{\mathrm{H}}(\mathrm{Kum}^{n-1}(A),\mathbb{Q})$.

\begin{passage}
    As in subsection \ref{use Yuxuan 25-2}, an element $f^{\widetilde{\mathrm{H}}}\in\mathrm{DMon}(A)_{\mathrm{res}}$ is originated from the derived autoequivalence $f\in\mathrm{Aut}(\mathbf{D}^b(A))_{\mathrm{res}}$, which can be described using the condition $$\gamma_A(f)=\begin{pmatrix}
            g_1&g_2\\g_3&g_4
        \end{pmatrix}\in\mathrm{Sp}(A), (g_2)_*(\frac{1}{n}\mathrm{H}^1(A^{\vee},\mathbb{Z}))\subset n\mathrm{H}^1(A,\mathbb{Z}).$$

    By \cite[Definition 9.34, Lemma 9.23 and Corollary 9.24]{Huybrechts:06}, the condition $f^{\widetilde{\mathrm{H}}}(\beta)=\alpha$ in $\widetilde{\mathrm{H}}(A,\mathbb{Q})$ is equivalent to say that the homomorphism $$F_f^{\widetilde{\mathrm{H}}}:\widetilde{\mathrm{H}}(A,\mathbb{Z})\times\widetilde{\mathrm{H}}(A^{\vee},\mathbb{Z})\to\widetilde{\mathrm{H}}(A,\mathbb{Z})\times\widetilde{\mathrm{H}}(A^{\vee},\mathbb{Z})$$ satisfies $F_f^{\widetilde{\mathrm{H}}}(0,\alpha^{\vee})=(0,\beta^{\vee})$, where $\alpha^{\vee}, \beta^{\vee}\in\widetilde{\mathrm{H}}(A^{\vee},\mathbb{Z})$ are defined analogously to $\alpha, \beta\in\widetilde{\mathrm{H}}(A,\mathbb{Z})$.

    By \cite[Definition 9.39 and Corollary 9.47]{Huybrechts:06}, $F_f=(-\otimes N_f)\circ(\gamma_A(f))_*$ for some line bundle $N_f\in\mathrm{Pic}(A\times A^{\vee})$. Hence, the homomorphism $$(\gamma_A(f))_*:\widetilde{\mathrm{H}}(A,\mathbb{Z})\times\widetilde{\mathrm{H}}(A^{\vee},\mathbb{Z})\to\widetilde{\mathrm{H}}(A,\mathbb{Z})\times\widetilde{\mathrm{H}}(A^{\vee},\mathbb{Z})$$ satisfies $(\gamma_A(f))_*(B_{-c_1(N_f)}(0,\alpha^{\vee}))=(0,\beta^{\vee})$. It can be easily realized by some $\gamma_A(f)=\begin{pmatrix}
            g_1&g_2\\g_3&g_4
        \end{pmatrix}\in\mathrm{Sp}(A)$ with $g_2=0$. To conclude, we get some $f\in\mathrm{Aut}(\mathbf{D}^b(A))_{\mathrm{res}}$ such that $f^{\widetilde{\mathrm{H}}}\in\mathrm{DMon}(A)_{\mathrm{res}}$ satisfies $f^{\widetilde{\mathrm{H}}}(\beta)=\alpha$ in $\widetilde{\mathrm{H}}(A,\mathbb{Q})$.
\end{passage}

So we get the following result.

\begin{lemma}\label{DMon(Kum) interchange alpha-tide beta}
    For an arbitrary abelian surface $A$ over an algebraically closed field of characteristic $0$, there exists an element in $\mathrm{DMon}(\mathrm{Kum}^{n-1}(A))$ that maps $\beta$ to $\widetilde{\alpha}$ in $\widetilde{\mathrm{H}}(\mathrm{Kum}^{n-1}(A),\mathbb{Q})$.
\end{lemma}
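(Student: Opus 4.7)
The plan is to assemble pieces already in place in the excerpt: I would take the element $g = f^{\widetilde{\mathrm{H}}} \in \mathrm{DMon}(A)_{\mathrm{res}}$ constructed in the passage immediately preceding the lemma (which satisfies $g(\beta)=\alpha$ in $\widetilde{\mathrm{H}}(A,\mathbb{Q})$) and feed it into the explicit formula of Theorem \ref{th 7.4+ th 12.2++}, producing
\[
g^{(n)} \;:=\; d_{(n)}(g) \;=\; \det(g)^n \cdot B_{-\delta'/2}\circ\iota(g)\circ B_{\delta'/2} \;\in\; \mathrm{DMon}(\mathrm{Kum}^{n-1}(A)).
\]
The proof then reduces to a direct evaluation of $g^{(n)}$ on $\beta$.

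The computation proceeds in three short steps. First, the $B$-field twist $B_{\delta'/2}$ fixes $\beta$, since it preserves the top (point) class; this is consistent with the formula in (\ref{Lambda LB,X}). Second, writing $\beta = \widetilde{\theta}(\beta_A)$ under the embedding $\widetilde{\theta}$ of passage \ref{th 12.2++ th 7.4+ notation}, the definition of $\iota$ gives $\iota(g)(\beta) = \widetilde{\theta}(g(\beta_A)) = \widetilde{\theta}(\alpha_A) = \alpha$. Third, using $b(\delta',\delta')=-2n$, a one-line calculation yields
\[
B_{-\delta'/2}(\alpha) \;=\; \alpha - \tfrac{\delta'}{2} + \tfrac{b(-\delta'/2,-\delta'/2)}{2}\beta \;=\; \alpha - \tfrac{\delta'}{2} - \tfrac{n}{4}\beta \;=\; \widetilde{\alpha},
\]
which is precisely the defining equality for $\widetilde{\alpha}$ in the passage following Definition \ref{def 5.2+}. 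Combining the three steps, $g^{(n)}(\beta) = \det(g)^n\,\widetilde{\alpha} = \pm\,\widetilde{\alpha}$.

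If $\det(g)^n = 1$, the element $g^{(n)}$ already proves the lemma. If instead $\det(g)^n = -1$, I would post-compose $g^{(n)}$ with the isometry induced by the shift functor $[1]\in\mathrm{Aut}(\mathbf{D}^b(\mathrm{Kum}^{n-1}(A)))$, whose action on the extended Mukai lattice is $-\mathrm{id}$ and which lies in $\mathrm{DMon}(\mathrm{Kum}^{n-1}(A))$; the composition then sends $\beta$ to $\widetilde{\alpha}$. The only delicate point in the whole argument is this final sign bookkeeping, but it is trivially handled either by the shift trick above or by noting that the flexibility in choosing $\gamma_A(f)$ with $g_2 = 0$ in the preceding passage already allows one to arrange $\det(g)=1$.
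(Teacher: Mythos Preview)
Your proposal is correct and follows essentially the same approach as the paper: the paper's proof consists of the passage immediately preceding the lemma together with the first paragraph of the subsection, which already asserts that feeding an $f\in\mathrm{Aut}(\mathbf{D}^b(A))_{\mathrm{res}}$ with $f^{\widetilde{\mathrm{H}}}(\beta)=\alpha$ into Theorem \ref{th 7.4+ th 12.2++} produces the desired element $g^{(n)}$. Your treatment is in fact slightly more explicit than the paper's on the sign issue coming from the $\det(g)^n$ factor; the paper leaves this implicit, though it uses exactly your shift-by-$[1]$ trick later in Remark \ref{extend lower bound of DMon(X)}.
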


By computation, we cannot obtain an element in $\mathrm{DMon}(\mathrm{Kum}^{n-1}(A))$ that interchanges $\beta$ and $\widetilde{\alpha}$ in $\widetilde{\mathrm{H}}(\mathrm{Kum}^{n-1}(A),\mathbb{Q})$ using the method above.

At the end of this subsection, we introduce another tool to obtain cohomological actions of $\widetilde{\mathrm{H}}(A,\mathbb{Q})$ through \cite{Yoshioka:14}.

Actually in \cite[section 2]{Yoshioka:14}, Yoshioka recapped the isomorphism of lattices $$(\mathrm{H}^*(A,\mathbb{Z})_{\mathrm{alg},}\langle\cdot,\cdot\rangle)\simeq(\mathrm{Sym}_2(\mathbb{Z},m),B)$$ mapping $(r,dH_A,a)$ to $\begin{pmatrix}
    r&d\sqrt{m}\\d\sqrt{m}&a
\end{pmatrix}$, where $H_A$ is an ample generator of $\mathrm{NS}(A)$ with $m:=\frac{1}{2}(H_A^2)\in\mathbb{Z}_{>0}$, $\langle\cdot,\cdot\rangle$ is the Mukai pairing of $\mathrm{H}^*(A,\mathbb{Z})_{\mathrm{alg}}$, $B$ is the bilinear form on $\mathrm{Sym}_2(\mathbb{Z},m):=\{\begin{pmatrix}
    x&y\sqrt{m}\\y\sqrt{m}&z
\end{pmatrix}|x,y,z\in\mathbb{Z}\}$ given by $$B(\begin{pmatrix}
    x_1&y_1\sqrt{m}\\y_1\sqrt{m}&z_1
\end{pmatrix},\begin{pmatrix}
    x_2&y_2\sqrt{m}\\y_2\sqrt{m}&z_2
\end{pmatrix}):=2my_1y_2-(x_1z_2+z_1x_2).$$ After introducing the notations
\begin{align*}
    \widehat{G}:=&\{\begin{pmatrix}
        a\sqrt{r}&b\sqrt{s}\\c\sqrt{s}&d\sqrt{r}
    \end{pmatrix}|a,b,c,d,r,s\in\mathbb{Z}, r,s>0, rs=m,adr-bcs=\pm1\}\\
    \mathrm{Stab}_0(v):=&\{g=\begin{pmatrix}
        x&y\\z&w
    \end{pmatrix}\in\widehat{G}|g(v)=\mathrm{det}(g)(v)\} \phantom{1}\mathrm{for}\phantom{1}v=(r,dH_A,a)\in(\mathrm{H}^*(A,\mathbb{Z})_{\mathrm{alg},}\langle\cdot,\cdot\rangle), r\not=0\\
    \mathrm{Stab}_0(v)^*:=&\{g=\begin{pmatrix}
        x&y\\z&w
    \end{pmatrix}\in\mathrm{Stab}_0(v)|xw-yz=1,y\in\sqrt{m}\mathbb{Z}\}
\end{align*}

and the right action $\cdot$ of $\widehat{G}$ on the lattice $(\mathrm{Sym}_2(\mathbb{Z},m),B)$
$$\begin{pmatrix}
    r&d\sqrt{m}\\d\sqrt{m}&a
\end{pmatrix}\cdot g:=\prescript{t}{}{g}\begin{pmatrix}
    r&d\sqrt{m}\\d\sqrt{m}&a
\end{pmatrix}g,g\in\widehat{G},$$
Yoshioka states that all elements of $\mathrm{Stab}_0(v)^*$ come from $\mathrm{Aut}(\mathbf{D}^b(A))$. 

By concrete computation, no element in $\mathrm{Stab}_0(v)^*$ gives an element in $\mathrm{O}(\widetilde{\mathrm{H}}(A,\mathbb{Q}))$ that maps $\beta$ to $\alpha$. So we cannot use Theorem \ref{th 7.4+ th 12.2++} to get an element in $\mathrm{DMon}(\mathrm{Kum}^{n-1}(A))$ that maps $\beta$ to $\widetilde{\alpha}$ in $\widetilde{\mathrm{H}}(\mathrm{Kum}^{n-1}(A),\mathbb{Q})$ using the method in \cite{Yoshioka:14}. If we can obtain more elements that are not in $\mathrm{Stab}_0(v)^*$ but come from $\mathrm{Aut}(\mathbf{D}^b(A))$, we may probably get the desired cohomological actions of $\widetilde{\mathrm{H}}(A,\mathbb{Q})$.

\section{Invariant Lattice}
Let $X$ be a $2(n-1)$-dimensional hyper-K\"ahler manifold of generalized Kummer type with $n\geq 3$. Any $\Gamma\cong\mathbb{Z}^9$ with an inclusion $\Gamma\hookrightarrow\widetilde{\mathrm{H}}(X,\mathbb{Q})$ inherits a quadratic form which takes values in rational numbers. We will denote by $\mathrm{O}(\Gamma)\subset\mathrm{O}(\widetilde{\mathrm{H}}(X,\mathbb{Q}))$ the group of all isometries $\gamma$ satisfying $\gamma(\Gamma)=\Gamma$.

The main goal of this section is to prove the following result.

\begin{theorem}[Similar to {\cite[Theorem 8.1]{Beckmann:22-2}}]\label{th 8.1+}
    Let $X$ be a hyper-K\"ahler manifold of generalized Kummer type with $\dim X=2(n-1),n\geq 3$, over an algebraically closed field of characteristic $0$. There are inclusions $$\widetilde{\mathrm{O}}^+(\Lambda_X)^{\mathrm{det}\cdot D}\subset\mathrm{DMon}(X)\subset\mathrm{O}(\Lambda_X).$$
    In particular, the $\mathrm{Kum}^{n-1}$ lattice $\Lambda_X$ is fixed by all derived equivalences.
\end{theorem}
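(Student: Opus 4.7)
My plan is to prove the two inclusions separately, following the template of Beckmann's Theorem 8.1 for $\mathrm{K3}^{[n]}$-type, but using the tools developed in Sections 4 and 5 (especially Theorem \ref{th 7.4+ th 12.2++}, Proposition \ref{prop 7.1+ lemma 12.1++}, and Lemma \ref{DMon(Kum) interchange alpha-tide beta}) as substitutes for the spherical-twist input that is unavailable on abelian surfaces.

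\textbf{Upper inclusion $\mathrm{DMon}(X)\subset \mathrm{O}(\Lambda_X)$.} First I would check invariance of $\Lambda_X$ on a generating set of $\mathrm{DMon}(X)$. Parallel transport operators preserve $\widetilde{\mathrm{H}}(X,\mathbb{Z})$ and send $\delta'$ to a class of square $-2n$ and divisibility $2n$, so by the independence-of-$\delta'$ remark in Definition \ref{def 5.2+} they preserve $\Lambda_X$. For derived autoequivalences the analysis splits into three families: the sign equivalence $\Phi_\chi^{(n)}$ acts as $(-1)^n s_{\widetilde{\delta'}}$ by Proposition \ref{prop 7.1+ lemma 12.1++}, which preserves $\Lambda_X$ since $\widetilde{\delta'}\in\Lambda_X$ has square $-2n$ and the correct divisibility; the Horja-type twists, when they exist (e.g.\ in dimension four by Proposition \ref{prop 10.2+}), are handled identically; and the autoequivalences $g^{(n)}=d_{(n)}(g)$ lifted from $g\in\mathrm{DMon}(A)_{\mathrm{res}}$ are governed by the explicit formula of Theorem \ref{th 7.4+ th 12.2++}, which reads $d_{(n)}(g)=\det(g)^n B_{-\delta'/2}\circ \iota(g)\circ B_{\delta'/2}$ and thus preserves $\Lambda_X=B_{-\delta'/2}(\widetilde{\mathrm{H}}(X,\mathbb{Z}))$ because $\iota(g)$ preserves $\widetilde{\mathrm{H}}(X,\mathbb{Z})$.

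\textbf{Lower inclusion $\widetilde{\mathrm{O}}^+(\Lambda_X)^{\det\cdot D}\subset\mathrm{DMon}(X)$.} This is the substantive direction. The plan is to invoke the Eichler criterion to be developed at the end of Section 6, which presents $\widetilde{\mathrm{O}}^+(\Lambda_X)^{\det\cdot D}$ as a group generated by Eichler transvections along isotropic classes in $\Lambda_X$. Using the orthogonal splitting $\Lambda_X=\Lambda_A\oplus\mathbb{Z}\widetilde{\delta'}$ from \eqref{LambdaXvsLambdaA}, I would realize every transvection supported on $\Lambda_A$ as the cohomological action of a lift $g^{(n)}$ via Theorem \ref{th 7.4+ th 12.2++}: the conjugation by $B_{\delta'/2}$ there converts an integral isometry of $\widetilde{\mathrm{H}}(A,\mathbb{Z})$ into exactly the transvection wanted on $\Lambda_A$, up to the central factor $\det(g)^n$. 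Mixing in the $\widetilde{\delta'}$-summand is then done by composing with the reflections $\pm s_{\widetilde{\delta'}}$ supplied by Proposition \ref{prop 7.1+ lemma 12.1++}, and to reach transvections involving the second hyperbolic summand spanned by $\widetilde{\alpha}$ and $\beta$ one conjugates by the monodromy operator of Lemma \ref{DMon(Kum) interchange alpha-tide beta} that swaps $\beta$ with $\widetilde{\alpha}$. A careful parity count then matches the constraints $\det\cdot D=1$ on the Eichler side with the signs $(-1)^n$ and $\det(g)^n$ on the monodromy side.

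\textbf{Main obstacle.} As emphasized in subsection \ref{some known derived autoequi of gener Kums}, $\mathbf{D}^b(A)$ admits no spherical objects, so the direct analogue of Beckmann's spherical-twist recipe for generating $\widetilde{\mathrm{O}}^+$ is unavailable. The hardest step will therefore be verifying exhaustion: that the image of $d_{(n)}$, enlarged by $\Phi_\chi^{(n)}$, the Horja twist in low dimension, and the operator of Lemma \ref{DMon(Kum) interchange alpha-tide beta}, is already large enough to cover every Eichler transvection in $\widetilde{\mathrm{O}}^+(\Lambda_X)^{\det\cdot D}$. The delicate bookkeeping of the signs $(-1)^n$, $\det(g)^n$, and the $\det\cdot D$ kernel condition — all of which trace back to the sign conventions in \cite[(2.2)]{Beckmann:22-2} — is where I expect most of the technical difficulty to concentrate.
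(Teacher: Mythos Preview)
Your plan for the \emph{lower} inclusion is essentially the paper's: realize $\widehat{\mathrm{SO}}^+(\Lambda_X)$ via Eichler transvections (Proposition \ref{prop 8.2+}), with $t(-\beta,\lambda)=B_\lambda$ coming from line bundles and $t(-\widetilde{\alpha},\lambda)$ obtained by conjugating with the element of Lemma \ref{DMon(Kum) interchange alpha-tide beta}, then extend to the $\det\cdot D$-kernel using $s_{\widetilde{\delta'}}$ from Proposition \ref{prop 7.1+ lemma 12.1++} (Remark \ref{extend lower bound of DMon(X)}). One small correction: Lemma \ref{DMon(Kum) interchange alpha-tide beta} only gives an element sending $\beta$ to $\widetilde{\alpha}$, not one that \emph{swaps} them; this is enough for the conjugation argument but your wording overstates it.

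Your plan for the \emph{upper} inclusion has a genuine gap. You propose to check $\Lambda_X$-invariance on a ``generating set'' consisting of parallel transport, $\Phi_\chi^{(n)}$, Horja twists, and lifts $d_{(n)}(g)$ from $\mathrm{DMon}(A)_{\mathrm{res}}$. But there is no reason these generate $\mathrm{DMon}(X)$: the derived monodromy group is built from \emph{all} derived equivalences $\mathbf{D}^b(X')\simeq\mathbf{D}^b(X'')$ between arbitrary deformations $X',X''$ of $X$, and no classification of such equivalences is available. Nothing rules out ``exotic'' autoequivalences not arising from your list; indeed, the whole point of the theorem is to constrain equivalences one does not yet understand.

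The paper's argument for the upper inclusion is entirely different and does not enumerate autoequivalences. It proceeds abstractly: (i) \emph{some} full-rank lattice $\Gamma\subset\widetilde{\mathrm{H}}(X,\mathbb{Q})$ is $\mathrm{DMon}(X)$-invariant (Lemma \ref{lemma 8.3+}); (ii) using the already-proven lower bound $\widehat{\mathrm{SO}}^+(\Lambda_X)\subset\mathrm{DMon}(X)$, any such lattice is, up to scaling, of the form $k\Lambda_A\oplus\mathbb{Z}\widetilde{\delta'}$ with $k\mid 2n$ (Lemma \ref{lemma 8.4+}), and one may reduce to $k=l$ where $l$ is maximal with $l^2\mid n$ (Lemma \ref{lemma 8.6+}); (iii) assuming $\mathrm{DMon}(X)\not\subset\mathrm{O}(\Lambda_X)$, one manufactures an element of $\mathrm{DMon}(X)$ sending $\widetilde{v}(\mathcal{O}_X)$ to a vector $h=\alpha+\bigl(\tfrac{s-1}{2}+\tfrac{k}{l}\bigr)\delta'+c\beta$ with $\tfrac{k}{l}\notin\mathbb{Z}$, and then the fact that the degree-two component of $T\bigl(h^{n-1}/(n-1)!\bigr)\in\overline{v(K^0_{\mathrm{top}}(X))}$ must lie in $\mathrm{H}^2(X,\mathbb{Z})$ forces $\tfrac{k}{l}\in\mathbb{Z}$, a contradiction. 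The key geometric input you are missing is this last integrality constraint from topological $K$-theory, which replaces the unavailable classification of derived equivalences (cf.\ Remark \ref{remark 8.5+}, which shows that the lattice-theoretic reductions alone are insufficient).
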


The group $\widetilde{\mathrm{O}}^+(\Lambda_X)$ is the group of all isometries with spinor norm $1$ and that act via $\pm\mathrm{id}$ on the discriminant group as in \cite[section 1]{Gritsenko Hulek Sankaran:09}. The subgroup $\widetilde{\mathrm{O}}^+(\Lambda_X)^{\mathrm{det}\cdot D}$ of $\widetilde{\mathrm{O}}^+(\Lambda_X)$ is the kernel of the product homomorphism $\mathrm{det}\cdot D$, where $D$ corresponds to the action on the discriminant group and $\mathrm{det}$ is the determinant character. (See passage \ref{SOtilde+(L)} in general.)

\subsection{Realizing orthogonal transformations as derived autoequivalences} The first inclusion follows from Lemma \ref{DMon(Kum) interchange alpha-tide beta}.

\begin{proposition}[Similar to {\cite[Proposition 8.2]{Beckmann:22-2}}]\label{prop 8.2+}
    There is an inclusion
    $$\widehat{\mathrm{SO}}^+(\Lambda_X)\subset\mathrm{DMon}(X),$$
    where $\widehat{\mathrm{SO}}^+(\Lambda_X)$ is the group of all isometries with spinor norm $1$ and that act trivially on the discriminant group as in \cite[section 1]{Gritsenko Hulek Sankaran:09} (see passage \ref{SOtilde+(L)}).
\end{proposition}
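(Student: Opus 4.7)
My strategy is to generate $\widehat{\mathrm{SO}}^+(\Lambda_X)$ by Eichler transvections and realize each generator as an element of $\mathrm{DMon}(X)$. Since $\Lambda_X$ is abstractly isometric to $\mathrm{H}^2(X,\mathbb{Z})\oplus U\cong U^{\oplus 4}\oplus\langle-2n\rangle$, it contains at least two orthogonal hyperbolic planes, so Eichler's classical criterion produces a generating set for $\widehat{\mathrm{SO}}^+(\Lambda_X)$ by transvections $t_{e,a}$ indexed by an isotropic vector $e\in\Lambda_X$ and a vector $a\in e^{\perp}\cap\Lambda_X$ of suitable divisibility. Such transvections have real spinor norm $1$ and act trivially on the discriminant group, so it suffices to exhibit each of them as a composition of parallel transports and derived autoequivalences; this is the role of the Eichler-transvection tool flagged at the end of Section~6 of the paper.

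I would exhibit the generators according to two orbits of isotropic vectors. For $e=\beta$, the transvection $t_{\beta,\lambda}$ with $\lambda\in\beta^{\perp}\cap\Lambda_X$ agrees (up to a straightforward change of variables in the $\beta$-coordinate) with the $B$-field shift $B_{\lambda}$, which on $\widetilde{\mathrm{H}}(X,\mathbb{Q})$ is the cohomological action of the derived autoequivalence $-\otimes L$ for any line bundle $L$ with $c_1(L)=\lambda$; hence every such transvection lies in $\mathrm{DMon}(X)$. For $e=\widetilde{\alpha}$, Lemma \ref{DMon(Kum) interchange alpha-tide beta} furnishes an element $\phi\in\mathrm{DMon}(X)$ with $\phi(\beta)=\widetilde{\alpha}$; then the identity $\phi\circ t_{\beta,a}\circ\phi^{-1}=t_{\widetilde{\alpha},\phi(a)}$ sweeps out every transvection centred at $\widetilde{\alpha}$ as $a$ varies in $\beta^{\perp}\cap\Lambda_X$. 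Transvections at the remaining isotropic vectors of $\Lambda_X$ are then reached by further conjugating with parallel-transport operators on $X$, which realize the full monodromy action on $\mathrm{H}^2(X,\mathbb{Z})$ while extending as the identity on $\mathbb{Z}\widetilde{\alpha}\oplus\mathbb{Z}\beta$.

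The hardest step is the orbit analysis: one must verify that every primitive isotropic class of $\Lambda_X$ appearing in an Eichler generator lies in the monodromy-orbit of $\beta$ or of $\widetilde{\alpha}$, which reduces to matching divisibility and image in the discriminant group via the Eichler criterion. A secondary technical point is to track spinor norm and discriminant behaviour through the conjugation by $\phi$, which itself need not belong to $\widehat{\mathrm{SO}}^+(\Lambda_X)$; however, the conjugate of any Eichler transvection is again an Eichler transvection, which automatically sits inside $\widehat{\mathrm{SO}}^+(\Lambda_X)$, so the argument remains consistent. Once these two points are settled, composing the geometrically produced generators exhibits an arbitrary element of $\widehat{\mathrm{SO}}^+(\Lambda_X)$ as a product of parallel transports and derived autoequivalences, yielding the desired inclusion.
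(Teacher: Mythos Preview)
Your approach is essentially the same as the paper's: generate $\widehat{\mathrm{SO}}^+(\Lambda_X)$ by Eichler transvections, realize $t(-\beta,\lambda)$ as the $B$-field twist $B_\lambda$ coming from tensoring by line bundles, and use the element $\gamma\in\mathrm{DMon}(X)$ from Lemma~\ref{DMon(Kum) interchange alpha-tide beta} with $\gamma(\beta)=\widetilde{\alpha}$ to conjugate and obtain the transvections centred at $\widetilde{\alpha}$.

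The main difference is that your ``hardest step'' --- the orbit analysis for the remaining isotropic vectors --- is unnecessary. The paper invokes the full strength of Proposition~\ref{prop 3.4 Eichler} (Gritsenko--Hulek--Sankaran): for $L=U\oplus L_1$ containing two hyperbolic planes with $\mathrm{rank}_2(L)\geq 6$ and $\mathrm{rank}_3(L)\geq 5$, one has $\widehat{\mathrm{SO}}^+(L)=E_U(L_1)$, where $E_U(L_1)$ is generated only by $t(e,a)$ and $t(f,a)$ for $e,f$ the isotropic basis of the \emph{single fixed} plane $U$ and $a\in L_1$. Taking $U=\langle\widetilde{\alpha},-\beta\rangle$ and $L_1=\Lambda_X'=\theta(\mathrm{H}^2(A,\mathbb{Z}))\oplus\mathbb{Z}\widetilde{\delta'}$, this says that transvections at $\beta$ and $\widetilde{\alpha}$ already suffice; your third class of generators never arises, and no orbit analysis is needed. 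A small side remark: parallel transport operators extend as the identity on $\mathbb{Z}\alpha\oplus\mathbb{Z}\beta$, not on $\mathbb{Z}\widetilde{\alpha}\oplus\mathbb{Z}\beta$ (since $\widetilde{\alpha}=\alpha-\delta'/2-(n/4)\beta$ moves with $\delta'$); this would complicate the orbit argument you sketch, but is moot once the step is removed.

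One point both you and the paper pass over quickly deserves comment: in order that $\gamma\circ t(-\beta,\gamma^{-1}(\lambda)|_{\Lambda_X'})\circ\gamma^{-1}$ lie in $\mathrm{DMon}(X)$, one needs $\gamma^{-1}(\lambda)|_{\Lambda_X'}$ to be integral, i.e.\ $\gamma$ should preserve $\Lambda_X$. This is not yet known in general at this stage of the argument, but it does hold for the specific $\gamma=d_{(n)}(g)$ produced by Lemma~\ref{DMon(Kum) interchange alpha-tide beta}, since by Theorem~\ref{th 7.4+ th 12.2++} such an element is $\pm B_{-\delta'/2}\circ\iota(g)\circ B_{\delta'/2}$ with $g\in\mathrm{O}(\widetilde{\mathrm{H}}(A,\mathbb{Z}))$, hence maps $\Lambda_X=B_{-\delta'/2}(\widetilde{\mathrm{H}}(X,\mathbb{Z}))$ to itself.
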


Unlike the proof of \cite[Proposition 8.2]{Beckmann:22-2}, we need more details about the Eichler transvections in \cite[section 3]{Gritsenko Hulek Sankaran:09}. See subsection \ref{SOtilde+(L) and Eichler} for a sketch.

\begin{proof}
    Let us orthogonally decompose $$\Lambda_X=U\oplus\Lambda_X',\Lambda_X'=\theta(\mathrm{H}^2(A,\mathbb{Z}))\oplus\mathbb{Z}\widetilde{\delta'}$$
    where the hyperbolic plane $U$ is spanned by $\widetilde{\alpha}$ and $-\beta$. By subsection \ref{lattices}, $\theta$ is an isometry. So $\theta(\mathrm{H}^2(A,\mathbb{Z}))\simeq U^{\oplus 3}$ contains another hyperbolic plane $U_1$. So, we \\have $\Lambda_X\supset U^{\oplus 4}$ and $\mathrm{rank}_2(\Lambda_X),\mathrm{rank}_3(\Lambda_X)\geq 8$. The group $\widehat{\mathrm{SO}}^+(\Lambda_X)$ equals the group $E_U(\Lambda_X')$ of unimodular transvections by Proposition \ref{prop 3.4 Eichler}. For $\lambda\in\Lambda_X'$, the Eichler transvection $t(-\beta,\lambda)$ equals $B_\lambda$ by (\ref{(3.4) Eichler}). Using tensoring with line bundles, we see that all these isometries are contained in $\mathrm{DMon}(X)$. Furthermore, using Lemma \ref{DMon(Kum) interchange alpha-tide beta} and the definition of the derived monodromy group in passage \ref{def of DMon(X)}, we get an element, say $\gamma$, in $\mathrm{DMon}(X)$ that maps $\beta$ to $\widetilde{\alpha}$. By (\ref{(3.6) Eichler}), $$\gamma\circ t(-\beta,\gamma^{-1}(\lambda))\circ\gamma^{-1}=t(-\widetilde{\alpha},\lambda),\forall\lambda\in\Lambda_X'.$$ Moreover, $t(-\widetilde{\alpha},\lambda)=\gamma\circ t(-\beta,\gamma^{-1}(\lambda)|_{\Lambda_X'})\circ\gamma^{-1}=\gamma\circ B_{\gamma^{-1}(\lambda)|_{\Lambda_X'}}\circ\gamma^{-1}\in\widehat{\mathrm{SO}}^+(\Lambda_X)$, where $\gamma^{-1}(\lambda)|_{\Lambda_X'}$ is the projection of $\gamma^{-1}(\lambda)\in\Lambda_X$ to the subspace $\Lambda_X'$. Obviously, it is an element of $\mathrm{DMon}(X)$. Since $$E_U(\Lambda_X')=\langle t(-\widetilde{\alpha},\lambda)=t(\widetilde{\alpha},-\lambda),t(-\beta,\lambda)=t(\beta,-\lambda)|\lambda\in\Lambda_X'\rangle$$ and the generators are contained in $\mathrm{DMon}(X)$. It yields $\widehat{\mathrm{SO}}^+(\Lambda_X)\subset\mathrm{DMon}(X)$.
\end{proof}

\begin{remark}\label{reason of failure trial extended Magni}
    Lemma \ref{DMon(Kum) interchange alpha-tide beta} is in a vital position in the above proof. It gives the importance of $d_{(n)}$ in Proposition \ref{prop 6.3+}. Example \ref{eg 4.19+} in passage \ref{10.1 (1)+} can also take the effect of Lemma \ref{DMon(Kum) interchange alpha-tide beta} with respect to proving Proposition \ref{prop 8.2+} for case $n=3$. Other examples in subsections \ref{sign equivalence} and \ref{some known derived autoequi of gener Kums} look powerless in this aspect.
    
    Moreover, the proof will be much easier and closer to that for \cite[Proposition 6.3]{Beckmann:22-2} if we can find a reflection $s_{\widetilde{\alpha}-\beta}$ in $\mathrm{DMon}(X)$.  
    
\end{remark}

\begin{remark}\label{extend lower bound of DMon(X)}
    The lower bound in Proposition \ref{prop 8.2+} is slightly different from the one in \cite[Proposition 8.2]{Beckmann:22-2}. In reality, if we get an element, say $\gamma\in\widetilde{\mathrm{O}}^+(\Lambda_X)^{\mathrm{det}\cdot D}\backslash\widehat{\mathrm{SO}}^+(\Lambda_X)$, in $\mathrm{DMon}(X)$. Then every element in $\widetilde{\mathrm{O}}^+(\Lambda_X)^{\mathrm{det}\cdot D}$ can be expressed using $\gamma$ and some elements in $\widehat{\mathrm{SO}}^+(\Lambda_X)$ by composition. Then Proposition \ref{prop 8.2+} can be extended to $\widetilde{\mathrm{O}}^+(\Lambda_X)^{\mathrm{det}\cdot D}\subset\mathrm{DMon}(X)$.

    Consider the nontrivial element $(-1)^ns_{\widetilde{\delta'}}\in\mathrm{DMon}(X)$ from Proposition \ref{prop 7.1+ lemma 12.1++}, which is the easiest explicit one we have until now. As the shift $[1]\in\mathrm{Aut}(\mathrm{D}^b(X))$ induces $(-1)_{\widetilde{\mathrm{H}}(X,\mathbb{Q})}\in\mathrm{DMon}(X)$, we get $s_{\widetilde{\delta'}}\in\mathrm{DMon}(X)$.

    Since $s_{\widetilde{\delta'}}=s_{\frac{1}{\sqrt{n}}\widetilde{\delta'}}\in\mathrm{O}(\Lambda_X\otimes\mathbb{R})$, we have $$\mathrm{sn}_\mathrm{\mathbb{R}}(s_{\widetilde{\delta'}})=\mathrm{sn}_\mathrm{\mathbb{R}}(s_{\frac{1}{\sqrt{n}}\widetilde{\delta'}})=-\frac{(\frac{1}{\sqrt{n}}\widetilde{\delta'},\frac{1}{\sqrt{n}}\widetilde{\delta'})}{2}(\mathrm{\mathbb{R}}^\times)^2=(\mathrm{\mathbb{R}}^\times)^2$$
    being trivial in $\mathbb{R}^\times/(\mathbb{R}^\times)^2$. Therefore, $s_{\widetilde{\delta'}}\in\mathrm{O}(\Lambda_X)\cap\mathrm{ker}(\mathrm{sn}_{\mathbb{R}})=\mathrm{O}^+(\Lambda_X)$. Furthermore, the fact $s_{\widetilde{\delta'}}\in\widetilde{\mathrm{O}}(\Lambda_X)^{\mathrm{det}\cdot D}\backslash\widehat{\mathrm{SO}}(\Lambda_X)$ implies that $s_{\widetilde{\delta'}}\in\widetilde{\mathrm{O}}^+(\Lambda_X)^{\mathrm{det}\cdot D}\backslash\widehat{\mathrm{SO}}^+(\Lambda_X)$. So the lower bound of Theorem \ref{th 8.1+} can be extended to $\widetilde{\mathrm{O}}^+(\Lambda_X)^{\mathrm{det}\cdot D}\subset\mathrm{DMon}(X)$.
\end{remark}

\subsection{Finding derived invariant lattices} The proof of the other inclusion in Theorem \ref{th 8.1+} will occupy the following two subsections.

\begin{lemma}[Similar to {\cite[Lemma 8.3]{Beckmann:22-2}}]\label{lemma 8.3+}
    There exists a lattice $\Gamma\subset\widetilde{\mathrm{H}}(X,\mathbb{Q})$ of rank $9$ such that $\mathrm{DMon}(X)\subset\mathrm{O}(\Gamma)$.
\end{lemma}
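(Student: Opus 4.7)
The plan is to construct $\Gamma$ by producing a $\mathrm{DMon}(X)$-invariant lattice in the Verbitsky component from topological $K$-theory, and then extracting a full-rank lattice in $\widetilde{\mathrm{H}}(X,\mathbb{Q})$ by exploiting the extended Mukai vectors of objects in the $\mathcal{O}_X$- and $k(x)$-orbits.

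First, I would observe that the Mukai vector map $v=\mathrm{ch}\cdot\mathrm{td}^{1/2}$ sends $\mathrm{K}^0_{\mathrm{top}}(X)$ to a finitely generated subgroup $V\subset\mathrm{H}^{\mathrm{even}}(X,\mathbb{Q})$, which is preserved by every derived equivalence by naturality ($\Phi^{\mathrm{H}}\circ v = v\circ\Phi$) and by every parallel transport by flatness of topological $K$-theory in smooth proper families. Orthogonal projection onto $\mathrm{SH}(X,\mathbb{Q})$ yields a $\mathrm{DMon}(X)$-invariant lattice $V_{\mathrm{SH}}$ in the Verbitsky component. Through the equivariant embedding $\psi$, this transfers to a $\mathrm{DMon}(X)$-invariant lattice $\psi(V_{\mathrm{SH}}) \subset \mathrm{Sym}^{n-1}\widetilde{\mathrm{H}}(X,\mathbb{Q})$, with $\mathrm{DMon}(X)$ acting through the $(n-1)$-th symmetric power (up to the sign $\epsilon$ when $n-1$ is even).

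Next, to descend from $\mathrm{Sym}^{n-1}\widetilde{\mathrm{H}}(X,\mathbb{Q})$ to $\widetilde{\mathrm{H}}(X,\mathbb{Q})$ itself, I would work with extended Mukai vectors directly. The vectors $\widetilde{v}(\mathcal{L})$ for line bundles $\mathcal{L}$, $\widetilde{v}(\mathcal{O}_P)$ from Example \ref{eg 4.17+}, and the vectors coming from Examples \ref{eg 4.18+} and \ref{eg 4.19+} together span $\widetilde{\mathrm{H}}(X,\mathbb{Q})$, and they all sit inside $\Lambda_{g,X} \subset \tfrac{1}{2}\widetilde{\mathrm{H}}(X,\mathbb{Z})$. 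Taking $\Gamma$ to be the $\mathbb{Z}$-span of the $\mathrm{DMon}(X)$-orbit of these spanning vectors then produces a full-rank $\mathrm{DMon}(X)$-invariant subgroup of $\widetilde{\mathrm{H}}(X,\mathbb{Q})$; finite generation of $\Gamma$ would follow from the boundedness of the orbit in $\mathrm{Sym}^{n-1}\widetilde{\mathrm{H}}(X,\mathbb{Q})$ coming from the first step.

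The main obstacle is closing the loop between the first and second steps: the finite generation of $\Gamma$ requires that the denominators of the linear orbit inside $\widetilde{\mathrm{H}}(X,\mathbb{Q})$ are genuinely bounded, not merely that the $(n-1)$-th power orbit in $\mathrm{Sym}^{n-1}\widetilde{\mathrm{H}}(X,\mathbb{Q})$ is. I would close this by invoking the orbit structure: if $\mathcal{E}$ is in the $\mathcal{O}_X$-orbit and $\Phi\in\mathrm{Aut}(\mathbf{D}^b(X))$, then $\Phi(\mathcal{E})$ is also in the $\mathcal{O}_X$-orbit (by composing with the autoequivalence that produced $\mathcal{E}$), so its extended Mukai vector again lies in $\Lambda_{g,X}$; the analogous statement for $k(x)$-orbits handles $\beta$. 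The same argument extends from derived autoequivalences to the full derived monodromy group $\mathrm{DMon}(X)$ by interpreting each parallel transport in a smooth proper family as preserving the family-theoretic analogue of the $\mathcal{O}_X$-orbit, which is the subtle technical point the proof would need to make precise.
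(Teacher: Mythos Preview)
Your first paragraph is exactly right and is the heart of the argument: the projected Mukai-vector image $\overline{v(K^0_{\mathrm{top}}(X))}\subset\mathrm{SH}(X,\mathbb{Q})$ is a full-rank lattice preserved by $\mathrm{DMon}(X)$, and via $\psi$ this gives a $\mathrm{DMon}(X)$-invariant lattice in $\mathrm{Sym}^{n-1}\widetilde{\mathrm{H}}(X,\mathbb{Q})$ (equivalently, in $\mathrm{SH}(X,\mathbb{Q})$ itself). This matches the paper's approach, which simply cites Beckmann's Lemma~8.3.

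The gap is in how you descend to $\widetilde{\mathrm{H}}(X,\mathbb{Q})$. Your proposed fix is circular: you assert that for any $\mathcal{E}$ in the $\mathcal{O}_X$-orbit one has $\widetilde{v}(\mathcal{E})\in\Lambda_{g,X}$, but at this point in the paper that is only known for the handful of explicit examples, and the general statement is precisely Corollary~\ref{cor 8.7+}, which is deduced \emph{from} Theorem~\ref{th 8.1+}, which in turn \emph{uses} Lemma~\ref{lemma 8.3+}. The extension to parallel transports is also problematic: a parallel transport operator does not send objects to objects, so there is no ``family-theoretic $\mathcal{O}_X$-orbit'' to invoke without already having an invariant lattice.

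The correct descent is purely representation-theoretic and requires no further geometric input. The homomorphism $\mathrm{O}(\widetilde{\mathrm{H}}(X,\mathbb{Q}))\to\mathrm{O}(\mathrm{SH}(X,\mathbb{Q}))$ has finite kernel (contained in $\{\pm\mathrm{id}\}$), so it is an isogeny of algebraic groups over $\mathbb{Q}$. Since $\mathrm{DMon}(X)$ maps into the stabiliser of a full-rank lattice in $\mathrm{SH}(X,\mathbb{Q})$, its image lies in an arithmetic subgroup of $\mathrm{O}(\mathrm{SH}(X,\mathbb{Q}))$; the preimage of an arithmetic subgroup under an isogeny is again arithmetic, hence $\mathrm{DMon}(X)$ is contained in an arithmetic subgroup of $\mathrm{O}(\widetilde{\mathrm{H}}(X,\mathbb{Q}))$. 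Any such subgroup is commensurable with $\mathrm{O}(\widetilde{\mathrm{H}}(X,\mathbb{Z}))$ and therefore preserves some full-rank lattice $\Gamma\subset\widetilde{\mathrm{H}}(X,\mathbb{Q})$. This replaces your second and third paragraphs entirely.
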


In fact, the proof is the same as that of \cite[Lemma 8.3]{Beckmann:22-2}.

We want to classify lattices $\Gamma$ with the property $\mathrm{DMon}(X)\subset\mathrm{O}(\Gamma)$. We know by Proposition \ref{prop 8.2+} that for any such lattice $\Gamma$, there is an inclusion $\widehat{\mathrm{SO}}^+(\Lambda_X)\subset\mathrm{O}(\Gamma)$. This yields strong restrictions.

\begin{lemma}[Similar to {\cite[Lemma 8.4]{Beckmann:22-2}}]\label{lemma 8.4+}
    Let $\widetilde{\Gamma}$ be a lattice preserved by $\mathrm{DMon}(X)$ as in Lemma \ref{lemma 8.3+}. Up to replacing $\widetilde{\Gamma}$ by $a\widetilde{\Gamma}\subset\widetilde{\mathrm{H}}(X,\mathbb{Q})$ for $a\in\mathbb{Q}_{>0}$, the lattice $\widetilde{\Gamma}$ is equal (as subsets) to $k\Lambda_A\oplus\mathbb{Z}\widetilde{\delta'}\subset\widetilde{\mathrm{H}}(X,\mathbb{Q})$ for some $k\in\mathbb{Z}$ satisfying $k|2n$.
\end{lemma}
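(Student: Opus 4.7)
The plan is to use the explicit generators of $\mathrm{DMon}(X)$ provided by Proposition \ref{prop 8.2+} and Remark \ref{extend lower bound of DMon(X)} -- namely the Eichler transvections generating $\widehat{\mathrm{SO}}^+(\Lambda_X) = E_U(\Lambda_X')$ together with the reflection $s_{\widetilde{\delta'}}$ -- and the parallel transport element of Lemma \ref{DMon(Kum) interchange alpha-tide beta} swapping $\beta$ with $\widetilde{\alpha}$, to pin down the shape of $\widetilde{\Gamma}$ stepwise.

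First, since $\widetilde{\delta'}$ is orthogonal to $\Lambda_A = \mathbb{Z}\widetilde{\alpha} \oplus \theta(\mathrm{H}^2(A,\mathbb{Z})) \oplus \mathbb{Z}\beta$, the reflection $s_{\widetilde{\delta'}}\in\mathrm{DMon}(X)$ acts as $+1$ on $\Lambda_A \otimes \mathbb{Q}$ and as $-1$ on $\mathbb{Q}\widetilde{\delta'}$. Combining this eigenspace decomposition with $\widehat{\mathrm{SO}}^+(\Lambda_X)$-invariance (to rule out genuinely mixed elements) yields $\widetilde{\Gamma} = \Gamma_A \oplus \mathbb{Z}c\,\widetilde{\delta'}$ with $\Gamma_A \subset \Lambda_A \otimes \mathbb{Q}$ of rank eight and $c \in \mathbb{Q}_{>0}$; rescaling by $c^{-1}$ reduces us to $c = 1$. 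Next, the centralizer of $s_{\widetilde{\delta'}}$ in $\widehat{\mathrm{SO}}^+(\Lambda_X)$ preserves $\Gamma_A$ and contains the transvections $t(-\beta, \theta(\mu)) = B_{\theta(\mu)}$ for $\mu \in \mathrm{H}^2(A,\mathbb{Z})$, as well as their conjugates $t(\widetilde{\alpha}, \theta(\mu))$ obtained through the swap $\beta \leftrightarrow \widetilde{\alpha}$ of Lemma \ref{DMon(Kum) interchange alpha-tide beta}. Running the same Eichler-transvection argument as in Proposition \ref{prop 8.2+}, but inside the hyperbolic plane $\mathbb{Z}\widetilde{\alpha} \oplus \mathbb{Z}\beta \subset \Lambda_A \cong U^{\oplus 4}$, the induced group is $\widehat{\mathrm{SO}}^+(\Lambda_A) = \mathrm{SO}^+(\Lambda_A)$ (equality because $\Lambda_A$ is unimodular, so its discriminant is trivial). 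Eichler's classical transitivity on primitive vectors in $U^{\oplus k}$ with $k \geq 2$, combined with unimodularity, forces every full-rank $\mathrm{SO}^+(\Lambda_A)$-stable $\mathbb{Z}$-sublattice of $\Lambda_A \otimes \mathbb{Q}$ to be of the form $k\Lambda_A$ for a unique $k \in \mathbb{Q}_{>0}$.

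Finally, apply the Eichler transvection $t(-\beta, \widetilde{\delta'}) \in \widehat{\mathrm{SO}}^+(\Lambda_X)$, which does \emph{not} commute with $s_{\widetilde{\delta'}}$ and therefore mixes the two summands. Using $\widetilde{b}(\widetilde{\alpha}, -\beta) = 1$, $\widetilde{b}(\widetilde{\alpha}, \widetilde{\delta'}) = 0$, and $\widetilde{b}(\widetilde{\delta'}, \widetilde{\delta'}) = -2n$, the standard Eichler formula gives
\[
t(-\beta, \widetilde{\delta'})(\widetilde{\alpha}) = \widetilde{\alpha} + \widetilde{\delta'} - n\beta, \qquad t(-\beta, \widetilde{\delta'})(\widetilde{\delta'}) = \widetilde{\delta'} - 2n\beta,
\]
while $\beta$ and $\theta(\mathrm{H}^2(A,\mathbb{Z}))$ are fixed. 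Applying this to $k\widetilde{\alpha} \in \widetilde{\Gamma}$ and reading off the $\mathbb{Z}\widetilde{\delta'}$-component produces $k\widetilde{\delta'}$, forcing $k \in \mathbb{Z}$; applying it to $\widetilde{\delta'} \in \widetilde{\Gamma}$ and reading off the $k\Lambda_A$-component produces $-2n\beta$, which since $\beta$ is primitive in $\Lambda_A$ forces $k \mid 2n$.

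The main obstacle is the middle step: rigorously identifying the centralizer of $s_{\widetilde{\delta'}}$ inside $\widehat{\mathrm{SO}}^+(\Lambda_X)$ with $\widehat{\mathrm{SO}}^+(\Lambda_A)$, and then invoking Eichler's orbit theorem together with unimodularity of $\Lambda_A$ to classify the invariant full-rank rational sublattices as the scalar multiples $k\Lambda_A$. The outer two steps are then essentially direct computations with the explicit Eichler formulas.
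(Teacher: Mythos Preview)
Your overall strategy is sound and the endgame (applying $t(-\beta,\widetilde{\delta'})=B_{\delta'}$ to $k\widetilde{\alpha}$ and to $\widetilde{\delta'}$) matches the paper exactly. The genuine gap is your first step: invariance under $s_{\widetilde{\delta'}}$ only yields $2x \in \Gamma_A$ and $2b\widetilde{\delta'} \in \Gamma_\delta$ for every $v = x + b\widetilde{\delta'} \in \widetilde{\Gamma}$, so $\widetilde{\Gamma}/(\Gamma_A \oplus \Gamma_\delta)$ is a priori $2$-torsion, not zero. Your parenthetical ``to rule out genuinely mixed elements'' via $\widehat{\mathrm{SO}}^+(\Lambda_X)$ is the crux and is left unexplained. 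It \emph{can} be done, but only after the content of your step~2 is in hand: once one knows $\Gamma_A = k\Lambda_A$, a putative mixed element has $x = \tfrac{k}{2}w$ with $w \notin 2\Lambda_A$, and then one exhibits a transvection $g \in \mathrm{SO}^+(\Lambda_A)$ fixing $\widetilde{\delta'}$ with $g(w) \not\equiv w \pmod{2\Lambda_A}$, whence $g(v)-v \notin \Gamma_A$, a contradiction. So your steps~1 and~2 need to be interchanged or merged, and the classification of $\mathrm{SO}^+(\Lambda_A)$-invariant sublattices that you invoke in step~2 still needs the differences-of-primitive-vectors argument spelled out.

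The paper avoids $s_{\widetilde{\delta'}}$ altogether by choosing a different normalization: it rescales so that $\widetilde{\Gamma} \subset \Lambda_X$ with minimal scaling, rather than so that the $\widetilde{\delta'}$-part is $\mathbb{Z}\widetilde{\delta'}$. Then every $v = x + b\widetilde{\delta'}$ already has $x \in \Lambda_A$ and $b \in \mathbb{Z}$, and a direct divisibility argument (define $k$ as the minimal $\Lambda_A$-divisibility among the $x$'s, use Eichler transitivity inside $\Lambda_A$ to get $k\Lambda_A \subset \widetilde{\Gamma}$, then subtract to see $b\widetilde{\delta'} \in \widetilde{\Gamma}$) yields the splitting $\widetilde{\Gamma} = k\Lambda_A \oplus s\mathbb{Z}\widetilde{\delta'}$ in one pass. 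Minimality of the rescaling forces $\gcd(k,s)=1$; then $B_{\delta'}(k\widetilde{\alpha})=k\widetilde{\alpha}+k\widetilde{\delta'}-kn\beta$ gives $s\mid k$, hence $s=1$, and $k\mid 2n$ follows from $B_{\delta'}(\widetilde{\delta'})=\widetilde{\delta'}-2n\beta$ exactly as in your final step.
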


\begin{proof}
    Replace $\widetilde{\Gamma}$ with $a\widetilde{\Gamma}$ for $a\in\mathbb{Q}_{>0}$ such that $\widetilde{\Gamma}\subset\Lambda_X$ and $a$ is the smallest positive rational number with this property.

    \textit{Step 1:} Find $k\in\mathbb{Z}$ such that $k\Lambda_A\subset\widetilde{\Gamma}$.
    
    Consider $v\in\widetilde{\Gamma}$ and write $v=x+b\widetilde{\delta'}$ with $x\in\Lambda_A, b\in\mathbb{Z}$. If $x\not=0$, then its divisibility agrees with the largest integer $t$ such that $x\in t\Lambda_A$, since $\Lambda_A=\theta(\mathrm{H}^2(A,\mathbb{Z}))\oplus\mathbb{Z\widetilde{\alpha}}\oplus\mathbb{Z}\beta$ is unimodular, which comes from $\mathrm{H}^2(A,\mathbb{Z})$ being unimodular and $\theta$ being an isometry. Consider all $v\in\widetilde{\Gamma}$ such that $x\not=0$ in the above decomposition and let $k$ be the minimum of all integers $t$ above. Then $k\Lambda_A\subset\widetilde{\Gamma}$.

    Indeed, take an element $v\in\widetilde{\Gamma}$ such that $v=kx+c\widetilde{\delta'}$ for some primitive $x\in\Lambda_A$ and $c\in\mathbb{Z}$. One observes that $\mathrm{O}^+(\Lambda_A)$ can be embedded into $\widehat{\mathrm{O}}^+(\Lambda_X)$ as the group of all isometries that fix $\widetilde{\delta'}$. We see that for every primitive element $y\in\Lambda_A$ with $\widetilde{b}(y,y)=\widetilde{b}(x,x)$, the element $ky+c\widetilde{\delta'}$ is contained in $\widetilde{\Gamma}$ by Proposition \ref{prop 3.3 Eichler}. It yields $k\Lambda_A\subset\widetilde{\Gamma}$.

    Actually, consider $u=ky+c\widetilde{\delta'}, v=kx+c\widetilde{\delta'}$, the condition $(u,u)=(v,v)$ comes from $\widetilde{b}(y,y)=\widetilde{b}(x,x)$. Since $\Lambda_X$ is unimodular, we have $u^*\equiv v^*\pmod{\Lambda_X}$. Applying Proposition \ref{prop 3.3 Eichler} to $\Lambda_X=U\oplus\Lambda_X',\Lambda_X'\supset U_1$ for $u,v$ with Proposition \ref{prop 3.4 Eichler} and Proposition \ref{prop 8.2+}, there exists $\tau\in E_U(\Lambda_X')=\widehat{\mathrm{SO}}^+(\Lambda_X)\subset\mathrm{DMon}(X)\subset\mathrm{O}(\widetilde{\Gamma})$, such that $\tau(v)=u$. As $v=kx+c\widetilde{\delta'}\in\widetilde{\Gamma}$, we have $u=ky+c\widetilde{\delta'}\in\widetilde{\Gamma}$.

    \textit{Step 2:} Describe $\widetilde{\Gamma}$. 
    
    Consider $(k\Lambda_A)^{\perp}\subset\widetilde{\Gamma}$ and take $s\in\mathbb{Z}_{>0}$ such that $$(k\Lambda_A)^{\perp}=s\mathbb{Z}\widetilde{\delta'}\subset\widetilde{\Gamma}.$$ We claim that $\widetilde{\Gamma}=k\Lambda_A\oplus s\mathbb{Z}\widetilde{\delta'}$. 
    
    The ``$\supset$" part is obvious by Step 1. For the ``$\subset$" part, take an arbitrary $v\in\widetilde{\Gamma}$ and write $v=dx+e\widetilde{\delta'}$ for $x\in\Lambda_A,d,e\in\mathbb{Z}$. The definition of the integer $k$ implies that $k$ divides $d$ and therefore we have $dx\in k\Lambda_A\subset\widetilde{\Gamma}$ by the above.
    Hence, $v-dx=e\widetilde{\delta'}$ is an element of $\widetilde{\Gamma}$ orthogonal to $k\Lambda_A$. By definition of the integer $s$, we get that $s$ divides $e$ and so $v\in k\Lambda_A\oplus s\mathbb{Z\widetilde{\delta'}}$.

    \textit{Step 3:} Verify that $s=1$.
    
    The minimality assumption of $a$ indicates that the integers $k$ and $s$ are coprime. On the other hand, $k\widetilde{\alpha}\in \widetilde{\Gamma}, B_{\delta'}(k\widetilde{\alpha})=k\widetilde{\alpha}+k\widetilde{\delta'}-kn\beta$. This implies that $k\widetilde{\delta'}\in\widetilde{\Gamma}$ since $B_{\delta'}\in\mathrm{DMon}(X)\subset\mathrm{O} (\widetilde{\Gamma})$ and $k\beta\in\widetilde{\Gamma}$. Therefore, we have $s=1$ using the definition of $s$ and $k\widetilde{\delta'}\in\widetilde{\Gamma},s\mathbb{Z}\widetilde{\delta'}\subset\widetilde{\Gamma}$. It yields $\widetilde{\Gamma}=k\Lambda_A\oplus\mathbb{Z}\widetilde{\delta'}$.

    \textit{Step 4:} Finish the proof by describing $k$.
    
    Since $\widetilde{\delta'}\in\widetilde{\Gamma}$ from Step 3, we get $$B_{\delta'}(\widetilde{\delta'})=\widetilde{\delta'}-2n\beta\in\widetilde{\Gamma}=k\Lambda_A\oplus\mathbb{Z}\widetilde{\delta'}.$$ As $\beta\in\Lambda_A$, $-2n\beta\in k\Lambda_A$, it gives $k|2n$.
\end{proof}

\begin{remark}[Similar to {\cite[Remark 8.5]{Beckmann:22-2}}]\label{remark 8.5+}
    Ideally, one would like to directly conclude in the above situation that $k=1$ and therefore (up to scaling) $\widetilde{\Gamma}$ must be $\Lambda_A$. However, this is in general not true.

    For example, let us consider the case of hyper-K\"ahler manifolds of generalized Kummer type of dimension $1798$. Lemma \ref{lemma 8.6+} implies an inclusion $\mathrm{O}(\widetilde{\Gamma})\subset\mathrm{O}(\Gamma)$ with $\Gamma=30\Lambda_A\oplus\mathbb{Z}\widetilde{\delta'}$. Lemma \ref{lemma 8.4+} yields that $\widetilde{\Gamma}=k\Lambda_A\oplus\mathbb{Z}\widetilde{\delta'}$ for some $k\in\mathbb{Z}, k|900$. So, the smallest possible $\widetilde{\Gamma}$ that satisfies $\mathrm{O}(\widetilde{\Gamma})\subset\mathrm{O}(\Gamma)$ is $\widetilde{\Gamma}=30\Lambda_A\oplus\mathbb{Z}\widetilde{\delta'}$. For the isometry $B_{\frac{5}{6}\delta'}$, we get $B_{\frac{5}{6}\delta'}(\widetilde{\alpha})=\widetilde{\alpha}+\frac{5}{6}\widetilde{\delta'}-625\beta\not\in\Lambda_X$. It yields $B_{\frac{5}{6}\delta'}\in\mathrm{O}(\widetilde{\Gamma})\backslash\mathrm{O}(\Lambda_X)$ by computation. Therefore, additional (geometric) input is necessary for the proof of Theorem \ref{th 8.1+}.
\end{remark}

We make some further reductions.

\begin{lemma}[Similar to {\cite[Lemma 8.6]{Beckmann:22-2}}]\label{lemma 8.6+}
    Let $l\in\mathbb{Z}_{>0}$ be the largest integers such that $l^2|n$. For every lattice $\widetilde{\Gamma}$ as in Lemma \ref{lemma 8.4+}, there is an inclusion $\mathrm{O}(\widetilde{\Gamma})\subset\mathrm{O}(\Gamma)$ with $\Gamma:=l\Lambda_A\oplus\mathbb{Z}\widetilde{\delta'}\subset\widetilde{\mathrm{H}}(X,\mathbb{Q})$.
\end{lemma}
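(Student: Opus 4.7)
The plan is to argue prime-by-prime, exploiting the orthogonal decomposition $\widetilde{\Gamma} = k\Lambda_A \oplus \mathbb{Z}\widetilde{\delta'}$, which holds because $\widetilde{\delta'} = \delta' + n\beta$ is orthogonal to $\Lambda_A$ in $\widetilde{\mathrm{H}}(X,\mathbb{Q})$: a direct check gives $\delta' \perp \theta(\mathrm{H}^2(A,\mathbb{Z}))$ together with $\widetilde{b}(\widetilde{\alpha},\delta') = n$ and $\widetilde{b}(\widetilde{\alpha},\beta) = -1$, so the $\delta'$ and $\beta$ contributions cancel. Since $\Gamma$ is recovered as $\bigcap_p (\Gamma \otimes \mathbb{Z}_p)$ inside $\widetilde{\Gamma}\otimes\mathbb{Q}$, it suffices to prove the local statement $\mathrm{O}(\widetilde{\Gamma}_p) \subset \mathrm{O}(\Gamma_p)$ at every prime $p$.

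At each prime $p$, write $k_p = v_p(k)$, $n_p = v_p(n)$, and recall $l_p = \lfloor n_p/2 \rfloor$. The completion $\widetilde{\Gamma}_p = p^{k_p}\Lambda_{A,p} \oplus \mathbb{Z}_p\widetilde{\delta'}$ is an orthogonal sum of a rank-$8$ block at Jordan level $2k_p$ (as $\Lambda_{A,p}$ is unimodular) and a rank-$1$ block at Jordan level $n_p + \delta_{p,2}$ (this being $v_p(2n)$). The key input is the uniqueness of Jordan decomposition for non-degenerate quadratic $\mathbb{Z}_p$-lattices: the isometry type of the part at any fixed level is preserved by every isometry of the whole lattice. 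Whenever the two levels differ, any $\phi_p \in \mathrm{O}(\widetilde{\Gamma}_p)$ therefore preserves both summands, the rank-$1$ summand up to a $p$-adic unit (of square one). The induced isometry of $p^{k_p}\Lambda_{A,p}$ then descends from an isometry of the unimodular $\Lambda_{A,p}$, which preserves every scaled copy $p^{l_p}\Lambda_{A,p}$; hence $\phi_p(\Gamma_p) = \Gamma_p$.

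In the remaining case $2k_p = n_p + \delta_{p,2}$, the definition $l_p = \lfloor n_p/2 \rfloor$ forces $n_p$ even and $k_p = n_p/2 = l_p$, so $\widetilde{\Gamma}_p = \Gamma_p$ and the inclusion of isometry groups is automatic at $p$. Globalizing via the intersection description of $\Gamma$ yields $\mathrm{O}(\widetilde{\Gamma}) \subset \mathrm{O}(\Gamma)$.

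The main obstacle is the prime $p = 2$: even unimodular $\mathbb{Z}_2$-lattices admit Jordan decompositions that mix the hyperbolic plane $U$ with an elliptic binary form, and the splitting within a fixed level is not canonical. Only the weaker uniqueness at each level is needed here and it remains valid, so the separation of levels $2k_2$ and $n_2+1$ pins down the rank-$1$ summand exactly as in the odd case; some bookkeeping with the discriminant form on the $2$-part of $D(\widetilde{\Gamma})$, using that $\widetilde{b}$ is even and $\widetilde{\delta'}$ has divisibility $2n$, is required to make this fully rigorous.
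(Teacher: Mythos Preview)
Your argument has two genuine gaps. First, the inference from ``the Jordan type at each level is unique'' to ``any $\phi_p$ preserves each summand as a sublattice'' is false: Jordan uniqueness is a statement about isometry classes, not about fixed sublattices. For a concrete counterexample take $p$ odd, $k_p=0$, $n_p=2$ (so the levels $0$ and $2$ are distinct) and $e\in\Lambda_{A,p}$ primitive isotropic. The Eichler transvection $t(e,\widetilde{\delta'})$ lies in $\mathrm{O}(\widetilde{\Gamma}_p)$ but sends $\widetilde{\delta'}\mapsto\widetilde{\delta'}+2ne\notin\mathbb{Z}_p\widetilde{\delta'}$, so the rank-$1$ block is not preserved. (This $t(e,\widetilde{\delta'})$ \emph{does} happen to preserve $\Gamma_p=p\Lambda_{A,p}\oplus\mathbb{Z}_p\widetilde{\delta'}$, because $2n\in p^2\mathbb{Z}_p$, but not via your mechanism.) Second, in the equal-level case at $p=2$ the equation $2k_2=n_2+1$ forces $n_2$ \emph{odd}, contrary to your claim; then $k_2=(n_2+1)/2=l_2+1\ne l_2$, so $\widetilde{\Gamma}_2\ne\Gamma_2$. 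For instance $n=6$, $k=2$ puts both blocks at level $2$ while $\widetilde{\Gamma}_2=2\Lambda_{A,2}\oplus\mathbb{Z}_2\widetilde{\delta'}\subsetneq\Gamma_2=\Lambda_{A,2}\oplus\mathbb{Z}_2\widetilde{\delta'}$. Handling this case genuinely requires the evenness of $\Lambda_A$: writing $\phi(2\lambda)=2\mu+c\widetilde{\delta'}$ one gets $\widetilde{b}(\lambda,\lambda)-\widetilde{b}(\mu,\mu)=-3c^2$, and the left side being even forces $c$ even. Your closing paragraph flags non-canonical splittings \emph{within} a $2$-adic level, but that is a different phenomenon and addresses neither gap.

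The paper's proof avoids Jordan theory entirely. It introduces the intermediate lattice $\Gamma_t$ with $t=\gcd(k,l)$ and proves $\mathrm{O}(\widetilde{\Gamma})\subset\mathrm{O}(\Gamma_t)\subset\mathrm{O}(\Gamma)$ by direct global self-pairing computations: the evenness of $\Lambda_A$ forces the $\widetilde{\delta'}$-coefficient of $\gamma(k\lambda)$ to satisfy a divisibility constraint which, combined with the maximality of $l$, yields $\gamma(t\lambda)\in\Gamma_t$; the second inclusion then uses $t\mid l$ together with a self-pairing argument for $\gamma(\widetilde{\delta'})$.
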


\begin{proof}
    Write $\widetilde{\Gamma}=k\Lambda_A\oplus\mathbb{Z}\widetilde{\delta'}$ with $k|2n$. Let $t$ be the greatest common divisor of $l$ and $k$ and denote by $\Gamma_t$ the lattice $t\Lambda_A\oplus\mathbb{Z}\widetilde{\delta'}\subset\widetilde{\mathrm{H}}(X,\mathbb{Q})$. The proof consists of showing the following two inclusions $$\mathrm{O}(\widetilde{\Gamma})\subset\mathrm{O}(\Gamma_t)\subset\mathrm{O}(\Gamma).$$

    \textit{Step 1:} Verify: $\mathrm{O}(\widetilde{\Gamma})\subset\mathrm{O}(\Gamma_t)$.
    
    Take an isometry $\gamma\in\mathrm{O}(\widetilde{\Gamma})$ and write $k=k't$. Since $\gamma(\widetilde{\delta'})\in\widetilde{\Gamma}\subset\Gamma_t$, as subsets of the extended Mukai lattice, it suffices to show that for every $\lambda\in\Lambda_A$, we have $\gamma(t\lambda)\in\Gamma_t$. By definition, we have $\gamma(k\lambda)\in\widetilde{\Gamma}$. So we can write $\gamma(k\lambda)=ak\mu+b\widetilde{\delta'}$ for some $a,b\in\mathbb{Z}$ and $\mu\in\Lambda_A$ using Lemma \ref{lemma 8.4+}. Dividing this equation by $k'$, we obtain $\gamma(t\lambda)=at\mu+\frac{b}{k'}\widetilde{\delta'}$. Since $\theta$ is an isometry as in the proof of Proposition \ref{prop 8.2+}, we get $\Lambda_A$ being an even lattice. Therefore, the self-pairing of $t\lambda,t\mu$ and $\gamma(t\lambda)$ are even. In particular, we find that $$\widetilde{b}(\frac{b}{k'}\widetilde{\delta'},\frac{b}{k'}\widetilde{\delta'})=-2n\frac{b^2}{k'^2}\in2\mathbb{Z}.$$ The defining property of $l$ with the fact that $l$ and $k'$ are coprime implies that $k'$ must divide $b$. It gives $\gamma(t\lambda)=at\mu+\frac{b}{k'}\widetilde{\delta'}\in t\Lambda_A\oplus\mathbb{Z}\widetilde{\delta'}=\Gamma_t$.

    \textit{Step 2:} Verify: $\mathrm{O}(\Gamma_t)\subset\mathrm{O}({\Gamma})$.
    
    Considering an isometry $\gamma\in\mathrm{O}(\Gamma_t)$ and observe that $$\gamma(t\lambda)=t\gamma(\lambda)\in\Gamma_t=t\Lambda_A\oplus\mathbb{Z}\widetilde{\delta'}$$
    for every $\lambda\in\Lambda_A$. This yields $$l\gamma(\lambda)=\frac{l}{t}t\gamma(\lambda)\in l\Lambda_A\oplus\mathbb{Z}\widetilde{\delta'}=\Gamma.$$
    It is left to show that $\gamma(\widetilde{\delta'})\in\Gamma$. Since $\widetilde{\delta'}\in\Gamma_t$ has divisibility $2n$, the same is true for $\gamma(\widetilde{\delta'})\in\Gamma_t$. Let $\widetilde{l}$ be the largest integer such that $\gamma(\widetilde{\delta'})\in\widetilde{l}\Lambda_A\oplus\mathbb{Z}\widetilde{\delta'}$. We get $\widetilde{b}(\gamma(\widetilde{\delta'})|_{\widetilde{\delta'}^{\perp}},\gamma(\widetilde{\delta'})|_{\widetilde{\delta'}^{\perp}})\in\widetilde{l}^2\mathbb{Z}$, since $\theta$ is an isometry, where $\gamma(\widetilde{\delta'})|_{\widetilde{\delta'}^{\perp}}$ is the projection of $\gamma(\widetilde{\delta'})\in\Gamma_t$ to $\widetilde{\delta'}^{\perp}$. Since $\widetilde{b}(\widetilde{\delta'},\widetilde{\delta'})=\widetilde{b}(\gamma(\widetilde{\delta'}),\gamma(\widetilde{\delta'}))\in 2n\mathbb{Z}$, we get $l=\widetilde{l}$ by the definition of $l$ and $\Lambda_A$ being an even lattice.
    It yields $\gamma(\widetilde{\delta'})\in{l}\Lambda_A\oplus\mathbb{Z}\widetilde{\delta'}=\Gamma$.
\end{proof}

We therefore have an upper bound for the lattice from Lemma \ref{lemma 8.3+}. That is, $$\mathrm{DMon}(X)\subset\mathrm{O}(\Gamma)\phantom{1}\mathrm{for}\phantom{1}\Gamma=l\Lambda_A\oplus\mathbb{Z}\widetilde{\delta'}$$ as above. In particular, if $n$ is square free, then $l=1$ and $\mathrm{DMon}(X)\subset\mathrm{O}(\Lambda_X)$.

\subsection{Conclusion of proof}
\begin{proof}[Proof of Theorem \ref{th 8.1+}]
    From Lemma \ref{lemma 8.6+}, we know that for the lattice $$\Gamma=l\Lambda_A\oplus\mathbb{Z}\widetilde{\delta'}$$ with $l$ maximal such that $l^2|n$, there is an inclusion $\mathrm{DMon}(X)\subset\mathrm{O}(\Gamma)$.

    \textit{Step 1:} Notation preparation. 
    
    Suppose that there exists an isometry $$\gamma\in\mathrm{DMon}(X)\backslash\mathrm{O}(\Lambda_X).$$
    Consider the composition $$\varphi:\Lambda_A\xrightarrow[]{\gamma}\widetilde{\mathrm{H}}(X,\mathbb{Q})\xrightarrow[]{p}\mathbb{Q}\widetilde{\delta'}$$ where $p$ is the orthogonal projection and denote $K:=\ker(\varphi)$. Let $v$ be a generator of $K^\perp\subset\Lambda_A$. Denote $\frac{k}{l}\widetilde{\delta'}:=\gamma(v)$. By assumption, $\frac{k}{l}\not\in\mathbb{Z}$. Note that there are two hyperbolic planes $U_1\oplus U_2$ contained in $K$.
    
    Note that $\Lambda_A\cong U^{\oplus 4}$ since $\theta$ is an isometry and $\mathrm{H}^2(A,\mathbb{Z})\cong U^{\oplus 3}$. Applying Proposition \ref{prop 3.3 Eichler} (i) (iii) to $\Lambda_A=U\oplus\theta(\mathrm{H}^2(A,\mathbb{Z})),\theta(\mathrm{H}^2(A,\mathbb{Z}))\supset U_1$, we get $\mathrm{O}^+(\Lambda_A)$ that acts transitively on primitive elements with the same square. So, the element $v$ can be sent into a hyperbolic plane $U\subset\Lambda_A\cong U^{\oplus4}$. Since $K=v^\perp$, we know that there are two (in fact, three) hyperbolic planes contained in $K$.

    Changing $v$ to $w$ by adding an element of $U_1$, we can assume that $\widetilde{b}(w,w)=0$ and $\varphi(w)$ generates the image of $\varphi$. Moreover, there exists a primitive isotropic element $z\in U_2\subset K\subset\Lambda_A$ such that $z\perp w$ and $u:=\gamma(z)\in\Lambda_A$ is a primitive element.

    Let us write $\gamma(w)=x+\frac{k}{l}\widetilde{\delta'}$ with $x\in\Lambda_A$ and \begin{equation}\label{(8.1)+}
        \gamma(\widetilde{\delta'})=\frac{2n}{l}y+s\widetilde{\delta'}
    \end{equation}
    for $y\in\Lambda_A,s\in\mathbb{Z}$, because $\widetilde{\delta'}\in\Gamma$ has divisibility $2n$. Note that $\exists a\in\mathbb{Z}$, such that $x':=x+\frac{n}{l}y+au\in\Lambda_A$ is primitive, since $u=\gamma(z)\in\Lambda_A$ is itself primitive. Therefore, the element $w':=w+az\in\Lambda_A$ is primitive, with self-pairing $0$ and the image $\varphi(w')$ generates the image of $\varphi$.

    Indeed, by the definition of $a$, $$\gamma(w+\frac{\widetilde{\delta'}}{2}+az)=x+\frac{n}{l}y+au+\frac{k}{l}\widetilde{\delta'}+\frac{s}{2}\widetilde{\delta'}=x'+\frac{k}{l}\widetilde{\delta'}+\frac{s}{2}\widetilde{\delta'}$$ is primitive. Hence, $w+\frac{\widetilde{\delta'}}{2}+az$ is primitive and $w'=w+az$ is also primitive.

    \textit{Step 2:} Introducing $h,\gamma''\circ\gamma\circ\gamma'$.
    
    We get the subgroup $\mathrm{O}^+(\Lambda_A)\subset\widetilde{\mathrm{O}}^+(\Lambda_X)$ of $\mathrm{DMon}(X)$ by letting isometries act trivially on $\widetilde{\delta'}$. Since $\mathrm{O}^+(\Lambda_A)$ acts transitively on the set of primitive vectors with prescribed self-pairing as in Step 1, there exists an isometry $\gamma'\in\widetilde{\mathrm{O}}^+(\Lambda_X)$ such that $\gamma'(\widetilde{\alpha})=w'\in\Lambda_A, \gamma'(\widetilde{\delta'})=\widetilde{\delta'}$. Furthermore, there exists an isometry $\gamma''\in\widetilde{\mathrm{O}}^+(\Lambda_X)$ such that the primitive element $x'$ is assigned to $\widetilde{\alpha}+b\beta$ for some $b\in\mathbb{Z}$ and $\gamma''(\widetilde{\delta'})=\widetilde{\delta'}$. We get an isometry $\gamma''\circ\gamma\circ\gamma'\in\mathrm{DMon}(X)$ which satisfies \begin{equation*}
        \widetilde{v}(\mathcal{O}_X)=\widetilde{\alpha}+\frac{\widetilde{\delta'}}{2}\overset{\gamma'}{\mapsto}w'+\frac{\widetilde{\delta'}}{2}\overset{\gamma}{\mapsto}x'+(\frac{k}{l}+\frac{s}{2})\widetilde{\delta'}\overset{\gamma''}{\mapsto}h:=\widetilde{\alpha}+(\frac{s}{2}+\frac{k}{l})\widetilde{\delta'}+b\beta=\alpha+(\frac{s-1}{2}+\frac{k}{l}){\delta'}+c\beta
    \end{equation*}
    for some $c\in\mathbb{Q}$.

    \textit{Step 3:} Use $T(\frac{h^{n-1}}{(n-1)!})$ to get a contradiction.
    
    By notation in \cite[subsection 4.1]{Beckmann:22-2}, the extended Mukai vector of $\mathcal{O}_X$ satisfies $$T(\frac{\widetilde{v}(\mathcal{O}_X)^{n-1}}{(n-1)!})=\overline{v(\mathcal{O}_X)}\in\mathrm{SH}(X,\mathbb{Q})\subset\mathrm{H}^*(X,\mathbb{Q})$$
    which is in particular an element in the image of the Mukai vector morphism $$\overline{v(-)}=\overline{\mathrm{ch}(-)\mathbf{td}^{1/2}}:K^0_{\mathrm{top}}(X)\to\mathrm{SH}(X,\mathbb{Q})\subset\mathrm{H}^*(X,\mathbb{Q})$$ projected to $\mathrm{SH}(X,\mathbb{Q})$. Since parallel transport operators and derived equivalences preserve the image of topological $K$-theory under the Mukai vector morphism in cohomology, the same holds for $\gamma''\circ\gamma\circ\gamma'$. In particular, since $\gamma''\circ\gamma\circ\gamma'(\widetilde{v}(\mathcal{O}_X))=h$, we get $\overline{v(\mathcal{O}_X)}\mapsto T(\frac{h^{n-1}}{(n-1)!})\in\overline{v(K^0_{\mathrm{top}}(X))}$ by applying $T(\frac{(-)^{n-1}}{(n-1)!})$. Since $n\geq3$, we write $T(\frac{h^{n-1}}{(n-1)!})=1+(\frac{s-1}{2}+\frac{k}{l}){\delta'}+\mu$ with $\mu\in\mathrm{SH}^{>2}(X,\mathbb{Q})$. Applying the quadratic form $\widetilde{b}$ to the equality (\ref{(8.1)+}), we get $(1-s^2)\widetilde{b}(\widetilde{\delta'},\widetilde{\delta'})=\frac{4n^2}{l^2}\widetilde{b}(y,y)$. It follows that $$(s^2-1)l^2=2n\widetilde{b}(y,y).$$ As $l^2|n$, we see that $s^2-1$ is even and $s$ is odd. It yields $\frac{s-1}{2}\in\mathbb{Z}$. Since the degree $2$ component of the image of an element of topological $K$-theory under the Mukai vector morphism lies in $\mathrm{H}^2(X,\mathbb{Z})$, we obtain $\frac{s-1}{2}+\frac{k}{l}\in\mathbb{Z}$ by considering the element $T(\frac{h^{n-1}}{(n-1)!})\in\overline{v(K^0_{\mathrm{top}}(X))}$. The result $\frac{k}{l}\in\mathbb{Z}$ yields a contradiction with the assumption in Step 1 and finishes the proof. 
\end{proof}

\begin{corollary}[Similar to {\cite[Corollary 8.7]{Beckmann:22-2}}]\label{cor 8.7+}
    Let $X$ be a hyper-K\"ahler manifold of generalized Kummer type with $\dim X=2(n-1),n\geq 3$, over an algebraically closed field of characteristic $0$. There is an inclusion $\mathrm{DMon}(X)\subset\mathrm{O}(\Lambda_{g,X})$.
\end{corollary}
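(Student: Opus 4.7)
The plan is to deduce the corollary directly from Theorem \ref{th 8.1+} together with an analysis of the discriminant group of $\Lambda_X$. By Theorem \ref{th 8.1+} we already know $\mathrm{DMon}(X)\subset\mathrm{O}(\Lambda_X)$, so it suffices to show that every isometry $\gamma$ preserving $\Lambda_X$ also preserves the overlattice $\Lambda_{g,X}=\Lambda_X+\mathbb{Z}\frac{\widetilde{\delta'}}{2}$.

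First I would observe that $\Lambda_{g,X}/\Lambda_X$ is a cyclic subgroup of order $2$ in the discriminant group $D(\Lambda_X)=\Lambda_X^\vee/\Lambda_X$, generated by the class of $\frac{\widetilde{\delta'}}{2}$. Any isometry of $\Lambda_X$ induces a group automorphism of $D(\Lambda_X)$, and $\gamma$ preserves $\Lambda_{g,X}$ as a subset of $\widetilde{\mathrm{H}}(X,\mathbb{Q})$ if and only if this induced automorphism preserves the subgroup $\Lambda_{g,X}/\Lambda_X$.

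Next I would compute $D(\Lambda_X)$ explicitly. Using the orthogonal splitting $\Lambda_X=\Lambda_A\oplus\mathbb{Z}\widetilde{\delta'}$ from (\ref{LambdaXvsLambdaA}), the fact that $\Lambda_A$ is unimodular (it is an orthogonal sum of the hyperbolic plane $\mathbb{Z}\widetilde{\alpha}\oplus\mathbb{Z}\beta$ and $\theta(\mathrm{H}^2(A,\mathbb{Z}))$, both unimodular since $\theta$ is an isometry by subsection \ref{lattices} and $\mathrm{H}^2(A,\mathbb{Z})$ is unimodular), and the computation $\widetilde{b}(\widetilde{\delta'},\widetilde{\delta'})=-2n$, we find $D(\Lambda_X)\cong\mathbb{Z}/2n\mathbb{Z}$, generated by the class of $\frac{\widetilde{\delta'}}{2n}$. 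Under this identification, the class of $\frac{\widetilde{\delta'}}{2}$ corresponds to $n\in\mathbb{Z}/2n\mathbb{Z}$, which is the unique element of order $2$.

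The key observation is then that the $2$-torsion subgroup of $\mathbb{Z}/2n\mathbb{Z}$ is the unique subgroup of order $2$, namely $\{0,n\}\subset\mathbb{Z}/2n\mathbb{Z}$. Consequently, every group automorphism of $D(\Lambda_X)$ automatically preserves this subgroup. Applying this to the automorphism induced by $\gamma\in\mathrm{DMon}(X)\subset\mathrm{O}(\Lambda_X)$, we conclude that $\gamma$ preserves $\Lambda_{g,X}/\Lambda_X$, hence preserves $\Lambda_{g,X}$. There is no real obstacle here: once Theorem \ref{th 8.1+} is in place, the corollary follows from the elementary structural fact that $\mathbb{Z}/2n\mathbb{Z}$ has a unique element of order $2$.
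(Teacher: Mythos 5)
Your proposal is correct. The paper's own proof is more pedestrian: it takes $\gamma\in\mathrm{DMon}(X)\subset\mathrm{O}(\Lambda_X)$, notes that elements of $\Lambda_A$ land in $\Lambda_X\subset\Lambda_{g,X}$, and then uses the fact that $\widetilde{\delta'}$ has divisibility $2n$ in $\Lambda_X$ to write $\gamma(\widetilde{\delta'})=2nx+s\widetilde{\delta'}$ with $x\in\Lambda_A$, $s\in\mathbb{Z}$ (this is equation (\ref{(8.1)+}) with $l=1$), from which $\gamma(\tfrac{\widetilde{\delta'}}{2})=nx+\tfrac{s}{2}\widetilde{\delta'}\in\Lambda_A\oplus\mathbb{Z}\tfrac{\widetilde{\delta'}}{2}=\Lambda_{g,X}$ is immediate. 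Your argument repackages the same underlying input --- that $\Lambda_A$ is unimodular and $\widetilde{b}(\widetilde{\delta'},\widetilde{\delta'})=-2n$, so $D(\Lambda_X)\cong\mathbb{Z}/2n\mathbb{Z}$ generated by $\tfrac{\widetilde{\delta'}}{2n}$ --- in discriminant-group language: $\Lambda_{g,X}/\Lambda_X$ is the unique order-two subgroup of a cyclic group, hence automatically stable under the automorphism of $D(\Lambda_X)$ induced by any $\gamma\in\mathrm{O}(\Lambda_X)$. All the steps check out: $\Lambda_{g,X}\subset\Lambda_X^\vee$ since $\widetilde{b}(\tfrac{\widetilde{\delta'}}{2},\widetilde{\delta'})=-n\in\mathbb{Z}$, the class of $\tfrac{\widetilde{\delta'}}{2}$ is indeed $n\in\mathbb{Z}/2n\mathbb{Z}$, and preservation of the subgroup $\{0,n\}$ is equivalent to $\gamma(\Lambda_{g,X})=\Lambda_{g,X}$. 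What your version buys is the slightly stronger structural observation that $\Lambda_{g,X}$ is canonically determined by $\Lambda_X$ as its unique index-two overlattice inside $\Lambda_X^\vee$, so no explicit computation of $\gamma(\widetilde{\delta'})$ is needed; what the paper's version buys is that it reuses a formula ((\ref{(8.1)+})) already established in the proof of Theorem \ref{th 8.1+} and avoids introducing the discriminant group at this point. Either proof is acceptable.
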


\begin{proof}
    Take $\gamma\in\mathrm{DMon}(X)$ and recall that $\Lambda_X\subset\Lambda_{g,X}=\Lambda_A\oplus\mathbb{Z}\frac{\widetilde{\delta'}}{2}$ from Definitions \ref{def 5.2+} and \ref{def 5.3+}. The inclusion $\mathrm{DMon}(X)\subset\mathrm{O}(\Lambda_{X})$ implies that every element \\of $\Lambda_A$ is mapped under $\gamma$ again to $\Lambda_X\subset\Lambda_{g,X}$. Moreover, since $\widetilde{\delta'}\in\Lambda_X$ has divisibility $2n$, we get $$\gamma(\widetilde{\delta'})=2nx+s\widetilde{\delta'}$$  for some $s\in\mathbb{Z}$ and $x\in\Lambda_A$, from (\ref{(8.1)+}) with $l=1$. This implies $\gamma(\frac{\widetilde{\delta'}}{2})\in\Lambda_{g,X}$.
\end{proof}

\subsection{The notation $\widehat{\mathrm{SO}}^+(L),\widetilde{\mathrm{O}}^+(L),\widetilde{\mathrm{O}}^+(L)^{\mathrm{det\cdot D}}$ and Eichler tranvections}\label{SOtilde+(L) and Eichler}
In this subsection, we recollect the definitions of  $\widehat{\mathrm{SO}}^+(L),\widetilde{\mathrm{O}}^+(L), \widetilde{\mathrm{O}}^+(L)^{\mathrm{det\cdot D}}$ and some facts about Eichler tranvections in \cite{Gritsenko Hulek Sankaran:09} used in the previous proof.

\begin{passage}\label{SOtilde+(L)}
    Let $(L,(-,-))$ be an integral even lattice. Then the dual lattice $$L^{\vee}=\{v\in L\otimes\mathbb{Q}|(v,l)\in\mathbb{Z},\forall l\in L\}$$ contains $L$. We denote the \textit{discriminant group} of $L$ by $D(L)=L^{\vee}/L$. It carries a quadratic form with values in $\mathbb{Q}/2\mathbb{Z}$. The \textit{stable orthogonal group} is defined as the kernel of the natural projection to the finite orthogonal group $\mathrm{O}(D(L))$, denoted by $\widehat{\mathrm{O}}(L):=\mathrm{ker}(\mathrm{O}(L)\xrightarrow{D}\mathrm{O}(D(L)))$. The set of all orthogonal maps that act via $\pm \mathrm{id}$ on the discriminant group $D(L)$ forms a subgroup, denoted by $\widetilde{\mathrm{O}}(L)$, of $\mathrm{O}(L)$.

    Moreover, we get a character $D:\widetilde{\mathrm{O}}(L)\to\{\pm 1\}$ that corresponds to the action of $\widetilde{\mathrm{O}}(L)$ on the discriminant group. In addition, we have the determinant character $\mathrm{det}:\widetilde{\mathrm{O}}(L)\to\{\pm 1\}$.

    For any field $K\not=\mathbb{F}_2$, any $g\in\mathrm{O}(L\otimes K)$ can be represented as the product of reflections $g=s_{v_1}s_{v_2}\dots s_{v_m}$, where $v_i\in L\otimes K$.%
    \footnote{Note that any element of $\mathrm{O}(\Lambda_X)$ can be represented as a product of reflections $s_{v_1}s_{v_2\dots}s_{v_n}$, where $v_i\in\Lambda_X$ and $(v_i,v_i)=\pm2$ by \cite{Wall:62} and $\Lambda_X\subset U^{\oplus5}$.}%
    We define the \textit{spinor norm over} $K$ as $\mathrm{sn}_K(g)=(-\frac{(v_1,v_1)}{2})\cdot...\cdot(-\frac{(v_m,v_m)}{2})(K^{\times})^2$. Thus, $$\mathrm{sn}_K:\mathrm{O}(L\otimes K)\to K^{\times}/(K^{\times})^2$$ is a group homomorphism. We define subgroups \begin{align*}
        \mathrm{O}^+(L):=&\mathrm{O}(L)\cap\ker(\mathrm{sn}_{\mathbb{R}})\\
        \widehat{\mathrm{O}}^+(L):=&\widehat{\mathrm{O}}(L)\cap{\mathrm{O}}^+(L)\\
        \widetilde{\mathrm{O}}^+(L):=&\widetilde{\mathrm{O}}(L)\cap{\mathrm{O}}^+(L)\\
        {\mathrm{O}'}(L):=&\mathrm{SO}(L)\cap\ker(\mathrm{sn}_{\mathbb{Q}})\\
        \mathrm{SO}^+(L):=&\mathrm{O}^+(L)\cap\mathrm{SO}(L)\\
        \widehat{\mathrm{SO}}^+(L):=&\widehat{\mathrm{O}}^+(L)\cap\mathrm{SO}(L)\\
        \widetilde{\mathrm{SO}}^+(L):=&\widetilde{\mathrm{O}}^+(L)\cap\mathrm{SO}(L),
    \end{align*}
    where $\mathrm{ker}(\mathrm{sn}_{\mathbb{R}})\subset\mathrm{O}(L\otimes\mathbb{R}), \mathrm{ker}(\mathrm{sn}_{\mathbb{Q}})\subset\mathrm{O}(L\otimes\mathbb{Q})$.

    In addition, we define subgroups \begin{align*}
        \widetilde{\mathrm{O}}(L)^{\mathrm{det}\cdot D}:=&\mathrm{ker}(\mathrm{det}\cdot D:\widetilde{\mathrm{O}}(L)\to\{\pm 1\})\leq\widetilde{\mathrm{O}}(L)\\
        \widetilde{\mathrm{O}}^+(L)^{\mathrm{det}\cdot D}:=&\widetilde{\mathrm{O}}(L)^{\mathrm{det}\cdot D}\cap\widetilde{\mathrm{O}}^+(L).
    \end{align*}
    So we have $\widetilde{\mathrm{O}}^+(L)^{\mathrm{det}\cdot D}=\widetilde{\mathrm{SO}}^+(L)\bigsqcup(\widetilde{\mathrm{O}}^+(L)\backslash(\widetilde{\mathrm{SO}}^+(L)\cup\widehat{\mathrm{O}}^+(L)))$.
\end{passage}

For a quadratic space $(V=L\otimes\mathbb{Q},(-,-))$ over $\mathbb{Q}$ and an isotropic vector $e\in V$ with $a\in e_V^{\perp}\subset V$, we get a map $t'(e,a):v\mapsto v-(a,v)e$ in $\mathrm{O}(e_V^{\perp})$, the orthogonal group.

\begin{lemma}[{\cite[Lemma 3.1]{Gritsenko Hulek Sankaran:09}}]
    The map $t'(e,a)$ above extends to a unique element $t(e,a)\in\mathrm{O}(V)$. To be explicit, \begin{equation}\label{(3.4) Eichler}
        t(e,a): v\mapsto v-(a,v)e+(e,v)a-\frac{1}{2}(a,a)(e,v)e.
    \end{equation}
\end{lemma}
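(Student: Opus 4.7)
The plan is to produce the isometry $t(e,a)$ by exponentiating a nilpotent skew-adjoint operator, and then deduce uniqueness from non-degeneracy. Concretely, I introduce the operator
\[
E : V \to V, \qquad E(v) = (e,v)a - (a,v)e,
\]
and first check that $E$ is skew-adjoint with respect to $(-,-)$: this is immediate from bilinearity and symmetry and does not use the hypotheses on $e,a$. The hypotheses $(e,e)=0$ and $(e,a)=0$ enter at the next step, where I compute $E(e)=0$ and $E(a)=-(a,a)e$, so that $E^2(v)=-(a,a)(e,v)e$ and $E^3=0$. Hence $T := \exp(E) = \mathrm{id}+E+\tfrac12 E^2$ is a well-defined polynomial in $E$, and since $E$ is skew-adjoint and nilpotent, $T\in\mathrm{O}(V)$ (alternatively, one verifies $(Tv,Tw)=(v,w)$ directly: the $16$-term expansion collapses using $(e,e)=(e,a)=0$ to leave only $(v,w)$). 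Writing out $T$ explicitly reproduces
\[
T(v) = v - (a,v)e + (e,v)a - \tfrac12 (a,a)(e,v)e,
\]
and for $v\in e_V^{\perp}$ the last two terms vanish, so $T|_{e_V^{\perp}}=t'(e,a)$. This defines $t(e,a):=T$.

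For uniqueness, suppose $T_1,T_2\in\mathrm{O}(V)$ both extend $t'(e,a)$ and set $g:=T_1T_2^{-1}\in\mathrm{O}(V)$, which fixes $e_V^{\perp}$ pointwise. Choose $f\in V$ with $(e,f)\neq 0$, which exists because $(-,-)$ is non-degenerate; then $V=e_V^{\perp}\oplus\mathbb{Q}f$. For every $v\in e_V^{\perp}$ we have $(v,gf)=(gv,gf)=(v,f)$, so $gf-f$ is orthogonal to $e_V^{\perp}$ and therefore lies in $(e_V^{\perp})^{\perp}=\mathbb{Q}e$. Write $gf=f+\lambda e$; the isometry condition $(gf,gf)=(f,f)$ reduces (using $(e,e)=0$) to $2\lambda(e,f)=0$, so $\lambda=0$ and $g=\mathrm{id}$.

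The main obstacle is really a piece of bookkeeping rather than a genuine difficulty: one has to notice that the formula $t(e,a)$ is exactly $\exp$ of the nilpotent derivation $E(e,a)$, at which point the isometry property becomes structural rather than the output of an opaque $16$-term expansion. The only point to flag is that the uniqueness step uses non-degeneracy of $(-,-)$ to guarantee both the existence of $f$ with $(e,f)\neq 0$ and the identification $(e_V^{\perp})^{\perp}=\mathbb{Q}e$; this is the ambient assumption on quadratic spaces used throughout \cite{Gritsenko Hulek Sankaran:09}.
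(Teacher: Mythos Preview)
The paper does not give its own proof of this lemma; it is quoted verbatim from \cite{Gritsenko Hulek Sankaran:09} and then the subsequent properties are listed without argument. Your proof is correct and self-contained: the identification $t(e,a)=\exp(E)$ with $E(v)=(e,v)a-(a,v)e$ makes the isometry property transparent, the nilpotency computation $E^3=0$ is verified correctly from $(e,e)=(e,a)=0$, and the uniqueness argument via $(e_V^\perp)^\perp=\mathbb{Q}e$ is the standard one and uses exactly the non-degeneracy you flag.
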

This element is called an \textit{Eichler transvection}.

The Eichler transvections have the following basic properties according to the above lemma.
\begin{gather}
    t(e,a)|_{e_V^{\perp}\cap a_V^{\perp}}=\mathrm{id}, t(e,a)=e\\
    t(e,a)t(e,b)=t(e,a+b),t(e,a)^{-1}=t(e,-a)\\
    \gamma\circ t(e,a)\circ\gamma^{-1}=t(\gamma(e),\gamma(a)),\forall\gamma\in\mathrm{O}(V)\label{(3.6) Eichler}\\
    t(xe,a)=t(e,xa),t(e,xe)=\mathrm{id},\forall x\in\mathbb{Q}^*\\
    t(e,a)=s_a\circ s_{a+\frac{1}{2}(a,a)e}\phantom{1}\mathrm{if}\phantom{1}(a,a)\not=0,\label{(3.8) Eichler}
\end{gather}
where $s_{-}$ are reflections.

From (\ref{(3.4) Eichler}), we see that any transvection $t(e,a)$ is unipotent. Equation (\ref{(3.8) Eichler}) shows that $t(e,a)\in\mathrm{SO}^+(L\otimes\mathbb{Q})$. Furthermore, according to (\ref{(3.4) Eichler}),\begin{equation}
    t(e,a)\in\widehat{\mathrm{SO}}^+(L) \phantom{1}\mathrm{for}\phantom{1}\mathrm{any}\phantom{1} e\in L, a\in L\phantom{1}\mathrm{with}\phantom{1} (e,e)=(e,a)=0.
\end{equation}

The \textit{divisor} $\mathrm{div}(l)$ of $l\in L$ is the positive generator of the ideal $(l,L)\subset\mathbb{Z}$. One can complete an isotropic element $e\in L$ to an integral isotropic plane $U\subset L$ if and only if $\mathrm{div}(e)=1$. We call such an isotropic vector \textit{unimodular}. For a unimodular isotropic vector $e$, we have $L=U\oplus L_1$. We define \begin{align*}
    E(L):=&\langle\{t(e,a)|e,a\in L, (e,e)=(e,a)=0, \mathrm{div}(e)=1\}\rangle\\
    E_U(L_1):=&\langle\{t(e,a),t(f,a)|a\in L_1\}\rangle,
\end{align*}
where $E(L)$ is the group generated by all transvections by unimodular isotropic vectors. In fact, $E(L)$ is a subgroup of $\mathrm{SO}^+(L)$.

\cite{Gritsenko Hulek Sankaran:09} provides some tools to describe $E(L)$ as follows.

\begin{proposition}[From {\cite[Proposition 3.3]{Gritsenko Hulek Sankaran:09}}]\label{prop 3.3 Eichler}
    Let $L=U\oplus U_1\oplus L_0$, where $U_1$ is the second copy of the integral hyperbolic plane in $L$, $U=\mathbb{Z}e\oplus\mathbb{Z}f$ and $L_1=U_1\oplus L_0$. \begin{enumerate}[(i)]
        \item If $u,v\in L$ are primitive, $(u,u)=(v,v)$ and $u^*\equiv v^*\pmod{L}$, then there exists $\tau\in E_U(L_1)$ such that $\tau(u)=v$.
        \item $E(L)=E_U(L_1)$.
        \item $\mathrm{O}(L)=\langle E_U(L_1),\mathrm{O}(L_1)\rangle$.
    \end{enumerate}
\end{proposition}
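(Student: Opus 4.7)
The plan is to prove (i) as the core transitivity statement and then derive (ii) and (iii) as near-formal consequences. Throughout, I write $u = ae + bf + w$ with $a, b \in \mathbb{Z}$ and $w \in L_1$, and similarly $v = a'e + b'f + w'$. Using the convention $(e, f) = 1$, the defining formula (\ref{(3.4) Eichler}) specializes to
\begin{align*}
t(e, c)(u) &= \bigl(a - (c, w) - \tfrac{1}{2}(c,c)\, b\bigr)\,e + b\,f + (w + b c),\\
t(f, c)(u) &= a\,e + \bigl(b - (c, w) - \tfrac{1}{2}(c,c)\, a\bigr)\,f + (w + a c),
\end{align*}
for any $c \in L_1$; thus $t(e, c)$ preserves the $f$-coefficient while shifting $w$ by $b c$, and $t(f, c)$ does the analogous thing with the roles of $e$ and $f$ swapped. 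The crucial role of the second hyperbolic plane $U_1 \subset L_1$ is to furnish isotropic vectors $c \in U_1$ whose pairings against $w$ can be tuned to any prescribed integer.

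For (i) my plan is to reduce each primitive $u$ with $(u, u) = 2k$ to a canonical form depending only on $k$ and on the class $u^* \bmod L$. A first transvection (with isotropic $c \in U_1$) arranges that both the $e$- and $f$-coefficients are nonzero; then a sequence of $t(e, c)$ and $t(f, c)$ transvections, exploiting primitivity of $u$ together with the integer-valued freedom in $(c, w)$ and in the quadratic terms $\tfrac12(c, c) a$, $\tfrac12(c, c) b$, brings $u$ to the form $e + Nf + w_0$, where $w_0 \in L_1$ is a fixed representative of $u^* \bmod L$ and $N$ is forced by the norm identity $2 N + (w_0, w_0) = 2k$. Running the same procedure on $v$ produces the same normal form, so $v = \tau u$ for some $\tau \in E_U(L_1)$. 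The main obstacle I expect is arranging the $e$-coefficient to equal $1$: a naive single transvection does not always suffice, and a careful case analysis according to whether primitivity of $u$ is witnessed by $a$, $b$, or by the content of $w$ is needed; the second hyperbolic plane $U_1$ is essential precisely to supply the ``slack'' required to close the argument in every case.

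For (ii), any unimodular isotropic $e' \in L$ has $\mathrm{div}(e') = 1$ and trivial class in $D(L)$, so by (i) there exists $\tau \in E_U(L_1)$ with $\tau(e) = e'$. For $a \in (e')^\perp$, the conjugation identity (\ref{(3.6) Eichler}) gives $t(e', a) = \tau \circ t(e, \tau^{-1}(a)) \circ \tau^{-1}$, and $\tau^{-1}(a) \in e^\perp$ may be chosen modulo a multiple of $e$ (invisible to $t(e, \cdot)$ by $t(e, xe) = \mathrm{id}$) to lie in $L_1$; hence every generator of $E(L)$ already lies in $E_U(L_1)$. For (iii), given $\phi \in \mathrm{O}(L)$, apply (i) to find $\tau_1 \in E_U(L_1)$ with $\tau_1\phi(e) = e$; the vector $\tau_1\phi(f)$ then has the form $xe + \varepsilon f + w_1$ with $\varepsilon = \pm 1$. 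The sign $\varepsilon$ can be absorbed into the $\mathrm{O}(L_1)$ factor by a reflection in $L_1$, and a further transvection $t(e, c) \in E_U(L_1)$ with an appropriate $c$ kills both $x$ and $w_1$. The resulting composition fixes the plane $U$ pointwise and so restricts to an isometry of $U^\perp = L_1$, yielding the factorization claimed in (iii).
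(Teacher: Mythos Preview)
The paper does not prove this proposition: it is quoted from \cite{Gritsenko Hulek Sankaran:09} as a background tool in subsection~\ref{SOtilde+(L) and Eichler}, with no argument supplied. There is therefore no proof in the paper to compare your attempt against.

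For what it is worth, your outline follows the standard Eichler transvection strategy and is essentially the one in the original Gritsenko--Hulek--Sankaran source: normalize a primitive vector to a canonical form via transvections in $t(e,\cdot)$ and $t(f,\cdot)$, using the second hyperbolic plane $U_1\subset L_1$ to manufacture the needed pairings, and then deduce (ii) and (iii) formally from (i) together with the conjugation identity~(\ref{(3.6) Eichler}). One minor slip in your part (iii): once $\tau_1\phi(e)=e$, the isometry property forces $(\tau_1\phi(f),e)=(f,e)=1$, so the $f$-coefficient of $\tau_1\phi(f)$ is automatically $1$, not $\pm 1$; the sign-absorption step is unnecessary. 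With that correction the sketch is sound.
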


For any prime $p$, the \textit{p-rank} of $L$, denoted by $\mathrm{rank}_p(L)$, is the maximal rank of the sublattices $M$ in $L$ such that $\det(M)$ is coprime to $p$.

\begin{proposition}[From {\cite[Proposition 3.4]{Gritsenko Hulek Sankaran:09}}]\label{prop 3.4 Eichler}
    Let $L=U\oplus U_1\oplus L_0$ be an even lattice with two hyperbolic planes such that $\mathrm{rank}_3(L)\geq 5$ and $\mathrm{rank}_2(L)\geq 6$. Then $$\widehat{\mathrm{SO}}^+(L)=\mathrm{O}'(L)=E(L)=E_U(L_1),$$ where $L_1=U_1\oplus L_0$.
\end{proposition}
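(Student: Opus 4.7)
The plan is to establish the chain of equalities by proving two groups of inclusions. The containment $E_U(L_1)\subseteq E(L)$ is immediate from the definitions, and the reverse inclusion $E(L)\subseteq E_U(L_1)$ is precisely Proposition \ref{prop 3.3 Eichler}(ii). For the inclusion $E(L)\subseteq\mathrm{O}'(L)\cap\widehat{\mathrm{SO}}^+(L)$, I would verify each generator $t(e,a)$ with $e$ unimodular isotropic separately. By (\ref{(3.8) Eichler}), when $(a,a)\neq 0$ the transvection factors as $t(e,a)=s_a\circ s_{a+\frac{1}{2}(a,a)e}$, and a direct computation gives $(a+\frac{1}{2}(a,a)e,a+\frac{1}{2}(a,a)e)=(a,a)$; hence both reflection factors have the same self-pairing and the spinor norm over $\mathbb{Q}$ is a square, so $t(e,a)\in\mathrm{O}'(L)$. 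The case $(a,a)=0$ follows by a density/continuity argument or by perturbing $a$ using $t(e,xe)=\mathrm{id}$. The condition $\mathrm{div}(e)=1$ and the explicit formula (\ref{(3.4) Eichler}) imply $t(e,a)$ fixes $L^{\vee}/L$ pointwise, so $t(e,a)\in\widehat{\mathrm{SO}}^+(L)$.

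The substantive content is the reverse inclusions $\mathrm{O}'(L)\subseteq E_U(L_1)$ and $\widehat{\mathrm{SO}}^+(L)\subseteq E_U(L_1)$. My plan is to appeal to strong approximation for the spin group $\mathrm{Spin}(L\otimes\mathbb{Q})$. The hypotheses $\mathrm{rank}_2(L)\geq 6$ and $\mathrm{rank}_3(L)\geq 5$ are exactly what guarantee that $\mathrm{Spin}(L\otimes\mathbb{Q}_p)$ is isotropic of sufficient rank at every prime $p$ — in particular at $p=2,3$, where low-rank anisotropic forms cause the classical Kneser/Eichler arguments to fail. Strong approximation then says the image of $\mathrm{Spin}(L)$ is dense in $\mathrm{Spin}(L\otimes\mathbb{A}_f)$, and translating this back to the orthogonal side shows that $\mathrm{O}'(L)$ is generated by unimodular Eichler transvections once two hyperbolic planes split off. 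For the identification $\mathrm{O}'(L)=\widehat{\mathrm{SO}}^+(L)$, one shows that under these rank hypotheses, any element of $\mathrm{SO}^+(L)$ fixing $L^{\vee}/L$ pointwise automatically has trivial spinor norm at every finite prime, using the flexibility provided by the two hyperbolic planes to redistribute spinor norms locally.

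The main obstacle will be this strong approximation step, specifically the careful bookkeeping at $p=2$ and $p=3$. The reason the theorem uses two separately optimized rank bounds rather than a single uniform one is precisely that the exceptional isotropic types at these primes require slightly different minimal ranks before the Eichler generation theorem applies. An alternative route I would consider, to avoid invoking strong approximation as a black box, is an inductive one: given $g\in\widehat{\mathrm{SO}}^+(L)$, use Proposition \ref{prop 3.3 Eichler}(i) to find $\tau\in E_U(L_1)$ with $\tau g$ fixing a unimodular isotropic vector $e$ of $U$, then work in $e^{\perp}/\mathbb{Z}e$ to reduce the rank, and iterate. Closing this induction at the base case, however, eventually forces one to confront the same arithmetic input about small-rank spin groups, so the two approaches amount to the same core difficulty.
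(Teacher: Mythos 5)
This proposition is not proved in the paper at all: it is imported verbatim from Gritsenko--Hulek--Sankaran (their Proposition 3.4), so there is no internal argument to compare against. Judged on its own terms, your outline follows exactly the route that the cited source takes, namely: the identification $E(L)=E_U(L_1)$ via Proposition \ref{prop 3.3 Eichler}(ii); the easy inclusions $E(L)\subseteq\mathrm{O}'(L)\cap\widehat{\mathrm{SO}}^+(L)$ by direct computation with the formulas (\ref{(3.4) Eichler}) and (\ref{(3.8) Eichler}); and the hard reverse inclusions via Eichler's generation theorem, which is where the hypotheses $\mathrm{rank}_2(L)\geq 6$ and $\mathrm{rank}_3(L)\geq 5$ enter (isotropy of the local spin groups at $p=2,3$ and strong approximation). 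So the plan is sound and correctly locates the substantive difficulty; but be aware that the entire arithmetic core --- $\mathrm{O}'(L)=E(L)$ and $\widehat{\mathrm{SO}}^+(L)\subseteq\mathrm{O}'(L)$ --- is left as a black box in your write-up, which is precisely the content the citation to Eichler is meant to supply. You have therefore reorganized the statement around its known proof rather than produced an independent one; that is acceptable here, since the paper itself treats the result as external input.

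One concrete slip: to handle the generator $t(e,a)$ with $(a,a)=0$ you propose ``perturbing $a$ using $t(e,xe)=\mathrm{id}$.'' That perturbation replaces $a$ by $a+xe$, and since $a\perp e$ and $(e,e)=0$ one has $(a+xe,a+xe)=(a,a)=0$, so it never makes the self-pairing nonzero. The correct fix is the group law $t(e,a)=t(e,a+b)\circ t(e,-b)$ for an auxiliary $b\in e^\perp$ chosen with $(b,b)\neq 0$ and $(a+b,a+b)\neq 0$ (possible because $e^\perp/\mathbb{Z}e$ contains a hyperbolic plane); both factors then have trivial spinor norm by your reflection computation, and the conclusion follows.
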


\section{Derived equivalences of hyper-K\"ahler manifolds of generalized Kummer type}
We will draw some consequences from Theorem \ref{th 8.1+}.

\subsection{General results}
Let $X$ be a (projective) hyper-K\"ahler manifold of generalized Kummer type with $\dim X=2(n-1),n\geq 3$, over an algebraically closed field of characteristic $0$. We denote by $\Lambda_{X,\mathrm{Hdg}}$ the Hodge structure obtained from the inclusion $\Lambda_X\subset\widetilde{\mathrm{H}}(X,\mathbb{Q})$ and by $\mathrm{Aut}(\Lambda_{X,\mathrm{Hdg}})$ the group of all Hodge isometries of $\Lambda_{X,\mathrm{Hdg}}$. Recall the representation $\rho^{\widetilde{\mathrm{H}}}:\mathrm{Aut}(\mathbf{D}^b(X))\to\mathrm{O}(\widetilde{\mathrm{H}}(X,\mathbb{Q}))$ from \cite[Theorems 4.8 and 4.9]{Taelman:23}. We get the following corollary via Theorem \ref{th 8.1+}.

\begin{corollary}[Similar to {\cite[Corollary 9.1]{Beckmann:22-2}} and {\cite[Proposition 10.1]{Taelman:23}}]\label{cor 9.1+}
    The representation $\rho^{\widetilde{\mathrm{H}}}$ of group of autoequivalences $\mathrm{Aut}(\mathbf{D}^b(X))$ factors via a representation $$\rho^{\widetilde{\mathrm{H}}}:\mathrm{Aut}(\mathbf{D}^b(X))\to\mathrm{Aut}(\Lambda_{X,\mathrm{Hdg}})\subset\mathrm{O}(\widetilde{\mathrm{H}}(X,\mathbb{Q})).$$
\end{corollary}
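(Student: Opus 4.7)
The plan is to combine two ingredients that are already in place. First, by the cited Theorems 4.8 and 4.9 of \cite{Taelman:23}, any $\Phi \in \mathrm{Aut}(\mathbf{D}^b(X))$ induces an isometry $\Phi^{\widetilde{\mathrm{H}}}$ of the rational extended Mukai lattice $\widetilde{\mathrm{H}}(X,\mathbb{Q})$ that respects the weight-two Hodge structure. Second, by Theorem \ref{th 8.1+}, we have the inclusion $\mathrm{DMon}(X) \subset \mathrm{O}(\Lambda_X)$, so every element of the derived monodromy group preserves the $\mathrm{Kum}^{n-1}$ lattice $\Lambda_X$ as a sublattice of $\widetilde{\mathrm{H}}(X,\mathbb{Q})$.

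The first step is to observe that, by the very definition of $\mathrm{DMon}(X)$ in passage \ref{def of DMon(X)}, the image of $\rho^{\widetilde{\mathrm{H}}}$ is contained in $\mathrm{DMon}(X)$, since isomorphisms induced by derived autoequivalences of $X$ are among the generators of $\mathrm{DMon}(X)$. Combined with Theorem \ref{th 8.1+}, this yields $\rho^{\widetilde{\mathrm{H}}}(\Phi)(\Lambda_X) = \Lambda_X$ for every $\Phi \in \mathrm{Aut}(\mathbf{D}^b(X))$.

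The second step is to upgrade this set-theoretic invariance to a Hodge-theoretic statement. Since $\Lambda_{X,\mathrm{Hdg}}$ is by definition the lattice $\Lambda_X$ equipped with the Hodge structure induced by the embedding $\Lambda_X \hookrightarrow \widetilde{\mathrm{H}}(X,\mathbb{Q})$, and since $\Phi^{\widetilde{\mathrm{H}}}$ is a Hodge isometry of the ambient rational space that carries $\Lambda_X$ to itself, the restriction $\Phi^{\widetilde{\mathrm{H}}}|_{\Lambda_X}$ automatically lies in $\mathrm{Aut}(\Lambda_{X,\mathrm{Hdg}})$. The factorization $\rho^{\widetilde{\mathrm{H}}} : \mathrm{Aut}(\mathbf{D}^b(X)) \to \mathrm{Aut}(\Lambda_{X,\mathrm{Hdg}})$ is then immediate, and remains a group homomorphism because restricting to $\Lambda_X$ commutes with composition.

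There is no real obstacle here since both inputs have been established earlier; the corollary is essentially a formal assembly of Theorem \ref{th 8.1+} with Taelman's Hodge-theoretic statement. The only thing worth flagging in the write-up is the justification of why $\mathrm{Im}(\rho^{\widetilde{\mathrm{H}}}) \subset \mathrm{DMon}(X)$, which is immediate from the definition but deserves an explicit sentence.
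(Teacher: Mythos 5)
Your proposal is correct and is exactly the argument the paper intends: the paper gives no separate proof of Corollary \ref{cor 9.1+} beyond the remark that it follows ``via Theorem \ref{th 8.1+}'', and your assembly of Taelman's statement that $\Phi^{\widetilde{\mathrm{H}}}$ is a Hodge isometry of $\widetilde{\mathrm{H}}(X,\mathbb{Q})$ with the inclusion $\mathrm{Im}(\rho^{\widetilde{\mathrm{H}}})\subset\mathrm{DMon}(X)\subset\mathrm{O}(\Lambda_X)$ is precisely that reasoning. Your explicit flagging of why the image of $\rho^{\widetilde{\mathrm{H}}}$ lands in $\mathrm{DMon}(X)$ is a reasonable addition, not a deviation.
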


The following Theorem is a general version of Corollary \ref{cor 9.1+}.

\begin{theorem}[Similar to {\cite[Theorem 9.2]{Beckmann:22-2}}]\label{th 9.2+}
    Let $X$ and $Y$ be (projective) hyper-K\"ahler manifolds of generalized Kummer type of dimension $2(n-1),n\geq 3$, over an algebraically closed field of characteristic $0$ and $\Phi:\mathbf{D}^b(X)\xrightarrow[]{\simeq}\mathbf{D}^b(Y)$ a derived equivalence. Then $\Phi^{\widetilde{\mathrm{H}}}:\widetilde{\mathrm{H}}(X,\mathbb{Q})\to\widetilde{\mathrm{H}}(Y,\mathbb{Q})$ restricts to a Hodge isometry $$\Phi^{\widetilde{\mathrm{H}}}:\Lambda_{X,\mathrm{Hdg}}\xrightarrow[]{\simeq}\Lambda_{Y,\mathrm{Hdg}}.$$
\end{theorem}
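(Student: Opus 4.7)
The plan is to deduce Theorem \ref{th 9.2+} from its autoequivalence version, Corollary \ref{cor 9.1+} (equivalently, from Theorem \ref{th 8.1+}), by composing $\Phi^{\widetilde{\mathrm{H}}}$ with a suitably chosen parallel transport operator that carries $\Lambda_Y$ onto $\Lambda_X$. The underlying principle is that $\Lambda_X$ was defined canonically up to the choice of a primitive class of square $-2n$ and divisibility $2n$, and this choice is itself immaterial by the independence clause of Definition \ref{def 5.2+}; so any isometry that intertwines these distinguished classes on the two sides will automatically match the lattices.

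First, I would invoke the remark recorded just before passage \ref{def of DMon(A)} to conclude that, because $X$ and $Y$ are of generalized Kummer type and $\mathbf{D}^b(X)\simeq\mathbf{D}^b(Y)$, the two manifolds are deformation equivalent; hence parallel transport operators $f:\mathrm{H}^*(Y,\mathbb{Q})\xrightarrow{\simeq}\mathrm{H}^*(X,\mathbb{Q})$ exist at all. Next, appealing to Markman's monodromy results for hyper-K\"ahler manifolds of generalized Kummer type, one can arrange $f(\delta'_Y)=\delta'_X$ by composing with an appropriate monodromy of $X$, since the monodromy group acts transitively on the set of primitive classes of square $-2n$ and divisibility $2n$ in $\mathrm{H}^2$. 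Because the canonical lift $\widetilde{f}:\widetilde{\mathrm{H}}(Y,\mathbb{Q})\to\widetilde{\mathrm{H}}(X,\mathbb{Q})$ acts as the identity on the summand $\mathbb{Q}\alpha\oplus\mathbb{Q}\beta$, a direct computation shows $\widetilde{f}\circ B_{-\delta'_Y/2}=B_{-\delta'_X/2}\circ\widetilde{f}$, and therefore $\widetilde{f}(\Lambda_Y)=\Lambda_X$ by Definition \ref{def 5.2+}.

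With this in place the proof finishes cleanly: the composition $\widetilde{f}\circ\Phi^{\widetilde{\mathrm{H}}}$ is a product of a derived equivalence and a parallel transport operator, so by the definition in passage \ref{def of DMon(X)} it lies in $\mathrm{DMon}(X)$, and hence preserves $\Lambda_X$ by Theorem \ref{th 8.1+}. This yields $\Phi^{\widetilde{\mathrm{H}}}(\Lambda_X)\subset\widetilde{f}^{-1}(\Lambda_X)=\Lambda_Y$, with equality following by applying the same argument to $\Phi^{-1}$. The Hodge structure is respected automatically, since $\Phi^{\widetilde{\mathrm{H}}}$ is already a Hodge isometry of the rational extended Mukai lattices by \cite[Theorems 4.8 and 4.9]{Taelman:23}.

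The main obstacle is the second step, namely the existence of a parallel transport operator $f$ with $f(\delta'_Y)=\delta'_X$. This rests on transitivity of the monodromy action on primitive $(-2n)$-classes of divisibility $2n$ in $\mathrm{H}^2$ for Kummer-type hyper-K\"ahler manifolds; if the relevant monodromy orbit is not a single orbit but splits into finitely many, one can still absorb the discrepancy using the independence statement in Definition \ref{def 5.2+}, which guarantees that $\Lambda_X$ does not change when $\delta'_X$ is replaced by any other such class. Either way, the reduction to Theorem \ref{th 8.1+} goes through.
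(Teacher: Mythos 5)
Your proposal is correct, and it reaches the conclusion by a route that differs from the paper's at the key identification step. Both arguments reduce to Theorem \ref{th 8.1+} by composing $\Phi^{\widetilde{\mathrm{H}}}$ with a parallel transport operator between $X$ and $Y$ (deformation equivalence being automatic here, since both manifolds are of generalized Kummer type of the same dimension). The paper then takes an \emph{arbitrary} parallel transport $\gamma$, shows by conjugation that $\gamma(\Lambda_Y)$ is a $\mathrm{DMon}(X)$-invariant lattice, and identifies it with $\Lambda_X$ by invoking the classification of invariant lattices in Lemma \ref{lemma 8.4+}. You instead identify the lattices directly: a parallel transport is an integral isometry on $\mathrm{H}^2$, so it sends $\delta'_Y$ to a primitive class of square $-2n$ and divisibility $2n$; since the lift $\widetilde{f}$ acts as the identity on $\mathbb{Q}\alpha\oplus\mathbb{Q}\beta$, it preserves the integral extended Mukai lattices and satisfies $\widetilde{f}\circ B_{-\delta'_Y/2}=B_{-f(\delta'_Y)/2}\circ\widetilde{f}$, whence $\widetilde{f}(\Lambda_Y)=B_{-f(\delta'_Y)/2}(\widetilde{\mathrm{H}}(X,\mathbb{Z}))=\Lambda_X$ by the independence clause of Definition \ref{def 5.2+}. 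This is more elementary (no appeal to Lemma \ref{lemma 8.4+} or to the auxiliary discriminant/scaling considerations it entails), while the paper's argument has the advantage of not relying on the explicit B-field description and works verbatim for any lattice already known to be derived-invariant.

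One caution: your primary route, arranging $f(\delta'_Y)=\delta'_X$ via transitivity of the monodromy action on primitive $(-2n)$-classes of divisibility $2n$, is not established anywhere in the paper, and for generalized Kummer type the monodromy group is a proper subgroup of $\widetilde{\mathrm{O}}^+(\mathrm{H}^2)$, so this transitivity is genuinely delicate and should not be asserted without proof. Fortunately your fallback via Definition \ref{def 5.2+} makes the transitivity entirely unnecessary — any parallel transport already carries $\Lambda_Y$ onto $\Lambda_X$ — so the argument should simply be phrased with the fallback as the main (and only) step; with that rearrangement the proof is complete, and the Hodge compatibility follows, as you say, from $\Phi^{\widetilde{\mathrm{H}}}$ being a Hodge isometry of the rational extended Mukai lattices.
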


\begin{proof}
    Since $X$ and $Y$ are deformation equivalent by the statement after passage \ref{def of DMon(X)}, there exists a parallel transport isometry $\gamma:\widetilde{\mathrm{H}}(Y,\mathbb{Q})\xrightarrow[]{\simeq}\widetilde{\mathrm{H}}(X,\mathbb{Q})$. The composition $\Phi^{\widetilde{\mathrm{H}}}\circ\gamma$ lies in $\mathrm{DMon}(Y)$ and, therefore, satisfies $\Phi^{\widetilde{\mathrm{H}}}\circ\gamma(\Lambda_{Y,\mathrm{Hdg}})=\Lambda_{Y,\mathrm{Hdg}}$ by Theorem \ref{th 8.1+}. Now $\gamma(\Lambda_Y)\subset\widetilde{\mathrm{H}}(X,\mathbb{Q})$ is a lattice invariant under $\mathrm{DMon}(X)$.

    Indeed, let $g$ be an element of $\mathrm{DMon}(X)$ as in passage \ref{def of DMon(X)}. Then $$\gamma^{-1}\circ g\circ\gamma\in\mathrm{DMon}(Y).$$ So, Theorem \ref{th 8.1+} gives $\gamma^{-1}\circ g\circ\gamma(\Lambda_Y)=\Lambda_Y$. It implies $g(\gamma(\Lambda_Y))=\gamma(\Lambda_Y)$.
    
    We have $\gamma(\Lambda_Y)=\Lambda_X$ by applying Lemma \ref{lemma 8.4+} to lattices $\Lambda_X,\gamma(\Lambda_Y)\subset\widetilde{\mathrm{H}}(X,\mathbb{Q})$ and the isometry $\gamma$. It yields $\Phi^{\widetilde{\mathrm{H}}}:\gamma(\Lambda_{Y,\mathrm{Hdg}})\simeq\Lambda_{X,\mathrm{Hdg}}\xrightarrow[]{\simeq}\Lambda_{Y,\mathrm{Hdg}}$.
\end{proof}

Lemma \ref{lemma 5.7+} and Theorem \ref{th 9.2+} implies the following.

\begin{corollary}[Similar to {\cite[Corollary 9.3]{Beckmann:22-2}}]\label{cor 9.3+}
    Let $X,Y$ and $\Phi$ be as above. Then $\Phi^{\widetilde{\mathrm{H}}}:\widetilde{\mathrm{H}}(X,\mathbb{Q})\to\widetilde{\mathrm{H}}(Y,\mathbb{Q})$ restricts to a Hodge isometry $$\Phi^{\widetilde{\mathrm{H}}}:\mathrm{H}^2(X,\mathbb{Z})_{\mathrm{tr}}\xrightarrow[]{\simeq}\mathrm{H}^2(Y,\mathbb{Z})_{\mathrm{tr}}$$ between the transcendental lattices of $X$ and $Y$.
\end{corollary}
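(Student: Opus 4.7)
The plan is to combine the two results that have just been set up: Theorem~\ref{th 9.2+} gives the restriction of $\Phi^{\widetilde{\mathrm{H}}}$ to a Hodge isometry $\Lambda_{X,\mathrm{Hdg}} \xrightarrow{\simeq} \Lambda_{Y,\mathrm{Hdg}}$ of the two $\mathrm{Kum}^{n-1}$ lattices, and Lemma~\ref{lemma 5.7+} identifies the transcendental part of $\Lambda_X$ (equivalently $\Lambda_Y$) with the transcendental lattice of the hyper-K\"ahler manifold. So the whole argument is essentially a formal check that Hodge isometries carry transcendental parts to transcendental parts.

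First, I would recall that for any lattice $\Gamma \subset \widetilde{\mathrm{H}}(X,\mathbb{Q})$ endowed with the induced Hodge structure, the transcendental part $\Gamma_{\mathrm{tr}} = \Gamma_{\mathrm{alg}}^\perp \cap \Gamma$ is an intrinsic invariant of the Hodge-theoretic data: any Hodge isometry $f\colon \Gamma \xrightarrow{\simeq} \Gamma'$ sends $\Gamma_{\mathrm{alg}}$ to $\Gamma'_{\mathrm{alg}}$ (since it preserves type $(1,1)$ classes), and therefore also the orthogonal complement, giving $f(\Gamma_{\mathrm{tr}}) = \Gamma'_{\mathrm{tr}}$. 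Applying this to the Hodge isometry $\Phi^{\widetilde{\mathrm{H}}}\colon \Lambda_{X,\mathrm{Hdg}} \xrightarrow{\simeq} \Lambda_{Y,\mathrm{Hdg}}$ from Theorem~\ref{th 9.2+}, one obtains a Hodge isometry on the transcendental parts
\[
\Phi^{\widetilde{\mathrm{H}}}\colon \Lambda_{X,\mathrm{tr}} \xrightarrow{\simeq} \Lambda_{Y,\mathrm{tr}}.
\]

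Then, using Lemma~\ref{lemma 5.7+}, one has canonical identifications $\Lambda_{X,\mathrm{tr}} = \mathrm{H}^2(X,\mathbb{Z})_{\mathrm{tr}}$ and $\Lambda_{Y,\mathrm{tr}} = \mathrm{H}^2(Y,\mathbb{Z})_{\mathrm{tr}}$, both as sublattices of the respective extended Mukai lattices and as weight-two Hodge structures. (The proof of Lemma~\ref{lemma 5.7+} already exhibits both transcendental parts as $\theta(\mathrm{H}^{0,2}(A,\mathbb{Z})) \oplus \theta(\mathrm{H}^{2,0}(A,\mathbb{Z}))$ sitting inside $\widetilde{\mathrm{H}}(-,\mathbb{Z})$.) Combining with the previous paragraph yields the desired Hodge isometry $\Phi^{\widetilde{\mathrm{H}}}\colon \mathrm{H}^2(X,\mathbb{Z})_{\mathrm{tr}} \xrightarrow{\simeq} \mathrm{H}^2(Y,\mathbb{Z})_{\mathrm{tr}}$.

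There is no real obstacle here; the only mild subtlety is checking that the restriction really lands inside $\mathrm{H}^2$ and not just inside $\widetilde{\mathrm{H}}$. This is automatic from Lemma~\ref{lemma 5.7+}, which asserts not merely an abstract isomorphism but the equality $\Lambda_{X,\mathrm{tr}} = \widetilde{\mathrm{H}}(X,\mathbb{Z})_{\mathrm{tr}} = \mathrm{H}^2(X,\mathbb{Z})_{\mathrm{tr}}$ as subsets of $\widetilde{\mathrm{H}}(X,\mathbb{Q})$, so nothing has to be transported through an auxiliary identification. The proof is therefore a two-line deduction from Theorem~\ref{th 9.2+} and Lemma~\ref{lemma 5.7+}, with the genuine content of the corollary concentrated entirely in the invariance of the $\mathrm{Kum}^{n-1}$ lattice already established.
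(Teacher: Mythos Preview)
Your proposal is correct and matches the paper's approach exactly: the paper simply states that the corollary follows from Lemma~\ref{lemma 5.7+} and Theorem~\ref{th 9.2+}, which is precisely the two-step deduction you spell out. Your added remark that Hodge isometries carry transcendental parts to transcendental parts makes explicit the only point the paper leaves implicit.
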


We get an immediate consequence.
\begin{theorem}[Similar to {\cite[Theorem 9.4]{Beckmann:22-2}}]\label{th 9.4+}
    For a fixed (projective) hyper-K\"ahler manifold $X$ of generalized Kummer type of dimension $2(n-1),n\geq 3$, over an algebraically closed field of characteristic $0$, the number of (projective) hyper-K\"ahler manifolds $Y$ of generalized Kummer type up to isomorphism with $\mathbf{D}^b(X)\simeq\mathbf{D}^b(Y)$ is finite.
\end{theorem}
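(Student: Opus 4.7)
The plan is to combine Corollary \ref{cor 9.3+}, which is itself a consequence of Theorem \ref{th 9.2+}, with Nikulin's theory of primitive embeddings and the global Torelli theorem for hyper-K\"ahler manifolds of generalized Kummer type.

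First, I would apply Corollary \ref{cor 9.3+}: for every $Y$ with $\mathbf{D}^b(X)\simeq\mathbf{D}^b(Y)$, the derived equivalence restricts to a Hodge isometry of transcendental lattices $\mathrm{H}^2(X,\mathbb{Z})_{\mathrm{tr}}\simeq\mathrm{H}^2(Y,\mathbb{Z})_{\mathrm{tr}}$. Since $X$ is fixed, the weight-two integral Hodge structure on $\mathrm{H}^2(Y,\mathbb{Z})_{\mathrm{tr}}$ is determined up to Hodge isometry. Both $\mathrm{H}^2(X,\mathbb{Z})$ and $\mathrm{H}^2(Y,\mathbb{Z})$ are abstractly isometric to a single lattice $L_n$ depending only on $n$, so Nikulin's theorem on primitive embeddings shows that the embedding of $\mathrm{H}^2(Y,\mathbb{Z})_{\mathrm{tr}}$ into $L_n$ is determined up to finitely many choices modulo $\mathrm{O}(L_n)$. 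Hence the Hodge isometry class of $\mathrm{H}^2(Y,\mathbb{Z})$ takes only finitely many values as $Y$ varies among derived partners of $X$.

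Next, I would invoke the global Torelli theorem of Verbitsky and Markman for hyper-K\"ahler manifolds of generalized Kummer type: a fixed Hodge isometry class on $\mathrm{H}^2(Y,\mathbb{Z})$, together with the projectivity hypothesis, determines $Y$ up to birational equivalence within the deformation class. The finiteness of birational projective models in a fixed Hodge class (essentially the cone conjecture for hyper-K\"ahler manifolds, available in the Kummer setting via Markman--Yoshioka and Amerik--Verbitsky arguments) then yields only finitely many isomorphism classes of $Y$ in each Hodge-isometry class. Combining this with the lattice-theoretic finiteness of the previous paragraph gives the desired global finiteness.

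The main obstacle is the last step: one must ensure that the global Torelli theorem is applied in its correct form for the Kummer deformation class (it identifies manifolds only up to birational equivalence and up to the parallel-transport ambiguity) and that the finiteness of birational projective models in a fixed Hodge class is genuinely available in this setting. Granting these standard inputs, the combination with the finiteness of primitive embeddings of the transcendental lattice concludes the proof.
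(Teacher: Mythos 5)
Your proposal is correct and follows essentially the same route as the paper: Corollary \ref{cor 9.3+} gives the Hodge isometry of transcendental lattices, a lattice-theoretic finiteness result for embeddings of the transcendental lattice (the paper cites Kneser's Satz 30.2 where you cite Nikulin, an interchangeable ingredient) bounds the possible Hodge structures on $\mathrm{H}^2(Y,\mathbb{Z})$, and then the finite-index property of the monodromy group plus the Global Torelli Theorem and the Markman--Yoshioka finiteness of birational models conclude. The caveats you flag at the end (birational-only conclusion of Torelli, parallel-transport ambiguity) are exactly the points the paper handles via Verbitsky's finite-index monodromy theorem and \cite[Corollary 1.5]{Markman Yoshioka:15}.
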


\begin{proof}
    The proof uses \cite[Proposition 5.3]{Bridgeland Maciocia:01} as a phototype, which implies that a $K3$ surface or an abelian surface $X$ has a finite number of Fourier-Mukai partners. In the proof of the proposition, for Fourier-Mukai partners $Y_1,Y_2$ of $X$, we get a Hodge isometry $f:\mathrm{NS}(Y_1)\oplus T(Y_1)\to\mathrm{NS}(Y_2)\oplus T(Y_2)$. It extends to a Hodge isometry $\mathrm{H}^2(Y_1,\mathbb{Z})\to\mathrm{H}^2(Y_2,\mathbb{Z})$. If $X$ is a $K3$ surface, we get $Y_1\cong Y_2$ using the Torelli theorem. If $X$ is an abelian surface, it implies $Y_1\cong Y_2,Y_2^{\vee}$ by \cite{Shioda:79}.

    Corollary \ref{cor 9.3+} implies that for any $Y$ as in the assertation, we have a Hodge isometry $\mathrm{H}^2(Y,\mathbb{Z})_{\mathrm{tr}}\simeq\mathrm{H}^2(X,\mathbb{Z})_{\mathrm{tr}}$. As abstract lattices, the number of emebeddings $$\mathrm{H}^2(Y,\mathbb{Z})_{\mathrm{tr}}\hookrightarrow\mathrm{H}^2(Y,\mathbb{Z})$$ is finite up to isometries of $\mathrm{H}^2(Y,\mathbb{Z})$ by \cite[Satz 30.2]{Kneser:02}. Therefore, the set of lattices appearing as $\mathrm{NS}(Y)$ for any such $Y$ is finite by \cite[Definition 5.6]{Beckmann:22-2} in subsection \ref{Hodge structures}.

    Similarly to the phototype above, we get a Hodge isometry $$f:\mathrm{NS}(X)\oplus \mathrm{H}^2(X,\mathbb{Z})_{\mathrm{tr}}\to\mathrm{NS}(Y)\oplus\mathrm{H}^2(Y,\mathbb{Z})_{\mathrm{tr}},$$ which has only finite choice according to the previous paragraph. It extends to a Hodge isometry $$\theta(\mathrm{H}^2(A_1,\mathbb{Z}))\oplus\mathbb{Z}\delta_1'\simeq\mathrm{H}^2(X,\mathbb{Z})\to\mathrm{H}^2(Y,\mathbb{Z})\simeq\theta(\mathrm{H}^2(A_2,\mathbb{Z}))\oplus\mathbb{Z}\delta_2'$$ where $X,Y$ is deformation equivalent to $\mathrm{Kum}^{n-1}(A_1),\mathrm{Kum}^{n-1}(A_2)$ for abelian surfaces $A_1,A_2$ respectively and $2\delta_i$ is the class of the exceptional divisor of the Hilbert-Chow morphism. So, the embedding $\mathrm{H}^2(A_2,\mathbb{Z})\hookrightarrow\theta(\mathrm{H}^2(A_1,\mathbb{Z}))\oplus\mathbb{Z}\delta_1'$ has only finite choices. Therefore, there are only finitely many Hodge structures on the lattice $\mathrm{H}^2(Y,\mathbb{Z})$ being realized by generalized Kummer type hyper-K\"ahler manifolds $Y$ derived equivalent to the fixed $X$ by \cite{Shioda:79}. Since the monodromy group $\mathrm{Mon}^2(Y)$ is a finite index subgroup of $\mathrm{O}(\mathrm{H}^2(Y,\mathbb{Z}))$ from \cite[Theorem 1.16]{Verbitsky:13} (works for all compact simple hyper-K\"ahler manifolds), the Global Torelli Theorem \cite[Theorem 1.18]{Verbitsky:13} shows that, up to birational equivalence, there are only finitely many hyper-K\"ahler manifolds realizing a given Hodge structure on $\mathrm{H}^2(Y,\mathbb{Z})$. The proof can be completed by \cite[Corollary 1.5]{Markman Yoshioka:15}, which works for all hyper-K\"ahler manifolds.
\end{proof}

We also have the following structural result.

\begin{corollary}[Similar to {\cite[Corollary 9.5]{Beckmann:22-2}}]\label{cor 9.5+}
    Let $X,Y$ and $\Phi$ be as in Theorem \ref{th 9.2+} and let $\mathcal{E}$ be the Fourier-Mukai kernel of $\Phi$. Then $\mathrm{rank}(\mathcal{E})=\frac{(n-1)!a^{n-1}}{n}$ for some $a\in\mathbb{Z}$. In addition, the smallest nonzero cohomological degree of the Mukai vector of the image of $k(x)$ under $\Phi$ for all $x\in X$ (that is, $\Phi^{\mathrm{H}}(v(k(x)))$ is $0, 2(n-1)$ or $4(n-1)$. In the case $2(n-1)$, $Y$ admits a rational Lagrangian fibration.
\end{corollary}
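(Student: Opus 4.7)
The plan is to read both parts of the statement off the fact that $\Phi(k(x))$ lies in the $k(x)$-orbit of $Y$, combined with the derived invariance of the $\mathrm{Kum}^{n-1}$ lattice from Theorem~\ref{th 9.2+}. By \cite[Lemma 4.13]{Beckmann:22-2}(iii) applied on $Y$, with Fujiki constant $c_Y = n$ for generalized Kummer type, one obtains $\mathrm{rank}(\Phi(k(x))) = \pm(n-1)!\,a^{n-1}/n$ for some $a \in \mathbb{Q}$; since $\mathrm{rank}(\mathcal{E}) = \mathrm{rank}(\Phi(k(x)))$, the same formula holds for the FM kernel. To upgrade $a$ to an integer, I would use that $\beta \in \Lambda_X$ together with Theorem~\ref{th 9.2+} to conclude $\Phi^{\widetilde{\mathrm{H}}}(\beta) \in \Lambda_Y$ (and this coincides with $\widetilde{v}(\Phi(k(x)))$ up to the signum from Definitions 4.11 and 4.12). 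Expanding in the integral basis $\Lambda_Y = \mathbb{Z}\widetilde{\alpha}_Y \oplus \theta_Y(\mathrm{H}^2(A',\mathbb{Z})) \oplus \mathbb{Z}\widetilde{\delta'}_Y \oplus \mathbb{Z}\beta_Y$ and converting to $(\alpha_Y, \lambda, \beta_Y)$-coordinates, the $\alpha_Y$-coefficient — which is exactly the $a$ of the rank formula — lands in $\mathbb{Z}$.

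For the smallest nonzero cohomological degree of $v(\Phi(k(x))) = \pm T(\widetilde{v}^{n-1}/n)$, where $\widetilde{v} = r\alpha + \lambda + s\beta$, I would exploit the cohomological weights $(\alpha, \mathrm{H}^2, \beta) \leftrightarrow (0, 2, 4)$ on $\widetilde{\mathrm{H}}$ inherited by $\mathrm{Sym}^{n-1}(\widetilde{\mathrm{H}})$ via $\psi$. A monomial $\alpha^i \lambda^j \beta^k$ with $i + j + k = n-1$ then sits in cohomological degree $2(j + 2k) = 2(n-1-i+k)$. If $r \neq 0$ the $\alpha^{n-1}$-term contributes the nonzero rank in $\mathrm{H}^0$; if $r = 0$ and $\lambda = 0$, then $\widetilde{v} = s\beta$ with $s \neq 0$ and only the $\beta^{n-1}$-term survives, landing in $\mathrm{H}^{4(n-1)}$; if $r = 0$ and $\lambda \neq 0$, every surviving monomial has $i = 0$, hence degree $\geq 2(n-1)$, with the unique degree-$2(n-1)$ contribution being $\lambda^{n-1}/n$. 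Non-vanishing of this last contribution uses $\widetilde{b}(\widetilde{v}, \widetilde{v}) = 0$ from \cite[Lemma 4.13]{Beckmann:22-2}(ii), which together with $r = 0$ forces $b(\lambda, \lambda) = 0$, hence $\Delta(\lambda^{n-1}) = 0$ and $\lambda^{n-1} \in \ker\Delta = \psi(\mathrm{SH}(Y, \mathbb{Q}))$; injectivity of $\psi$ together with $\lambda^{n-1} \neq 0$ in $\mathrm{Sym}^{n-1}(\widetilde{\mathrm{H}})$ then forces $T(\lambda^{n-1}) \neq 0$ in $\mathrm{SH}^{2(n-1)}(Y)$.

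For the Lagrangian fibration claim, in the case of smallest degree $2(n-1)$ we have $r = 0$ and $\lambda \neq 0$. The integral expansion from the first step shows $\lambda \in \mathrm{H}^2(Y, \mathbb{Z})$ (of the form $\mu + c\delta'_Y$ with $\mu \in \theta_Y(\mathrm{H}^2(A', \mathbb{Z}))$ and $c \in \mathbb{Z}$), while the Hodge-equivariance of $\Phi^{\widetilde{\mathrm{H}}}$ together with $\beta \in \widetilde{\mathrm{H}}^{1,1}$ gives $\lambda \in \mathrm{NS}(Y)$, and isotropy follows from $b(\lambda, \lambda) = 0$. An isotropic integral $(1,1)$-class on a projective hyper-K\"ahler manifold of generalized Kummer type then produces a rational Lagrangian fibration on $Y$ via the hyper-K\"ahler SYZ theorem for this deformation class (Matsushita and Yoshioka). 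The step I expect to be the most delicate is the non-vanishing of $T(\lambda^{n-1})$ in middle cohomology — this is ultimately the observation that $\lambda^{n-1}$ already lies in $\ker\Delta$ once $b(\lambda, \lambda) = 0$, combined with the injectivity of $\psi$ — while the remaining work reduces to careful bookkeeping in the integral basis of $\Lambda_Y$ and tracking the $\epsilon \cdot \mathrm{sgn}(v)$ convention from the $k(x)$-orbit passages.
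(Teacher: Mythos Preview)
Your approach is correct and essentially matches the paper's. Both arguments hinge on $\beta\in\Lambda_X$ together with Theorem~\ref{th 9.2+} to force $\Phi^{\widetilde{\mathrm{H}}}(\beta)\in\Lambda_Y$, then read off the rank via \cite[Lemma 4.13]{Beckmann:22-2}(iii) with $c_Y=n$, case-split on the shape $r\alpha+\lambda+s\beta$ to get the trichotomy $0,\,2(n-1),\,4(n-1)$, and finally invoke the SYZ-type results for isotropic nef classes in the middle case.

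The differences are in the level of detail rather than strategy. For integrality of $a$ the paper simply cites \cite[Remark 4.14]{Beckmann:22-2}, whereas you extract it directly from the $\widetilde{\alpha}_Y$-coefficient in the integral basis of $\Lambda_Y$; this is the same content unpacked. Your non-vanishing argument for $T(\lambda^{n-1})$ in the $r=0,\,\lambda\neq0$ case (using $b(\lambda,\lambda)=0\Rightarrow\Delta(\lambda^{n-1})=0\Rightarrow\lambda^{n-1}\in\ker\Delta=\mathrm{Im}\,\psi$, then injectivity of $\psi$) is a point the paper leaves implicit, and making it explicit is worthwhile. For the fibration step the paper routes through \cite[subsection 5.2]{Markman:11} (moving $\lambda$ to the closure of the birational K\"ahler cone) and \cite[Corollary 1.1]{Matsushita:17}; your citation of Matsushita--Yoshioka points to the same circle of results, though you should be precise about which statement you invoke, since the passage from an arbitrary isotropic integral $(1,1)$-class to a \emph{rational} Lagrangian fibration does use the monodromy/birational-cone input and not just Matsushita's theorem on its own.
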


\begin{proof}
    Since the Fujiki constant $c_X=n$ for a hyper-K\"ahler manifold $X$ of generalized Kummer type of dimension $2(n-1)$ as in \cite[subsection 2.1]{Beckmann:22-2}, the first assertion about $\mathrm{rank}(\mathcal{E})$ comes from \cite[Lemma 4.13 and Remark 4.14]{Beckmann:22-2}.

    By \cite[subsection 4.2]{Beckmann:22-2}, $v(k(x))=\mathbf{p}\in\mathrm{H}^{4(n-1)}(X,\mathbb{Z})$. Theorem \ref{th 9.2+} implies \\that $\Phi^{\widetilde{\mathrm{H}}}$ maps the element $v(k(x))=\mathbf{p}=B_{-\delta'/2}(\mathbf{p})\in\Lambda_{X,\mathrm{Hdg}}$ to $$\Phi^{\widetilde{\mathrm{H}}}(\mathbf{p})\in\Lambda_{Y,\mathrm{Hdg}}\subset\mathrm{H}^*(Y,\mathbb{Q}).$$
    Thus, the smallest non-zero cohomological degree of $\Phi^{\mathrm{H}}(v(k(x)))$ is $0, 2(n-1)$ or $4(n-1)$.

    For the case of $2(n-1)$, we know $\Phi^{\widetilde{\mathrm{H}}}$ maps $\beta\in\Lambda_{X,\mathrm{Hdg}}$ to $\lambda+c\beta\in\Lambda_{Y,\mathrm{Hdg}}$ for some $c\in\mathbb{Z}$ and $\lambda\in\mathrm{H}^{1,1}(Y,\mathbb{Z})$ that satisfies ${b}(\lambda,\lambda)=0$. Let $\mathcal{C}_Y\subset\mathrm{H}^2(Y,\mathbb{R})$ be the positive cone of $Y$. Then $\mathbb{R}\lambda\subset\lambda^\perp\cap\mathcal{C}_Y\not=\emptyset$. By \cite[subsection 5.2]{Markman:11}, there exists an isometry mapping $\lambda$ into the closure of the birational K\"ahler cone. The statement follows by \cite[Corollary 1.1]{Matsushita:17}.
\end{proof}

Corollary \ref{cor 9.5+} gives strong restrictions on Fourier-Mukai kernels of derived equivalences between (projective) hyper-K\"ahler manifolds of generalized Kummer type of dimension $2(n-1),n\geq 3$, over an algebraically closed field of characteristic $0$. Note that, all the three cases $0, 2(n-1)$ and $4(n-1)$ occur, see Proposition \ref{prop 7.1+ lemma 12.1++}, Proposition \ref{prop 10.2+}; Example \ref{eg 4.18+}; passage \ref{10.1 (1)+}, Lemma \ref{DMon(Kum) interchange alpha-tide beta}.
Furthermore, \cite[Lemma 4.13]{Beckmann:22-2} implies that if $\mathrm{rank}(\mathcal{E})=0$, then for all $x\in X$, all Chern classes of $\mathcal{E}_x$ are isotropic.

\subsection{Generalized Kummer varieties}
Unlike K3 surfaces, an abelian surface $A$ always satisfies $U\subset\mathrm{NS}(A)$. Theorem \ref{th 7.4+ th 12.2++} allows us to determine the image of the representation $\rho^{\widetilde{\mathrm{H}}}$ up to finite index.

\begin{theorem}[Similar to {\cite[Theorem 9.8]{Beckmann:22-2}}]\label{th 9.8+}
    For the generalized Kummer variety of an abelian surface $A$, $\mathrm{Kum}^{n-1}(A),n\geq3$, over an algebraically closed field of characteristic $0$, we have $$\widetilde{\mathrm{Aut}}^+(\Lambda_{\mathrm{Kum}^{n-1}(A),\mathrm{Hdg}})^{\mathrm{det}\cdot D}\subset\mathrm{Im}(\rho^{\widetilde{\mathrm{H}}})\subset{\mathrm{Aut}}(\Lambda_{\mathrm{Kum}^{n-1}(A),\mathrm{Hdg}}).$$
    The group $\widetilde{\mathrm{Aut}}^+(\Lambda_{\mathrm{Kum}^{n-1}(A),\mathrm{Hdg}})$ is the group of all Hodge isometries with real spinor norm one which act via $\pm\mathrm{id}$ on the discriminant group and $$\widetilde{\mathrm{Aut}}^+(\Lambda_{\mathrm{Kum}^{n-1}(A),\mathrm{Hdg}})^{\mathrm{det}\cdot D}:=\widetilde{\mathrm{Aut}}^+(\Lambda_{\mathrm{Kum}^{n-1}(A),\mathrm{Hdg}})\cap\widetilde{\mathrm{O}}^+(\Lambda_{\mathrm{Kum}^{n-1}(A)})^{\mathrm{det}\cdot D}.$$
\end{theorem}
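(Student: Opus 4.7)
The right inclusion $\mathrm{Im}(\rho^{\widetilde{\mathrm{H}}})\subset\mathrm{Aut}(\Lambda_{\mathrm{Kum}^{n-1}(A),\mathrm{Hdg}})$ is immediate from Corollary~\ref{cor 9.1+}. For the left inclusion, the strategy is to re-run the argument of Proposition~\ref{prop 8.2+} and Remark~\ref{extend lower bound of DMon(X)} inside $\mathrm{Im}(\rho^{\widetilde{\mathrm{H}}})$ rather than inside $\mathrm{DMon}(X)$. The point is that each generator used in those arguments is either literally a derived autoequivalence or a conjugate of one, hence a genuine Hodge isometry of $\Lambda_X$.

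Concretely, I would assemble three families of Hodge-preserving generators. First, for each $\lambda\in\mathrm{NS}(X)$, the Eichler transvection $t(-\beta,\widetilde{\lambda})\in E_U(\Lambda_X')$ equals $B_\lambda$ and is realized by tensoring with a line bundle $L$ with $c_1(L)=\lambda$; by the Lefschetz $(1,1)$-theorem this gives exactly the Hodge-preserving transvections of the form $t(-\beta,\cdot)$. Secondly, by Lemma~\ref{DMon(Kum) interchange alpha-tide beta}, the construction through $d_{(n)}$ of Theorem~\ref{th 7.4+ th 12.2++} produces an element of $\mathrm{DMon}(\mathrm{Kum}^{n-1}(A))$ sending $\beta$ to $\widetilde{\alpha}$; since the construction goes through a genuine derived autoequivalence of $A$ lifted by $\phi_{(n)}$, this element actually lies in $\mathrm{Im}(\rho^{\widetilde{\mathrm{H}}})$ and is therefore a Hodge isometry. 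Conjugation by it (using (\ref{(3.6) Eichler})) transports the first family to Hodge-preserving transvections of the form $t(-\widetilde{\alpha},\cdot)$. Thirdly, by Proposition~\ref{prop 7.1+ lemma 12.1++} combined with the shift $[1]$, the reflection $s_{\widetilde{\delta'}}$ lies in $\mathrm{Im}(\rho^{\widetilde{\mathrm{H}}})$ and is a Hodge isometry because $\widetilde{\delta'}$ is of type $(1,1)$; as noted in Remark~\ref{extend lower bound of DMon(X)}, it represents the non-trivial coset of $\widetilde{\mathrm{O}}^+(\Lambda_X)^{\mathrm{det}\cdot D}/\widehat{\mathrm{SO}}^+(\Lambda_X)$.

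Feeding these three families into the Eichler-theoretic recipe of Proposition~\ref{prop 3.4 Eichler} (whose hypotheses are met by $\Lambda_X\supset U^{\oplus 4}$), the first two generate the Hodge-preserving part of $\widehat{\mathrm{SO}}^+(\Lambda_X)$, and appending $s_{\widetilde{\delta'}}$ extends this to the Hodge-preserving part of $\widetilde{\mathrm{O}}^+(\Lambda_X)^{\mathrm{det}\cdot D}$. By definition, this is exactly $\widetilde{\mathrm{Aut}}^+(\Lambda_{\mathrm{Kum}^{n-1}(A),\mathrm{Hdg}})^{\mathrm{det}\cdot D}$, proving the desired inclusion. \textbf{The main obstacle} will be the Hodge-refined Eichler step: Proposition~\ref{prop 3.4 Eichler} as stated produces all of $\widehat{\mathrm{SO}}^+(\Lambda_X)$ from transvections $t(-\beta,\lambda)$ and $t(-\widetilde{\alpha},\lambda)$ for \emph{arbitrary} $\lambda\in\Lambda_X'$, whereas only the type-$(1,1)$ transvections are realized geometrically. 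I expect to close this gap by exploiting the large supply of derived autoequivalences of $A$ given by Orlov's Theorem~\ref{Orlov fundamental th}: any Hodge isometry of $\Lambda_A$ of the relevant type comes from such an autoequivalence, and via $d_{(n)}$ it induces a Hodge isometry of $\Lambda_X$ acting as the identity on $\mathbb{Z}\widetilde{\delta'}$. Together with the type-$(1,1)$ transvections this should be enough to generate the Hodge-preserving part of $\widehat{\mathrm{SO}}^+(\Lambda_X)$, but the precise bookkeeping, especially for abelian surfaces with small $\mathrm{NS}(A)$, is where the real work lies.
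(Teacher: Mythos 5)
Your upper bound via Corollary \ref{cor 9.1+} and your inventory of geometrically realized isometries ($B_\lambda$ for $\lambda\in\mathrm{NS}$, the element of Lemma \ref{DMon(Kum) interchange alpha-tide beta} sending $\beta$ to $\widetilde{\alpha}$, and $s_{\widetilde{\delta'}}$ from Proposition \ref{prop 7.1+ lemma 12.1++} together with the shift) all match the paper. But the obstacle you flag at the end is not bookkeeping: it is fatal to the generation strategy as you have set it up. Every Eichler transvection $t(e,a)$ with $e,a$ in the algebraic part of $\Lambda_X$ acts as the identity on the orthogonal complement of $\Lambda_{X,\mathrm{alg}}$, hence on the transcendental lattice $\mathrm{H}^2(X,\mathbb{Z})_{\mathrm{tr}}$; so no product of your type-$(1,1)$ transvections (nor of $s_{\widetilde{\delta'}}$) can realize a Hodge isometry acting nontrivially on $\mathrm{H}^2(X,\mathbb{Z})_{\mathrm{tr}}$, and such isometries do occur in $\widetilde{\mathrm{Aut}}^+(\Lambda_{X,\mathrm{Hdg}})^{\mathrm{det}\cdot D}$ (e.g.\ for abelian surfaces with extra Hodge isometries of the transcendental part). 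Proposition \ref{prop 3.4 Eichler} therefore cannot be ``Hodge-refined'' in the way your plan requires.

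The paper closes exactly this gap by inverting the architecture: instead of generating the group, it \emph{normalizes} an arbitrary $\gamma\in\widehat{\mathrm{Aut}}^+(\Lambda_{X,\mathrm{Hdg}})$. Using Eichler's transitivity criterion (Proposition \ref{prop 3.3 Eichler}, applied inside the algebraic part of $\Lambda_X$, which contains enough hyperbolic planes) together with the geometrically realized $B_\lambda$ and the $\beta\mapsto\widetilde{\alpha}$ element, one produces a derived autoequivalence $\Phi$ with $\Phi^{\widetilde{\mathrm{H}}}(\gamma(\widetilde{\delta'}))=\widetilde{\delta'}$. The residual isometry $\Phi^{\widetilde{\mathrm{H}}}\circ\gamma$ then preserves $\widetilde{\delta'}^{\perp}$, and conjugating by $B_{\pm\delta'/2}$ it restricts to a Hodge isometry of $\widetilde{\mathrm{H}}(A,\mathbb{Z})$; the derived Torelli theorem for abelian surfaces (Orlov, Golyshev--Lunts--Orlov, via \cite[Corollaries 9.50 and 9.61]{Huybrechts:06}) realizes this — transcendental part included — as $\eta^{\widetilde{\mathrm{H}}}$ for some $\eta\in\mathrm{Aut}(\mathbf{D}^b(A))$, and Theorem \ref{th 7.4+ th 12.2++} shows $d_{(n)}(\eta^{\widetilde{\mathrm{H}}})=\pm\Phi^{\widetilde{\mathrm{H}}}\circ\gamma$, the sign being absorbed by the shift functor. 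You name Orlov's theorem and $d_{(n)}$ as your intended fix, which are indeed the right ingredients, but they must be used to absorb the entire residual isometry after normalizing $\gamma(\widetilde{\delta'})$, not to enlarge the supply of transvections; as written, your proposal does not contain a proof of the left inclusion.
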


\begin{proof}
    Let $\gamma\in\widehat{\mathrm{Aut}}^+(\Lambda_{\mathrm{Kum}^{n-1}(A),\mathrm{Hdg}})$ be a Hodge isometry with real spinor norm one, determinant one that acts trivially on the discriminant group. We want to show $\gamma\in\mathrm{Im}(\rho^{\widetilde{\mathrm{H}}})$.

    By subsection \ref{lattices}, $\theta$ is an isometry. So $\mathrm{NS}(A)\cong U^{\oplus2}$ implies that $\Lambda_{\mathrm{Kum}^{n-1}(A),\mathrm{alg}}$ contains three copies of the hyperbolic plane $U$. The elements $\widetilde{\delta},\gamma(\widetilde{\delta})\in\Lambda_{\mathrm{Kum}^{n-1}(A),\mathrm{alg}}$ have the same self-pairing as well as divisibility. Since $B_{\lambda}$ for $\lambda\in\Lambda'_{\mathrm{Kum}^{n-1}(A)}$ and the element in $\mathrm{DMon}(\mathrm{Kum}^{n-1}(A))$ mapping $\beta$ to $\widetilde{\alpha}$ as in Lemma \ref{DMon(Kum) interchange alpha-tide beta} can be realized by the cohomological actions of some derived autoequivalences in $\mathrm{Aut}(\mathbf{D}^b(\mathrm{Kum}^{n-1}(A)))$, 
    we may use Proposition \ref{prop 3.3 Eichler}, as explained in the proof of Proposition \ref{prop 8.2+},
    to conclude that there exists a derived equivalence $\Phi\in\mathrm{Aut}(\mathbf{D}^b(\mathrm{Kum}^{n-1}(A)))$, whose induced action $\Phi^{\widetilde{\mathrm{H}}}$ is trivial on the discriminant group, has spinor norm one, determinant one, and sends $\gamma(\widetilde{\delta'})$ to $\widetilde{\delta'}$, i.e. $\Phi^{\widetilde{\mathrm{H}}}\circ\gamma(\widetilde{\delta'})=\widetilde{\delta'}$ with $\Phi^{\widetilde{\mathrm{H}}}$ in $\widehat{\mathrm{Aut}}^+(\Lambda_{\mathrm{Kum}^{n-1}(A),\mathrm{Hdg}})$.

     In particular, the isometry $\Phi^{\widetilde{\mathrm{H}}}\circ\gamma$ restricts to a Hodge isometry of $$\widetilde{\delta'}^\perp\subset\Lambda_{\mathrm{Kum}^{n-1}(A)}$$
    with real spinor norm one. Note that $\widetilde{\delta'}^\perp\supset B_{-\delta'/2}(\theta(\mathrm{H}^2(A,\mathbb{Z})))$. There is an autoequivalence $\eta\in\mathrm{Aut}(\mathbf{D}^b(A))$ such that $$B_{\delta'/2}\circ\Phi^{\widetilde{\mathrm{H}}}\circ\gamma\circ B_{-\delta'/2}$$ restricted to a Hodge isometry of $\widetilde{\mathrm{H}}(A,\mathbb{Z})$ agrees with $\eta^{\widetilde{\mathrm{H}}}$.

    Indeed, since $\Phi^{\widetilde{\mathrm{H}}},\gamma\in\widehat{\mathrm{Aut}}^+(\Lambda_{\mathrm{Kum}^{n-1}(A),\mathrm{Hdg}})$ and $\Lambda_{\mathrm{Kum}^{n-1}(A)}=B_{-\delta'/2}(\widetilde{\mathrm{H}}(\mathrm{Kum}^{n-1}(A),\mathbb{Z})$, we get a Hodge isometry $B_{\delta'/2}\circ\Phi^{\widetilde{\mathrm{H}}}\circ\gamma\circ B_{-\delta'/2}\in\mathrm{Aut}(\widetilde{\mathrm{H}}(\mathrm{Kum}^{n-1}(A),\mathbb{Z}))$. It yields the restriction being a Hodge isometry $$h:=(B_{\delta'/2}\circ\Phi^{\widetilde{\mathrm{H}}}\circ\gamma\circ B_{-\delta'/2})|_{\widetilde{\mathrm{H}}(A,\mathbb{Z})}\in\mathrm{Aut}(\widetilde{\mathrm{H}}(A,\mathbb{Z})).$$
    It can be extended to a Hodge isometry $h^+\in\mathrm{Aut}({\mathrm{H}}^*(A,\mathbb{Z}))$ by Hodge decomposition of complex torus. So, the restriction $h^+|_{\mathrm{H}^1(A,\mathbb{Z})\oplus\mathrm{H}^3(A,\mathbb{Z})}\in\mathrm{Aut}^+(\mathrm{H}^1(A\times A^{\vee},\mathbb{Z}))$ is a Hodge isometry. \cite[Corollaries 9.50 and 9.61]{Huybrechts:06} and \cite[Proposition 4.3.2]{Golyshev:01} yield that there exists $\eta\in\mathrm{Aut}(\mathbf{D}^b(A))$ such that $h=\eta^{\widetilde{\mathrm{H}}}$. 
    
    Theorem \ref{th 7.4+ th 12.2++} implies that $\Phi^{\widetilde{\mathrm{H}}}\circ\gamma$ or $-\Phi^{\widetilde{\mathrm{H}}}\circ\gamma$ lies in $\mathrm{Im}(\rho^{\widetilde{\mathrm{H}}})$. As the shift functor $[1]$ acts as $-\mathrm{id}$, we conclude that $\Phi^{\widetilde{\mathrm{H}}}\circ\gamma\in\mathrm{Im}(\rho^{\widetilde{\mathrm{H}}})$ and therefore $\gamma\in\mathrm{Im}(\rho^{\widetilde{\mathrm{H}}})$.

    Hence, we have proven that all Hodge isometries with real spinor norm one, determinant one that act trivially on the discriminant lattice are contained in $\mathrm{Im}(\rho^{\widetilde{\mathrm{H}}})$. The assertion about the lower bound now follows from Remark \ref{extend lower bound of DMon(X)} which yields an isometry $s_{\widetilde{\delta'}}\in\mathrm{Im}(\rho^{\widetilde{\mathrm{H}}})$ acting as $-\mathrm{id}$ on the discriminant group. The upper bound comes from Corollary \ref{cor 9.1+}.
\end{proof}

\bibliographystyle{amsplain}

\end{document}